\theoremstyle{definition}
\def\fnum{equation} 
\newtheorem{Thm}[\fnum]{Theorem}
\newtheorem{Cor}[\fnum]{Corollary}
\newtheorem{Lem}[\fnum]{Lemma}
\newtheorem{Def}[\fnum]{Definition}
\newtheorem{Exa}[\fnum]{Example}
\newtheorem{Rem}[\fnum]{Remark}
\newtheorem{Pro}[\fnum]{Proposition}
\newtheorem{Asu}[\fnum]{Assumption}
\numberwithin{equation}{section}
\renewcommand{\rm}{\normalshape} 
\newcommand{\Ker}{{\text{Ker}}}
\newcommand{\supp}{{\text{supp}}}
\newcommand{\XX}{\mathbf{X}}
  \newcommand{\R}{\ensuremath{\mathbb{R}}}
 \newcommand{\ba}{\begin{align*}}
 \newcommand{\ea}{\end{align*}}
\newcommand{\Id}{\text{Id}}
\newcommand{\iso}{\text{Iso}}
\newcommand{\aut}{\text{Aut}}
\newcommand{\loc}{\text{loc}}
\newcommand{\test}{\text{TestF}}
\DeclareMathOperator{\dr}{dt_r}
\DeclareMathOperator{\drt}{dist_{r/10}}
\newcommand{\mx}{{\text {Mx}}}
\newcommand{\rcd}{\text{RCD}^{\ast}}
\DeclareMathOperator{\RCD}{RCD}
\newcommand{\mm}{\mathfrak{m}}
\title{Margulis Lemma on RCD(K,N) spaces}
\author{Qin Deng}\address{Qin Deng, Massachusetts Institute of Technology, qindeng@mit.edu}
\author{Jaime Santos-Rodr\'iguez} \address{\parbox{\linewidth}{Jaime Santos-Rodr\'iguez, Universidad Aut\'onoma de Madrid and Durham University,\\ jaime.santos@uam.es, jaime.santos-rodriguez@durham.ac.uk}}  
\author{Sergio Zamora}\address{Sergio Zamora, Max Planck Institute for Mathematics, zamora@mpim-bonn.mpg.de}
\author{Xinrui Zhao}\address{Xinrui Zhao, Massachusetts Institute of Technology, xrzhao@mit.edu}
\begin{document}

\maketitle

\vspace{-0.5cm}

\begin{abstract}
    We extend the Margulis Lemma for manifolds with lower Ricci curvature bounds to the $\rcd (K,N)$ setting. 
    As one of our main tools, we obtain improved regularity estimates for Regular Langrangian flows on these spaces.
\end{abstract}

{
\hypersetup{linkcolor=black}
\tableofcontents
}
\section{Introduction}

The main result of this paper extends the Margulis Lemma to $\rcd (K,N)$ spaces. Recall that for a group $G$, we say an (ordered) generating set $ \beta = \{ \gamma _1 , \ldots , \gamma_n \} \subset G$ is a \textit{nilpotent basis of length} $n$ if for all $ i  , j \in \{ 1, \ldots , n \}$ one has $[\gamma _i, \gamma _j ] \in \langle \{ \gamma_{1}, \ldots , \gamma _{i-1} \} \rangle$.

\begin{Thm}\label{thm:margulis}
\rm  For each $K \in \mathbb{R}$, $N \geq 1$, there exist $\varepsilon > 0 $ and $C \in \mathbb{N} $ such that if $(X,d, \mathfrak{m} ,p)$ is a pointed $\rcd (K , N)$ space of rectifiable dimension $n$, the image of the map
\[      \pi_1(B_{\varepsilon} (p), p) \to \pi_1(X,p)                  \]
induced by the inclusion contains a subgroup of index $\leq C$  that admits a nilpotent basis of length $\leq n$.
\end{Thm}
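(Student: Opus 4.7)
The plan is to follow the scheme of Kapovitch--Wilking for manifolds with a Ricci lower bound, substituting Regular Lagrangian flows for the gradient flows of distance-like functions wherever displacement or transport of loops is needed, and to proceed by contradiction combined with induction on the rectifiable dimension $n$. Assuming the theorem fails, one produces a sequence of pointed $\rcd(K,N)$ spaces $(X_i, d_i, \mm_i, p_i)$ of rectifiable dimension $n$, scales $\varepsilon_i \to 0$, and integers $C_i \to \infty$ such that the images $\Gamma_i$ of $\pi_1(B_{\varepsilon_i}(p_i)) \to \pi_1(X_i)$ admit no subgroup of index at most $C_i$ with a nilpotent basis of length at most $n$. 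Rescale each $X_i$ so that a shortest generator of $\Gamma_i$ has length comparable to $1$; the rescaled spaces then satisfy $\rcd(K_i, N)$ with $K_i \to 0$, so any limit will be $\rcd(0, N)$.

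By Gromov precompactness, after passing to a subsequence, one extracts a pointed measured Gromov--Hausdorff limit $(X_\infty, d_\infty, \mm_\infty, p_\infty)$. Passing to local universal covers of balls around $p_i$ and using equivariant mGH convergence, one obtains a closed group $G_\infty$ acting by isometries on the limit cover and inheriting the short-generator structure of the $\Gamma_i$. Since the isometry group of an $\rcd(K,N)$ space is a Lie group, $G_\infty$ is Lie; its identity component $G_\infty^0$ has orbits along which the limit splits off Euclidean factors by Gigli's splitting theorem. If $G_\infty^0$ is nontrivial, quotienting by a closed Euclidean subgroup produces an $\rcd(0, N')$ space of strictly smaller rectifiable dimension to which the inductive hypothesis applies, once one verifies that small loops in $X_i$ descend to small loops generating the corresponding image group in the quotient.

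In the residual case where the limit action is discrete, one uses the improved regularity estimates for Regular Lagrangian flows announced in the abstract to construct approximate isometries displacing short loops along essential directions, and to control the displacement of iterated commutators $[\gamma_i, \gamma_j]$ in terms of the displacements of the individual generators. Iterating this commutator control at most $n$ times collapses the generators into a nilpotent filtration of length $\leq n$ on a subgroup of index bounded independently of $i$, which contradicts the counterexample assumption. The principal obstacle is exactly this last step: unlike on a smooth manifold, the Regular Lagrangian flow of a semiconcave potential on an $\rcd$ space is not automatically regular pointwise, so one must establish quantitative Lipschitz and commutator estimates for these flows that are uniform along the converging sequence. These estimates, which the paper develops as its main technical input, are what allow the Kapovitch--Wilking commutator iteration to be run in the nonsmooth setting and the induction to close.
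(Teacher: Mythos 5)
Your high-level strategy is the one the paper actually uses: follow Kapovitch--Wilking, substitute Regular Lagrangian flows for gradient flows of distance-like functions, argue by contradiction along a sequence, and close by an induction that produces the nilpotent basis. But several specific mechanisms are either misstated or omitted, and they are precisely the hard parts.

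First, the induction parameter. You propose ``induction on the rectifiable dimension $n$,'' but the paper's Induction Theorem (Theorem 1.6) is proved by reverse induction on $k$, the dimension of the Euclidean factor of the Gromov--Hausdorff blowup of the quotients, and the nilpotent basis has length $\leq n-k$. This is not cosmetic: quotienting the covers by a normal subgroup (the $\Upsilon_i$ playing the role of identity component, Theorem 3.1) may leave $k$ unchanged or may increase it, but it does not directly decrease $n$. Your phrasing suggests you would decrement $n$ and re-apply, which does not track what actually shrinks at each step.

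Second, and more seriously, you do not address the case in which the blowup limit is purely Euclidean, $Y=\{\ast\}$, at the current scale. There the induction step cannot close at all without a change of scale. The paper's Rescaling Theorem (Theorem 7.1) finds a new scale $\lambda_i\to\infty$ at which the limit becomes $\mathbb{R}^k\times Y$ with $Y$ nontrivial, while simultaneously producing GS maps of deck type connecting base points chosen in a good set of asymptotically full measure. Constructing these GS maps, carrying them through the rescaling, and verifying that they respect the group structure (so that Proposition 2.32 can be applied and the nilpotent basis from the inductive call is still respected) is the place where Theorem 1.2 and the self-improving stability results of Sections 4--5 are actually consumed. Your proposal alludes to ``approximate isometries displacing short loops'' but does not identify the rescaling as the resolution.

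Third, the concluding ``commutator iteration'' is not what the argument does. There is no direct estimate that bounds the displacement of iterated commutators $[\gamma_i,\gamma_j]$ and collapses the generators into a nilpotent filtration by counting. Instead, the nilpotent basis is built one generator at a time as the reverse induction on $k$ unwinds (Proposition 8.3, Case 2): one appends an element such as $f^{(2C)!}_{\ell+1,i}$ to the basis obtained from the inductive call on a quotient, and uses Proposition 2.31 to control how the deck-type maps act on the chain. A purely quantitative commutator-iteration scheme as you describe would need to replace this structural construction, and it is not clear it can. As written, the proposal would stall exactly at the $Y=\{\ast\}$ case and at the construction of the nilpotent basis.
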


From the work of Kapovitch--Wilking, Theorem \ref{thm:margulis} is known to hold when $X$ is a smooth Riemannian manifold \cite{KW11}.  On the other hand, Breuillard--Green--Tao proved that, after quotienting by a finite normal subgroup,  Theorem \ref{thm:margulis}  holds in more general metric spaces with nice packing properties  \cite[Corollary 11.17]{BGT12}.

The proof strategy of Theorem \ref{thm:margulis} is similar to that of Kapovitch--Wilking, including a reverse induction argument (see Theorem \ref{thm:induction-main-theorem}). Nevertheless, there are quite a few technical challenges to generalizing their arguments to a nonsmooth framework. An important tool used in \cite{KW11} is the gradient flow of smooth functions with suitable integral Hessian bounds and their associated regularity estimates. In the nonsmooth framework, these gradient flows are by necessity replaced by the Regular Lagrangian flows (RLFs) of Sobolev vector fields. Several regularity results have been obtained for RLFs in recent works \cite{BS18, BS20, BDS22}, but are not quite strong enough to give the necessary estimates, see the discussion at the end of the next subsection for more details. The main technical contribution of this paper, therefore, is to establish new regularity estimates for Regular Lagrangian flows on the $\RCD(K,N)$ spaces, which matches the best known in the smooth setting. We mention that related estimates of this type have also been employed successfully in other works to study the structure of Ricci limit spaces (see \cite{CC96, CN12, KL18}) and $\rcd (K,N)$ spaces (see  \cite{BS20, D20}). 

\subsection{Main regularity estimates on RLFs}

The following regularity result is our substitute for smoothness in the context of Regular Lagrangian flows (see \ref{def:mx} for the definition of $\mx$). 

\begin{Thm}\label{thm:essential-stability}
\rm Let $\rho > 0$, $T > 0$, $L \geq 1$, $D \geq 0$, $(X,d,\mathfrak{m})$ an $\rcd (-(N-1), N)$ space,  $b \in L^1 ( [0,T] ;  H^{1,2}_{C,s}(TX))$ a vector field with $\Vert b (t) \Vert _{\infty} \leq L$ and $\Vert \text{div}( b (t)) \Vert _{\infty} \leq D$ for all $t \in [0,T]$,  $\XX: [0,T] \times X \to X$ its RLF, and define $H : X \to \mathbb{R}$ as $H(y) : = \int_0^T \mx _{\rho} ( \vert \nabla b (t) \vert ) ( \XX_t(y) ) dt$. Then there are $\delta (D, T, N) > 0, M(D, T, N) > 0 $, such that if $x \in X$ satisfies 
\begin{equation}\label{eq:good-point-hypothesis}
    \limsup_{r \to 0} \dfrac{\mm ( \{ y \in B_r(x) \vert H(y) > \delta \} )}{ \mm (B_r(x))  }  <   \frac{1}{2},           
\end{equation}  
then there is $r_x \leq \rho / 100 $ and a representative $\tilde{\XX} : [0, T ] \times X \to X$ of the RLF to $b$ such that  for all $r \leq r_x$ the following holds:
\begin{enumerate}[label= $\bm{S}$\textbf{.\arabic*}]
    \item  There is $A_r \subset B_r(x)$ with $\mm (A_r) \geq \frac{1}{M} \mm ( B_r(x) )$ and 
    \[    \tilde{\XX} _t(A_r) \subset B_{2r}(\tilde{\XX} _t(x)) \text{ for all }t \in [0, T] .       \] \label{eq:es-st-1}
    \item  For all $t \in [0, T]$,
    \[   \frac{1}{M} \mm (B_r(x)) \leq \mm ( B_r( \tilde{\XX} _t(x) ) ) \leq M \mm ( B_r(x) ).    \] \label{eq:es-st-2}
\end{enumerate}
Moreover, $\tilde{\XX} $ can be chosen so that any point $x \in X$ satisfying \ref{eq:good-point-hypothesis} also satisfies \ref{eq:es-st-1} and \ref{eq:es-st-2} for $r$ sufficiently small (depending on $x$).
\end{Thm}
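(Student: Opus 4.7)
The plan is to prove this via a Lusin--Lipschitz estimate for the RLF, adapting to the $\rcd$ setting the Crippa--De Lellis type arguments standard for RLFs of Sobolev vector fields. The key observation is that on the sublevel set $E_\delta := \{H \leq \delta\}$, the flow deforms distances by a bounded factor. Specifically, the first task would be to establish that for $\mm\otimes\mm$-a.e.\ pair $(y,z)$ with $d(y,z) < \rho/10$, provided the trajectories $\XX_s(y)$ and $\XX_s(z)$ remain at distance $\le \rho$ from each other throughout $[0,T]$,
\[
d(\XX_t(y),\XX_t(z)) \le \exp\!\bigl(C_N\bigl(H(y)+H(z)\bigr)\bigr)\, d(y,z) \qquad \forall\, t\in[0,T],
\]
together with the reverse inequality obtained by flowing backwards (the time-reversed RLF is that of $-b(T-\cdot)$, for which the analogous integral agrees with $H$ by a change of variables). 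Estimates of this flavour underlie the works \cite{BS18,BS20,BDS22}; the new input required is that the exponential constant depends only on $N$ and on the $\rho$-truncated maximal function, so that choosing $\delta = \delta(N,D,T)$ small forces a bi-Lipschitz factor of at most $2$ between pairs in $E_\delta$.

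Next, I construct the representative $\tilde\XX$. Let $G\subset X$ be the set of points satisfying \ref{eq:good-point-hypothesis}. For each $y\in G$, the density hypothesis combined with Fubini yields a sequence $y_k\in E_\delta\cap B_{1/k}(y)$ to which the pairwise Lipschitz estimate applies. Since trajectory drift is uniformly bounded by $LT$, one arranges that the trajectories of $y_k$ and $y_l$ stay within $\rho$ of each other for $k,l$ large, so $(\XX_t(y_k))_k$ is Cauchy uniformly in $t\in[0,T]$, and one sets $\tilde\XX_t(y) := \lim_k \XX_t(y_k)$; independence of the sequence is immediate from the Lipschitz bound. On $X\setminus G$, let $\tilde\XX$ coincide with any fixed $\mm$-a.e.\ version of $\XX$.

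For \ref{eq:es-st-1}, shrink $r_x \le \rho/100$ so that $\mm(E_\delta \cap B_r(x)) \ge \tfrac12 \mm(B_r(x))$ for all $r\le r_x$ and so that all trajectories from $B_{r_x}(x)$ remain within distance $\rho/10$ of that of $x$ throughout $[0,T]$ (using $\Vert b\Vert_\infty\le L$ together with the bi-Lipschitz factor). Setting $A_r := E_\delta\cap B_r(x)$, the forward Lipschitz estimate gives $\tilde\XX_t(A_r)\subset B_{2r}(\tilde\XX_t(x))$. For \ref{eq:es-st-2}, invoke the measure compressibility of the RLF,
\[
e^{-DT}\mm(E) \le \mm(\XX_t(E)) \le e^{DT}\mm(E),
\]
applied to $A_{r/2}$ to get
\[
\mm\bigl(B_r(\tilde\XX_t(x))\bigr) \ge \mm\bigl(\tilde\XX_t(A_{r/2})\bigr) \ge \tfrac{e^{-DT}}{2}\,\mm(B_{r/2}(x)) \ge \tfrac{e^{-DT}}{2\,C(N)}\,\mm(B_r(x)),
\]
with the last step from Bishop--Gromov on an $\rcd(-(N-1),N)$ space. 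The upper bound follows symmetrically: apply the reverse Lipschitz estimate along the time-reversed flow to the points of $E_\delta\cap \tilde\XX_t^{-1}(B_r(\tilde\XX_t(x)))$, then use compressibility and Bishop--Gromov to pass between scales $r$ and $2r$. The ``moreover'' clause is automatic since $\tilde\XX$ has been defined over the entire set $G$ of points satisfying \ref{eq:good-point-hypothesis}.

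The main obstacle is the Lipschitz estimate itself: proving it with constants depending only on $N$ and with the $\rho$-truncated maximal function in the exponent. In the smooth setting this follows from controlling $\tfrac{d}{dt}\log d(\XX_t(y),\XX_t(z))$ by integrating $\vert\nabla b\vert$ along a minimizing geodesic between the two trajectories; in the $\rcd$ framework one must instead work with test plans, segment-type inequalities on scales $\le\rho$, and sharp Poincar\'e inequalities to bound the relevant line integral by the truncated maximal function evaluated at the endpoints. Once that estimate is secured, the remainder of the argument is a measure-theoretic assembly of the pieces above.
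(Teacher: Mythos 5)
Your proposal hinges on a pointwise a.e.\ Gronwall-type estimate
\[
d(\XX_t(y),\XX_t(z)) \le \exp\bigl(C_N(H(y)+H(z))\bigr)\, d(y,z)
\]
for $\mm\otimes\mm$-a.e.\ pair, and you correctly flag that proving it is the main obstacle. The trouble is that this pointwise estimate is precisely what is \emph{not} available in the (possibly collapsed) $\RCD(K,N)$ setting, and the paper's introduction says so explicitly: the optimal infinitesimal Lipschitz estimates of \cite{BDS22} rely on the Green's function and are restricted to non-collapsed spaces, and ``[do] not seem to be readily obtainable using the methods employed here.'' The difficulty is structural. In $\R^n$ the Crippa--De Lellis bound comes from differentiating $t\mapsto\log d(\XX_t(y),\XX_t(z))$ and bounding $\langle b(\XX_t(y))-b(\XX_t(z)),v\rangle$ by the pointwise Morrey inequality for $b$. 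In an $\RCD$ space there is no canonical parallel transport between tangent modules at two distinct points, so this ``difference of vector fields'' has no meaning, and the first-variation bound is only available in integral form over pairs (Theorem \ref{thm:first-variation}, coming from the segment inequality). You cannot ``evaluate the truncated maximal function at the endpoints'' in a pointwise sense; the tools give only control on $\int_{S_1\times S_2}\dr(t)$. The paper's proof is organized precisely to avoid the pointwise claim: it runs a discrete time-step induction of length $r/(10L)$, uses the integral first-variation formula plus Chebyshev to extract a single good base point $x_{r,k}$ and a majority set $B_r'(x_{r,k})$ at each step, then restarts. Your scheme collapses to roughly the same structure once you admit only integral control, but as written the crucial pointwise estimate is an unproved (and likely unprovable, at least by these methods) assertion.

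A second, smaller gap: you write that the ``moreover'' clause is automatic, but the nontrivial content of the conclusion is that $\tilde\XX$ is again a Regular Lagrangian Flow of $b$, i.e.\ that it satisfies conditions \ref{def:rlf-1}--\ref{def:rlf-3} of Definition \ref{def:rlf}. The paper devotes a substantial portion of the proof to verifying \ref{def:rlf-2} (via a Lebesgue-differentiation argument with the bounded-eccentricity family $\mu_{r,x}(t)$ from Lemma \ref{lem:leb}) and \ref{def:rlf-3} (via a layer-cake counting bound on how many sausages $\XX_t(A_r(x))$ can cover a given point). Your construction of $\tilde\XX$ by limiting trajectories of points in $E_\delta$ is in the same spirit as the paper's, but without the RLF verification the statement is not established.

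\end{document}
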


\begin{Def}
\rm Let $M (1, T, N)>0$ be given by Theorem \ref{thm:essential-stability}, 
$(X,d,\mathfrak{m})$ an $\rcd (-(N-1), N)$ space,  $b :  [0,T] \to L^2_{\loc} (TX)$ a vector field,  and $\XX: [0,T] \times X \to X$ its RLF. We say that $x \in X$ is a \textit{point of essential stability} of $\XX$ if there is $r_x > 0$ such that \ref{eq:es-st-1} and \ref{eq:es-st-2} hold for all $r \leq r_x$. 
\end{Def}

\begin{Cor}\label{cor:essential-stability}
\rm  For each  $N \geq 1$, $T \geq 0$, $D \geq 0$, $r \geq 0$, $L \geq 0$ and $\varepsilon > 0 $, there are $R \geq 1$, $\eta > 0$, such that the following holds. Let $(X,d,\mathfrak{m}, p)$ be an $\rcd (-(N-1), N)$ space,  $b \in  H^{1,2}_{C,s}(TX)$ a vector field with $\Vert b \Vert _{\infty} \leq L$, $\Vert \text{div}( b ) \Vert _{\infty} \leq D$, and $\XX: [0,T] \times X \to X$ its RLF. Assume that for all $s \in [1,R]$ one has 
\[    \fint_{B_{s}(p)}  \vert  \nabla b  \vert  ^2 d \mm  \leq \eta  .  \]
Then if $G \subset X$ denotes the set of points of essential stability of $\XX$, one has 
\[          \mathfrak (  G \cap B_r (p) ) \geq (1 - \varepsilon ) \mathfrak{m} (B_r(p)).                                    \]
\end{Cor}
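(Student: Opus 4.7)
The plan is to reduce to Theorem~\ref{thm:essential-stability} pointwise and then estimate the measure of the failure set directly. Fix $\rho = 1$, let $\delta = \delta(D,T,N)$ be the constant from Theorem~\ref{thm:essential-stability}, and set
\[ H(y) := \int_0^T \mx_\rho(|\nabla b|)(\XX_t(y))\, dt. \]
Since $(X,d,\mm)$ is doubling at bounded scales by Bishop--Gromov (with constants depending only on $N$ and the scale), Lebesgue's differentiation theorem applies to $\mathbf{1}_{\{H > \delta\}}$, so $\mm$-a.e.\ point of $\{H \leq \delta\}$ has density zero of $\{H > \delta\}$ and therefore satisfies hypothesis~\eqref{eq:good-point-hypothesis}. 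Theorem~\ref{thm:essential-stability} then guarantees such points lie in $G$ up to an $\mm$-null set; thus
\[ \mm(B_r(p) \setminus G) \leq \mm(\{H > \delta\} \cap B_r(p)), \]
and it suffices to make the right-hand side small.

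Next, I would estimate $\int_{B_r(p)} H\, d\mm$ via Fubini together with two standard features of the RLF: $\|b\|_\infty \leq L$ implies $\XX_t(B_r(p)) \subset B_{r+LT}(p)$, and $\|\text{div}(b)\|_\infty \leq D$ yields the compression bound $(\XX_t)_\ast \mm \leq e^{Dt}\mm$. Together these reduce matters to bounding $\int_{B_{r+LT}(p)} \mx_\rho(|\nabla b|)\, d\mm$. By Cauchy--Schwarz and the $L^2$-boundedness of the localized Hardy--Littlewood maximal operator on the doubling space $(X,d,\mm)$, this is at most $C_N\, \mm(B_{r+LT}(p))^{1/2}\, \|\nabla b\|_{L^2(B_{r+LT+\rho}(p))}$, the point being that $\mx_\rho(f)$ only sees values of $f$ within distance $\rho$.

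Finally, setting $R := \max(1,\, r+LT+\rho)$, the averaged-gradient hypothesis gives $\|\nabla b\|^2_{L^2(B_R(p))} \leq \eta\, \mm(B_R(p))$, and Bishop--Gromov compares $\mm(B_R(p))$ to $\mm(B_r(p))$ up to a factor $\Lambda(r,L,T,\rho,N)$. Combining these bounds with Markov's inequality yields
\[ \mm(\{H > \delta\} \cap B_r(p)) \leq \frac{C(D,T,L,r,\rho,N)}{\delta}\sqrt{\eta}\,\mm(B_r(p)), \]
and choosing $\eta$ small in terms of $\varepsilon$ (and the other parameters) closes the argument. The only nontrivial technical ingredient is the $L^2$-bound for the localized maximal operator $\mx_\rho$ on $\RCD(-(N-1),N)$ spaces, which is standard Hardy--Littlewood theory on doubling metric measure spaces; I do not anticipate a serious obstacle beyond bookkeeping of constants.
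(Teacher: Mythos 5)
Your proposal is correct and follows essentially the same route as the paper's proof: define $H(y)=\int_0^T\mx_\rho(|\nabla b|)(\XX_t(y))\,dt$ with $\rho=1$, bound $\fint_{B_r(p)}H\,d\mm$ by $C(D,T,L,r,N)\sqrt{\eta}$ using Fubini/change of variables with the compression bound $(\XX_t)_*\mm\leq e^{Dt}\mm$, the containment $\XX_t(B_r(p))\subset B_{r+LT}(p)$, Cauchy--Schwarz, and the $L^2$-boundedness of the localized maximal operator, then apply Chebyshev and observe that density points of $\{H\leq\delta\}$ satisfy \eqref{eq:good-point-hypothesis} and hence lie in $G$ by Theorem~\ref{thm:essential-stability}. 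The only cosmetic difference is the point at which Cauchy--Schwarz is invoked (the paper squares $\fint_{B_r(p)}H$ directly and applies Jensen in time, whereas you apply it on the spatial average of $\mx_\rho(|\nabla b|)$); both yield the same $\sqrt{\eta}$ bound.
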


We remark that for non-collapsed $\rcd (K,N)$ spaces, a version of these regularity results were obtained in \cite{BDS22} using alternative methods relying on estimates of the Green's function, which cannot be readily applied in collapsed cases. Moreover, the use of the Green's function in \cite{BDS22} resulted in the dependence of various estimates on non-structural information such as the space itself (since the Green's function naturally contains global information about $X$). This is undesirable for the application at present since we will need to consider sequences of $\RCD$ spaces and therefore cannot make use of estimates which depend on the space. Indeed this dependence can be avoided by adapting the scheme of \cite{D20}. We point out that the advantage of using the Green's function in the non-collapsed setting is that one obtains optimal infinitesimal Lipschitz estimates \cite[Theorem 1.6]{BDS22}, which does not seem to be readily obtainable using the methods employed here.

\subsection{Induction Theorem}

In this subsection we state Theorem \ref{thm:induction-main-theorem}; our main technical result from which Theorem \ref{thm:margulis} follows. Recall that for a semi-locally-simply-connected space $X$, we can identify its fundamental group $\pi_1(X)$ with the group of deck transformations of its universal cover $\tilde{X}$.

\begin{Def}
\rm Let $X$ be a semi-locally simply connected geodesic space and $\tilde{X}$ its universal cover. We say a function $f: \tilde{X} \to \tilde{X}$ is of \textit{deck type} if there is an automorphism $f_{\ast} \in \aut (\pi_1(X))$ such that for all $g \in \pi_1(X)$ and $x \in \tilde{X}$, one has $f(g(x)) = f_{\ast}(g) (f(x)).$ 
\end{Def}

\begin{Exa}
\rm If $f \in \pi_1(X)$, then it is of deck type with $f_{\ast}(g) : = f \circ g \circ f^{-1}$.
\end{Exa}

\begin{Def}
\rm For metric spaces $X, Y$, a function  $f: X \to Y$, and $r > 0 $, the \textit{distortion at scale }$r$ is defined as the map $\dr (f) : X \times X \to [0,r]$ with 
\[  \dr (f) (x_1, x_2) : = \min \{ r, \vert d_X(x_1, x_2)- d_Y(f(x_1), f(x_2))\vert \}.       \]
If $X$ is equipped with a measure $\mm $, we say that $x \in X$ is a \textit{point of essential continuity} of $f$ if there exists $r_0 > 0$ such that for all $r \leq r_0$ there is a subset $A \subset B_r(x) $ with $\mm  (A) \geq \frac{1}{2} \mm  (B_r(x))$ and $f(A) \subset B_{2r}(f(x))$.
\end{Def}

The next definition is a non-smooth version of the \textit{maps with zoom-in property} from \cite{KW11}. Although the notion is very technical, these are precisely the properties present in gradient flows of $\delta$-splittings (and as we will show, also in the RLFs of $\delta$-splittings in the $\RCD$ setting).

\begin{Def}\label{def:gas}
\rm Let $(X_i^j, d_i^j, \mm _i^j, p_i^j) $, $j \in \{ 1, 2 \}$ be two sequences of pointed $\rcd (K,N)$ spaces. We say that a sequence of measurable functions $f_i : [ X_i^1, p_i^1 ] \to  [ X_i^2, p_i^2 ]$ is \textit{good at all scales} (GS) if  there is a sequence of measurable  functions $f_i^{-1} :  [ X_i^2, p_i^2 ] \to  [ X_i^1, p_i^1 ]$ such that $f_i^{-1} \circ f_i = \Id_{X_i^1}$ almost everywhere and $f_i \circ f_i^{-1} = \Id_{X_i^2}$ almost everywhere, satisfying the following: 
\begin{enumerate}
\item $(f_i)_{\ast} (\mm  _i^1) \ll \mm _i^2$ and $(f_i^{-1})_{\ast}(\mm _i^2) \ll \mm _i^1$ for all $i$.\label{def:gas-1}
\item There is $R_0 > 0 $ and sequences $S_i^j \subset B_1(p_i^j)$ for $j \in \{ 1 , 2 \} $ with $\mm _i^j (S_i^j) \geq  \frac{1}{2} \mm _i^j (  B_1(p_i^j) ) $ and $f_i( S_i^1) \subset  B_{R_0}(p_i^2)$, $f_i^{-1}( S_i^2) \subset  B_{R_0}(p_i^1) $.\label{def:gas-2}
\item There is a sequence $\varepsilon _ i \to 0$ and sequences of subsets  $ U_i^j \subset X_i^j $ for $j \in \{ 1 , 2 \}$ such that: \label{def:gas-3}
\begin{enumerate}
\item The points of $U_i^1$ (resp. $U_i^2$) are of essential continuity of $f_i$ (resp. $f_i^{-1}$). \label{def:gas-3a}
\item $f_i$ (resp. $f_i^{-1}$) restricted to $U_i^1$ (resp. $U_i^2$) is measure preserving.\label{def:gas-3b}
\item For all $R > 0$ and $j \in \{ 1 , 2 \} $, one has 
 \[  \lim _{i \to \infty } \frac{\mm _i^j(U_i^j \cap B_R(p_i^j))}{\mm _i^j(B_R(p_i^j))} = 1.    \]   \label{def:gas-3c}
\item  For all $x_i^1 \in U_i^1,$ $ x_i^2  \in U_i^2 $,  $r \leq 1$, one has 
\begin{gather*}
     \fint _{B_r(x_i^1) ^{\times 2} } \dr (f_i) (a,b) d  (\mm _i^1 \times \mm _i^1 ) (a,b)  \leq \varepsilon_i r \\
     \fint _{B_r(x_i^2) ^{\times 2} } \dr  ( f_i^{-1} )(a,b) d  (\mm _i^2 \times \mm _i^2 )  (a,b)  \leq \varepsilon_i  r    .
\end{gather*}\label{def:gas-3d}
\end{enumerate} 
\end{enumerate}
\end{Def}
\begin{Def}
\rm  Let $\Gamma $ be a group, $G \leq \Gamma$ a subgroup admitting a nilpotent basis $\beta = \{\gamma_{1}, \ldots , \gamma _{n} \}$, and $\varphi  \in \aut (\Gamma )$. We say that $\varphi $ \textit{respects} $\beta$ if it preserves  $\langle \{ \gamma_1 , \ldots , \gamma_m \} \rangle$ for each $m$, and  acts trivially on $\langle \{ \gamma_1 , \ldots , \gamma_{m} \} \rangle / \langle \{ \gamma_{1} , \ldots , \gamma_{m-1} \} \rangle $ for each $m$.
\end{Def}

\begin{Thm}\label{thm:induction-main-theorem}
\rm  Let $(X_i, d_i, \mathbf{m}_i, p_i )$ be a sequence of pointed $\rcd \left(  -\frac{1}{i}, N \right)$ spaces of rectifiable dimension $n$ and a pointed compact metric space $(Y,y)$ of diameter $D$ for which the sequence $(X_i,p_i)$ converges in the Gromov--Hausdorff sense to $(\mathbb{R}^k \times Y, (0,y) )$. Let $\tilde{X}_i$ be the sequence of universal covers,  $\tilde{p}_i \in \tilde{X}_i$ in the preimage of $p_i$, $\Gamma_i \leq \pi_1(X_i)$ be the group generated by the elements $g \in \pi_1(X_i)$ with $d( g \tilde{p}_i , \tilde{p}_i ) \leq 2D + 1$, and for each $j \in \{1, \ldots , \ell \}$,  $f_{j,i} : [\tilde{X}_i, \tilde{p}_i]\to [\tilde{X}_i, \tilde{p}_i]$ a sequence of deck type maps with the GS property. Then for some $C>0$ and $i$ large enough, $\Gamma_i$ contains a subgroup $G_i \leq \Gamma_i$ with the following properties
\begin{itemize}
    \item $[\Gamma_i, G_i] \leq C$ .
    \item $G_i$ admits a nilpotent basis $\beta_i$ of length $\leq n-k$.
    \item $\left( f_{j,i} \right)_{\ast}^{C !}$ respects $\beta _i $ for each $j$.
\end{itemize}
\end{Thm}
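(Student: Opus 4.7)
The plan is to proceed by reverse induction on the codimension $n-k$, mirroring the scheme of Kapovitch--Wilking but replacing smooth gradient flows with Regular Lagrangian flows (RLFs) of $\delta$-splittings on the universal cover, whose regularity is furnished by Theorem \ref{thm:essential-stability} and Corollary \ref{cor:essential-stability}. For the base case $n-k=0$, the rectifiable-dimension constraint forces $Y$ to be a point, the equivariant GH limit of $(\tilde X_i, \tilde p_i, \Gamma_i)$ is $\mathbb{R}^n$ with limit action by Euclidean isometries, and a standard equivariant argument bounds $|\Gamma_i|$ uniformly, so the trivial subgroup suffices.

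For the inductive step, pass to an equivariant pointed GH limit to obtain a limit space $\mathbb{R}^{k'} \times Z$ with $k' \geq k$ and limit action $\Gamma_\infty$. If $k' > k$ (the splitting-improvement case), the extra Euclidean directions give $\delta_i$-splittings $u_i : X_i \to \mathbb{R}^{k'-k}$ with $\delta_i \to 0$, which lift equivariantly to $\tilde u_i$ on $\tilde X_i$. Apply Corollary \ref{cor:essential-stability} component-wise to $\tilde u_i$ on $\tilde X_i$ to produce RLFs $\tilde{\XX}^{\ell}_{j,i,t}$ that are essentially stable on a set of almost full measure. These RLFs are automatically of deck type (equivariance of $\tilde u_i$ forces the flow to commute with the deck action), and Theorem \ref{thm:essential-stability} together with the Hessian bound of $\delta$-splittings upgrades them to the GS property of Definition \ref{def:gas}. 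The induction hypothesis applied to $\mathbb{R}^{k'} \times Z$ with the augmented family $\{f_{j,i}\} \cup \{\tilde{\XX}^{\ell}_{j,i,t}\}$ then yields a nilpotent basis $\beta'_i$ of length $\leq n - k'$ respected by all these maps; prepending one generator per extra Euclidean direction (from the near-translation character of the new RLFs in those directions) extends it to a basis of length $\leq n - k$. If $k' = k$, apply the Breuillard--Green--Tao structure theorem to the displacement pseudogroup of $\Gamma_i$ to produce $G_i \leq \Gamma_i$ of bounded index with nilpotent basis of length $\leq n$; the absence of further splittings combined with noncollapsing in the $\mathbb{R}^k$-factor allows one to eliminate $k$ generators to reach length $\leq n-k$, and the respected condition for $(f_{j,i})_\ast^{C!}$ follows because each $f_{j,i}$, being GS, descends in the limit to a map preserving the $\mathbb{R}^k$-direction and thus preserving each stage of the filtration after taking the bounded power $C!$.

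The main obstacle is establishing the GS property for the RLFs constructed in the splitting-improvement case. The integral distortion bound \ref{def:gas-3d} demands uniform control on how the RLF spreads balls at \emph{all} scales and along a potentially collapsing sequence, which is precisely the pointwise-in-measure regularity provided by the point-of-essential-stability machinery of Theorem \ref{thm:essential-stability}; earlier RLF regularity results are not strong enough to furnish this bound uniformly. Executing this step rigorously requires showing that the $\delta_i$-splitting bounds on $X_i$ descend to ball-averaged Sobolev bounds for $\tilde u_i$ on $\tilde X_i$ across all relevant scales, and that these bounds survive equivariant GH convergence so that the induction hypothesis remains applicable to the augmented family.
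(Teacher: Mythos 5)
Your overall scheme (reverse induction, GS maps built from RLFs of $\delta$-splittings) has the same flavor as the paper's proof, but two steps do not go through as described. First, in your ``splitting-improvement'' case you construct RLFs from $\delta$-splitting coordinates in the extra Euclidean directions of the universal-cover limit $\mathbb{R}^{k'}\times Z$, $k'>k$. Those directions are not present in the quotient $X_i$ (its limit is $\mathbb{R}^k\times Y$ with $Y$ compact), so the corresponding splitting functions $\tilde u_i$ live only on $\tilde X_i$ and do not descend; the deck group translates them (the limit $\Gamma$-action on the extra Euclidean factor is by nontrivial translations), so $\nabla\tilde u_i$ is not $\Gamma_i$-invariant and the RLF is not automatically of deck type. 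This is precisely why the paper's Lemma~\ref{lem:euclidean:flow} restricts to the $\mathbb{R}^k$ directions coming from $X_i$: there the vector fields lift from the quotient, so the flow commutes with deck transformations. The extra Euclidean factors of $\tilde Y$ are instead exploited by passing to intermediate covers $\tilde X_i/\Upsilon_i'$ where they become visible in the quotient (Case 2 of Proposition~\ref{pro:step-easy}), with $\Upsilon_i$ furnished by Theorem~\ref{thm:upsilon}.

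Second, your treatment of the no-further-splitting case via Breuillard--Green--Tao and ``eliminating $k$ generators'' does not give what is needed: BGT yields a nilpotent basis of bounded index, but no control on whether $(f_{j,i})_*^{C!}$ respects that basis, and the length bound $\leq n-k$ rather than $\leq n$ is exactly the content a generic approximate-group argument cannot reach. The genuinely hard situation is $Y=\{\ast\}$ with $k<n$, and the paper's route through it is the Rescaling Theorem~\ref{thm:re-scaling} (Section~\ref{sec:res}), which produces scale-invariant GS maps sending a definite fraction of a ball to a single good point, thereby reducing to $Y\neq\{\ast\}$ at a smaller scale; that machinery is absent from your outline. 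You also omit the reduction to $f_{j,\infty}=\mathrm{Id}$ (Proposition~\ref{pro:step-medium}, via translation flows and Proposition~\ref{pro:replace}), and the base case relies on the nontrivial small-subgroup result Lemma~\ref{lem:no-collapse-small-subgroup} rather than a ``standard equivariant argument'': equivariant convergence of $\Gamma_i$ to the trivial group does not by itself bound $|\Gamma_i|$.
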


\subsection{Structure of the paper}  

In Section \ref{sec:preamble}, we cover the background material we will need. In Section \ref{sec:gcc}, we prove Theorem \ref{thm:upsilon}, which provides us with subgroups $\Upsilon_i \triangleleft \Gamma _ i $ that play the role of identity connected components in the discrete groups $\Gamma_i$.

In Section \ref{sec:pmre} we prove Theorem \ref{thm:essential-stability} and Corollary \ref{cor:essential-stability}, allowing us to find points of essential stability, and in section \ref{sec:sis} we study how essential stability allows one to obtain stronger estimates. In Section \ref{sec:pgas} we prove properties of GS maps, and in Section \ref{sec:cgas} we give two ways to construct GS maps (cf. \cite[Section 3]{KW11}).

In Section \ref{sec:res} we show Theorem \ref{thm:re-scaling}, reducing Theorem \ref{thm:induction-main-theorem} to the case  $Y \neq \{ \ast \}$ (cf. \cite[Section 5]{KW11}). In Section \ref{sec:pmt} we prove Theorem \ref{thm:induction-main-theorem} and with it Theorem \ref{thm:margulis}.
 \section*{Acknowledgement}
Qin Deng and Xinrui Zhao are grateful to Prof. Tobias Colding for his interest on this project. Jaime Santos--Rodr\'iguez is supported in part by a Margarita Salas Fellowship CA1/RSUE/2021--00625, and by research grants  MTM2017-‐85934-‐C3-‐2-‐P, PID2021--124195NB--C32 from the Ministerio de Econom\'ia y Competitividad de Espa\~{na} (MINECO). Sergio Zamora holds a Postdoctoral Fellowship at the Max Planck Institute for Mathematics at Bonn, Xinrui Zhao is supported by NSF Grant DMS 1812142 and NSF Grant DMS 2104349.

\section{Preamble}\label{sec:preamble}

\subsection{Notation}

For a set $A$, we denote by $A^{\times 2} $ the set $A \times A$. If $A \subset X$, we denote by $\chi _A : X \to [0,1] $ the characteristic function of $A$. For a group $G$ and $g\in G$, we denote by $g_{\ast}  \in \aut (G) $ the map $h \mapsto ghg^{-1}$.  For metric spaces $(X, d_X)$ and $(Y, d_Y)$, we denote by $X \times Y$ the $L^2$ product. That is, for $x_1, x_2 \in X$, $y_1, y_2 \in Y$, 
\[   d_{X \times Y}((x_1, y_1), (x_2, y_2)) : =  \sqrt{  d_X(x_1,x_2)^2 + d_Y(y_1,y_2) ^2 }.        \]
We say a pointed metric measure space $(X,d, \mathfrak{m}, p)$ is \textit{normalized} if 
\[   \int_{B_1(p)}(1-d(p,\cdot ))d\mm  = 1.  \]
For $m \in  \mathbb{N}$, we denote by $\mathbb{R}^m$ the $m$-dimensional Euclidean space equipped with its usual metric, and by $\mathcal{H}^m$ the $m$-dimensional Hausdorff measure for which the metric measure space $(\mathbb{R}^m, \mathcal{H}^m, 0)$ is normalized. 

To a metric space $(X,d)$, we can adjoin a point $\ast$ at infinite distance from any point of $X$ to get a new space we denote by $X \cup \{ \ast \}$. Similarly, to any group $G$ we can adjoin an element $\ast$ whose product with any element of $G$ is defined as $\ast$, obtaining a binary operation on $G \cup \{ \ast \}$.

 We write $C(\alpha , \beta ,\gamma )$ to denote a constant $C$ that depends only on the quantities $\alpha , \beta , \gamma $.

\subsection{$\rcd (K,N)$ spaces; doubling, isometries, covers, and geodesics}

One of the most powerful tools in the study of $\rcd (K,N)$ spaces is the Bishop--Gromov inequality \cite{BS10}.

\begin{Thm}\label{thm:bg-in}
\rm (Bacher--Sturm) For each $K \in \mathbb{R}$, $N \geq 1$, $R>0$, $\lambda > 1 $ there is $C(K,N, R, \lambda )> 0 $ such that for any pointed $\rcd (K,N)$ space $(X, d, \mm ,p)$, and any $r \leq R $, one has
\[\mm (B_{\lambda r}(p)) \leq C \cdot \mm (B_r(p)) . \]
Moreover, for fixed $K , N, R$, if $\lambda \to 1$ then $C \to 1$.
\end{Thm}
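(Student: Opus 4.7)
The plan is to deduce this directly from the (reduced) Bishop--Gromov volume comparison for $\CD^{*}(K,N)$ spaces established by Bacher and Sturm. That result asserts that on any $\rcd(K,N)$ space $(X,d,\mm,p)$,
\[\frac{\mm(B_s(p))}{V_{K,N}(s)} \text{ is nonincreasing in } s,\]
where $V_{K,N}(s)$ denotes the volume of a ball of radius $s$ in the $N$-dimensional model space of constant sectional curvature $K/(N-1)$ (with the usual convention if $N$ is not an integer). Equivalently, for $0 < r \leq s$ one has $\mm(B_s(p))/\mm(B_r(p)) \leq V_{K,N}(s)/V_{K,N}(r)$. I would simply take this as a black box.

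Next I would reduce everything to a scalar calculus statement by setting
\[C(K,N,R,\lambda) \;:=\; \sup_{0 < r \leq R} \frac{V_{K,N}(\lambda r)}{V_{K,N}(r)}.\]
Applying the volume comparison with $s = \lambda r$ immediately yields $\mm(B_{\lambda r}(p)) \leq C \cdot \mm(B_r(p))$ for all $r \leq R$, so it only remains to verify that $C < \infty$ and that $C \to 1$ as $\lambda \to 1^{+}$.

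For finiteness, I would use that $V_{K,N}$ is smooth and strictly positive on its domain (and, if $K > 0$, the case $\lambda R$ exceeds the model diameter is trivial since balls exhaust the space). Hence $r \mapsto V_{K,N}(\lambda r)/V_{K,N}(r)$ is continuous on $(0,R]$, and the Taylor expansion $V_{K,N}(r) = c_N r^N(1 + O(r^2))$ at $r=0$ lets it extend continuously to $r = 0$ with value $\lambda^N$. Compactness of $[0,R]$ then gives $C < \infty$. For the second claim, the family $f_{\lambda}(r) := V_{K,N}(\lambda r)/V_{K,N}(r)$, extended continuously by $\lambda^N$ at the origin, converges uniformly on $[0,R]$ to the constant function $1$ as $\lambda \to 1^{+}$, so $C(K,N,R,\lambda) \to 1$.

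The main obstacle, honestly, is essentially nonexistent: once Bacher--Sturm's comparison is available the statement reduces to a short calculus exercise. The only point that needs a small verification is uniformity of the ratio near $r = 0$, which is handled by the Taylor expansion above.
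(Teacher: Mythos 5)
The paper does not actually prove Theorem \ref{thm:bg-in}; it is stated as a cited result of Bacher--Sturm with a reference to \cite{BS10}, so there is no ``paper's own proof'' to compare against. Your reconstruction is the standard one: invoke the (reduced) Bishop--Gromov monotonicity $\mm(B_s(p))/V_{K,N}(s)$ nonincreasing in $s$, set $C := \sup_{0<r\leq R} V_{K,N}(\lambda r)/V_{K,N}(r)$, and reduce the finiteness and the $\lambda\to 1$ limit to calculus on the model volume profile. This is correct. Two small points worth tightening if you wrote it out in full: (i) when $K>0$ and $\lambda r$ exceeds the model diameter $\pi\sqrt{(N-1)/K}$, the clean way to handle it is to replace $V_{K,N}(\lambda r)$ by $V_{K,N}\bigl(\min\{\lambda r, \pi\sqrt{(N-1)/K}\}\bigr)$ in the definition of $C$ (equivalently, observe that $B_{\lambda r}(p)=X$ there and still bound $\mm(X)/\mm(B_r(p))$ by BG applied at the cap), rather than saying the case is ``trivial''; (ii) the claim that $C(K,N,R,\lambda)\to 1$ as $\lambda\to 1^+$ is best justified by noting that $(r,\lambda)\mapsto f_\lambda(r)$, extended by $\lambda^N$ at $r=0$, is jointly continuous on $[0,R]\times[1,2]$, so $\sup_{r\in[0,R]} f_\lambda(r)$ is continuous in $\lambda$ and equals $1$ at $\lambda=1$; phrasing it as ``uniform convergence of the family'' is fine too but this is the more direct bookkeeping.
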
 

\begin{Cor}\label{cor:a-full-measure}
\rm Let $(X_i, d_i , \mm _i, p_i)$ be a sequence of pointed $\rcd (K,N)$ spaces and consider a sequence of subsets $U_i \subset X_i$. Then the following are equivalent:
\begin{itemize}
    \item For all $R > 0$, there is a sequence $\eta_i \to 0$ such that
    \[     \mm _i (U_i \cap B_R(p_i)) \geq (1 - \eta_i )\mm _i (B_R(p_i)) .                     \]
    \item  For all $R >  \delta  > 0$, there is a sequence $\varepsilon _i \to 0$ such that if $x \in B_R(p_i)$, one has
    \[      \mm _i(U_i \cap B_{\delta}(x)) \geq            (1 - \varepsilon _i ) \mm _i (B_{\delta}(x)).     \]
\end{itemize}
In either case, we say that the sequence $U_i$ has \textit{asymptotically full measure}.
\end{Cor}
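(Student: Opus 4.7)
The plan is to apply the Bishop--Gromov inequality (Theorem \ref{thm:bg-in}) to convert between measures of balls at different radii and different centers, which is exactly what bridges the global statement (1) and the pointwise-local statement (2). Both directions reduce to straightforward measure-theoretic estimates once the doubling behavior is in hand.

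For the implication (1) $\Rightarrow$ (2), I would fix $R > \delta > 0$ and an arbitrary $x \in B_R(p_i)$, and apply the hypothesis at scale $R + \delta$ to obtain $\eta_i \to 0$ with $\mm_i(U_i^c \cap B_{R+\delta}(p_i)) \leq \eta_i\, \mm_i(B_{R+\delta}(p_i))$. Since $B_\delta(x) \subset B_{R+\delta}(p_i) \subset B_{2R+\delta}(x)$, Bishop--Gromov centered at $x$ provides $\mm_i(B_{2R+\delta}(x)) \leq C(K,N,R,\delta)\,\mm_i(B_\delta(x))$, hence $\mm_i(B_{R+\delta}(p_i)) \leq C\,\mm_i(B_\delta(x))$. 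Combining these gives $\mm_i(U_i^c \cap B_\delta(x)) \leq C\eta_i\, \mm_i(B_\delta(x))$, so $\varepsilon_i := C\eta_i$ works uniformly in $x \in B_R(p_i)$.

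For the implication (2) $\Rightarrow$ (1), I fix $R > 0$ and pick an auxiliary scale $\delta \in (0, R)$. Choose a maximal $\delta$-separated set $\{x_j\}_{j=1}^{N_i} \subset B_R(p_i)$, so the balls $B_\delta(x_j)$ cover $B_R(p_i)$ while $B_{\delta/2}(x_j)$ are disjoint and contained in $B_{R+\delta/2}(p_i)$; a standard packing bound via Bishop--Gromov (comparing radii $\delta/2$ and $2R+\delta/2$ at each $x_j$) yields $N_i \leq N(K,N,R,\delta)$ independent of $i$. Applying hypothesis (2) at scale $\delta$ inside $B_R(p_i)$ gives $\varepsilon_i \to 0$ with $\mm_i(U_i^c \cap B_\delta(x_j)) \leq \varepsilon_i\, \mm_i(B_\delta(x_j))$ for each $j$. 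Summing and using $\mm_i(B_\delta(x_j)) \leq \mm_i(B_{R+\delta}(p_i)) \leq C'(K,N,R,\delta)\, \mm_i(B_R(p_i))$ (again by Bishop--Gromov, now from $p_i$) yields $\mm_i(U_i^c \cap B_R(p_i)) \leq NC'\varepsilon_i\, \mm_i(B_R(p_i))$, so $\eta_i := NC'\varepsilon_i \to 0$ is as required.

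There is no genuine conceptual obstacle here; the proof is a direct packaging of Bishop--Gromov. The only care needed is bookkeeping: one must check that every comparison constant produced depends only on the structural parameters $K$, $N$ and the chosen scales $R$, $\delta$, but not on the index $i$ nor on the particular space $X_i$. This is precisely what the uniform formulation of Theorem \ref{thm:bg-in} guarantees.
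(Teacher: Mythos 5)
The paper states this corollary without proof as an immediate consequence of Bishop--Gromov (Theorem \ref{thm:bg-in}); your argument is correct and is the natural one filling in those details. Both implications are handled cleanly, with all comparison constants and the packing number depending only on $K$, $N$, $R$, $\delta$ and not on $i$ or the particular space, exactly as required.
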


\begin{Def}\label{def:mx}
\rm  Let $(X, d, \mm ) $ be an $\rcd (K,N)$ space, $R > 0 $,  and $h : X \to \mathbb{R}^{+} $ measurable. The $R$\textit{-maximal function} $\mx_R(h): X \to \R$ is defined as
\begin{equation*}
    \mx_R(h)(x)= \sup_ {0 < r \leq R} \fint_{B_r(x)} h \,  d \mm  .
\end{equation*} 
For simplicity, we denote $\mx _1$ by $\mx$. 
\end{Def}

The following well known facts follow from Theorem \ref{thm:bg-in} (see \cite[p.12]{S93} and \cite[p.6]{KW11}). 

\begin{Pro}\label{pro:max-properties}
\rm  Let $(X, d, \mm ) $ be an $\rcd (K,N)$ space,  $h : X \to \mathbb{R}^{+} $ measurable, and $R > 0 $. Then  
\begin{enumerate}
\item For all $\delta > 0$, 
\[ \mm (  \{ x \in X  \vert \mx _R(h) (x) \geq \delta \}  ) \leq \frac{C(K,N, R )}{ \delta } \int_{X} h \,  d \mm  . \]\label{pro:max-properties-1}
\item For all $\alpha > 1$, 
\[ \Vert \mx _R( h) \Vert _{\alpha} \leq C(K, N, R, \alpha ) \Vert h \Vert _{\alpha}. \]\label{pro:max-properties-2}
\item  For all $\alpha > 1$,  $s \leq R/2$,
\[   \mx_s(  \mx_s ( h )^{\alpha}) \leq C(K,N,R, \alpha ) \mx _R( h^{\alpha} ).        \]\label{pro:max-properties-3}
\end{enumerate}
\end{Pro}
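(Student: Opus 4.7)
My plan is to prove the three statements in the order given, using the Bishop--Gromov doubling inequality (Theorem \ref{thm:bg-in}) as the only nontrivial analytic input; at every step I will make sure the doubling constants arise from dilations by a bounded factor (uniformly $\leq 5$ or $\leq 3$) at scales $\leq R$, so that the constants depend only on $K,N,R$ (and, in (2)--(3), $\alpha$) rather than on the particular space. Because $\mx_R(h)$ is lower semicontinuous, all the sets appearing below are measurable.

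For (1), I would run the standard Vitali-type argument. Set $E_\delta := \{x : \mx_R(h)(x) > \delta\}$; for each $x \in E_\delta$ choose a radius $r_x \leq R$ with $\int_{B_{r_x}(x)} h\, d\mm > \delta\, \mm(B_{r_x}(x))$. Since the $r_x$ are uniformly bounded by $R$, the standard $5r$-covering lemma in a metric space extracts a countable pairwise disjoint subfamily $\{B_{r_i}(x_i)\}$ whose dilates $B_{5r_i}(x_i)$ cover $E_\delta$. Theorem \ref{thm:bg-in} applied with $\lambda = 5$ and radii $r_i \leq R$ gives $\mm(B_{5r_i}(x_i)) \leq C(K,N,R)\, \mm(B_{r_i}(x_i))$, and summing then using the defining inequality for $r_i$ yields $\mm(E_\delta) \leq C(K,N,R)\,\delta^{-1}\|h\|_1$.

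For (2), I would apply Marcinkiewicz interpolation between the weak $(1,1)$ estimate from (1) and the trivial bound $\|\mx_R(h)\|_\infty \leq \|h\|_\infty$; the $L^\alpha$ inequality for every $\alpha \in (1,\infty)$ follows with constant $C(K,N,R,\alpha)$.

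For (3), fix $x$ and $r \leq s \leq R/2$, and estimate $\fint_{B_r(x)} \mx_s(h)^\alpha\, d\mm$. For $y \in B_r(x)$ I split the supremum defining $\mx_s(h)(y)$ at the threshold $\rho = 2r$. When $\rho \geq 2r$, the inclusions $B_{\rho-r}(x) \subseteq B_\rho(y) \subseteq B_{\rho+r}(x) \subseteq B_R(x)$ together with doubling (note $(\rho+r)/(\rho-r) \leq 3$) give a pointwise estimate $\fint_{B_\rho(y)} h \leq C(K,N,R)\, \mx_R(h)(x)$ that is independent of $y$; Jensen then upgrades this to $\bigl(\sup_{2r \leq \rho \leq s}\fint_{B_\rho(y)} h\bigr)^\alpha \leq C\, \mx_R(h^\alpha)(x)$. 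When $\rho \leq 2r$ one has $B_\rho(y) \subseteq B_{3r}(x)$, so this portion of the supremum equals $\mx_{2r}(h\chi_{B_{3r}(x)})(y)$. Using the elementary inequality $(a+b)^\alpha \leq 2^\alpha(a^\alpha + b^\alpha)$ and the $L^\alpha$ estimate (2) applied to the localized function $h\chi_{B_{3r}(x)}$, together with doubling $\mm(B_{3r}(x)) \leq C(K,N,R)\, \mm(B_r(x))$ and Jensen's inequality in the form $(\fint_{B_{3r}(x)} h)^\alpha \leq \fint_{B_{3r}(x)} h^\alpha \leq C\, \mx_R(h^\alpha)(x)$, one obtains $\fint_{B_r(x)} \mx_s(h)^\alpha\, d\mm \leq C(K,N,R,\alpha)\, \mx_R(h^\alpha)(x)$. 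Taking the supremum over $r \leq s$ closes (3).

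I do not expect a serious obstacle: the main bookkeeping item is to check at each step that the constants depend only on $(K,N,R,\alpha)$ uniformly across $\rcd(K,N)$ spaces, which is guaranteed by the last sentence of Theorem \ref{thm:bg-in} together with the fact that every application of doubling occurs at scales $\leq R$ with dilation factor $\leq 5$.
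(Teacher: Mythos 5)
Parts (1) and (2) are standard and correct as you wrote them: the $5r$-covering argument together with Bishop--Gromov gives the weak $(1,1)$ bound, and Marcinkiewicz interpolation then yields the strong $L^\alpha$ bound. The paper itself gives no proof of this proposition (it simply cites Stein and Kapovitch--Wilking), so there is no paper argument to compare against; your route is the expected one.

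There is, however, a genuine gap in your treatment of (3). In the small-scale case you localize to $B_{3r}(x)$ and close with the claim $\fint_{B_{3r}(x)} h^\alpha \, d\mm \leq C\, \mx_R(h^\alpha)(x)$. This is false when $3r > R$, and that regime is not excluded by your hypotheses: $r$ ranges over $(0,s]$ with $s \leq R/2$, so $r$ can be as large as $R/2$ and $3r$ as large as $3R/2$. For instance, if $h$ is supported in $B_{3r}(x)\setminus B_R(x)$ then $\mx_R(h^\alpha)(x)=0$ while $\fint_{B_{3r}(x)} h^\alpha \, d\mm > 0$, so the inequality you wrote fails outright. The fix is easy: the averaging balls $B_\rho(y)$ relevant to $\mx_s(h)(y)$ for $y\in B_r(x)$ satisfy $\rho\leq s$, hence lie in $B_{s+r}(x)\subseteq B_{2s}(x)\subseteq B_R(x)$. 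So localize $h$ to $B_{\min(3r,\,2s)}(x)$ instead of $B_{3r}(x)$. That ball has radius $\leq R$ (so its $h^\alpha$-average is bounded by $\mx_R(h^\alpha)(x)$) and radius $\leq 3r$ (so the volume ratio against $B_r(x)$ is still controlled by a single application of Theorem \ref{thm:bg-in} with dilation factor $\leq 3$). With that one change the chain of estimates closes with a constant depending only on $(K,N,R,\alpha)$.
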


For a proper metric space $X$, the topology that we use on its group of isometries $\iso(X)$ is the compact-open topology, which in this setting coincides with both the topology of pointwise convergence and the topology of uniform convergence on compact sets. This topology makes $\iso(X)$ a locally compact second countable metric group. In the case $(X,d,\mm )$ is an $\rcd (K,N)$ space,  $\iso (X)$ is a Lie group \cite{GS19, S18}.

\begin{Thm}\label{thm:gss-lie}
\rm (Sosa, Guijarro--Santos) Let $(X,d,\mm )$ be an $\rcd (K,N)$ space. Then $\iso(X)$ is a Lie group.
\end{Thm}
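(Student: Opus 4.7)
The plan is to follow the classical Gleason--Yamabe route: a locally compact Hausdorff topological group is a Lie group if and only if it has no small subgroups. I would therefore split the argument into two main steps: (i) local compactness of $\iso(X)$, and (ii) the no-small-subgroups (NSS) property.

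For local compactness, I would use that $X$ is proper, a consequence of the Bishop--Gromov doubling property in Theorem \ref{thm:bg-in}. Given a basepoint $p \in X$ and $R > 0$, the set of isometries $\phi$ with $d(\phi(p),p) \leq R$ is equicontinuous (isometries are $1$-Lipschitz) and pointwise bounded on bounded sets, hence relatively compact in the compact-open topology by Arz\`ela--Ascoli. A neighborhood basis of $\Id$ is of this form with $R$ small, which gives local compactness; second countability follows from separability of $X$.

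For the NSS property, the RCD structure enters. Fix an $n$-regular point $p \in X$ whose unique tangent cone is $(\R^n, 0)$ for some $n \leq N$. Assume for contradiction that every neighborhood of $\Id$ contains a nontrivial closed subgroup, and extract a sequence $\phi_j \in \iso(X) \setminus \{\Id\}$ with $\phi_j \to \Id$ lying in such subgroups. After a small perturbation and using stability of regular points under isometries close to $\Id$, arrange that each $\phi_j$ fixes a regular point $p_j$ with $p_j \to p$. Each $\phi_j$ then induces an isometry of the tangent cone at $p_j$, i.e.\ an element $d\phi_j \in O(n)$; since $O(n)$ itself is NSS, one must have $d\phi_j = \Id$ for $j$ large. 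A rigidity statement then says that an isometry of an RCD space fixing a regular point and acting trivially on its tangent cone must be the identity in a neighborhood of that point, and hence globally by a standard connectedness/propagation argument. This contradicts $\phi_j \neq \Id$, yielding NSS.

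The main obstacle is precisely the rigidity invoked above: in the smooth setting it is immediate from the exponential map, but on an $\rcd(K,N)$ space one needs the fine structure of the regular set — essentially $(n,\varepsilon)$-regularity, uniqueness of tangent cones, and the rectifiable structure established by Mondino--Naber and subsequent works — to upgrade the infinitesimal triviality of $\phi_j$ at $p_j$ to genuine local rigidity. A secondary technical point is the preliminary \emph{perturbation-to-fix-a-regular-point} step: one must argue that isometries sufficiently close to $\Id$ can be conjugated (or shown to displace by a controlled amount so as to) fix some nearby regular point, which relies on stability of the regular stratum under small isometric perturbations together with the Bishop--Gromov volume comparison to control tangent cones. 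These are precisely the ingredients supplied by Sosa \cite{S18} and Guijarro--Santos-Rodr\'iguez \cite{GS19}.
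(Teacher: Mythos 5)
The paper does not prove this statement; it cites it directly to Sosa \cite{S18} and Guijarro--Santos-Rodr\'iguez \cite{GS19}, so there is no internal proof to compare against. Your overall Gleason--Yamabe/NSS strategy, anchored at regular points and their Euclidean tangent cones, is indeed the route those references take, and the local-compactness step via properness and Arz\`ela--Ascoli is fine. You also correctly identify the core lemma (an isometry that fixes a regular point and acts trivially on its tangent cone is the identity) as the heart of the matter.

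However, there is a real gap in the ``perturbation-to-fix-a-regular-point'' step. An isometry $\phi_j$ close to $\Id$ need not fix \emph{any} point: small translations of $\mathbb{R}^n$ are arbitrarily close to $\Id$ in the compact-open topology and have no fixed points. NSS is a statement about subgroups, not about individual elements; the hypothesis you must exploit is that $\phi_j$ lies in a nontrivial \emph{closed subgroup} $H_j$ contained in $B_{\varepsilon_j}(\Id)$, whose closure is compact. Even then, finding a fixed point of a compact isometry group is not automatic in a general RCD space (there is no Cartan fixed-point theorem without nonpositive curvature, and circumcenters need not be unique); the cited proofs work around this rather than invoking a direct fixed-point statement for single nearly-trivial isometries.

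A second gap sits in ``$d\phi_j = \Id$ for $j$ large because $O(n)$ is NSS.'' NSS of $O(n)$ only kills \emph{subgroups} in a small neighborhood of $\Id_{O(n)}$; applied to a single element $d\phi_j$ it gives nothing. What you actually need is that the whole homomorphic image of $H_j$ (or of $\overline{\langle \phi_j \rangle}$) in $O(n)$ lands in a small neighborhood of $\Id_{O(n)}$, and then NSS of $O(n)$ forces that image to be trivial. The passage from ``$H_j$ small in $\iso(X)$'' to ``image of $H_j$ small in $O(n)$'' is itself nontrivial, since the tangent cone is obtained by blowing up and smallness at macroscopic scale does not obviously control the infinitesimal action. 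These two points are precisely where the RCD structure (volume convergence, Reifenberg-type regularity, stability of tangent cones under measured GH limits) is used in \cite{S18,GS19}, and they cannot be dismissed as secondary technicalities.
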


The $\rcd (K,N)$ condition can be checked locally (see \cite[Section 3]{EKS15}), hence if $(X,d, \mathfrak{m} )$ is an $\rcd (K,N)$ space and $\rho : \tilde{X} \to X$ is a covering space, $\tilde{X}$ admits a unique measure making it an $\rcd (K,N)$ space, and for which  $\rho$ is a local isomorphism of metric measure spaces (see \cite[Section 2.3]{MW19}). Whenever we have a covering space of an $\rcd (K,N)$ space, we assume it is equipped with such measure.  This allows one to lift estimates on maximal functions \cite[Lemma 1.6]{KW11}.

\begin{Pro}\label{pro:mx-lift}
\rm Let $(X,d, \mm ) $ be an $\rcd (K,N)$, $ \rho : ( \tilde{X}, \tilde{d}, \tilde{\mm } ) \to (X,d, \mm )$ a covering space, $x\in X$,  $\tilde{x}\in \rho ^{-1}(x)$, $f : X \to \mathbb{R}^{+}$ measurable. Then for all $r \leq R$, one has
\[    \fint_{B_{r}(\tilde{x})} (f \circ \rho) \,  d \tilde{ \mm }   \leq C(K,N, R) \fint_{B_r(x)} f \,   d \mm  .  \]
In particular, 
\[   \mx_R(f \circ \rho) \leq C(K,N, R) \cdot \mx_R( f) \circ \rho .              \]
\end{Pro}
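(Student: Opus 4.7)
The maximal function inequality follows from the first by taking suprema over $r \leq R$, so I focus on the first inequality. My plan is to convert the $\tilde{X}$-integral into an $X$-integral weighted by a sheet-count, then bound the weight uniformly via Bishop--Gromov.

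Using the local isomorphism property of $\rho$ (each $\tilde{y}\in\tilde{X}$ has an evenly-covered neighborhood on which $\rho$ preserves $\tilde{\mm} \leftrightarrow \mm$), I would partition $B_r(\tilde{x})$ measurably into countably many pieces inside evenly-covered neighborhoods and sum the change-of-variables contributions to obtain
\[
\int_{B_r(\tilde{x})} (f \circ \rho) \, d\tilde{\mm} \;=\; \int_{B_r(x)} f(y) N(y) \, d\mm(y),
\]
where $N(y) := \#\bigl(\rho^{-1}(y) \cap B_r(\tilde{x})\bigr)$. Specializing $f \equiv \chi_{B_r(x)}$ gives $\tilde{\mm}(B_r(\tilde{x})) = \int_{B_r(x)} N \, d\mm$, so the $\mm$-average of $N$ over $B_r(x)$ is precisely $\tilde{\mm}(B_r(\tilde{x}))/\mm(B_r(x))$. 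Consequently the averaging inequality reduces to the pointwise bound $N(y) \leq C(K,N,R)\, \tilde{\mm}(B_r(\tilde{x}))/\mm(B_r(x))$ for $\mm$-a.e.\ $y \in B_r(x)$.

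To establish this bound I would use three ingredients: Bishop--Gromov (Theorem \ref{thm:bg-in}) applied in the $\rcd(K,N)$ space $\tilde{X}$; the fact that deck transformations act on $\tilde{X}$ by $\tilde{\mm}$-preserving isometries, so $\tilde{\mm}(B_s(\tilde{y}_i))$ is the same for all preimages $\tilde{y}_i$ of a common point; and the path-lifting identity $\rho(B_s(\tilde{z})) = B_s(\rho(\tilde{z}))$ arising from the geodesic structure of $X$. The geometric picture is that each preimage $\tilde{y}_i \in \rho^{-1}(y) \cap B_r(\tilde{x})$ produces a ball $B_r(\tilde{y}_i)$ containing $\tilde{x}$ and lying in $B_{2r}(\tilde{x})$, with all such balls having a common $\tilde{\mm}$-measure $V$ by deck invariance. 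Bishop--Gromov in $\tilde{X}$ relates $V$ to $\tilde{\mm}(B_r(\tilde{x}))$; the surjectivity $\rho(B_r(\tilde{y}_i)) = B_r(y)$ together with Bishop--Gromov in $X$ relates $\mm(B_r(y))$ to $\mm(B_r(x))$; combining these comparisons with the identity $\tilde{\mm}(B_r(\tilde{x})) = \int_{B_r(x)} N \, d\mm$ yields the desired uniform bound on $N(y)$.

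The main obstacle is the pointwise control of $N(y)$. In the $\rcd$ setting there is no uniform injectivity radius for the cover, so distinct preimages of a common point can lie arbitrarily close in $\tilde{X}$; this prevents the naive bound via $\sum_i \tilde{\mm}(B_\epsilon(\tilde{y}_i))$ for a suitably small $\epsilon$, since the separating scale would not be controlled by $K,N,R$. The workaround is to abandon infinitesimal scales and carry out the entire argument at the fixed scale $r$, leveraging $\pi_1$-invariance so that all balls $B_r(\tilde{y}_i)$ share a common $\tilde{\mm}$-measure, and invoking Bishop--Gromov only at this scale. This parallels the smooth argument of \cite[Lemma 1.6]{KW11}.
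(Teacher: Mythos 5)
Your overall plan — rewrite the lifted integral as $\int_{B_r(x)} f\,N\,d\mm$ with $N(y) := \#\bigl(\rho^{-1}(y)\cap B_r(\tilde x)\bigr)$, then prove the pointwise bound $N(y) \leq C(K,N,R)\,\tilde{\mm}(B_r(\tilde x))/\mm(B_r(x))$ — is the right one, and it is the argument of \cite[Lemma 1.6]{KW11}, which the paper cites in place of a proof. The change-of-variables identity via a measurable partition into evenly-covered pieces, and the reduction of the averaging inequality to the pointwise bound on $N$, are both correct.

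The gap is in how the pointwise bound on $N(y)$ is supposed to come out. The ingredients you list — a common measure $V=\tilde{\mm}(B_r(\tilde y_i))$, Bishop--Gromov making $V\approx\tilde{\mm}(B_r(\tilde x))$ and $\mm(B_r(y))\approx\mm(B_r(x))$, and the identity $\tilde{\mm}(B_r(\tilde x))=\int_{B_r(x)}N\,d\mm$ — do not combine to bound $N(y)$. The balls $B_r(\tilde y_i)$ overlap with uncontrolled multiplicity inside $B_{2r}(\tilde x)$, so $V$ being large tells you nothing about how many sheets sit over $y$; and the integral identity only controls the $\mm$-average of $N$ over $B_r(x)$, not its value at $y$. (Also, the constancy of $V$ by ``deck invariance'' requires a regular cover, which is not assumed; but this is a non-issue because Bishop--Gromov in $\tilde X$ already makes the $\tilde{\mm}(B_r(\tilde y_i))$ comparable to each other, and in fact that comparability is never used.)

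The missing lemma is a sheet-count propagation step that uses uniqueness of path lifting, not just surjectivity of $\rho$ on balls. Namely: for every $z\in B_r(y)$ and every $\tilde y_i\in\rho^{-1}(y)\cap B_r(\tilde x)$, lift a geodesic from $y$ to $z$ starting at $\tilde y_i$; the endpoints $\tilde z_i$ are pairwise distinct (by uniqueness of lifts), lie over $z$, and satisfy $d(\tilde z_i,\tilde x)<2r$. Hence $N_{2r}(z):=\#\bigl(\rho^{-1}(z)\cap B_{2r}(\tilde x)\bigr)\geq N(y)$ for all $z\in B_r(y)$. Integrating this over $B_r(y)$ and using the change-of-variables identity at scale $2r$ gives
\[
\tilde{\mm}\bigl(B_{2r}(\tilde x)\bigr)\;\geq\;\int_{B_r(y)}N_{2r}\,d\mm\;\geq\;N(y)\,\mm\bigl(B_r(y)\bigr),
\]
and then Bishop--Gromov in $\tilde X$ (legitimate since $\tilde X$ carries the lifted $\rcd(K,N)$ structure) and in $X$ yields the desired $N(y)\leq C(K,N,R)\,\tilde{\mm}(B_r(\tilde x))/\mm(B_r(x))$. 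Note the argument necessarily passes to the doubled scale $2r$; it cannot be carried out ``entirely at the fixed scale $r$'' as you suggest, and the essential input is injectivity of lifting along paths, not $\pi_1$-invariance.
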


An important topological property of $\rcd (K,N ) $  spaces is that they are semi-locally-simply-connected \cite{W22}.

\begin{Thm}\label{thm:jikang}
\rm (Wang) Let $(X,d,\mm )$ be an $\rcd (K,N)$ space. Then $X$ is semi-locally-simply-connected, so its universal cover $\tilde{X}$ is simply connected and we can identify $\pi_1(X)$ with the group of deck transformations $\tilde{X} \to \tilde{X}$.
\end{Thm}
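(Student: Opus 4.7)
My plan is to prove semi-local simple connectedness via the Sormani--Wei $\delta$-cover construction. For each $\delta > 0$, one forms the regular $\delta$-cover $\pi_\delta \colon \tilde{X}^\delta \to X$ whose deck transformation group is normally generated by loops of diameter at most $\delta$. The space $X$ is semi-locally-simply-connected if and only if at every point $p$ these covers stabilize, i.e.\ there exists $\delta_0 > 0$ such that $\pi_{\delta_0}$ and $\pi_\delta$ agree above a neighborhood of $p$ for all $0 < \delta \leq \delta_0$. In this case the stabilized cover is the universal cover and $\pi_1(X)$ coincides with its deck transformation group.

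The first step would be to use the fact (discussed just before Proposition \ref{pro:mx-lift}) that covers of $\rcd(K,N)$ spaces are themselves $\rcd(K,N)$ so the Bishop--Gromov inequality (Theorem \ref{thm:bg-in}) applies on each $\tilde{X}^\delta$. This produces, for fixed $R > 0$, a uniform bound on the number of elements of the deck transformation group moving a chosen lift $\tilde{p}$ by at most $2R$, independent of $\delta$. Invoking Gromov precompactness, one may then pass to pointed measured Gromov--Hausdorff limits of sequences $(\tilde{X}^{\delta_i}, \tilde{p}_i, \Gamma_{\delta_i})$ with $\delta_i \to 0$, obtaining a limit $\rcd(K,N)$ space carrying an isometric action whose quotient is $X$; by Theorem \ref{thm:gss-lie} this isometry group is Lie.

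The heart of the argument is to establish stabilization, and I would attempt this using the structure theory of $\rcd(K,N)$ spaces: at $\mm$-almost every $x$ the tangent cone is Euclidean, so on sufficiently small balls $B_r(x)$ an almost--splitting / almost--metric--cone theorem produces a nearly Euclidean approximation. A Reifenberg-type argument should then force any loop supported in $B_r(x)$ to be null-homotopic in $B_{2r}(x)$ for $r$ small enough. Spreading the contractibility estimate to a neighborhood of every point via a Vitali covering argument combined with Corollary \ref{cor:a-full-measure} would then yield the desired stabilization of $\pi_\delta$ over a fixed neighborhood of $p$.

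The main obstacle will be controlling the homotopies near singular points, whose tangent cones need not be Euclidean, so one cannot argue purely via a single Reifenberg construction. Wang's resolution, which I would follow, is an inductive descent along an effective stratification of the singular set, combined with almost volume cone rigidity and the stability of fundamental groups of the $\delta$-covers under measured Gromov--Hausdorff convergence: loops through deep strata are perturbed through slightly shallower strata without changing their homotopy class in a controlled enlargement of the ball. Carrying out this stratified induction while keeping the doubling and cover-stabilization constants independent of the stratum is the most delicate step.
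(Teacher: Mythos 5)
The paper does not prove this theorem; it is quoted from Wang \cite{W22} as a black box, so there is no ``paper's own proof'' to compare against. Your sketch is therefore an attempt to reconstruct Wang's argument, and as such it identifies the right overall framework (Sormani--Wei $\delta$-covers, lifting the $\rcd$ structure to covers, Bishop--Gromov uniform doubling, precompactness, $\iso$ a Lie group via Theorem \ref{thm:gss-lie}). But the proposal leaves the actual content of the theorem unproved, and one intermediate step as stated would not work.

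The gap is concentrated in the stabilization step. Your Reifenberg argument only shows local contractibility of small loops \emph{based at points where a quantitatively almost-Euclidean condition holds at all scales below $r$}; this is an open set of full measure but not all of $X$, and near the singular set the required uniform Reifenberg hypothesis fails. You then propose to ``spread the contractibility estimate to a neighborhood of every point via a Vitali covering argument combined with Corollary \ref{cor:a-full-measure}'' --- but semi-local simple connectedness is a pointwise topological statement, and a Vitali cover of a full-measure set of good points does not cover $X$, so it cannot upgrade an almost-everywhere contractibility bound to a statement at arbitrary $p$. Finally, the paragraph that is supposed to close this gap --- the inductive descent along the singular stratification --- is not argued at all; you explicitly say you ``would follow Wang's resolution'' here. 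Since that descent is precisely the content of Wang's theorem (the existence of the universal cover was already known from Mondino--Wei \cite{MW19}; semi-local simple connectedness is exactly the new part), the proposal as written defers the theorem to itself. To complete the argument you would need to actually carry out the contradiction/blow-up argument near singular points, with quantitative control on the relative $\delta$-covers of small balls along a rescaling sequence, not merely name the stratification.
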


To conclude this subsection, we note that by the Kuratowski and Ryll-Nardzewski measurable selection theorem, for any $\rcd (K,N)$ space $(X,d, \mm )$, there is a measurable map 
\[ \gamma _{\cdot , \cdot} (\cdot ) : X \times X \times [0,1] \to X  \] 
such that for all $x,y \in X$, the map $[0,1] \ni s \mapsto \gamma _{x,y} (s) $ is a constant speed geodesic from $x$ to $y$. For the rest of this paper, for each $(X,d,\mathfrak{m} ) $ we fix such a choice of $\gamma$. This allows us to state the segment inequality for $\rcd (K,N)$ spaces \cite[Theorem 3.22]{D20}.

\begin{Thm}\label{thm:segment}
\rm Let $(X,d, \mm )$ be an $\rcd (K,N)$ space,  $h : X \to \mathbb{R}^+$ measurable, $p \in X$, and $r \leq R $. Then
\begin{equation*}
\fint_{B_r(p)^{\times 2}} d(x,y) \left[ \int_0^1 h  \left(  \gamma _{x,y}(s)  \right) ds \right] d (\mm \times \mm )( x , y)  \leq r \cdot  C(K,N,R)  \fint _{B_{2r}(p)} h \,  d \mm  .
\end{equation*}
\end{Thm}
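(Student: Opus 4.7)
The plan is to adapt the Cheeger--Colding proof of the segment inequality to the $\RCD(K,N)$ framework, using as key ingredients the measure contraction property $\MCP(K,N)$ (which is a consequence of $\RCD(K,N)$), the essential non-branching of $\RCD$ geodesics \cite{D20}, and the Bishop--Gromov inequality (Theorem~\ref{thm:bg-in}). Since the integrand $d(x,y) h(\gamma_{x,y}(s))$ is invariant under $(x,y,s) \mapsto (y,x,1-s)$ (up to the $\mm\times\mm$-null set on which essential non-branching might fail), it suffices to bound the half-integral obtained by restricting to $s \in [0,1/2]$, and show that it is bounded by $C(K,N,R)\, r \cdot \mm(B_r(p)) \int_{B_{2r}(p)} h\, d\mm$.

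For the main step, fix $x \in B_r(p)$ and $s \in (0,1/2]$, and apply $\MCP(K,N)$ with center $x$ to the source $\mm_{B_r(p)}/\mm(B_r(p))$. The resulting density bound on the intermediate marginal $(e_s)_\#\Pi$ gives, for any $f \geq 0$,
\[
\int_{B_r(p)} f(\gamma_{x,y}(s))\, d\mm(y) \;\leq\; C(K,N,R)\, s^{-N} \int_{B_{2rs}(x)} f\, d\mm.
\]
Applying this estimate with $f(z) = d(x,z)\,h(z)$, and using both the identity $d(x,y) = d(x,\gamma_{x,y}(s))/s$ and the pointwise bound $d(x,\gamma_{x,y}(s)) \leq 2rs$, we obtain the weighted estimate
\[
\int_{B_r(p)} d(x,y)\, h(\gamma_{x,y}(s))\, d\mm(y) \;\leq\; 2r\, C(K,N,R)\, s^{-N} \int_{B_{2rs}(x)} h\, d\mm.
\]

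The remainder is a standard chain of integrations. Swapping the $s$ and $z$ integrals via Fubini (using $z \in B_{2rs}(x) \iff s \geq d(x,z)/(2r)$) and computing $\int_{d(x,z)/(2r)}^{1/2} s^{-N}\,ds$ explicitly yields an upper bound of the form $C(K,N,R)\, r^{N-1} \int_{B_r(x)} h(z)/d(x,z)^{N-1}\, d\mm(z)$. A layer-cake argument combined with Theorem~\ref{thm:bg-in} produces the kernel bound $\int_{B_r(z)} d(x,z)^{-(N-1)}\, d\mm(x) \leq C(K,N,R)\, \mm(B_r(z))/r^{N-1}$, and after integrating over $x \in B_r(p)$, swapping the $x$ and $z$ integrals, and applying Bishop--Gromov doubling $\mm(B_r(z)) \leq C\mm(B_r(p))$ for $z \in B_{2r}(p)$, one arrives at the desired bound on the half-integral.

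The main technical obstacle lies in the rigorous application of $\MCP(K,N)$ with the weight $d(x,y)$: the MCP provides a transference plan whose intermediate marginal satisfies a density bound, and to convert this into a pointwise estimate involving the fixed measurable geodesic selection $\gamma_{x,y}$ (chosen at the end of the subsection), one must identify the geodesics of the optimal plan with $\gamma_{x,y}$ up to a $\mm$-null set. This identification is exactly what is ensured by the essential non-branching property of $\RCD(K,N)$ geodesics, and it is where the technical contribution of \cite{D20} is used.
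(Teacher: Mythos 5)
The paper does not prove Theorem~\ref{thm:segment}; it quotes it directly from \cite[Theorem 3.22]{D20}, so there is no in-paper argument to compare against. Your proposal, however, has a genuine gap at the final kernel step. The claimed bound
\begin{equation*}
\int_{B_r(z)} d(x,z)^{-(N-1)}\, d\mm(x) \;\leq\; C(K,N,R)\,\frac{\mm(B_r(z))}{r^{N-1}}
\end{equation*}
is false for general $\RCD(K,N)$ spaces. Take $X=\mathbb{R}$ with Lebesgue measure, which is $\RCD(0,N)$ for every $N\geq 1$: for $N>2$ the left-hand side is $2\int_0^r \rho^{1-N}\,d\rho = \infty$ while the right-hand side is $2r^{2-N}<\infty$. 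Theorem~\ref{thm:bg-in} provides only the \emph{lower} bound $\mm(B_\rho(z)) \geq c(K,N,R)\,(\rho/r)^N\,\mm(B_r(z))$; convergence of the singular integral would require a matching \emph{upper} bound with exponent exceeding $N-1$, which fails whenever the rectifiable dimension is below $N-1$. The root cause is your choice to symmetrize onto $s\in(0,1/2]$: there the $\MCP(K,N)$ density factor $s^{-N}$ blows up as $s\to 0$, and in collapsed spaces this blow-up is not compensated by the measure of $B_{2rs}(x)$, which may scale far more slowly than $s^N$.

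The repair is to symmetrize to $s\in[1/2,1]$ instead, where $s^{-N}\leq 2^N$ is uniformly bounded, so the weight device is unnecessary. Applying the same $\MCP$ density estimate directly to $f=h$, and using $d(x,y)\leq 2r$ together with $\gamma_{x,y}(s)\in B_{2r}(p)$ for $x,y\in B_r(p)$, one gets for $\mm$-a.e.\ $x\in B_r(p)$ and all $s\in[1/2,1]$
\begin{equation*}
\int_{B_r(p)} d(x,y)\,h(\gamma_{x,y}(s))\,d\mm(y) \;\leq\; 2r\int_{B_r(p)} h(\gamma_{x,y}(s))\,d\mm(y) \;\leq\; 2^{N+1}C(K,N,R)\,r\int_{B_{2r}(p)} h\,d\mm.
\end{equation*}
Integrating over $s\in[1/2,1]$ and $x\in B_r(p)$, dividing by $\mm(B_r(p))^2$, and invoking Theorem~\ref{thm:bg-in} once to control $\mm(B_{2r}(p))/\mm(B_r(p))$ gives the result; no singular kernel, no identity $d(x,y)=d(x,\gamma_{x,y}(s))/s$, and no Fubini in $s$ are needed. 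The measure-theoretic point you correctly flagged --- identifying the geodesics of the $\MCP$ plan with the fixed measurable selection $\gamma$ up to a $\mm$-null set via essential non-branching and a.e.\ uniqueness of geodesics --- is genuine and is indeed where the technical content of \cite{D20} lies, but the analytic estimate must be run on the half of the time interval where $\MCP$ is uniformly effective.
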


We will also need the following variation of the Lebesgue Differentiation Theorem.

\begin{Def}
\rm   Let $(X,d,\mm )$ be a metric measure space. We say that a family of measures $\mathcal{V} $ on $X$ has \textit{bounded eccentricity} if there are $M \geq 1 \geq  \eta > 0$ such that $ \nu \leq  M \mm $ for all $\nu \in \mathcal{V}$, and a map $ \theta : \mathcal{V} \to X$  such that for all $\nu \in \mathcal{V}$ there is $r(\nu ) > 0$ with  $ \supp ( \nu ) \subset B_{r} (\theta ( \nu ) ) $ and  $  \nu (B_r(\theta (\nu ))) \geq \eta \,  \mm (B_r(\theta ( \nu )))  $. We then say that a net $\nu_i \in \mathcal{V}$ converges to $x \in X$ if $\theta (\nu_i ) = x$ for all large $i$ and $r(\nu_i) \to 0$. 
\end{Def}

\begin{Lem}\label{lem:leb}
Let $(X,d,\mm )$ be an $\rcd (K,N) $ space,  $f\in L^1(\mm)$, and $\mathcal{V}$ a family of measures of bounded eccentricity. Then for $\mm$-almost every $x \in X$ we have 
\begin{align*}
    f(x)=\lim_{\nu\to x }\frac{1}{\nu(X)}\int_X f d\nu.
\end{align*}
\end{Lem}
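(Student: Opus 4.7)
The plan is to reduce the assertion to the classical Lebesgue differentiation theorem for concentric balls on doubling metric measure spaces, using the bounded eccentricity hypothesis to transfer the ball-averaging estimate to an averaging estimate against the measures $\nu \in \mathcal{V}$. Since $(X,d,\mm)$ is $\rcd(K,N)$, the Bishop--Gromov inequality (Theorem \ref{thm:bg-in}) provides local doubling, and the weak-$L^1$ maximal bound of Proposition \ref{pro:max-properties}(\ref{pro:max-properties-1}) is at our disposal.

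The first step is to show that for $\mm$-a.e.\ $x\in X$,
\begin{equation*}
    \lim_{r\to 0}\fint_{B_r(x)}|f(y)-f(x)|\,d\mm(y)=0.
\end{equation*}
This is the usual Lebesgue differentiation statement. For continuous compactly supported $g$ the limit holds everywhere by continuity, and continuous compactly supported functions are dense in $L^1(\mm)$ since $X$ is a Polish space with Radon reference measure. For general $f$ one writes $f=g+(f-g)$; the weak-$L^1$ bound applied to $h:=|f-g|$ on a sufficiently large ball containing all points under consideration bounds both $h(x)$ and $\mx_R(h)(x)$ in measure by $C\|f-g\|_1/\delta$, and then a standard diagonal argument along a sequence $g_n\to f$ in $L^1$ produces a single $\mm$-null set off of which the limit vanishes.

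The second step is the reduction. Fix a Lebesgue point $x$ in the sense above and a net $\nu_i\in\mathcal{V}$ with $\nu_i\to x$, so that $\theta(\nu_i)=x$ for large $i$ and $r_i:=r(\nu_i)\to 0$. Because $\int f(x)\,d\nu_i=f(x)\cdot\nu_i(X)$ and $\supp(\nu_i)\subset B_{r_i}(x)$,
\begin{equation*}
    \left|\frac{1}{\nu_i(X)}\int f\,d\nu_i-f(x)\right|\leq \frac{1}{\nu_i(X)}\int_{B_{r_i}(x)}|f-f(x)|\,d\nu_i.
\end{equation*}
The bounded eccentricity hypothesis gives $\nu_i\leq M\mm$ in the numerator and $\nu_i(X)=\nu_i(B_{r_i}(x))\geq \eta\,\mm(B_{r_i}(x))$ in the denominator, so the right-hand side is at most
\begin{equation*}
    \frac{M}{\eta}\fint_{B_{r_i}(x)}|f-f(x)|\,d\mm,
\end{equation*}
which tends to $0$ by the choice of $x$.

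The proof has no real obstacle: the bounded eccentricity hypothesis is tailored precisely so that averaging against any $\nu_i$ is comparable to averaging over the enclosing ball with respect to $\mm$, and the latter is governed by the classical theorem on doubling spaces that is already essentially contained in Proposition \ref{pro:max-properties}. The only care needed is in the diagonal approximation argument of the first step, and in noting that the set of admissible $x$ depends only on $f$, not on the family $\mathcal{V}$, so that the conclusion holds simultaneously for all nets in $\mathcal{V}$ converging to $x$.
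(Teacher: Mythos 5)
Your proposal is correct and proceeds by essentially the same mechanism as the paper: approximation by continuous functions, the weak-$L^1$ maximal inequality from Proposition \ref{pro:max-properties}, and the bounded eccentricity hypothesis to compare $\nu$-averages with ball averages via $\nu\leq M\mm$ and $\nu(X)\geq\eta\,\mm(B_r(\theta(\nu)))$. The only difference is organizational: you first isolate the classical strong Lebesgue differentiation statement and then apply the pointwise eccentricity transfer, whereas the paper bounds the exceptional set $E_\alpha$ directly with the same three ingredients folded together; both routes are sound and yield identical conclusions.
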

\begin{proof}
For $\alpha > 0 $, define
\[ E_\alpha = \bigg \{ x \in X :\limsup_{ \nu \to x}  \frac{1}{\nu(X)} \bigg|\int_Xf(y) -f(x)\, d\nu \bigg|  > 2\alpha \bigg\}. \]
Given $\varepsilon >0$,  pick a continuous function $g \in L^1(\mm ) $ with
\begin{equation}\label{eq:g-approx}
    \|f-g\|_{L^1(\mm)}\leq \varepsilon.
\end{equation}
For $\nu \in \mathcal{V}$ with $r(\nu ) \leq 1$ and $\theta (\nu ) = x$ we have
\begin{equation}\label{eq:leb}
\begin{split}
   & \frac{1}{\nu(X)} \big \vert \int_X ( f(y)-f(x) ) d \nu (y) \big \vert  \\
    & \leq   \frac{1}{\nu(X)} \big \vert \int_X ( f(y)-g(y) ) d \nu (y) \big \vert +  \frac{1}{\nu(X)} \big \vert \int_X (g(y)-g(x)) d \nu (y) \big \vert + \big \vert g(x)-f(x) \big \vert . 
\end{split}
\end{equation}
Since $g $ is continuous, for all $x \in X$ we have
\begin{equation}\label{eq:leb-con}
    \lim_{\nu \to x}    \frac{1}{\nu(X)} \big \vert \int_X (g(y)-g(x)) d \nu (y) \big \vert = 0. 
\end{equation}             
To deal with the first summand, we compute
\begin{equation}\label{eq:leb-mx}
\begin{split}
     & \frac{1}{\nu(X)} \big \vert \int_X ( f(y)-g(y) ) d \nu (y) \big \vert \\
     & \leq  \frac{M}{ \eta \, \mm (B_r(x) ) }  \int_{B_r(x)} \vert f(y) - g(y) \vert d \mm   \leq \frac{M}{\eta} \mx \big( \vert  f-g \vert \big) (x).
\end{split}
\end{equation}
Combining \ref{eq:leb}, \ref{eq:leb-con}, and \ref{eq:leb-mx}, we get
\begin{equation*}
\begin{split}
    E_{\alpha} \subset \left\{  \mx \big( \vert f - g \vert \big) \geq \frac{ \eta \,  \alpha }{M} \right\} \cup \{ \vert f - g \vert \geq \alpha \}.
\end{split}
\end{equation*}
Then from \ref{eq:g-approx} and Proposition \ref{pro:max-properties}(\ref{pro:max-properties-1}) we obtain
\[    \mm (E_{\alpha}) \leq \frac{ C(K,N)M  }{\eta \alpha} \varepsilon.   \]
Since $\varepsilon $ was arbitrary we get $\mm (E_{\alpha}) = 0$, and hence the result.
\end{proof}

\subsection{Gromov--Hausdorff topology}

\begin{Def}\label{def:pmgh}
\rm Let $(X_i,p_i)$ be a sequence of pointed proper metric spaces. We say that it \textit{converges in the Gromov--Hausdorff sense} to a proper pointed metric space $(X,p)$ if there is a sequence of functions $\varphi _i : X_i \to X \cup \{ \ast \} $ with $\varphi _i(p_i ) \to p $  such that for each $R>0$,
\begin{gather*}
 \varphi_i^{-1} (B_{R}(p)) \subset B_{2R}(p_i) \text{ for } i \text{ large enough}, \\
\lim_{i \to \infty } \sup_{x_1,x_2 \in B_{2R}(p_i)} \vert d(\varphi _i(x_1),\varphi _i(x_2)) - d(x_1,x_2) \vert =0  , \\
  \lim_{i \to \infty} \sup_{y \in B_R(p) } \inf_{x \in B_{2R}(p_i)}  d(\varphi  _i (x),y)  = 0. 
\end{gather*}
If in addition to that, $(X_i,d_i,\mm _i)$, $(X,d,\mm )$ are metric measure spaces, the maps $\varphi _i$ are Borel measurable, and
\[  \int_X f \cdot d((\varphi _i)_{\ast}\mm _i) \to \int_X f \cdot d \mm      \]
for all $f: X \to \mathbb{R}$ bounded continuous with compact support, then we say that $(X_i,d_i, \mm _i,p_i)$ converges to $(X,d,\mm ,p)$ in the \textit{measured Gromov--Hausdorff sense}.
\end{Def}

\begin{Rem}
\rm Whenever we say that a sequence of spaces $X_i$ converges in the  Gromov--Hausdorff sense to some space $X$, we implicitly assume the existence of the maps $\varphi _i$, called \textit{Gromov--Hausdorff approximations} satisfying the above conditions, and if a sequence $x_i \in X_i$ is such that $\varphi _i (x_i) \to x \in X$, by an abuse of notation we say that $x_i$ \textit{converges} to $x$. 
\end{Rem}

The topology induced by this convergence is also given by a metric \cite{G81}.

\begin{Thm}
\rm (Gromov) There is a metric $d_{GH}$ in the class of pointed proper metric spaces modulo pointed isometry with the property that a sequence $(X_i, p_i) $ converges to a space $(X,p)$ in the Gromov--Hausdorff sense if and only if $d_{GH}((X_i, p_i), (X,p)) \to 0$. 
\end{Thm}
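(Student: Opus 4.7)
The plan is to construct an explicit metric $d_{GH}$ and verify that convergence in this metric coincides with pointed Gromov--Hausdorff convergence in the sense of Definition \ref{def:pmgh}. First, for each scale $R > 0$, I would define a ball-comparison quantity $d_R((X,p),(Y,q))$ as the infimum of $\varepsilon > 0$ for which there exist maps $f : B_R(p) \to Y$ and $g : B_R(q) \to X$ with $d(f(p),q) < \varepsilon$, $d(g(q),p) < \varepsilon$, distorting distances by at most $\varepsilon$, and whose images are $\varepsilon$-dense in $B_{R-\varepsilon}(q)$ and $B_{R-\varepsilon}(p)$ respectively (equivalently, one may require the existence of a common metric space into which $B_R(p)$ and $B_R(q)$ embed isometrically with Hausdorff distance $< \varepsilon$ and basepoints within $\varepsilon$). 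Then I would set
\[
d_{GH}((X,p),(Y,q)) \;:=\; \int_0^\infty e^{-R}\, \min\{1, d_R((X,p),(Y,q))\}\, dR.
\]

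Symmetry and nonnegativity of $d_{GH}$ are immediate from the definition of $d_R$. The triangle inequality is checked first at the level of $d_R$ by composing $\varepsilon$-approximations at the cost of an additive loss, then transferred to $d_{GH}$ by integration. The first genuine point to verify is definiteness: if $d_{GH}((X,p),(Y,q)) = 0$, then $d_R = 0$ for almost every, and in fact every, $R > 0$. This produces, for each $n \in \N$, approximations $f_n : B_n(p) \to Y$ of distortion $\leq 1/n$ sending $p$ close to $q$. Using properness of $X$, each $B_n(p)$ is totally bounded, so an Arzel\`a--Ascoli argument followed by a diagonal extraction yields a limit map $f : X \to Y$ which is an isometric embedding with $f(p) = q$; running the same argument for $g_n$ gives an isometric embedding in the opposite direction, whence $(X,p)$ and $(Y,q)$ are pointed isometric. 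This definiteness step, which depends crucially on properness to secure the compactness needed for the diagonal limit, is the main obstacle.

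Finally, I would match $d_{GH}$ with Definition \ref{def:pmgh}. Assuming $d_{GH}((X_i,p_i),(X,p)) \to 0$, for each fixed $R > 0$ the quantity $d_R((X_i,p_i),(X,p))$ tends to $0$ (by monotonicity in $R$ and the behavior of the integrand); selecting corresponding $\varepsilon_{i,R}$-approximations and combining them by a diagonal argument produces maps $\varphi_i : X_i \to X \cup \{\ast\}$ with the three properties of Definition \ref{def:pmgh}. Conversely, given a sequence of Gromov--Hausdorff approximations $\varphi_i$ as in that definition, the restrictions to $B_R(p_i)$ directly witness $d_R((X_i,p_i),(X,p)) \to 0$ for every $R > 0$, and the dominated convergence theorem applied to the defining integral yields $d_{GH}((X_i,p_i),(X,p)) \to 0$. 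This establishes that the induced topology matches pointed Gromov--Hausdorff convergence, completing the argument.
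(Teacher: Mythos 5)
The paper states this result and cites it to Gromov's book \cite{G81} without giving a proof, so there is no internal argument to compare against. Your sketch follows what is essentially the standard construction: define a finite-scale comparison quantity $d_R$, integrate it against an exponential weight to obtain $d_{GH}$, and then verify the metric axioms and the topological equivalence. The key ideas---Arzel\`a--Ascoli plus a diagonal argument for definiteness (with properness supplying the needed compactness), and a diagonal argument again to pass between fixed-scale approximations and the global maps $\varphi_i$ of Definition \ref{def:pmgh}---are all present and placed correctly.

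The one place you should be more careful is the triangle inequality. Your two proposed formulations of $d_R$ are not literally equivalent (only equivalent up to constants and a shrinkage of the radius), and composing $\varepsilon$-approximation maps $f_1 : B_R(p_X) \to Y$ with $f_2 : B_R(p_Y) \to Z$ is not immediate because $f_1(B_R(p_X))$ lands in $B_{R+2\varepsilon_1}(p_Y)$, not $B_R(p_Y)$; so the loss is not merely additive in $\varepsilon$ but also a loss in the radius $R$, and a naive argument only gives a quasi-triangle inequality for the integrated quantity. The clean way out is to commit to the common-embedding formulation: given $Z_1$ containing isometric copies of $B_R(p_X)$ and $B_R(p_Y)$ at Hausdorff distance $< \varepsilon_1$, and $Z_2$ containing copies of $B_R(p_Y)$ and $B_R(p_Z)$ at Hausdorff distance $< \varepsilon_2$, glue $Z_1$ and $Z_2$ along the two isometric copies of $B_R(p_Y)$; the glued space then witnesses $d_R((X,p_X),(Z,p_Z)) \le \varepsilon_1 + \varepsilon_2$ directly, with no radius loss, and integration preserves the exact triangle inequality. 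A second, smaller point: requiring Hausdorff distance $< \varepsilon$ between the full balls $B_R(p)$ and $B_R(q)$ at the exact radius $R$ is slightly too rigid at the boundary (closed balls need not vary continuously in $R$); the standard remedy is a one-sided condition such as ``$B_{R-\varepsilon}(q)$ lies in the $\varepsilon$-neighborhood of the image of $B_R(p)$'' and its symmetric counterpart, which is what your first formulation with $\varepsilon$-denseness in $B_{R-\varepsilon}$ in fact does. With these adjustments the argument is correct, and it is the same construction one finds in the standard references.
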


\begin{Rem}\label{rem:100k}
\rm The only property we will need about this metric is that if $(Y,y)$ is a pointed compact geodesic space for which 
\[  d_{GH}((\mathbb{R}^k \times Y, (0,y)), (\mathbb{R}^k, 0)) \leq  1/100   \text{ for some } k \in \mathbb{N}, \]
then diam$(Y) \leq 1/10$.
\end{Rem}

One of the main features of the class of $\rcd (K,N)$ spaces is the compactness property \cite{G81, BS10}. Notice that for any pointed $\rcd (K,N)$ space $(X,d, \mm, p)$,  there is a unique $c> 0$ for which $(X,d,c\mm ,p)$ is normalized.

\begin{Thm}\label{thm:compactness}
\rm (Gromov) If $(X_i,d_i, \mm _i,p_i)$ is a sequence of pointed  $\rcd (K,N)$ spaces, then one can find a subsequence for which $(X_i, p_i)$ converges in the Gromov--Hausdorff sense to some pointed proper geodesic space $(X,p)$.
\end{Thm}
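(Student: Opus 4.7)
The plan is to reduce to the classical proof of Gromov's compactness theorem, whose only nontrivial input is a uniform packing bound on balls of any fixed radius. The Bishop--Gromov inequality (Theorem \ref{thm:bg-in}) will supply this bound, after which a routine diagonal argument will produce a subsequence converging in the pointed Gromov--Hausdorff sense to some space $(X,p)$; I will then verify that $(X,p)$ is both proper and geodesic.

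For the packing estimate, I would fix $\rho > s > 0$ and take a maximal $s$-separated set $\{x_1,\ldots,x_N\} \subset B_\rho(p_i)$. The balls $B_{s/2}(x_j)$ are then pairwise disjoint and contained in $B_{\rho+s/2}(p_i)$. Since $d(x_j,p_i) \leq \rho$ gives $B_\rho(p_i) \subset B_{2\rho}(x_j)$, two applications of Theorem \ref{thm:bg-in} produce a constant $C=C(K,N,\rho,s)$ with
\[
    \mm_i(B_{s/2}(x_j)) \geq \tfrac{1}{C}\,\mm_i(B_{2\rho}(x_j)) \geq \tfrac{1}{C}\,\mm_i(B_\rho(p_i)) \quad \text{and} \quad \mm_i(B_{\rho+s/2}(p_i)) \leq C\,\mm_i(B_\rho(p_i)).
\]
Summing the first estimate over $j$ and comparing with the second yields $N \leq C^2$, a bound depending only on $K,N,\rho,s$, never on $i$.

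With uniform packing in hand, I would run the standard diagonal argument: pick $R_k \to \infty$ and $\varepsilon_k \to 0$, and for each $k$ select $\varepsilon_k$-nets $\mathcal{N}_k^i \subset B_{R_k}(p_i)$ of uniformly bounded cardinality. A diagonal extraction produces a subsequence for which, for every $k$, the cardinalities stabilize and all pairwise distances within $\mathcal{N}_k^i$ converge coherently with the nested inclusions $\mathcal{N}_k^i \hookrightarrow \mathcal{N}_{k+1}^i$. The metric completion of the resulting abstract limit is the candidate $(X,p)$, and nearest-point projections from $X_i$ to $\mathcal{N}_k^i$ assemble into Gromov--Hausdorff approximations $\varphi_i$ in the sense of Definition \ref{def:pmgh}. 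Properness of $X$ follows from the packing bound, and its geodesic property follows because each $X_i$ is a geodesic space and hence admits approximate midpoints, which converge to genuine midpoints in the complete limit $X$. I do not anticipate a serious obstacle here: the $\rcd(K,N)$ hypothesis enters only through Theorem \ref{thm:bg-in}, and the essential feature is that its constants depend only on $K,N,\rho,s$ rather than on the ambient space, which is already part of its statement.
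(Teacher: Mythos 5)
Your argument is the standard proof of Gromov's precompactness theorem, and the paper itself supplies no proof — Theorem \ref{thm:compactness} is stated as a citation to \cite{G81, BS10}. Your packing estimate via two applications of Theorem \ref{thm:bg-in} is correct (with the implicit and harmless use that balls in an $\rcd(K,N)$ space have positive measure, since $\supp(\mm_i)=X_i$), and the diagonal extraction, properness from the uniform packing bound, and the geodesic property via limits of approximate midpoints are all exactly as in the textbook argument; this is the same route the cited reference takes.
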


\begin{Thm}\label{thm:compactness-2}
\rm (Bacher--Sturm) The class of pointed normalized $\rcd (K,N) $ spaces is closed under measured Gromov--Hausdorff convergence. Moreover, if $(X_i,d_i, \mm_i, p_i)$ is a sequence of $\rcd (K- \varepsilon_i,N)$ spaces such that $\varepsilon_i \to 0$ and $(X_i, p_i)$ converges in the Gromov--Hausdorff sense to a pointed proper metric space $(X,p)$, then $X$ admits a measure $\mm $ that makes it a normalized $\rcd (K,N)$ space, and after passing to a subsequence, there are  $c_i > 0$ for which $(X_i, d_i, c_i\mm_i, p_i)$ converges in the measured Gromov--Hausdorff sense to $(X,d,\mm, p)$.
\end{Thm}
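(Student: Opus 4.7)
The plan is to establish the theorem in two stages: first producing a candidate limit measure via compactness, then verifying that the resulting metric measure space satisfies $\rcd(K,N)$. For the first stage, I would start from the hypothesis that $(X_i, p_i) \to (X, p)$ in Gromov--Hausdorff sense, with Gromov--Hausdorff approximations $\varphi_i : X_i \to X \cup \{\ast\}$. Apply Theorem \ref{thm:bg-in} to the $\rcd(K-\varepsilon_i, N)$ spaces: for any fixed $R > 0$, the volume ratios $\mm_i(B_R(p_i)) / \mm_i(B_1(p_i))$ are uniformly bounded, with constants that stay controlled because $K - \varepsilon_i$ varies in a bounded set. Choose $c_i > 0$ so that $(X_i, d_i, c_i \mm_i, p_i)$ is normalized, i.e., $\int_{B_1(p_i)} (1 - d(p_i, \cdot))\, c_i\, d\mm_i = 1$. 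By Bishop--Gromov this forces $c_i \mm_i(B_1(p_i))$ to lie in a fixed compact interval of $(0, \infty)$, and in turn $c_i \mm_i(B_R(p_i))$ is uniformly bounded for every $R > 0$.

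With those uniform bounds in hand, push the normalized measures forward via $\varphi_i$ to obtain finite Borel measures on each $B_R(p) \subset X$. Use a weak-$\ast$ compactness argument (Prokhorov, made available by the uniform local finiteness plus tightness coming from properness of the limit) on each $B_R(p)$ and diagonalize in $R$ to extract a subsequence converging to a Radon measure $\mm$ on $X$. The normalization passes to the limit because $y \mapsto (1 - d(p,y))^+$ is bounded continuous with compact support and $\varphi_i(p_i) \to p$; thus $(X, d, \mm, p)$ is a normalized pointed metric measure space and $(X_i, d_i, c_i \mm_i, p_i) \to (X, d, \mm, p)$ in the measured Gromov--Hausdorff sense of Definition \ref{def:pmgh}.

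For the second stage, I must check that $(X, d, \mm)$ is $\rcd(K, N)$. I would split this into the two halves of the definition. For the curvature-dimension part, verify that $\CD(K, N)$ is stable under measured Gromov--Hausdorff convergence with the curvature parameter passing to the limit: given any $\mu_0, \mu_1 \in \mathcal{P}_2(X)$ with bounded support and finite Rényi entropy, approximate them by pushforward measures $\mu_0^i, \mu_1^i$ on $X_i$; use the $\CD(K - \varepsilon_i, N)$ condition to produce Wasserstein geodesics with the correct entropy convexity inequality; pass to the limit using lower semicontinuity of the Rényi functional under narrow convergence, noting that $K - \varepsilon_i \to K$ so the distortion coefficients converge. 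For the Riemannian part, show that the Cheeger energy $\Ch_X$ on $(X, d, \mm)$ is quadratic (infinitesimally Hilbertian): this follows because $\Ch_{X_i}$ are all quadratic and this property is stable under measured Gromov--Hausdorff convergence via the Mosco-type convergence of Cheeger energies along the sequence together with the Bochner/Bakry--Émery formulation $\BE(K - \varepsilon_i, N)$ passing to $\BE(K, N)$.

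The main obstacle is the stability of the full $\rcd$ condition in the collapsed setting with varying parameter $K - \varepsilon_i \to K$. Stability of $\CD$ alone goes back to Lott--Villani and Sturm, but the Hilbertianity of the limit Cheeger energy is more delicate and requires identifying the $\Gamma$-limit of $\Ch_{X_i}$ with a quadratic form on the limit; this is what Ambrosio--Gigli--Savaré and Gigli--Mondino--Savaré established through a combined optimal-transport/Dirichlet-form analysis, and it is the one place where the argument is not a soft compactness/lower-semicontinuity game. Once this identification is granted, the convergence $K - \varepsilon_i \to K$ causes no additional trouble because all structural inequalities in $\CD$ and $\BE$ depend continuously on the curvature parameter.
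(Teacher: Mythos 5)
The paper does not prove this theorem; it is a cited result (attributed to Bacher--Sturm \cite{BS10}, though the Riemannian half of the $\rcd$ condition rests on later work of Gigli--Mondino--Savar\'e and the equivalence results of Erbar--Kuwada--Sturm \cite{EKS15}). Your two-stage outline is the standard modern argument. Stage 1 is correct as sketched: the Bishop--Gromov constants for $\rcd(K-\varepsilon_i,N)$ are uniform in $i$ since $K-\varepsilon_i$ stays in a bounded interval, the normalization pins $c_i\,\mm_i(B_1(p_i))$ to a compact interval of $(0,\infty)$, and Prokhorov on the closed (hence compact, by properness) balls of $X$ followed by a diagonal extraction produces the limit Radon measure; the normalization passes to the limit because $(1-d(p,\cdot))^{+}$ is continuous with compact support.

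Stage 2 is where you defer to the literature, and you point at the right places, but a few things need tightening. For the $\CD(K,N)$ part, ``approximate $\mu_0,\mu_1$ by pushforward measures on $X_i$'' is not quite right as stated: the Gromov--Hausdorff approximations $\varphi_i$ go from $X_i$ to $X$, not the other way, so you need an approximate inverse (or a different transport of the reference measures), and you then need to extract a limit Wasserstein geodesic along which the entropy convexity persists; this is standard but not purely a soft lower-semicontinuity argument in the pointed noncompact setting. For infinitesimal Hilbertianity, your appeal to ``$\BE(K-\varepsilon_i,N)$ passing to $\BE(K,N)$'' to deduce quadraticity of $\Ch_X$ is logically circular: the Bakry--\'Emery condition presupposes a Dirichlet form, i.e.\ that the Cheeger energy is already quadratic. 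The correct order is to first establish Mosco convergence of the Cheeger energies $\Ch_{X_i}\to \Ch_X$ in the Kuwae--Shioya sense of converging Hilbert spaces (which requires the uniform doubling and Poincar\'e coming from the $\CD$ bounds), conclude that the Mosco limit of quadratic forms is quadratic via convergence of resolvents, and only then invoke $\BE$ or the EKS equivalence if you want the Eulerian description. None of this is a fatal gap---you are correctly identifying and citing the hard analysis rather than reproving it---but as written the $\BE$-based deduction of Hilbertianity would not compile into a valid proof without this reordering.
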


\begin{Def}
\rm Let $(X,d,\mm )$ be an $\rcd (K,N)$ space and $m \in \mathbb{N}$. We say that $p \in X$ is an $m$\textit{-regular} point if for each $\lambda_i \to \infty$, the sequence $(\lambda_i X, p)$ converges in the Gromov--Hausdorff sense to $(\mathbb{R}^m, 0)$.
\end{Def}
 
Mondino--Naber showed that the set of regular points in an $\rcd (K,N)$ space has full measure \cite{MN19}. This result was refined by Bru\'e--Semola who showed that most points have the same local dimension \cite{BS20}.

\begin{Thm}\label{thm:bs-dimension}
\rm (Bru\'e--Semola) Let $(X,d,\mm )$ be an $\rcd (K,N)$ space. Then there is a unique $m \in \mathbb{N} \cap [0, N]$ such that the set of $m$-regular points in $X$ has full measure. This number $m$ is called the \textit{rectifiable dimension }of $X$.
\end{Thm}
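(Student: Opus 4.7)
The plan is to upgrade the Mondino--Naber decomposition $X = \mathcal{S} \sqcup \bigsqcup_{k=0}^{\lfloor N \rfloor} \mathcal{R}_k$ (where $\mathcal{R}_k$ is the set of $k$-regular points and $\mm(\mathcal{S}) = 0$) to the assertion that exactly one $\mathcal{R}_k$ carries the full measure. Uniqueness of such an $m$ is immediate from the disjointness of the $\mathcal{R}_k$ (two disjoint sets cannot both be of full measure); for existence I would argue by contradiction, assuming $\mm(\mathcal{R}_k) > 0$ and $\mm(\mathcal{R}_{k'}) > 0$ for some integers $k < k' \leq N$.

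First, I would fix a density point $p$ of $\mathcal{R}_{k'}$. Since the tangent cone at $p$ is $\R^{k'}$, for any $\varepsilon > 0$ there exist $r > 0$ and a harmonic $(k',\varepsilon)$-splitting map $u = (u_1,\ldots,u_{k'}) : B_r(p) \to \R^{k'}$ with bounded gradients, bounded divergence of each $\nabla u_i$, and $\fint_{B_r(p)} |\nabla^2 u_i|^2\, d\mm \leq \varepsilon$. Each component gradient field then satisfies the hypotheses of Corollary~\ref{cor:essential-stability}, so the RLF $\XX^i$ of $\nabla u_i$ has an asymptotically-full-measure set of points of essential stability in the sense of Theorem~\ref{thm:essential-stability}.

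Next, I would consider the composed flow $\Phi^{\vec c}_t = \XX^{k'}_{c_{k'} t} \circ \cdots \circ \XX^1_{c_1 t}$ with variable coefficients $\vec c \in \R^{k'}$. The key claim is that through a point $y$ of essential stability of $\Phi^{\vec c}$, the dimension is preserved: $m(\Phi^{\vec c}_t(y)) = m(y)$. This rests on two inputs: (i) Theorem~\ref{thm:essential-stability} giving two-sided measure comparability of small concentric balls at $y$ and at $\Phi^{\vec c}_t(y)$, and (ii) the $\varepsilon$-splitting estimates on $u$, combined with the segment inequality (Theorem~\ref{thm:segment}) applied to $|\nabla^2 u|$, forcing $\Phi^{\vec c}_t$ to act infinitesimally as an almost-isometry. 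Together these imply that the rescaled balls around $y$ and around $\Phi^{\vec c}_t(y)$ Gromov--Hausdorff-converge to the same limit along a suitable scale sequence, so the tangent cones agree and the dimension is preserved. Since $\Phi^{\vec c}_t$ also approximately preserves $\mm$ by the divergence bound, and varying $\vec c$ allows it to move any density point of $\mathcal{R}_k \cap B_{r/2}(p)$ into an arbitrarily small neighborhood of $p$ (the image $u(B_r(p))$ essentially covers a Euclidean ball by the splitting property), we can transport positive $\mathcal{R}_k$-mass into any neighborhood of $p$, contradicting the density-one property of $\mathcal{R}_{k'}$ at $p$.

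The main obstacle is the dimension-preservation step, because Theorem~\ref{thm:essential-stability} yields only measure-theoretic comparability of balls, not directly pmGH-closeness of rescaled balls. Bridging this gap requires carefully combining the measure estimates with the splitting property of $u$ and the segment inequality: one needs to control $d(\Phi^{\vec c}_t(a),\Phi^{\vec c}_t(b))$ for generic pairs $a,b$ at all infinitesimal scales around $y$, which ultimately comes from showing that $\nabla u$ behaves like a constant Euclidean vector field on the essentially stable set at these scales. Once this is in place, the tangent-cone rigidity (each regular tangent is a unique $\R^{m(x)}$) transfers through the flow and closes the contradiction.
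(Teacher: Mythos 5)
The paper does not prove this theorem: it is cited from Bru\'e--Semola \cite{BS20} and used here as a black box, so there is no in-paper proof to compare against. Your sketch follows the general philosophy of \cite{BS20} (flows of gradients of $\delta$-splittings combined with RLF regularity estimates), and the dimension-preservation step you flag as your ``main obstacle'' is actually the most tractable part: all-small-scale, two-sided measure comparability of concentric balls, as in Corollary \ref{cor:e-s-better}, does pin down the dimension of a regular point, since $\mm(B_{sr}(x))/\mm(B_r(x))\to s^{m}$ at an $m$-regular point. The genuine gaps lie in the contradiction scheme itself.

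The composed flow $\Phi^{\vec{c}}_t$ of $\nabla u_1,\ldots,\nabla u_{k'}$ near $p$ only moves the ``$\mathbb{R}^{k'}$-coordinate'' of a point: if $X$ near $p$ is pmGH-close to $\mathbb{R}^{k'}\times Z$ with $u$ approximating the Euclidean projection, then $\Phi^{\vec{c}}_1(s,z)\approx(s+\vec{c},z)$, leaving the transverse $Z$-coordinate essentially untouched. So you cannot ``move any density point of $\mathcal{R}_k\cap B_{r/2}(p)$ into an arbitrarily small neighborhood of $p$''; the best achievable target is $B_{C\delta r}(p)$, where $\delta$ is the splitting/GH error at scale $r$. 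And even granting that, the density contradiction does not close: since $p$ is a density point of $\mathcal{R}_{k'}$, the fraction $\eta(r)=\mm(\mathcal{R}_k\cap B_{r/2}(p))/\mm(B_{r/2}(p))$ tends to $0$ (it may vanish outright for small $r$), and nothing controls the ratio $\eta(r)/\delta(r)^{k'}$ that would be needed to produce a definite fraction of $\mathcal{R}_k$-mass in arbitrarily small balls around $p$. Finally, a local argument near one density point would at best give local constancy of dimension, and globalizing would require a separate connectedness step that your sketch does not address. The actual proof in \cite{BS20} is organized quite differently---it works at a density point of the regular stratum of \emph{minimal} essential dimension and exploits the near--bi-Lipschitz character of the splitting map on a set of almost full measure---and avoids these pitfalls.
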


The Cheeger--Gromoll splitting theorem was extended by Gigli to this setting \cite{G14}.

\begin{Thm}\label{thm:gigli}
\rm (Gigli) Let $(X,d,\mm )$ be an $\rcd (0, N)$ space of rectifiable dimension $n$ and assume the metric space $(X,d)$ contains an isometric copy of $\mathbb{R}^{m}$, then there is $c>0$ and an $\rcd (0, N-m)$ space $(Y, d^Y, \nu )$ of rectifiable dimension $n-m$ such that $(X,d,c \mm )$ is isomorphic to the product $(\mathbb{R}^m\times Y, d ^{\mathbb{R}^m} \times d^Y, \mathcal{H}^m \otimes \nu)$. In particular $m \leq n$, and if $m =n$ then $Y$ is a point.
\end{Thm}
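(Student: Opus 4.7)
The plan is to argue by induction on $m$, with the base case $m=1$ being the core splitting theorem. Suppose $\gamma: \R \to X$ is an isometric line. Define the Busemann functions $b^{\pm}(x) := \lim_{t\to\infty}(t - d(x,\gamma(\pm t)))$; both are $1$-Lipschitz with $b^+ + b^- \geq 0$ and equality along $\gamma$. The first main step is to show that $b^+$ is harmonic: by the Laplacian comparison theorem in $\rcd(0,N)$ applied to the distance functions $d(\cdot,\gamma(\pm t))$, one obtains $\Delta b^{\pm} \leq 0$ in the sense of distributions; combined with $b^+ + b^- \geq 0$, $b^+ + b^- = 0$ on $\gamma$, and the strong maximum principle (available here because RCD spaces admit a linear heat flow satisfying the Bakry--\'Emery calculus), this yields $b^+ + b^- \equiv 0$ and $\Delta b^+ = 0$.

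The second step is to show $b^+$ is ``affine'' in a sense strong enough to induce a splitting. The Bakry--\'Emery $N$-Bochner inequality in $\rcd(0,N)$ gives
\[
\tfrac{1}{2}\Delta |\nabla b^+|^2 \;\geq\; |\he b^+|^2 \;+\; \tfrac{(\Delta b^+)^2}{N} \;\geq\; 0,
\]
so $|\nabla b^+|^2$ is subharmonic. Since $|\nabla b^+|\leq 1$ with equality on $\gamma$, the maximum principle forces $|\nabla b^+| \equiv 1$, and then Bochner forces $\he b^+ \equiv 0$ in the sense of Gigli's second-order calculus.

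The third step is the actual construction of the splitting. Let $\Phi_t$ be the regular Lagrangian flow of $\nabla b^+$, which is well-defined because $\nabla b^+$ is a bounded divergence-free Sobolev vector field (vanishing Hessian implies zero divergence). The identity $\he b^+ \equiv 0$ upgrades $\Phi_t$ to a one-parameter group of measure-preserving isometries of $X$. Setting $Y := (b^+)^{-1}(0)$ with the metric and measure inherited via disintegration of $\mm$ along level sets of $b^+$, the map $\R \times Y \ni (t,y) \mapsto \Phi_t(y) \in X$ is an isomorphism of metric measure spaces after rescaling $\mm$ by a suitable constant $c>0$, and $Y$ inherits an $\rcd(0,N-1)$ structure by the stability properties of the RCD condition. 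For the inductive step: an isometric embedding $\R^m \hookrightarrow X$ with one coordinate line used in the base case projects, by the product structure, to an isometric embedding $\R^{m-1} \hookrightarrow Y$, so one may iterate. Finally, the rectifiable dimension is additive under metric measure products by Theorem \ref{thm:bs-dimension} applied to regular points of the factors, giving $n = m + \dim_{\mathrm{rect}}(Y)$, whence $m \leq n$, with equality forcing $Y$ to be a single point.

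The main obstacle is the rigorous manipulation of $\he b^+ \equiv 0$ and its integration into an actual pointwise splitting. Specifically, one must (i) interpret the vanishing Hessian using Gigli's $L^2$-Hessian calculus, (ii) show that the regular Lagrangian flow of $\nabla b^+$, which \emph{a priori} is only defined $\mm$-almost everywhere, admits a continuous representative that is a genuine one-parameter group of isometries, and (iii) perform the disintegration of $\mm$ along level sets of $b^+$ so that $Y$ is a bona fide metric measure factor rather than an essentially-defined slice. The arguments here parallel the smooth Cheeger--Gromoll proof, but each analytic input (maximum principle, Bochner, gradient flow, disintegration) must be replaced by its nonsmooth $\rcd$-analogue.
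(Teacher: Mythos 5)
This theorem is stated in the paper with attribution to Gigli and citation to [G14]; the paper gives no proof of its own, so there is no in-paper argument to compare against. Your sketch follows the standard Cheeger--Gromoll strategy adapted to $\rcd$ spaces (Busemann functions, Laplacian comparison, maximum principle, Bochner, gradient flow), which is indeed the route [G14] takes. However, several details as written are incorrect. With your normalization $b^{\pm}(x) := \lim_{t\to\infty}\big(t - d(x,\gamma(\pm t))\big)$, the triangle inequality $d(\gamma(t),\gamma(-s)) \leq d(x,\gamma(t)) + d(x,\gamma(-s))$ gives $b^+ + b^- \leq 0$ (not $\geq 0$), and Laplacian comparison $\Delta d(\cdot,q) \leq (N-1)/d(\cdot,q)$ gives $\Delta b^{\pm} \geq 0$ (subharmonic, not superharmonic); the correct conclusion then comes from the strong \emph{maximum} principle for $b^+ + b^-$ with interior max $0$ along $\gamma$. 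Your stated signs match the opposite convention $b^\pm = \lim(d(\cdot,\gamma(\pm t)) - t)$. The Bochner inequality as written, with both $|\he b^+|^2$ and $(\Delta b^+)^2/N$ appearing on the right, is also not correct: the $(\Delta f)^2/N$ term is a lower bound one obtains \emph{after} discarding the Hessian via $|\he f|^2 \geq (\Delta f)^2/n$, so the two cannot simply be added. This is harmless here since $\Delta b^+ = 0$, but the formula should be corrected.

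The substantive gap is the step $|\nabla b^+| \equiv 1$. In the $\rcd$ framework, $|\nabla b^+|$ is the minimal weak upper gradient, an object defined only $\mm$-a.e.; the line $\gamma$ is $\mm$-null, so asserting that $|\nabla b^+| = 1$ "on $\gamma$" is not a legitimate input to a maximum-principle argument for $|\nabla b^+|^2$. Subharmonicity of $|\nabla b^+|^2$ (which does follow from Bochner once $\Delta b^+ = 0$) together with the a.e. bound $|\nabla b^+| \leq 1$ does \emph{not} by itself force $|\nabla b^+| = 1$ a.e.: a subharmonic function bounded above need not attain its supremum, and you have given no argument that the essential supremum is attained or that the upper-semicontinuous representative of $|\nabla b^+|^2$ equals $1$ at a point of $\gamma$. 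Gigli proves $|\nabla b^+| = 1$ and $\he b^+ = 0$ by a different mechanism, exploiting the harmonicity of $b^\pm$ together with heat-flow/gradient-flow contraction estimates and the identity $b^- = -b^+$, rather than via a pointwise maximum principle along $\gamma$. Without replacing your step 2 by such an argument, step 3 (constructing the one-parameter group of isometries from $\he b^+ = 0$) rests on an unjustified premise. The remaining issues you flag in your final paragraph --- upgrading the $\mm$-a.e.\ defined RLF to a genuine isometry group, disintegrating $\mm$, and verifying additivity of rectifiable dimension from Theorem~\ref{thm:bs-dimension} --- are indeed where the remaining technical work lies, but those are correctly identified as obstacles rather than dismissed.
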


\begin{Cor}\label{thm:gigli-corollary-2}
\rm Let $(X_i, d_i, \mm _i, p_i)$ be a sequence of pointed normalized $\rcd (- \delta _i , N)$ spaces with $\delta_i \to 0$. If $(X_i, p_i)$ converges in the Gromov--Hausdorff sense to $(\mathbb{R}^k, 0)$, then $(X_i, d_i, \mm _i, p_i)$ converges to $(\mathbb{R}^k, d^{\mathbb{R}^k}, \mathcal{H}^k, 0)$ in the measured Gromov--Hausdorff sense as well.
\end{Cor}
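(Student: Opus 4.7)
The plan is to combine the measured compactness statement from Theorem \ref{thm:compactness-2} with Gigli's splitting theorem (Theorem \ref{thm:gigli}) and normalization to pin down the limit measure, then use uniqueness to pass from a subsequence to the full sequence.

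First I would extract a subsequence and positive constants $c_i > 0$ via Theorem \ref{thm:compactness-2} so that $(X_i, d_i, c_i \mm_i, p_i)$ converges in the measured Gromov--Hausdorff sense to $(\mathbb{R}^k, d^{\mathbb{R}^k}, \mm, 0)$ for some measure $\mm$ making the limit a normalized $\rcd(0, N)$ space (here I use $\delta_i \to 0$ to upgrade the lower bound of the limit to $0$). Next I would identify $\mm$ by applying Gigli's splitting theorem: the underlying metric space $\mathbb{R}^k$ contains an isometric copy of $\mathbb{R}^k$ itself, so Theorem \ref{thm:gigli} gives a splitting with factor $Y$ of rectifiable dimension $n - k \geq 0$; since the whole space is already $\mathbb{R}^k$, the factor $Y$ must be a point and $\mm = \alpha \, \mathcal{H}^k$ for some $\alpha > 0$. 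Because $(\mathbb{R}^k, \mathcal{H}^k, 0)$ is normalized by our convention and $(\mathbb{R}^k, d^{\mathbb{R}^k}, \mm, 0)$ is normalized by Theorem \ref{thm:compactness-2}, the constant $\alpha$ must equal $1$, so $\mm = \mathcal{H}^k$.

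It then remains to prove $c_i \to 1$ so that $\mm_i$ itself (not just $c_i \mm_i$) converges. I would test against the bounded continuous compactly supported function $h: \mathbb{R}^k \to \mathbb{R}$ defined by $h(x) := \max(0, 1 - |x|)$. By normalization of $\mathcal{H}^k$ we have $\int h \, d\mathcal{H}^k = 1$, while the measured Gromov--Hausdorff convergence gives $\int h \circ \varphi_i \, d(c_i \mm_i) \to 1$, where $\varphi_i : X_i \to \mathbb{R}^k \cup \{\ast\}$ are the Gromov--Hausdorff approximations from Definition \ref{def:pmgh}. Since $\varphi_i(p_i) \to 0$ and the $\varphi_i$ are almost distance-preserving on $B_2(p_i)$, the function $h \circ \varphi_i$ approximates $(1 - d(p_i, \cdot))^+$ uniformly on $B_1(p_i)$, so the normalization of $\mm_i$ yields $\int h \circ \varphi_i \, d\mm_i \to 1$. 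Combining these two facts forces $c_i \to 1$, hence $(X_i, d_i, \mm_i, p_i)$ converges to $(\mathbb{R}^k, d^{\mathbb{R}^k}, \mathcal{H}^k, 0)$ in the measured Gromov--Hausdorff sense along this subsequence.

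Finally, to upgrade from subsequential to full convergence, I would note that the argument above applies to every subsequence of the original sequence, always producing the same limit $(\mathbb{R}^k, d^{\mathbb{R}^k}, \mathcal{H}^k, 0)$. A standard subsequence extraction argument then gives convergence of the entire sequence. The only mildly delicate step is matching the normalization constants, i.e.\ showing $c_i \to 1$; everything else reduces to quoting the already established structural theorems.
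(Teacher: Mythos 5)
Your argument is correct, and since the paper states this corollary without an explicit proof (it is listed as a direct consequence of Gigli's splitting theorem), the route you take — Theorem \ref{thm:compactness-2} to pass to a normalized $\rcd(0,N)$ limit measure, Theorem \ref{thm:gigli} to identify it as a constant multiple of $\mathcal{H}^k$, normalization to fix the constant to $1$, verification that $c_i\to 1$, and a subsequence argument — is exactly the intended one. One point is worth tightening: to conclude that the factor $Y$ in Gigli's splitting is a point, you invoke the rectifiable dimension $n$ of the limit without establishing $n = k$; a cleaner justification is purely metric, namely that $(\mathbb{R}^k, d^{\mathbb{R}^k}) \cong \mathbb{R}^k\times Y$ and a geodesic space with more than one point has Hausdorff dimension at least one, so $Y$ must be a singleton (after which $n = k$ follows, rather than being assumed).
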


\begin{Cor}\label{cor:split-cover}
\rm Let $(\tilde{Y}, d, \mm )$ be an $\rcd  (0,N)$ space of rectifiable dimension $n$ for which $ \tilde{Y} / \iso (\tilde{Y})$ is compact.  Then there are $m \leq n$ and a compact metric space $Z $ for which $\tilde{Y}$ is isometric to the product $\mathbb{R}^m\times Z$.
\end{Cor}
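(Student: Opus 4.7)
My plan is to run an iterated splitting argument using Gigli's theorem, then reduce to a "no line" factor, and finally extract a contradiction from noncompactness using cocompactness of $\iso(\tilde{Y})$.

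\textbf{Step 1 (Maximal Euclidean factor).} I would begin by defining $m$ to be the largest nonnegative integer such that $(\tilde{Y},d)$ contains an isometric copy of $\mathbb{R}^m$. The rectifiable dimension bound $m \leq n$ from Theorem \ref{thm:gigli} guarantees that such a maximum exists. Applying Gigli's splitting theorem, I obtain a metric measure isomorphism $\tilde{Y} \cong \mathbb{R}^m \times Z$ where $Z$ is an $\rcd(0,N-m)$ space of rectifiable dimension $n-m$. By the maximality of $m$, the factor $Z$ contains \emph{no} isometric copy of $\mathbb{R}$; otherwise, re-applying Gigli inside $Z$ would produce an $\mathbb{R}^{m+1}$-factor of $\tilde{Y}$.

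\textbf{Step 2 (Isometries split).} The key structural input is that $\iso(\mathbb{R}^m \times Z) = \iso(\mathbb{R}^m)\times \iso(Z)$. To see this, I would first observe that every geodesic line $\sigma\colon \mathbb{R}\to \mathbb{R}^m\times Z$ must project to a constant in $Z$: writing $\sigma(t)=(a+tv, c(t))$ with $|v|\leq 1$, the isometric condition forces $d_Z(c(s),c(t))^2=(1-|v|^2)(s-t)^2$, which would produce a line in $Z$ unless $|v|=1$ and $c$ is constant. Hence all lines are horizontal. Given $f\in \iso(\tilde{Y})$, this forces $f$ to permute the horizontal slices $S_z := \mathbb{R}^m\times\{z\}$, defining an isometry $\phi\in \iso(Z)$ by $f(S_z)=S_{\phi(z)}$. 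A short calculation using $d(f(v,z),f(v,z'))=d((v,z),(v,z'))$ then shows that the induced map $\mathbb{R}^m\to\mathbb{R}^m$ is independent of $z$, so $f = a\times \phi$ with $a\in\iso(\mathbb{R}^m)$.

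\textbf{Step 3 (Cocompactness descends and forces compactness).} Since $\iso(\mathbb{R}^m)$ acts transitively on $\mathbb{R}^m$, the product decomposition of $\iso(\tilde{Y})$ yields a natural isometric identification $\tilde{Y}/\iso(\tilde{Y}) = Z/\iso(Z)$, so $Z/\iso(Z)$ is compact. Suppose for contradiction that $Z$ is noncompact. Since $Z$ is a proper geodesic space, there exists a unit-speed ray $\gamma\colon[0,\infty)\to Z$. For each positive integer $k$ pick $\phi_k\in\iso(Z)$ with $\phi_k(\gamma(k))$ lying in a fixed compact set $K\subset Z$, and consider the length-$2k$ geodesics $\sigma_k(t):=\phi_k(\gamma(t+k))$ for $t\in[-k,k]$. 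Since $\sigma_k(0)\in K$ and $Z$ is proper, the Arzel\`a--Ascoli theorem furnishes a subsequence converging uniformly on compact sets to a line $\sigma\colon\mathbb{R}\to Z$, contradicting the "no line" property of $Z$ established in Step 1.

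The main obstacle I anticipate is the isometry splitting in Step 2: one must rule out "twisting" between the $\mathbb{R}^m$ and $Z$ factors, which is precisely what the "no line" property of $Z$ is used for. The remaining steps are a clean combination of Gigli's splitting theorem with a standard Arzel\`a--Ascoli line-extraction argument.
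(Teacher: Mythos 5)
Your proof is correct, and it assembles exactly the tools the paper sets up. The paper states Corollary \ref{cor:split-cover} without proof, but your argument matches what the paper itself does in the analogous situation of Corollary \ref{cor:fybc}, where the key structural fact $\iso(\mathbb{R}^m\times Z)=\iso(\mathbb{R}^m)\times\iso(Z)$ is derived from the observation that all lines in $\mathbb{R}^m\times Z$ are horizontal when $Z$ has no lines. The outline (maximal Euclidean factor via Gigli, isometry group respects the splitting, cocompactness descends to the no-lines factor, Arzel\`a--Ascoli line-extraction contradicts maximality) is the standard Cheeger--Gromoll-style argument, and each step as you state it is sound.

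The only place where you glide over a step worth writing out is the form $\sigma(t)=(a+tv,c(t))$ for a line $\sigma$ in the product. This follows from the fact that any unit-speed geodesic in an $L^2$-product of geodesic spaces is of the form $t\mapsto(\alpha(\lambda t),\beta(\mu t))$ with $\alpha,\beta$ constant-speed geodesics in the factors and $\lambda^2+\mu^2=1$; one proves this by applying equality in Cauchy--Schwarz to the triangle equality along the geodesic, which also shows the speed split $(\lambda,\mu)$ is constant along the geodesic. Once this is in hand, $\alpha$ is affine in $\mathbb{R}^m$, and the rest of Step~2 (including the short calculation showing $\phi\in\iso(Z)$ and that the $\mathbb{R}^m$-component of $f$ is $z$-independent) goes through exactly as you wrote.
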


The rectifiable dimension is lower semi-continuous  \cite{K19}.

\begin{Thm}\label{thm:dim-semicont}
\rm (Kitabeppu) Let $(X_i, d_i, \mm _i, p_i)$ be a sequence of pointed $\rcd (K,N) $ spaces of rectifiable dimension $m$. Assume $(X_i, p_i)$ converges in the Gromov--Hausdorff sense to $(X,p)$. If $\mm$ is a measure on $X$ that makes it an $\rcd (K,N)$ space, then $(X,d,\mm)$ has rectifiable dimension at most $m$.
\end{Thm}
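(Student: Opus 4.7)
The plan is to argue by contradiction. Let $k$ denote the rectifiable dimension of $(X, d, \mm)$, which is well-defined by Theorem \ref{thm:bs-dimension}, and assume for contradiction that $k > m$.

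The first step is to upgrade the given Gromov--Hausdorff convergence to measured Gromov--Hausdorff convergence. By Theorem \ref{thm:compactness-2}, after passing to a subsequence and rescaling each $\mm_i$ by suitable positive constants $c_i$, one obtains a measure $\tilde{\mm}$ on $X$ making $(X,d,\tilde{\mm})$ an $\rcd(K,N)$ space and $(X_i, d_i, c_i\mm_i, p_i) \to (X, d, \tilde{\mm}, p)$ in measured GH. Following Mondino--Naber and Bru\'e--Semola (Theorem \ref{thm:bs-dimension}), the regular strata $\mathcal{R}_j$ are metric invariants, and any $\rcd(K,N)$ measure is concentrated on $\bigcup_j \mathcal{R}_j$. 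Consequently the rectifiable dimensions of $(X,d,\tilde{\mm})$ and $(X,d,\mm)$ coincide, and without loss of generality we may assume $\tilde{\mm} = \mm$.

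Next, I would blow up at a $k$-regular point. Pick a $k$-regular point $q \in X$; by definition $(X, \lambda d, q) \to (\mathbb{R}^k, 0)$ in pointed GH sense as $\lambda \to \infty$. The rescaled pointed spaces are $\rcd(\lambda^{-2}K, N)$ with curvature bound tending to $0$, so Corollary \ref{thm:gigli-corollary-2} upgrades the convergence to normalized measured GH. Combining this with the measured GH convergence $X_i \to X$ via a standard diagonal argument produces $q_i \in X_i$ converging to $q$ and scales $\lambda_i \to \infty$ with
\[
\tilde X_i := (X_i, \lambda_i d_i, \tilde c_i \mm_i, q_i) \longrightarrow (\mathbb{R}^k, d^{\mathbb{R}^k}, \mathcal{H}^k, 0)
\]
in measured GH, after renormalizing by constants $\tilde c_i > 0$. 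Since rescaling preserves the regular set structure, each $\tilde X_i$ has rectifiable dimension $m$.

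The contradiction comes from comparing two asymptotic regimes for volumes of small balls in $\tilde X_i$. Because $m$-regular points have full measure in $\tilde X_i$ and measured GH convergence forces any fixed-radius ball around $q_i$ to have positive mass for $i$ large, I can select $\tilde q_i \in \tilde X_i$ that is $m$-regular and converges to $0 \in \mathbb{R}^k$. For any fixed generic $r > 0$, volume continuity under measured GH yields
\[
\tilde c_i \mm_i(B_r^{\tilde X_i}(\tilde q_i)) \longrightarrow \omega_k r^k \quad \text{as } i \to \infty,
\]
whereas the $m$-regularity of $\tilde q_i$ gives, at scales $r$ sufficiently small depending on $i$, the expansion $\tilde c_i \mm_i(B_r^{\tilde X_i}(\tilde q_i)) = \tilde{\theta}_i \, \omega_m r^m (1 + o_i(r))$ for some positive density $\tilde{\theta}_i$. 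Evaluating the doubling ratio $\mm_i(B_{2r}(\tilde q_i))/\mm_i(B_r(\tilde q_i))$ at a diagonally chosen scale $r_i \to 0$, and using uniform doubling from Theorem \ref{thm:bg-in}, produces a factor approaching $2^k$ from the measured-GH side but $2^m$ from the $m$-regularity side, which is impossible when $k > m$.

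The main obstacle is precisely this diagonal coordination: $m$-regularity provides estimates only at scales $r \le r_\ast(i)$ which may shrink in $i$ at an unknown rate, while measured GH convergence provides estimates at fixed scales as $i \to \infty$. Bridging these regimes requires choosing the diagonal scale $r_i$ with care, combined with uniform Bishop--Gromov control (Theorem \ref{thm:bg-in}) and a Lebesgue-type averaging (Lemma \ref{lem:leb}) applied to the densities $\tilde{\theta}_i$, so as to rule out that these densities degenerate in a manner that reconciles the $r^k$ and $r^m$ scalings.
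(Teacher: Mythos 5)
The paper does not prove this theorem; it is cited to Kitabeppu [K19] as a background result, so there is no paper proof to compare against. Evaluating your proposal on its own merits, there are two genuine gaps. First, the reduction from an arbitrary $\rcd(K,N)$ measure $\mm$ on $X$ to the limit measure $\tilde{\mm}$ produced by Theorem \ref{thm:compactness-2} is unjustified: both measures give full measure to the union of all regular strata (which is indeed a metric object), but this does not imply they give full measure to the \emph{same} stratum of $j$-regular points. Concluding that the rectifiable dimensions of $(X,d,\mm)$ and $(X,d,\tilde{\mm})$ agree requires a mutual absolute continuity statement for $\rcd$ reference measures on a fixed metric space, which is a nontrivial theorem that you neither cite nor prove; the word ``consequently'' is hiding real content.

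Second, and more fatally, the doubling-ratio contradiction does not close. Measured Gromov--Hausdorff convergence $\tilde{X}_i \to \mathbb{R}^k$ controls the ratio $\mm_i(B_{2r}(\tilde{q}_i))/\mm_i(B_r(\tilde{q}_i)) \to 2^k$ only at scales $r$ that are \emph{fixed} (or decay slowly enough relative to the rate of mGH convergence), while the $m$-regularity of $\tilde{q}_i$ controls the ratio $\to 2^m$ only at scales $r \le \rho_i$, where $\rho_i$ may shrink arbitrarily fast in $i$. There is no reason these two scale windows overlap; indeed, for each $i$ the intermediate value theorem produces scales at which the doubling ratio equals any prescribed value between roughly $2^m$ and $2^k$, and this is perfectly consistent. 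Theorem \ref{thm:bg-in} yields only the one-sided bound $\le 2^N$, not a monotonicity in $r$ that would link the two regimes, and the densities $\tilde{\theta}_i$ you introduce can freely degenerate to zero (consistent with $\tilde{\theta}_i$ of order $\rho_i^{k-m}$) without Lemma \ref{lem:leb} preventing it. A route that does work, and sits naturally inside the paper's toolbox, is to use $\delta$-splittings rather than volume: near the $k$-regular point $q$ take a $\delta$-splitting $u : B_R(q) \to \mathbb{R}^k$ via Lemma \ref{lem:convergence-to-delta}, transfer it to $\delta'$-splittings $u^i$ on $X_i$; the gradients $\nabla u^i_1, \ldots, \nabla u^i_k$ are then pointwise linearly independent on a set of positive $\mm_i$-measure, so the tangent module of $X_i$ has pointwise dimension $\ge k$ on that set, contradicting rectifiable dimension $m < k$ through the constancy theorem underlying Theorem \ref{thm:bs-dimension}.
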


\subsection{Equivariant Gromov--Hausdorff convergence}

In the setting of Gromov--Hausdorff convergence, there is a notion of convergence of group actions  \cite[Section 3]{FY92}. For a pointed proper metric space $(X,p)$, we equip its isometry group $\iso (X)$ with the metric $d_0^{p}$ given by
\begin{equation}\label{d0}
d_{0}^{p} (h_1, h_2) : = \inf_{r > 0 } \left\{ \frac{1}{r} + \sup_{x \in B_r(p)} d(h_1x, h_2x)  \right\}          
\end{equation}
for $h_1, h_2 \in \iso (X)$. It is easy to see that this metric is left invariant, induces the compact-open topology, and makes $\iso (X)$ a proper metric space.

Recall that if a sequence of pointed proper metric spaces $(X_i, p_i)$ converges in the Gromov--Hausdorff sense to the pointed proper metric space $(X,p)$,  one has Gromov--Hausdorff approximations $\varphi _i : X_i \to X \cup \{ \ast \} $.

\begin{Def}\label{def:equivariant}
\rm Consider a sequence of pointed proper metric spaces $(X_i,p_i)$ that converges in the Gromov--Hausdorff sense to a pointed proper metric space $(X,p)$, a sequence of closed groups of isometries $\Gamma_i \leq \iso(X_i)$, and a  closed group $\Gamma \leq \iso (X)$. Equip $\Gamma_i $ with the metric $d_0^{p_i}$ and $\Gamma$ with the metric $d_0^p$. We say that the sequence $\Gamma_i$ \textit{converges equivariantly} to $\Gamma$ if there is a sequence of Gromov--Hausdorff approximations $\psi_i : \Gamma _ i \to \Gamma  \cup \{ \ast \} $ such that for each $R > 0 $ one has 
\[  \lim_{i \to \infty} \sup_{g \in B_R(\Id_{X_i}) } \sup_{ x \in B_R(p_i)}  d( \varphi  _i( g x ), \psi_i(g) ( \varphi _i x ) ) = 0       . \]
\end{Def}

Isometry groups of proper spaces satisfy a compactness property \cite[Proposition 3.6]{FY92}.

\begin{Thm}\label{thm:equivariant-compactness}
\rm (Fukaya--Yamaguchi) Let $(Y_i,q_i) $ be a sequence of proper metric spaces that converges in the Gromov--Hausdorff sense to a proper space $(Y,q)$, and take a sequence $\Gamma_i \leq \iso(Y_i)$ of closed groups of isometries. Then, after taking a subsequence,  $\Gamma_{i}$ converges equivariantly  to a closed group $\Gamma \leq \iso(Y)$, and the sequence $(Y_i/\Gamma_i, [q_i])$ converges in the Gromov--Hausdorff sense to $(Y/\Gamma , [q])$. Moreover, if $\rho_i : Y_i \to Y_i / \Gamma _ i$, $\rho : Y \to  Y / \Gamma $ are the projections, there are $\delta_i \to 0$, $R_i \to \infty$, and  Gromov--Hausdorff approximations $\tilde{\varphi}_i: Y_i \to Y  \cup \{ \ast \} $, $\varphi_i : Y_i / \Gamma _i \to Y / \Gamma  \cup \{ \ast \} $ such that for all $x \in B_{R_i}(q_i)$ one has 
\begin{equation}\label{eq:fy}
     d( \varphi_i (\rho_i(x)) , \rho ( \tilde{\varphi}_i(x)  )   ) \leq \delta_i    .          
\end{equation}  
\end{Thm}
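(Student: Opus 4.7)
The plan is to combine an Arzelà--Ascoli/diagonal argument on the isometries with a quotient construction. The starting observation is that for $g \in \Gamma_i$ with $d_{0}^{p_i}(g, \Id) \leq R$, the isometry $g$ is $1$-Lipschitz and satisfies $d(g(p_i), p_i) \leq 2R$, so on any ball $B_{R'}(p_i)$ it maps into $B_{R'+2R}(p_i)$. Via the Gromov--Hausdorff approximations $\tilde{\varphi}_i : Y_i \to Y \cup \{\ast\}$, these restrictions become approximately $1$-Lipschitz, almost-distance-preserving maps between bounded subsets of $Y$, i.e.\ an equicontinuous, pointwise-bounded family on each ball around $p$.

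First I would extract limit isometries. Fix a countable dense set $\{x_n\}_n \subset Y$ and, for each $R$, a sequence of finite $\varepsilon$-nets in $\Gamma_i \cap B_{R}^{\iso}(\Id)$ (which are totally bounded since $\iso(Y_i)$ is proper in the $d_0^{p_i}$ metric). A standard diagonal argument over $R \to \infty$, $\varepsilon \to 0$, and over $\{x_n\}$ lets one pass to a subsequence so that for each chosen $g_i$, the images $\tilde{\varphi}_i(g_i x)$ converge as $i \to \infty$ for each $x$ approximating some $x_n$. The resulting limit map $g : Y \to Y$ is $1$-Lipschitz; applying the same procedure to $g_i^{-1}$ and using that $g_i \circ g_i^{-1} = \Id$, one sees the limit of the inverses is a $1$-Lipschitz inverse to $g$, so $g \in \iso(Y)$. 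Let $\Gamma$ denote the closure in $\iso(Y)$ of the collection of all such limit isometries.

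Next I would verify $\Gamma$ is a closed subgroup and set up the equivariant approximations $\psi_i : \Gamma_i \to \Gamma \cup \{\ast\}$. Closure under composition and inversion follows because pointwise convergence preserves the group operations within equicontinuous families. Define $\psi_i(g) \in \Gamma$ to be (any) element realizing (up to error $\delta_i \to 0$) the limit isometry associated to $g$ from the extraction, and set $\psi_i(g) = \ast$ when $d_0^{p_i}(g, \Id)$ exceeds a threshold $R_i \to \infty$. By construction $d(\tilde{\varphi}_i(gx), \psi_i(g)(\tilde{\varphi}_i x)) \to 0$ uniformly on bounded sets, giving the equivariant convergence.

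Finally I would handle the quotients. Define $\varphi_i : Y_i/\Gamma_i \to Y/\Gamma \cup \{\ast\}$ by choosing a measurable representative $x \in B_{R}(p_i)$ of each class $[x]$ and setting $\varphi_i([x]) := [\tilde{\varphi}_i(x)]$. The key point to check is that this is well-defined up to $o(1)$ and that it is a Gromov--Hausdorff approximation: if $[x] = [y]$ with both representatives bounded, there is $g \in \Gamma_i$ with $gx = y$ and $d_0^{p_i}(g, \Id)$ controlled, so by the equivariant convergence $\psi_i(g)(\tilde{\varphi}_i x)$ is $o(1)$-close to $\tilde{\varphi}_i y$, whence $[\tilde{\varphi}_i x] = [\tilde{\varphi}_i y]$ in $Y/\Gamma$ up to $o(1)$; the distance estimates $|d_{Y_i/\Gamma_i}([x],[y]) - d_{Y/\Gamma}(\varphi_i[x], \varphi_i[y])| \to 0$ follow by approximating the infima in the quotient distance and invoking the distance estimate on $Y_i \to Y$. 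Equation \ref{eq:fy} is then immediate from the definition $\varphi_i \circ \rho_i = \rho \circ \tilde{\varphi}_i$ up to $o(1)$. The main obstacle throughout is the bookkeeping in the diagonal extraction: one must simultaneously arrange convergence of $\Gamma_i$ on each bounded scale, approximate compatibility with the (possibly non-injective) maps $\tilde{\varphi}_i$, and preserve all uniform estimates under passage to the quotient; once these are organized, every assertion reduces to Arzelà--Ascoli plus the definition of the quotient metric.
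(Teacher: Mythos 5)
The paper does not actually prove Theorem \ref{thm:equivariant-compactness}; it cites it verbatim from Fukaya--Yamaguchi \cite[Proposition 3.6]{FY92}, so there is no ``paper's own proof'' to compare against. Your proposal reconstructs the standard equivariant precompactness argument and the overall architecture is the correct one.

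That said, two places in your sketch are thinner than the load they bear. First, in the quotient step you assert that the almost-isometry estimate ``follows by approximating the infima in the quotient distance,'' but the two directions are asymmetric: showing $d_{Y/\Gamma}(\varphi_i[x],\varphi_i[y]) \leq d_{Y_i/\Gamma_i}([x],[y]) + o(1)$ uses that the infimum in $Y_i/\Gamma_i$ is realized (or nearly so) by some $g\in\Gamma_i$ of controlled norm and then pushes forward via $\psi_i$, whereas the reverse inequality requires that \emph{every} $\gamma\in\Gamma$ of controlled norm is a limit of elements $g_i\in\Gamma_i$ --- i.e.\ the Gromov--Hausdorff approximations $\psi_i:\Gamma_i\to\Gamma\cup\{\ast\}$ must be $o(1)$-onto on bounded sets, not merely well-defined. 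Your construction of $\Gamma$ as the set of limit isometries does encode this, but the sketch never makes the surjectivity explicit, and it is exactly what is needed to lift a minimizer in $Y/\Gamma$ back to $Y_i$. Second, the claim that $\psi_i(g)$ can be taken to realize ``the limit isometry associated to $g$'' tacitly assumes that for \emph{every} $g\in\Gamma_i\cap B_R^{\iso}(\Id)$, not just for the countably many extracted sequences, there is a nearby limit isometry; this follows from the total boundedness of $\Gamma_i\cap B_R^{\iso}(\Id)$ together with the diagonal extraction over finer and finer nets, but you should state this as the source of well-definedness rather than leaving it implicit. With these two points made precise, the argument is complete and matches the argument in \cite{FY92}.
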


As a consequence of Theorems \ref{thm:gigli} and \ref{thm:equivariant-compactness}, one gets the following well-known result.

\begin{Pro}\label{pro:gigli-corollary}
\rm  For each $i \in \mathbb{N}$, let $(X_i, d_i, \mm _i, p_i)  $ be a  pointed $\rcd (- \frac{1}{i} ,N)$ space of rectifiable dimension $n$.  Assume $(X_i, p_i)$ converges in the Gromov--Hausdorff sense to $(X, p)$, there is a sequence of closed groups of isometries $\Gamma_i \leq \iso (X_i)$ that converges equivariantly to $\Gamma \leq \iso(X)$, and the sequence of pointed metric spaces $(X_i/\Gamma_i , [p_i])$ converges in the Gromov--Hausdorff sense to $(\mathbb{R}^k \times Y,(0,q))$ for some pointed proper metric space $(Y, q)$. 

Then there is a pointed metric space $(\tilde{Y}, \tilde{q})$ for which  $X$ is isomorphic to the product $\mathbb{R}^k\times {\tilde{Y}}$,  
the $\Gamma$-action respects the splitting $\mathbb{R}^k \times \tilde{Y}$, and acts trivially on the first factor. In particular, if $k = n$, then $\tilde{Y}$ is a point.
\end{Pro}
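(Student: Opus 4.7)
The plan is to upgrade the given Gromov--Hausdorff convergence to a measured one, find an isometric copy of $\R^k$ in the limit by lifting lines from the quotient, apply Gigli's splitting theorem, and finally use rigidity together with the hypothesis on the quotient to show that the $\Gamma$-action is trivial on the Euclidean factor.

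First I would apply Theorem \ref{thm:compactness-2}: after passing to a subsequence and choosing normalizing constants $c_i > 0$, $(X_i, d_i, c_i \mm_i, p_i)$ converges in the measured Gromov--Hausdorff sense to a normalized $\rcd(0, N)$ space $(X, d, \mm, p)$ (where $K = 0$ works since $-1/i \to 0$). Theorem \ref{thm:dim-semicont} then gives that the rectifiable dimension of $X$ is at most $n$.

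The main step is to exhibit an isometric copy of $\R^k$ inside $X$. By Theorem \ref{thm:equivariant-compactness}, fix compatible Gromov--Hausdorff approximations $\tilde{\varphi}_i \colon X_i \to X \cup \{\ast\}$ and $\varphi_i \colon X_i/\Gamma_i \to (\R^k \times Y) \cup \{\ast\}$ satisfying \eqref{eq:fy}. For each $j \in \{1, \ldots, k\}$ and $T > 0$ I would pick points $q^{\pm}_{i, j, T} \in X_i/\Gamma_i$ converging to $(\pm T e_j, q)$, connect them by minimizing geodesics $\sigma^T_{i,j}$ in $X_i/\Gamma_i$ (of length $\to 2T$), and use the formula $d_{X_i/\Gamma_i}([x],[y]) = \inf_{g \in \Gamma_i} d_{X_i}(x, gy)$ together with properness to lift them to geodesics $\tilde{\sigma}^T_{i,j}$ of the same length in $X_i$; translating by $\Gamma_i$ keeps $\tilde{\sigma}^T_{i,j}(0)$ at bounded distance from $p_i$. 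A diagonal Arzelà--Ascoli extraction then yields limit lines $\tilde{\ell}_j \colon \R \to X$ through a common basepoint $\tilde{p}$ above $(0, q)$. The identity \eqref{eq:fy}, combined with the fact that the lifted geodesics realize the quotient distance, gives the orthogonality
\[ d_X(\tilde{\ell}_j(s), \tilde{\ell}_{j'}(t)) = \sqrt{s^2 + t^2}, \quad j \neq j'. \]
Iteratively applying Gigli's splitting (Theorem \ref{thm:gigli}) to the successive lines produces a metric-measure isomorphism $X \cong \R^k \times \tilde{Y}$ for an $\rcd(0, N - k)$ space $\tilde{Y}$ of rectifiable dimension at most $n - k$; in particular if $k = n$ then $\tilde{Y}$ is necessarily a point.

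For the final conclusions, choosing the $\tilde{\ell}_j$ so that they project onto the coordinate axes of $\R^k \subset X/\Gamma$ ensures the splittings of $X$ and $X/\Gamma$ are compatible. A de~Rham-type rigidity (a routine consequence of the essential uniqueness of the splitting in Theorem \ref{thm:gigli}) then implies every isometry of $X = \R^k \times \tilde{Y}$ has a product form $(x, y) \mapsto (\alpha(x), \beta(y))$; applied to each $\gamma \in \Gamma$, the compatibility with the quotient forces the closed subgroup $\{\alpha_\gamma\} \leq \iso(\R^k)$ to have quotient isometric to $\R^k$, which only occurs when this subgroup is trivial. The main obstacle is the second step: lifting quotient geodesics when the $\Gamma_i$-action is potentially neither free nor discrete and producing orthogonal limit lines with a common basepoint. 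Everything afterwards is a standard application of Gigli's splitting and rigidity.
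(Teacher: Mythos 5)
Your overall strategy (lift lines from the quotient, apply Gigli's splitting, show $\Gamma$ acts trivially on $\R^k$) matches the paper. The lifting step is cleaner done post-limit: by Theorem~\ref{thm:equivariant-compactness} the submetry $\rho\colon X \to X/\Gamma = \R^k\times Y$ already exists in the limit, and submetries of proper geodesic spaces lift lines. Lifting the first coordinate line from $\rho(p)=(0,q)$ gives a line in $X$ through $p$; after splitting off $\R$, the $1$-Lipschitzness of $\rho$ forces the next lifted line to lie in the complementary factor, and iterating produces $\R^k\subset X$ together with the built-in compatibility $\rho(x,\tilde{q})=(x,q)$. Your pre-limit lifting from $X_i/\Gamma_i$ can probably be made to work, but does not obviously yield a common basepoint or the orthogonality you need, and you flag this yourself.

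The genuine gap is in the final step. You invoke a de~Rham-type rigidity to assert that every isometry of $\R^k \times \tilde{Y}$ has product form $(\alpha,\beta)$. That is false in general: it requires $\tilde{Y}$ not to split off a line, and nothing in the hypotheses rules that out ($Y$ is only proper, not compact, so $\tilde Y$ may well contain Euclidean factors). Even granting a product form for each $\gamma\in\Gamma$, the further claim that $\R^k/\{\alpha_\gamma\}$ must be isometric to $\R^k$ does not follow: the $\Gamma$-action on the product need not be a product action --- $\Gamma$ could be the graph of a homomorphism $\{\alpha_\gamma\}\to\iso(\tilde Y)$ --- so one cannot read $\R^k/\{\alpha_\gamma\}$ off of $X/\Gamma=\R^k\times Y$. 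The paper avoids both problems with a short direct estimate using only the compatibility $\rho(x,\tilde q)=(x,q)$ and the $1$-Lipschitzness of $\rho$: supposing $g(x_1,\tilde q)=(x_2,y)$, one has for $t\geq 1$
\[
t\,|x_1-x_2| \,=\, d\bigl(\rho(x_1+t(x_2-x_1),\tilde q),\,\rho(x_2,y)\bigr) \,\leq\, \sqrt{(t-1)^2|x_1-x_2|^2 + d^{\tilde Y}(\tilde q,y)^2},
\]
which as $t\to\infty$ forces $x_1=x_2$; a nearly identical estimate then upgrades this from $\tilde q$ to an arbitrary $z\in\tilde Y$. This uses no uniqueness of the splitting and is the argument your proposal is missing.
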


\begin{proof}  By Theorem \ref{thm:equivariant-compactness},  $X/\Gamma  = \mathbb{R}^k \times Y$, and one can use the submetry $\rho : X \to  X/\Gamma $ to lift the lines of $\mathbb{R}^k$ to lines in $X$ passing through $p$. By Theorem \ref{thm:compactness-2}, $X$ admits a measure that makes it an $\rcd (0, N)$ space, so by  \ref{thm:gigli} and \ref{thm:dim-semicont},  we get the desired splitting $X = \mathbb{R}^k \times \tilde{Y}$ with the property that $\rho ( x , \tilde{q} )=(x,q)$ for all $x \in \mathbb{R}^k$.

 Now we show that the action of $\Gamma $ respects  the $\tilde{Y}$-fibers. Let $g \in \Gamma$ and assume $g(x_1,\tilde{q} ) = (x_2,y)$ for some $x_1,x_2 \in \mathbb{R}^k$, $y \in \tilde{Y}$. Then for all $t \geq 1$, one has
\begin{eqnarray*}
t \vert x_1-x_2 \vert & = & d( \rho  ( x_1+ t(x_2-x_1),\tilde{q}) , \rho  ( x_1,\tilde{q}) ) \\
& = & d(  \rho   ( x_1+ t(x_2-x_1),\tilde{q}) ,  \rho  (x_2,y)  )        \\
& \leq & d^X (  ( x_1 + t (x_2-x_1),\tilde{q}) , (x_2,y) )\\
& = & \sqrt{    \vert  (t-1) (x_2-x_1) \vert ^2  + d^{\tilde{Y}}(\tilde{q},y)^2 }.
\end{eqnarray*} 
As $t \to \infty$, this is only possible if $x_1=x_2$. This shows that $g(x , \tilde{q}) = (x, y)$ for some $y \in \tilde{Y}$ independent of $x \in \mathbb{R}^k$. Now assume $g(x_1, z) = (x_2, z^{\prime})$ for some $z,z^{\prime} \in \tilde{Y}$. Then 
\begin{eqnarray*}
t ^2 \vert x_1-x_2 \vert ^2 + d ^{\tilde{Y}}(\tilde{q},z)^2 &  = & d^X (( x_1+ t(x_2-x_1),\tilde{q}) ,   ( x_1,z) )^2 \\
& = & d^X( ( x_1+ t(x_2-x_1),y) , (x_2, z^{\prime}) )^2        \\
& = &  \vert  (t-1) (x_2-x_1) \vert ^2  + d^{\tilde{Y}}(y,z^{\prime})^2.
\end{eqnarray*} 
This is only possible if $x_1 = x_2$, showing that $\Gamma$ acts trivially on the $\mathbb{R}^k$-factor.
\end{proof}

\subsection{$\delta$-splittings}

Let us recall some results on $\delta$-splittings. For proof and detailed discussions see for example \cite[section 3.1]{BPS23}.

\begin{Lem}\label{lem:convergence-to-delta}
\rm Let $(X_i, d_i, \mm _i, p_i) $  be a sequence of $ \rcd (- \frac{1}{i} , N)$ spaces for which $(X_i,p_i)$  converges in the Gromov--Hausdorff sense to $(\mathbb{R}^k\times Y , (0, y))$ for some metric space $ (Y,y) $. Then for any sequence of Gromov--Hausdorff approximations $\varphi _i : X_i \to \mathbb{R}^k \times Y \cup \{ \ast \} $, there are sequences $\delta_i \to 0$, $R_i \to \infty$, and a sequence of $L(N)$-Lipschitz functions $h^i \in H^{1,2}(X_i; \mathbb{R}^k)$ such that
\begin{itemize}
\item $\nabla h^i $ is divergence free in $B_{R_i}(p_i)$.
\item For all $r \in [1, R_i]$, one has
\[   \fint _{B_r(p_i)} \left[  \sum_{j_1, j_2 =1}^k \vert \langle \nabla h_{j_1}^i, \nabla h_{j_2}^i \rangle - \delta_{j_1, j_2} \vert + \sum_{j=1}^k  \vert  \nabla \nabla h^i_j  \vert  ^2   \right] d \mm _i  \leq \delta_i^2.          \]
\item  For all $x \in B_{R_i}(p_i) $ one has 
\begin{equation}  \label{eq:d-split-1}
\vert h^i (x) - \pi ( \varphi _i x) \vert \leq \delta_i , 
\end{equation}
where $\pi : \mathbb{R}^k \times Y\to \mathbb{R}^k$ is the projection.    
\end{itemize}
\end{Lem}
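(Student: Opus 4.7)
The plan is to construct the $h^i$ as harmonic replacements of Busemann--type approximations of the $\R^k$ coordinate projections on the limit, and to verify the quantitative estimates via the Bochner inequality on $\rcd (-\tfrac{1}{i},N)$ spaces; the required $\delta_i \to 0$ and $R_i \to \infty$ are then produced by a diagonal argument along auxiliary scales $T_i \gg R_i$. For Step 1, for each $j \in \{1,\ldots,k\}$ I would pick $x_{j,i}^{\pm} \in X_i$ with $\varphi_i(x_{j,i}^{\pm}) \to (\pm T_i e_j, y)$ and define
\[ b_j^i(x) := \tfrac{1}{2}\bigl( d(x, x_{j,i}^-) - d(x, x_{j,i}^+) \bigr). \]
Each $b_j^i$ is $1$-Lipschitz and, by the triangle inequality together with the convergence $(X_i,p_i) \to (\R^k \times Y, (0,y))$, approximates the coordinate projection $\pi_j \colon \R^k \times Y \to \R$ uniformly on $B_{R_i}(p_i)$. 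The Laplacian comparison on $\rcd (-\tfrac{1}{i},N)$ spaces gives $\Delta d(\cdot, x_{j,i}^{\pm}) \leq (N-1)/T_i + o(1)$ distributionally, and together with the almost-equality $d(\cdot, x_{j,i}^+) + d(\cdot, x_{j,i}^-) = 2 T_i + o(1)$ on $B_{R_i}(p_i)$, this forces $\|\Delta b_j^i\|_{L^1(B_{R_i}(p_i))} \to 0$.

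In Step 2 I would take $h_j^i$ to be the solution of the Dirichlet problem $\Delta h_j^i = 0$ on $B_{2R_i}(p_i)$ with $h_j^i - b_j^i \in H^{1,2}_0(B_{2R_i}(p_i))$; the divergence-free requirement on $B_{R_i}(p_i)$ is then automatic. Energy estimates applied to $h_j^i - b_j^i$, using the smallness of $\Delta b_j^i$ and the Poincar\'e inequality on $\rcd$ spaces, yield $\|h_j^i - b_j^i\|_{W^{1,2}(B_{R_i}(p_i))} \to 0$, so $h_j^i \to \pi_j$ in $W^{1,2}_{\text{loc}}$. In particular $|\nabla h_j^i|^2 \to 1$ and $\langle \nabla h_{j_1}^i, \nabla h_{j_2}^i\rangle \to \delta_{j_1,j_2}$ in $L^1_{\text{loc}}$, and the uniform $L^{\infty}$-closeness of $h_j^i$ to $b_j^i$ (via the maximum principle) gives property \eqref{eq:d-split-1} after recentering. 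The uniform $L(N)$-Lipschitz bound on $h_j^i$ follows from Bakry--\'Emery gradient estimates applied to the heat flow regularization $P_t h_j^i$ for small $t$.

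For Step 3 I would use the Bochner inequality to extract the Hessian bound. Take a compactly supported Lipschitz cut-off $\chi$ on $B_{2R_i}(p_i)$ with $\chi \equiv 1$ on $B_{R_i}(p_i)$ and $\|\Delta \chi\|_{\infty} \leq C(N)/R_i^2$. Since $\Delta h_j^i = 0$, the $\rcd (-\tfrac{1}{i},N)$ Bochner inequality integrated against $\chi$ gives
\[ \int_{B_{R_i}(p_i)} |\nabla\nabla h_j^i|^2 \, d\mm_i \leq \tfrac{1}{2}\int |\Delta \chi|\,|\nabla h_j^i|^2\, d\mm_i + \tfrac{N-1}{i}\int \chi\,|\nabla h_j^i|^2\, d\mm_i, \]
where both terms on the right vanish as $i \to \infty$. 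Averaging over $B_{R_i}(p_i)$ and combining with the orthogonality from Step 2 delivers the integral estimate in the second bullet.

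The main obstacle is the coupling of the three scales $T_i, R_i, \delta_i$: the Busemann approximation degrades as $R_i / T_i \to 1$, while the harmonic replacement error deteriorates if $R_i$ grows too fast relative to the $L^1$-decay of $\Delta b_j^i$. Making all estimates quantitative as the curvature lower bound $-1/i \to 0$ relies on the stability of the Bochner inequality under measured Gromov--Hausdorff convergence, which is ensured by Theorem \ref{thm:compactness-2}; the proof is then concluded by a diagonal extraction that couples these convergence rates.
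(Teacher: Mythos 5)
The paper does not prove Lemma \ref{lem:convergence-to-delta} itself; it cites \cite[Section 3.1]{BPS23}, and the subsequent remark only addresses the extra clause \eqref{eq:d-split-1} and extending $h^i$ beyond $B_{R_i}(p_i)$ via the Mondino--Naber cut-offs. Your outline is the standard Cheeger--Colding scheme (Busemann approximants, harmonic replacement, Bochner), which is indeed the argument behind the cited reference, and Steps 1--2 are on the right track modulo the usual technical polish.

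However, Step 3 as written fails to deliver the required estimate. You run the Bochner computation once, with a single cut-off at scale $R_i$ and the crude bound $|\nabla h_j^i|^2\leq L^2$, which gives smallness of $\fint_{B_{R_i}(p_i)}|\nabla\nabla h_j^i|^2\,d\mm_i$. But the lemma demands that this average be small for every $r\in[1,R_i]$, and your global estimate does not localize: dividing $\int_{B_{R_i}}|\nabla\nabla h_j^i|^2\,d\mm_i$ by $\mm_i(B_1(p_i))$ costs a Bishop--Gromov factor $\mm_i(B_{R_i})/\mm_i(B_1)\gtrsim R_i^N$, producing $O(R_i^{N-2})$ rather than $o(1)$ when $N>2$. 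Re-running the Bochner step with a cut-off at scale $r$ does not help either: then $\|\Delta\chi\|_\infty\lesssim r^{-2}$, so $\int|\nabla h_j^i|^2|\Delta\chi|\,d\mm_i$ averaged over $B_r(p_i)$ is merely $O(1)$. The missing ingredient is the cancellation $\int|\nabla h_j^i|^2\,\Delta\chi\,d\mm_i=\int\bigl(|\nabla h_j^i|^2-1\bigr)\,\Delta\chi\,d\mm_i$ (since $\int\Delta\chi\,d\mm_i=0$), which must be coupled with the scale-by-scale $L^1$ estimate $\fint_{B_{2r}(p_i)}\bigl|\,|\nabla h_j^i|^2-1\,\bigr|\,d\mm_i\leq\delta_i^2$ before it can be fed into the Bochner inequality at each $r$. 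As written, "combining with the orthogonality from Step 2" appears after the Hessian bound is claimed, but the Hessian bound needs that orthogonality as an input at each scale, not as an afterthought. Two smaller points: for $\rcd(-1/i,N)$ the Ricci term in Bochner contributes $i^{-1}\int\chi|\nabla h_j^i|^2\,d\mm_i$, not $(N-1)/i$ (harmless); and invoking "the maximum principle" for \eqref{eq:d-split-1} is not immediate since $h_j^i-b_j^i$ is not harmonic --- the uniform estimate requires a separate De Giorgi--Nash--Moser or segment-inequality argument on top of the $W^{1,2}$ closeness.
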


\begin{Lem}\label{lem:delta-to-convergence}
\rm Let $(X_i, d_i, \mm _i, p_i) $ be a sequence of $ \rcd (-\frac{1}{i} , N)$. Assume there are sequences $\delta_i \to 0$, $R_i \to \infty$, and a sequence of $L$-Lipschitz functions $h^i \in H^{1,2}(X_i; \mathbb{R}^k)$ with $h^i (p_i) = 0$ for all $i$ and such that
\begin{itemize}
    \item $\nabla h^i $ is divergence free in $B_{R_i}(p_i)$.
    \item For all $r \in [1, R_i]$, one has
    \[   \fint _{B_r(p_i)} \left[  \sum_{j_1, j_2 =1 }^k \vert \langle \nabla h_{j_1}^i, \nabla h_{j_2}^i \rangle - \delta_{j_1, j_2 } \vert + \sum_{j=1}^k  \vert  \nabla \nabla h^i_j  \vert ^2   \right] d \mm _i  \leq \delta_i^2.          \]
\end{itemize}
Then, after taking a subsequence, there is a metric space $ (Y,y) $ and a sequence of Gromov--Hausdorff approximations $ \varphi _i : X_i  \to  \mathbb{R}^k \times Y  \cup \{ \ast \}  $ for which 
\begin{equation}\label{eq:d-split-2}
    \sup_{x \in B_{R_i}(p_i)} \vert h^i (x) - \pi (  \varphi _i x) \vert \to 0 \text{ as }i \to \infty ,     
\end{equation}   
where $\pi : \mathbb{R}^k \times Y\to \mathbb{R}^k$ is the projection. 
\end{Lem}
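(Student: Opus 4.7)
The plan is to pass to a subsequential Gromov--Hausdorff limit, extract a limit map from the $h^i$, show the limit map is an isometric linear splitting map via Gigli's theorem, and then read off the GH approximations from the convergence.

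First I would apply Theorem \ref{thm:compactness} to extract a subsequence (not relabeled) for which $(X_i, p_i)$ converges in the Gromov--Hausdorff sense to some pointed proper geodesic space $(X_\infty, p_\infty)$. Since each $X_i$ is $\RCD(-1/i, N)$, after normalizing the measures via $c_i > 0$ as in Theorem \ref{thm:compactness-2}, we may assume $(X_i, d_i, c_i \mm_i, p_i)$ converges in the measured Gromov--Hausdorff sense to an $\RCD(0,N)$ space $(X_\infty, d_\infty, \mm_\infty, p_\infty)$. Since $h^i$ is $L$-Lipschitz with $h^i(p_i) = 0$, an Arzel\`a--Ascoli argument adapted to the mGH setting produces a further subsequence and an $L$-Lipschitz map $h^\infty : X_\infty \to \mathbb{R}^k$ with $h^\infty(p_\infty) = 0$ such that $h^i \to h^\infty$ uniformly on compact sets (in the standard sense that $h^i(x_i) \to h^\infty(x_\infty)$ whenever $x_i \to x_\infty$).

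Next I would upgrade the pointwise convergence of the $h^i$ to Sobolev-type convergence: by the Mosco convergence of Cheeger energies along mGH convergence of $\RCD$ spaces (a standard fact in the Ambrosio--Gigli--Savar\'e / Gigli--Mondino--Savar\'e framework), the uniform $L^2$-bound on $\nabla h^i_j$ coming from the Lipschitz constraint yields $L^2_{\loc}$-weak convergence of $\nabla h^i$ to $\nabla h^\infty$, and similarly for $\nabla \nabla h^i_j$ if one uses weak convergence in $H^{2,2}$. Using the integral hypotheses, lower semicontinuity of energies gives, for every $r \geq 1$ and every component, that
\begin{equation*}
\fint_{B_r(p_\infty)} \bigl[ \textstyle\sum_{j_1,j_2} |\langle \nabla h^\infty_{j_1}, \nabla h^\infty_{j_2}\rangle - \delta_{j_1,j_2}| + \sum_j |\nabla \nabla h^\infty_j|^2 \bigr] \, d\mm_\infty = 0,
\end{equation*}
so that $\{\nabla h^\infty_j\}$ are orthonormal a.e.\ and each $h^\infty_j$ has vanishing Hessian on all of $X_\infty$. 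By Gigli's $H^{2,2}$-calculus, the gradient flow of each $h^\infty_j$ generates a one-parameter group of isometries and produces $k$ mutually orthogonal lines through every point of $X_\infty$.

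Iterated application of Gigli's splitting theorem (Theorem \ref{thm:gigli}) then yields an isometric splitting $X_\infty = \mathbb{R}^k \times Y$ (as metric measure spaces, up to normalization) for some pointed $\RCD(0, N-k)$ space $(Y, d^Y, \nu, y)$ with $p_\infty = (0, y)$, and the construction identifies $h^\infty$ with the Euclidean projection $\pi : \mathbb{R}^k \times Y \to \mathbb{R}^k$ (up to possibly composing with an orthogonal transformation, which we absorb into the identification). Finally I would choose the Gromov--Hausdorff approximations $\varphi_i : X_i \to \mathbb{R}^k \times Y \cup \{\ast\}$ produced by the mGH convergence; then \eqref{eq:d-split-2} follows from the uniform-on-compacta convergence $h^i \to h^\infty = \pi$ together with a diagonal argument to allow $R_i \to \infty$ while maintaining $\sup_{x \in B_{R_i}(p_i)} |h^i(x) - \pi(\varphi_i x)| \to 0$.

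The hard part will be step two: rigorously justifying that the integral bounds on $\langle \nabla h^i_{j_1}, \nabla h^i_{j_2}\rangle - \delta_{j_1,j_2}$ and on $|\nabla \nabla h^i_j|^2$ pass to the limit in the mGH sense. This requires the Mosco/$\Gamma$-convergence machinery for Cheeger energies on varying $\RCD$ spaces and the stability of $H^{2,2}$-functions under mGH convergence, which are non-trivial but by now standard tools. Once these are invoked, the remainder is a direct application of Gigli's splitting theorem.
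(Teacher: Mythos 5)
Your route --- pass to the limit, split via Gigli's theorem, read off the GH approximations from the mGH convergence --- is genuinely different from what the paper references: \cite[Section 3.1]{BPS23} constructs the GH approximation $\varphi_i$ at finite $i$ directly from $h^i$ and a companion projection, controlling the distortion via the segment inequality (Theorem \ref{thm:segment}) together with the Hessian bound, and never passes to a limit or invokes a splitting theorem. The limiting approach is viable in principle, but as written it has a real gap at the step where you conclude that the limit gradients $\nabla h^\infty_j$ are orthonormal a.e. What you control is strong $L^2_{\loc}$ convergence $h^i \to h^\infty$ and, from this together with the uniform Lipschitz bound, only \emph{weak} $L^2_{\loc}$ convergence of the gradients; lower semicontinuity of the Cheeger energy then gives $|\nabla h^\infty_j| \leq 1$ a.e., not equality, and it says nothing useful about the cross terms $\langle \nabla h^\infty_{j_1}, \nabla h^\infty_{j_2}\rangle$, because $\sum_{j_1,j_2}|\langle \nabla h_{j_1}, \nabla h_{j_2}\rangle - \delta_{j_1,j_2}|$ is not a convex functional of the gradients and its $L^1$-smallness along the sequence need not survive to a weak limit. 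What you actually need is \emph{strong} $L^2_{\loc}$ convergence of the gradients, and semicontinuity is the wrong mechanism for that.

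The way to close this gap is harmonicity, not ``Mosco/lower semicontinuity'': since $\nabla h^i_j$ is divergence free on $B_{R_i}(p_i)$, the Ambrosio--Honda stability results for the Poisson equation under mGH convergence upgrade the strong $L^2_{\loc}$ convergence of $h^i_j$ to strong $W^{1,2}_{\loc}$ convergence, after which the Gram matrix $\langle \nabla h^i_{j_1}, \nabla h^i_{j_2}\rangle$ passes to the limit and orthonormality drops out. A further, smaller point: Theorem \ref{thm:gigli} as stated requires that $X_\infty$ already contain an isometric copy of $\mathbb{R}^m$, so even once $|\nabla h^\infty_j| = 1$ and $\nabla\nabla h^\infty_j = 0$ are established you still need an intermediate step producing lines (integral curves of $\nabla h^\infty_j$) through each point, and then a verification that the resulting splitting coordinate coincides with $h^\infty_j$ and that the mGH approximations from Theorem \ref{thm:compactness-2} are compatible with the splitting so that \eqref{eq:d-split-2} follows. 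None of this is conceptually wrong, but it is substantially more machinery than the phrase ``lower semicontinuity of energies gives'' lets on.
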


\begin{Rem}
\rm In the literature, Lemmas \ref{lem:convergence-to-delta} and  \ref{lem:delta-to-convergence} are often stated without Equations \ref{eq:d-split-1} and \ref{eq:d-split-2}. However, these equations follow from how the functions $h_i$ (resp. $\varphi_i$) are constructed in the proof of Lemma \ref{lem:convergence-to-delta} (resp. \ref{lem:delta-to-convergence}). Similarly, the maps $h_i$ are usually only defined on balls around $p_i$ with radii going to infinity, but thanks to the existence of good cut-off functions \cite[Lemma 3.1]{MN19}, we can assume they are fully defined on the  spaces $X_i$.
\end{Rem}

\subsection{Regular Lagrangian Flows}

In $\rcd (K,N)$ spaces, there exist flows of certain Sobolev vector fields. 



\begin{Def}\label{def:rlf}
Let $(X,d, \mm)$ be an $\rcd (K,N)$ space, $T > 0$, and $V : [0,T] \to L^2_{\loc }(TX) $ a time-dependent  vector field.  A Borel map $ \XX :[0,T] \times X \to X$ is called a \textit{Regular Lagrangian flow (RLF)} to $V$ if the following holds:
	\begin{enumerate}[label= $\bm{R}$\textbf{.\arabic*}]
	\item $ \XX _0(x) = x$ and $[0,T] \ni t \mapsto  \XX _t(x)$ is continuous for every $x \in X$. \label{def:rlf-1}
	\item For every $f \in \test (X)$ and $\mm$-a.e. $x \in X$, $t \mapsto f( \XX _t(x))$ is in $W^{1,1}([0,T])$ and 
		\begin{equation}\label{eq:rlf-def}
		 	\frac{d}{dt}f( \XX _t(x))= df(V(t))( \XX _t(x)) \; \; \text{ for a.e. } t \in [0,T].  
		\end{equation} \label{def:rlf-2}
		\item There exists a constant $C(V)$ so that $( \XX _t)_{*}m \leq C\mm $ for all $t$ in $[0,T]$. \label{def:rlf-3}
	\end{enumerate}
\end{Def}

For sufficiently regular vector fields, RLFs satisfy an existence and uniqueness property \cite{AT14}.

\begin{Thm}\label{thm:RLF-existence}
Let $(X,d,\mm)$ be an $\rcd (K,N)$ space, and assume $V \in L^1([0,T],$ $L^2(TX))$ satisfies $V(t)\in D(\text{div})$ for a.e. $t \in [0,T]$ with
\begin{equation*}
	\text{div}(V(\cdot)) \in L^1 \left( [0,T],L^2(\mm ) \right) \; \;  \;(\text{div}(V(\cdot )))^- \in L^1\left( [0,T], L^{\infty}(\mm )\right)  \; \; \; \nabla V( \cdot )  \in L^1 \left( [0,T],L^2(T^{\otimes 2}X)\right) .
\end{equation*}
Then there exists a unique (up to $\mm$-a.e. equality) RLF $\XX : [0,T] \times X \to X$ for $V$ satisfying
\begin{equation}\label{eq:rlf-measure-0}
	( \XX _t)_{*}(\mm ) \leq \exp 
 \left( \int_{0}^{t} \|\text{div}(V(s))^-\|_{L^{\infty}(\mm )}\,ds \right) \mm
\end{equation}
for every $t \in [0,T]$. 
\end{Thm}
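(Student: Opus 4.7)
My plan is to follow the by-now standard DiPerna--Lions--Ambrosio--Trevisan scheme, transferring the question about flows of vector fields to a well-posedness question for the associated continuity equation. The passage from the continuity equation back to an actual flow is then carried out via the superposition principle. All the structural ingredients needed (calculus, heat semigroup, Bakry--\'Emery $\Gamma_2$-estimates) are available on an $\rcd(K,N)$ space, which is exactly the reason the smooth Euclidean argument carries over.

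First I would consider, for $u_0 \in L^2 \cap L^\infty(\mm)$, the continuity equation
\[
\partial_t u_t + \dv(u_t V(t)) = 0, \qquad u_0 \text{ given,}
\]
in the weak sense against $\test(X)$. Existence of a weak solution $u \in L^\infty([0,T]; L^2 \cap L^\infty)$ is obtained by a regularization or Galerkin argument: regularize $V$ (for instance using the heat flow in $t$ and truncations) to produce smoothed vector fields $V^\varepsilon$ for which existence is easy, obtain a priori $L^p$-estimates from the divergence bound $\|\dv V(t)^-\|_\infty \in L^1_t$ via Gronwall, and pass to the limit using weak compactness. The a priori bound automatically gives $\|u_t\|_\infty \le \exp \bigl( \int_0^t \|\dv V(s)^-\|_\infty ds \bigr) \|u_0\|_\infty$, which will translate into \ref{eq:rlf-measure-0}.

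The technical heart is uniqueness for bounded solutions, which is where the Sobolev regularity $\nabla V \in L^1_t L^2$ enters. The idea is to prove the renormalization property: if $u$ is a bounded distributional solution and $\beta \in C^1(\R)$, then $\beta(u)$ solves the continuity equation with source $\beta'(u)u - \beta(u)$ times $\dv V$. Following Ambrosio--Trevisan, I would regularize $u$ by the heat semigroup $h_\varepsilon$ on $(X,d,\mm)$, compute the equation satisfied by $h_\varepsilon u$, and control the commutator
\[
C_\varepsilon(u, V)(t) := \dv\bigl(h_\varepsilon u \cdot V(t)\bigr) - h_\varepsilon \bigl( \dv(u V(t)) \bigr)
\]
in $L^1_{\loc}$. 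The commutator estimate relies on Bakry--\'Emery's contraction of $|\nabla h_\varepsilon f|$ and on integration by parts using the symmetric covariant derivative $\nabla V \in L^2$. Once renormalization is known, uniqueness follows by testing $\beta(u) = u^2$ (or $|u|$) and applying Gronwall to $\int u_t^2 \, d\mm$, after subtracting two solutions with the same datum.

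With uniqueness of bounded solutions to the continuity equation in hand, I would invoke the superposition principle (available in $\rcd(K,N)$ thanks to Lisini's theory of absolutely continuous curves of measures): any nonnegative solution $\rho_t$ to the continuity equation lifts to a probability measure $\boldsymbol{\eta}$ on $C([0,T];X)$ concentrated on curves $\gamma$ that satisfy $\dot\gamma(t) = V(t,\gamma(t))$ in the weak sense \ref{eq:rlf-def}, with $(e_t)_* \boldsymbol{\eta} = \rho_t \mm$. Disintegrating $\boldsymbol{\eta}$ over the initial position $x = \gamma(0)$ and using uniqueness of the continuity equation yields, for $\mm$-a.e.~$x$, a Dirac mass, hence a Borel map $x \mapsto \XX_\cdot(x)$. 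The properties \ref{def:rlf-1}--\ref{def:rlf-2} come from the superposition, while \ref{def:rlf-3} together with the sharper bound \ref{eq:rlf-measure-0} are read off from the $L^\infty$ estimate on $u_t$ obtained in the first step. The most delicate part of the whole argument is the commutator estimate in the renormalization step, since it is the only place where the full Sobolev regularity of $V$ and the Riemannian character of $\rcd$ (via $\Gamma_2 \ge K\Gamma$) are essential.
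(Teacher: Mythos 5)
The paper does not prove Theorem~\ref{thm:RLF-existence}; it is stated as a citation of Ambrosio--Trevisan \cite{AT14}, and the surrounding text makes clear that it is used as a black box. Your proposal is an accurate reconstruction of the strategy of \cite{AT14}: a priori $L^p$ bounds and existence for the continuity equation from the one-sided divergence bound, uniqueness via the renormalization property proved through a commutator estimate with the heat semigroup (where Bakry--\'Emery contractivity and the Sobolev bound $\nabla V \in L^1_t L^2$ are the essential structural inputs), and then passage to a selection of characteristics via the superposition principle with a disintegration argument forcing Dirac conditional measures. The only minor inaccuracy is the phrase ``regularize $V$ \dots using the heat flow in $t$''; the mollification by the heat semigroup in this scheme is in the spatial variable, and the time variable is handled by a separate, standard convolution. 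Since there is no proof in the paper to compare against, the relevant judgement is simply that your outline faithfully matches the cited reference.
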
 
\begin{Rem}
\rm The estimate \ref{eq:rlf-measure-0} can be localized for any $S \in \mathcal{B}(X)$ as
\begin{equation}\label{eq:volume-distortion-00}
\left( \XX_t  \right)_{\ast} \left( 
 \mm \vert _S \right) \leq \exp \left(  \int_0^t     \|\text{div}(V(s))^-\|_{L^{\infty}\left( (\XX_s)_{\ast} (\mm \vert _S) \right) }\,ds \right) \mm  .        
\end{equation}      
This is obtained from \cite[4-22]{AT14} by choosing $\beta (z) : = z^p $ for $p \to \infty$. 
\end{Rem}

\begin{Rem}\label{rem:sob-to-lip}
\rm From \ref{def:rlf-2}, we get that if $\Vert V (t) \Vert_{\infty} \leq L$ for all $t \in [0,T] $ and some $L > 0 $, then for $\mathfrak{m}$-a.e. $x \in X$, the map 
\begin{equation}\label{eq:sob-to-lip-flow}
    [0, T] \ni t \mapsto \XX_t(x) \text{  is }L\text{-Lipschitz}.
\end{equation} 
Thus, after modifying $\XX$ on a set of measure zero, we can always assume \ref{eq:sob-to-lip-flow} holds for all $x \in X$ (see \cite[Theorem A.4]{GT21}).   
\end{Rem}

For nice vector fields, there is a reverse flow \cite[Proposition 3.12]{D20}.

\begin{Pro}
\rm Let $(X,d, \mm ) $, $V$, $\XX$, be as in Theorem \ref{thm:RLF-existence}, and define $\overline{V} : [0,T] \to L^2(TX)$ as 
\[  \overline{V} (t) (x) : =  - V (T-t)(x) \]
 for each $t \in [0,T], x \in X. $ Then there is a map $\overline{\XX} : [0,T] \times X \to X$ which is a RLF for $\overline{V}$ and for $\mm$-a.e. $x \in X$  one has 
\[   \overline{\XX} _t ( \XX_T  (x) ) = \XX_{T-t}(x) \hspace{0.2cm} \text{ for all } \hspace{0.2cm} t \in [0,T].             \]
\end{Pro}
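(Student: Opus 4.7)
The plan is to obtain $\overline{\XX}$ by applying Theorem \ref{thm:RLF-existence} to $\overline{V}$, and then to identify the composition $\overline{\XX}_t \circ \XX_T$ with the time-reversed map $\XX_{T-t}$ via a uniqueness argument.

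First I would verify that $\overline{V}$ satisfies the hypotheses of Theorem \ref{thm:RLF-existence}. The $L^1$-in-time bounds on $V(\cdot)$, $\nabla V(\cdot)$, and $\text{div}(V(\cdot))$ in the $L^2$ norm are all symmetric under the transformation $V(t) \mapsto -V(T-t)$. The only delicate point is the $L^\infty$ hypothesis, which for $\overline{V}$ translates into $(\text{div}(V))^{+} \in L^1([0,T];L^\infty(\mm))$; in the settings relevant to this paper (Theorem \ref{thm:essential-stability}) this is automatic since $\Vert \text{div}(V(t)) \Vert_\infty$ is uniformly bounded. Granting this, Theorem \ref{thm:RLF-existence} yields the RLF $\overline{\XX}$ for $\overline{V}$.

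Next I would check the reversal identity. Define $\Psi_t(x) := \XX_{T-t}(x)$. For every $f \in \test(X)$ and $\mm$-a.e.\ $x$, property \ref{def:rlf-2} for $\XX$ gives that $s \mapsto f(\XX_s(x))$ lies in $W^{1,1}([0,T])$ with derivative $df(V(s))(\XX_s(x))$. Substituting $s = T - t$,
\[
\frac{d}{dt} f(\Psi_t(x)) \;=\; -df(V(T-t))(\Psi_t(x)) \;=\; df(\overline{V}(t))(\Psi_t(x)),
\]
so $\Psi$ solves the flow ODE for $\overline{V}$, but with initial datum $\Psi_0 = \XX_T$ rather than the identity. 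Similarly, $\Phi_t(x) := \overline{\XX}_t(\XX_T(x))$ solves the flow ODE for $\overline{V}$ with the same initial datum $\XX_T$: this uses the pushforward bound $(\XX_T)_{\ast}\mm \leq C \mm$ coming from \ref{def:rlf-3}, which ensures that the $\mm$-a.e.\ identities valid for $\overline{\XX}$ transport under composition with $\XX_T$.

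Finally, I would conclude $\Phi = \Psi$ holds $\mm$-a.e.\ via uniqueness of RLFs. The standard Ambrosio--Trevisan theorem gives uniqueness up to $\mm$-negligible sets for the flow starting from the identity. To reduce to this, I would view both $\Phi_t$ and $\Psi_t$ as maps parameterized by $y := \XX_T(x)$; both are then continuous curves satisfying the flow ODE for $\overline{V}$ with the common starting point $y$, so they must coincide for $(\XX_T)_\ast \mm$-a.e.\ $y$, and hence for $\mm$-a.e.\ $x$ via the Radon--Nikodym bound $(\XX_T)_\ast \mm \leq C \mm$. The main obstacle is this last step: transferring the uniqueness statement from identity initial data to the non-identity initial map $\XX_T$ requires carefully exploiting \ref{def:rlf-3} so that the a.e.\ coincidence produced by Ambrosio--Trevisan's uniqueness pulls back to an $\mm$-a.e.\ statement in $x$.
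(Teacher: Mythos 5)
The paper offers no proof of this proposition; it simply cites \cite[Proposition 3.12]{D20}, so there is no in-paper argument to compare against. Your outline is on the right track, and your observation about the hidden hypothesis is correct and worth stressing: as written, the statement only asks that $V$ satisfy the hypotheses of Theorem \ref{thm:RLF-existence}, which controls $(\text{div}\,V)^-$ in $L^1([0,T];L^\infty(\mm))$. But to produce $\overline{\XX}$ by Theorem \ref{thm:RLF-existence}, or even just to get the measure bound \ref{def:rlf-3} for $\overline{\XX}$, one needs $(\text{div}\,\overline{V})^- = (\text{div}\,V)^+$ in $L^1([0,T];L^\infty(\mm))$ as well; so you genuinely need $\text{div}\,V \in L^1([0,T];L^\infty(\mm))$ (both signs), as is the case in every application in this paper. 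This should be stated as an explicit added hypothesis rather than deferred to context.

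The substantive gap is the final uniqueness step. Theorem \ref{thm:RLF-existence} gives uniqueness only for RLFs starting from the identity, and your proposed reduction---``view both $\Phi_t$ and $\Psi_t$ as maps parameterized by $y := \XX_T(x)$''---is not well posed: $\XX_T$ need not be injective, so two distinct $x$'s can satisfy $\XX_T(x)=y$ while producing \emph{different} curves $\Psi_\cdot(x) = \XX_{T-\cdot}(x)$ (indeed $\Psi_T(x)=x$ distinguishes them). You correctly flag this as ``the main obstacle,'' but the repair you gesture at (use \ref{def:rlf-3} to pull back a.e.\ statements) handles only the easy transfer from $y$-a.e.\ to $x$-a.e., not the absence of a uniqueness theorem for non-identity initial data. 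The clean fix is a concatenation trick: define $W:[0,2T]\to L^2(TX)$ by $W(t)=V(t)$ for $t\in[0,T]$ and $W(t)=-V(2T-t)$ for $t\in[T,2T]$. Under the strengthened divergence hypothesis, $W$ satisfies the conditions of Theorem \ref{thm:RLF-existence}. One then checks directly that both $\Omega_t$ (equal to $\XX_t$ on $[0,T]$ and to $\overline{\XX}_{t-T}\circ\XX_T$ on $[T,2T]$) and $\tilde\Omega_t$ (equal to $\XX_t$ on $[0,T]$ and to $\XX_{2T-t}$ on $[T,2T]$) are RLFs for $W$ from the identity---your computations verifying \ref{def:rlf-2} for $\Phi$ and the pushforward bound via $(\XX_T)_\ast\mm\leq C\mm$ are exactly what is needed here. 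Uniqueness of the RLF for $W$ then gives $\Omega_t=\tilde\Omega_t$ for $\mm$-a.e.\ $x$ and all $t\in[0,2T]$, and reading this at $t=T+s$ yields $\overline{\XX}_s(\XX_T(x))=\XX_{T-s}(x)$ as required.
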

\begin{Rem}
    If $\Vert \text{div} (V (t)) \Vert _{\infty} \leq D $ for all $t \in [0,T]$ and some $D > 0$, \ref{eq:rlf-measure-0} implies 
\begin{equation}\label{eq:volume-distortion}
    e^{- D T} \mm \leq (\XX_t(\cdot))_{*}(\mm) \leq e^{ D  T} \mm \; \; \text{for all } t \in [0,T].
\end{equation}
\end{Rem}

The integral first variation formula extends to $\rcd (K,N)$ spaces \cite[Corollary 4.2]{BDS22}.

\begin{Thm}\label{thm:first-variation}
   Let $r > 0 $, $(X,d, \mm )$ an $\rcd (K,N) $ space, and $V$ a time-dependent vector field satisfying the conditions of Theorem \ref{thm:RLF-existence}. Set 
   \begin{equation}\label{eq:ddrt}
       \begin{split}
       \dr : [0, T] \times X \times X \to [0,r] \\
       \dr (t) : =  \sup_{s \in [0,t]} \dr ( \XX_s)
       \end{split}
   \end{equation}
    Let $S_1, S_2$ be Borel subsets of $X$ with finite positive measure, and define 
    \begin{equation}\label{eq:gamma(t)}
        \Gamma (t) : = \{ (a,b) \in S_1 \times S_2 \vert \dr (t) (a,b) < r \} .
    \end{equation} 
    Then the map $t \mapsto \int_{S_1 \times S_2} \dr (t)(x,y) d(\mm \times \mm)(x,y)$ is Lipschitz on $[0,T]$ and for a.e. $t \in [0,T]$ one has
    \begin{align*}
         &\frac{d}{dt} \int_{S_1 \times S_2} dt_{r}(t)(x,y) \, d(\mm \times \mm)(x,y)\\
        \leq &\int_{0}^{1} \int_{\Gamma (t)} d(\XX_{t}(x), \XX_{t}(y))|\nabla V (t)|(\gamma_{\XX_{t}(x), \XX_{t}(y)}(s)) \, d(\mm \times \mm)(x,y) \, ds.
    \end{align*}
\end{Thm}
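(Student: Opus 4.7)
My plan is to reduce the integrated first variation formula to a pointwise first variation estimate for the distance between two flow lines, and then use an elementary running-supremum argument to handle the truncation at $r$. The key pointwise estimate I aim to establish is that for $(\mm \times \mm)$-a.e.\ pair $(a,b)$, the map $s \mapsto d(\XX_s(a), \XX_s(b))$ is absolutely continuous on $[0,T]$ with
\[
    \left| \frac{d}{ds} d(\XX_s(a), \XX_s(b)) \right| \leq d(\XX_s(a), \XX_s(b)) \int_0^1 |\nabla V(s)|(\gamma_{\XX_s(a), \XX_s(b)}(u))\, du
\]
for a.e.\ $s$. This is the nonsmooth counterpart of the classical identity $\tfrac{d}{ds} d(\phi_s a, \phi_s b) = \langle V, \gamma'\rangle|_{\phi_s a}^{\phi_s b}$ combined with the fundamental theorem of calculus along the geodesic. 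In the $\rcd$ setting I would prove it by testing \ref{eq:rlf-def} against heat-flow regularizations of the squared distance function, then passing to the limit using Bochner-type bounds; this is essentially the content underlying \cite[Corollary 4.2]{BDS22}.

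Next, for each such $(a,b)$, set $\phi(s) := \min\{r, |d(a,b) - d(\XX_s(a), \XX_s(b))|\}$ and $\Phi(t) := \sup_{s \in [0,t]} \phi(s) = \dr(t)(a,b)$. Then $\Phi$ is nondecreasing and absolutely continuous, and an elementary calculus argument for running suprema gives at a.e.\ $t$ where $\Phi'(t)$ exists
\[
    \Phi'(t) \leq \chi_{\Gamma(t)}(a,b) \cdot \left|\frac{d}{dt} d(\XX_t(a), \XX_t(b))\right|,
\]
because $\Phi$ can only grow at those $t$ where the truncation is not yet active (equivalently $(a,b) \in \Gamma(t)$) and where $\phi(t) = \Phi(t)$. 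Combined with the pointwise first variation bound, this controls the $t$-derivative of $\dr(t)(a,b)$ by the desired integrand.

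I would then integrate this pointwise derivative bound over $(a,b) \in S_1 \times S_2$, interchange the order of integration via Fubini, and apply the segment inequality (Theorem \ref{thm:segment}) together with the volume distortion estimate \ref{eq:volume-distortion} to dominate the contribution of $|\nabla V(s)|$ along geodesics in terms of $\|\,|\nabla V(s)|\,\|_{L^2}$. This yields, for $0 \leq t < t' \leq T$,
\[
    F(t') - F(t) \leq \int_t^{t'} \int_0^1 \int_{\Gamma(s)} d(\XX_s(a), \XX_s(b))\, |\nabla V(s)|(\gamma_{\XX_s(a), \XX_s(b)}(u))\, d(\mm \times \mm)(a,b)\, du\, ds,
\]
where $F(t) := \int_{S_1 \times S_2} \dr(t)\, d(\mm \times \mm)$. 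Since $|\nabla V| \in L^1([0,T]; L^2(\mm))$, the right-hand side is absolutely continuous in the upper endpoint, proving the Lipschitz regularity of $F$. Lebesgue differentiation in $t$ then recovers the claimed pointwise inequality for $F'(t)$ at a.e.\ $t \in [0,T]$.

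The main obstacle is the pointwise first variation estimate for the distance along the RLF, which is delicate in the absence of a smooth structure on either $X$ or $V$. Once it is in hand, the running-supremum calculus, Fubini, and the segment inequality are routine and deliver both the Lipschitz regularity and the integral first variation bound.
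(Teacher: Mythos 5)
Your reduction to a pointwise, a.e.-in-$(a,b)$ first variation estimate for $s\mapsto d(\XX_s(a),\XX_s(b))$ is \emph{not} the route the paper takes, and the gap you flag at the end is a real one, not a deferred technicality. The paper does not prove this theorem; it quotes \cite[Corollary 4.2]{BDS22}, whose proof (following \cite[Proposition 3.27]{D20}) works \emph{directly at the level of the integral over $S_1\times S_2$}, testing the RLF identity \ref{eq:rlf-def} against heat-kernel regularizations of the distance and passing to the limit with Bakry--\'Emery/Bochner bounds. That machinery produces exactly the integrated inequality you are trying to prove; it does \emph{not} produce a pointwise derivative bound for the distance between two individual flow lines, and such a bound is strictly stronger. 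So when you write that the pointwise estimate is ``essentially the content underlying \cite[Cor.\ 4.2]{BDS22},'' you are citing the conclusion of the argument as a lemma in its own proof --- the step is circular as stated. To make your scheme work you would have to independently establish (a) that $s\mapsto d(\XX_s(a),\XX_s(b))$ is absolutely continuous for $(\mm\times\mm)$-a.e.\ $(a,b)$, which is not automatic under the hypotheses of Theorem \ref{thm:RLF-existence} (no $L^\infty$ bound on $V$ is assumed, so Remark \ref{rem:sob-to-lip} does not apply), and (b) that its a.e.\ derivative is controlled by the segment integral of $|\nabla V(s)|$ along $\gamma_{\XX_s(a),\XX_s(b)}$. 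Neither of these is available from \cite[Prop.\ 3.6]{D20} or the other tools in the paper, which give derivative formulas for $f(\XX_t(x))$ against a \emph{fixed} Lipschitz $f$, not against a function depending on a second moving trajectory.

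Once you grant that pointwise estimate, the remainder of your argument --- the running-supremum inequality $\Phi'(t)\leq\chi_{\Gamma(t)}(a,b)\,|\frac{d}{dt}d(\XX_t(a),\XX_t(b))|$ (using that $\Phi$ is monotone, bounded by $r$, and can only increase where the truncation is inactive), followed by Fubini and Lebesgue differentiation in $t$ --- is sound and would indeed deliver both the Lipschitz regularity of $F$ and the claimed a.e.\ derivative bound. One small caveat: your Lipschitz-regularity claim uses $|\nabla V|\in L^1([0,T];L^2)$ together with the segment inequality and Bishop--Gromov to bound the inner integral uniformly; this part is fine. The missing ingredient is entirely the pointwise first variation bound, and it cannot be obtained by quoting the theorem you are proving. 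If you want a self-contained proof along the lines you sketch, you should instead adapt the heat-regularization argument of \cite[Prop.\ 3.27]{D20} directly at the integral level, where the passage to the limit in $\epsilon$ is done after integrating over $S_1\times S_2$; this avoids ever needing the pointwise statement.
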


\begin{Rem}
Although \cite[Corollary 4.2]{BDS22} was stated only for the non-collapsed case (i.e. $\mm = \mathscr{H}^N$), its proof follows that of \cite[Proposition 3.27]{D20} (see also \cite[Proposition 4.1]{BDS22} for additional comments) and in particular works without the non-collapsed assumption. 
\end{Rem}

\subsection{Group norms}

Let $(X,p)$ be a pointed proper geodesic space and $\Gamma \leq \iso(X)$ a closed group of isometries. The \textit{norm} $\Vert \cdot \Vert_p : \Gamma \to \mathbb{R}$ associated to $p$ is defined as $\Vert g \Vert_p : = d(gp,p)$.  We denote as $\mathcal{G}(\Gamma, X ,  p,r)$ the subgroup of $\Gamma$ generated by the elements of norm $\Vert \cdot \Vert_p \leq r$. The \textit{norm spectrum} $\sigma (\Gamma )$ is defined as the set of $r \geq 0 $ for which  $\mathcal{G}(\Gamma ,X, p,r) \neq \mathcal{G}(\Gamma ,X, p , r- \varepsilon )$ for all $\varepsilon > 0 $.  Notice that we always have $0 \in \sigma (\Gamma)$.  If we want redundancy we sometimes write $\sigma(\Gamma , X,p)$ to denote the spectrum of the action of $\Gamma$ on the pointed space $(X,p)$. 

\begin{Pro}\label{pro:bounded-generation}
\rm If $\Gamma$ is equipped with the metric $d_0^p$ from \ref{d0}, and $\Gamma = \mathcal{G}(\Gamma , X, p, D)$ for some $D>0$, then $\Gamma = \langle B_{D + 2 \sqrt{2} + \varepsilon }( \Id_X )   \rangle $ for all $\varepsilon > 0$.
\end{Pro}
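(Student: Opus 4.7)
The plan is to prove the proposition directly from the definitions via a triangle-inequality estimate, comparing the two natural ways of measuring the size of an element: the norm $\Vert g \Vert_p = d(gp,p)$ and the metric $d_0^p(g, \Id_X)$.

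First, I will show that any $g \in \Gamma$ with $\Vert g \Vert_p \leq D$ satisfies $d_0^p(g, \Id_X) \leq D + 2\sqrt{2}$. Fix $r > 0$ and take any $x \in B_r(p)$. Since $g$ is an isometry, $d(gx, gp) = d(x,p) \leq r$, so by the triangle inequality
\[
d(gx, x) \leq d(gx, gp) + d(gp, p) + d(p, x) \leq r + D + r = 2r + D.
\]
Plugging into the definition \ref{d0} of $d_0^p$, this yields
\[
d_0^p(g, \Id_X) \leq \inf_{r > 0} \left\{ \frac{1}{r} + 2r + D \right\}.
\]
Minimizing the function $r \mapsto 1/r + 2r + D$ at $r = 1/\sqrt{2}$ gives the value $2\sqrt{2} + D$, establishing the claim.

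Now, given any $\varepsilon > 0$, every generator $g$ with $\Vert g \Vert_p \leq D$ lies in $B_{D + 2\sqrt{2} + \varepsilon}(\Id_X)$ by the above bound. Since by hypothesis $\Gamma = \mathcal{G}(\Gamma, X, p, D)$ is generated by precisely these elements, we conclude $\Gamma = \langle B_{D + 2\sqrt{2} + \varepsilon}(\Id_X) \rangle$, as desired. There is no real obstacle here; the only (elementary) step is the one-variable optimization that produces the constant $2\sqrt{2}$, which is essentially optimal given the $1/r + \sup$ structure of $d_0^p$.
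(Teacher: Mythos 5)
Your proof is correct and follows essentially the same route as the paper's: both establish the key estimate $d_0^p(g,\Id_X) \leq \Vert g\Vert_p + 2\sqrt{2}$ by evaluating the infimum in the definition of $d_0^p$ at $r = 1/\sqrt{2}$. You additionally explain the optimization that selects this $r$, but the argument is otherwise identical.
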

\begin{proof}
From  \ref{d0} with $r = 1/\sqrt{2}$, for all $g \in \Gamma$ one gets
\begin{equation}\label{norm-dist}
  \Vert g \Vert _p \leq d_0^p(g , I_X) \leq \Vert g \Vert _p + 2 \sqrt{2}   . 
\end{equation}
Then $ \{ g \in \Gamma \vert \Vert g \Vert _p \leq D  \} \subset B_{D + 2 \sqrt{2} + \varepsilon }( \Id_X ) \text{ for all }\varepsilon  >  0.  $
\end{proof}
This spectrum is closely related to the covering spectrum introduced by Sormani--Wei in \cite{SW04}, and it also satisfies a continuity property \cite[Proposition 47]{SZ23}.

\begin{Pro}\label{pro:spec-cont}
\rm Let $(X_i,p_i)$ be a sequence of pointed proper metric spaces that converges in the Gromov--Hausdorff sense to $(X,p)$ and consider a sequence of closed isometry groups $\Gamma _i \leq \iso (X_i)$ that converges equivariantly to a closed group $\Gamma \leq \iso (X)$. Then for any convergent sequence $r_i \in \sigma (\Gamma _ i ) $, we have $ \left[ \lim_{i \to \infty} r_i \right] \in \sigma (\Gamma ) $. 
\end{Pro}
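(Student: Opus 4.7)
\textbf{Proof Plan for Proposition \ref{pro:spec-cont}.}
The plan is to argue by contradiction: assume $r := \lim r_i \notin \sigma(\Gamma)$ and then use a lift-and-correct argument to show that $r_i \notin \sigma(\Gamma_i)$ for large $i$, contradicting the hypothesis. If $r \notin \sigma(\Gamma)$, then by definition there exists $\varepsilon > 0$ such that $\mathcal{G}(\Gamma, X, p, r) = \mathcal{G}(\Gamma, X, p, r - \varepsilon)$. On the other hand, because $r_i \in \sigma(\Gamma_i)$, for each large $i$ we may select an element $g_i \in \Gamma_i$ with $\Vert g_i \Vert_{p_i} \leq r_i$ but $g_i \notin \mathcal{G}(\Gamma_i, X_i, p_i, r_i - \varepsilon/3)$. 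Since the norms $\Vert g_i \Vert_{p_i}$ are uniformly bounded, equivariant convergence (together with the compactness of bounded metric balls in $\iso(X)$) yields a subsequential limit $g \in \Gamma$ of $g_i$, and by continuity of the action one obtains $\Vert g \Vert_p \leq r$.

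In particular $g$ lies in $\mathcal{G}(\Gamma, X, p, r) = \mathcal{G}(\Gamma, X, p, r - \varepsilon)$, so we may write $g = h_1 h_2 \cdots h_k$ with $\Vert h_j \Vert_p \leq r - \varepsilon$ for each $j$. The next step is the lifting: using that the GH approximations $\psi_i : \Gamma_i \to \Gamma \cup \{ \ast \}$ provided by equivariant convergence are almost surjective on bounded sets and almost distance-preserving, choose for each $j$ a sequence $h_{j,i} \in \Gamma_i$ with $\psi_i(h_{j,i}) \to h_j$. Compatibility of $\psi_i$ with the actions on $X_i, X$ via $\varphi_i$ gives $\Vert h_{j,i} \Vert_{p_i} \to \Vert h_j \Vert_p \leq r - \varepsilon$, so for $i$ sufficiently large we have $\Vert h_{j,i} \Vert_{p_i} \leq r - \varepsilon/2 < r_i - \varepsilon/3$. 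Consequently the product $\pi_i := h_{1,i} h_{2,i} \cdots h_{k,i}$ lies in $\mathcal{G}(\Gamma_i, X_i, p_i, r_i - \varepsilon/3)$.

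To finish, consider the error term $e_i := g_i^{-1} \pi_i \in \Gamma_i$. Since both $g_i \to g$ and $\pi_i \to h_1 \cdots h_k = g$ in the equivariant sense, we obtain $\psi_i(e_i) \to \Id_X$, hence $\Vert e_i \Vert_{p_i} \to 0$ by compatibility of the actions. In particular, for large $i$ we have $\Vert e_i \Vert_{p_i} \leq r_i - \varepsilon/3$, so $e_i \in \mathcal{G}(\Gamma_i, X_i, p_i, r_i - \varepsilon/3)$, and therefore
\[
  g_i \;=\; \pi_i \, e_i^{-1} \;\in\; \mathcal{G}(\Gamma_i, X_i, p_i, r_i - \varepsilon/3),
\]
contradicting the choice of $g_i$. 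This contradiction forces $r \in \sigma(\Gamma)$.

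The main technical point is the lifting step together with the justification that convergence in $d_0^{p_i}$ (or equivalently through the GH approximations $\psi_i$) actually controls the norms $\Vert \cdot \Vert_{p_i}$; I expect this to be the subtle part, since a priori the metric $d_0^{p}$ only bounds $\Vert \cdot \Vert_p$ up to an additive constant (cf. \eqref{norm-dist}). The resolution is to exploit the additional compatibility between $\psi_i$ and $\varphi_i$ in Definition \ref{def:equivariant}, which ensures that limits in $\Gamma$ of sequences in $\Gamma_i$ preserve the quantity $d(g_i p_i, p_i)$ and not merely $d_0^{p_i}(g_i, \Id)$.
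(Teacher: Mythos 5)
Your proposal is correct, and since the paper cites this proposition from \cite[Proposition 47]{SZ23} rather than proving it, there is no internal argument to compare against. The lift-and-correct strategy you use is natural and fits the style of the neighbouring proposition (the one asserting $\mathcal{G}(\Gamma, X, p, a) = \mathcal{G}(\Gamma, X, p, a+\varepsilon)$ for small $\varepsilon$), which the paper does prove via essentially the same kind of limit-and-correct-by-a-small-element argument.

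I checked the steps you flagged as delicate and they hold: (i) given $g_i$ with $\|g_i\|_{p_i}$ bounded and $\psi_i(g_i)\to g$, the compatibility $\sup_{h\in B_R(\Id),\,x\in B_R(p_i)} d(\varphi_i(hx),\psi_i(h)\varphi_i x)\to 0$ applied at $x=p_i$, together with $\varphi_i p_i\to p$, uniform convergence of $\psi_i(g_i)$ on compacta, and the GH almost-isometry property of $\varphi_i$, gives $\|g_i\|_{p_i}\to\|g\|_p$; (ii) the same reasoning shows that $\psi_i$ applied to products of converging sequences converges to the product of the limits, so $\psi_i(\pi_i)\to g$; (iii) for the error term, $d_0^{p_i}(g_i,\pi_i)\to 0$ by almost-isometry of $\psi_i$, and since $d_0^{p_i}$ is left-invariant and \eqref{norm-dist} gives $\|e_i\|_{p_i}\le d_0^{p_i}(e_i,\Id)$ with \emph{no} additive constant on that side, you do get $\|e_i\|_{p_i}\to 0$ directly. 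Two small points worth stating explicitly in a written-up version: the case $r=0$ is trivial since $0\in\sigma(\Gamma)$ by convention, and for $r>0$ one should take $\varepsilon<r$ (always possible by shrinking $\varepsilon$) so that $r_i-\varepsilon/3>0$ for large $i$ and the subgroup $\mathcal{G}(\Gamma_i,X_i,p_i,r_i-\varepsilon/3)$ is genuinely nontrivially defined.
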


\begin{Rem}
\rm It is possible that an element in $\sigma (\Gamma)$ is not a limit of elements in $\sigma (\Gamma_i)$, so this spectrum is not necessarily continuous with respect to equivariant convergence (see \cite[Example 1]{KW11}).
\end{Rem}

\begin{Pro}
\rm For any $a > 0$, one has $\mathcal{G}(\Gamma , X, p, a) = \mathcal{G}(\Gamma , X, p, a + \varepsilon )$  for $\varepsilon > 0 $ small enough.
\end{Pro}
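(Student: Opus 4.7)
The plan is to proceed by contradiction using compactness of norm balls in $\Gamma$. Suppose the conclusion fails; then there is a sequence $\varepsilon_n \downarrow 0$ with
\[ \mathcal{G}(\Gamma, X, p, a) \subsetneq \mathcal{G}(\Gamma, X, p, a + \varepsilon_n). \]
Since the right-hand side is by definition generated by $\{ g \in \Gamma : \Vert g\Vert_p \leq a + \varepsilon_n \}$, some $g_n$ in this generating set must lie outside $\mathcal{G}(\Gamma, X, p, a)$; necessarily $a < \Vert g_n \Vert_p \leq a + \varepsilon_n$, since any element of norm $\leq a$ is already in $\mathcal{G}(\Gamma, X, p, a)$.

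Next, apply a compactness argument. Because $X$ is proper and isometries are $1$-Lipschitz, a standard Arzel\`a--Ascoli argument shows that the set $\{ h \in \iso(X) : d(hp, p) \leq a + 1 \}$ is compact in the compact-open topology, and hence so is its intersection with the closed subgroup $\Gamma$. Passing to a subsequence, $g_n \to g_\infty$ for some $g_\infty \in \Gamma$; continuity of evaluation at $p$ gives $\Vert g_\infty \Vert_p = a$, and in particular $g_\infty \in \mathcal{G}(\Gamma, X, p, a)$.

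To derive the contradiction I will exploit left-invariance of the displacement: using $g_n p \to g_\infty p$,
\[ \Vert g_n g_\infty^{-1} \Vert_p = d(g_n p, g_\infty p) \longrightarrow 0. \]
Because $a > 0$, for all sufficiently large $n$ we have $\Vert g_n g_\infty^{-1} \Vert_p \leq a$, so $g_n g_\infty^{-1} \in \mathcal{G}(\Gamma, X, p, a)$; multiplying on the right by $g_\infty$ puts $g_n$ in $\mathcal{G}(\Gamma, X, p, a)$, contradicting our choice of $g_n$. The whole argument is essentially formal once the two ingredients---local compactness of $(\iso(X), d_0^p)$ and continuity of the norm $\Vert \cdot \Vert_p$---are in hand, so I do not expect any serious obstacle; the only subtlety is simply the observation that, when the containment of groups is strict, one may pick a witness among the generators rather than among arbitrary elements.
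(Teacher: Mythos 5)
Your argument follows the same strategy as the paper's proof: extract a convergent subsequence of witnesses, pass to a limit $g_\infty$ of norm exactly $a$, and write $g_n$ as the product of $g_\infty$ and a short element to derive a contradiction. One algebraic slip: $\Vert g_n g_\infty^{-1}\Vert_p = d(g_n g_\infty^{-1}p,\,p)$, which is \emph{not} $d(g_n p,\,g_\infty p)$ in general; the correct identity is $\Vert g_\infty^{-1} g_n\Vert_p = d(g_\infty^{-1} g_n p,\,p) = d(g_n p,\,g_\infty p)$, after which one writes $g_n = g_\infty\,(g_\infty^{-1} g_n)$ — which is exactly the decomposition the paper uses. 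The conclusion is unaffected (indeed $g_n g_\infty^{-1}\to\Id$ in $\iso(X)$ also forces $\Vert g_n g_\infty^{-1}\Vert_p\to 0$, since $\iso(X)$ is a topological group), but the quantity you named and the identity you asserted for it do not match.
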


\begin{proof}
Assuming the proposition fails, there is a sequence of elements $g_i $ not in $\mathcal{G}(\Gamma , X,p, a)$ with $\Vert g_i \Vert_p \to a$. As the sequence $\Vert g_i \Vert _p$ is bounded, after taking a subsequence we can assume $g_i \to g $ for some $g \in \Gamma$ with $\Vert g \Vert _p = a$. Then for large enough $i$, $\Vert g^{-1}g_i \Vert _p < a$, so $g_ i = (g) (g^{-1}g_i) \in \mathcal{G}(\Gamma , X , p, a) $, which is a contradiction.  
\end{proof}

\begin{Cor}\label{cor:empty-spec-1}
\rm For any $[a,b] \subset (0, \infty)$, the following are equivalent:
\begin{itemize}
\item $\sigma (\Gamma)\cap (a, b] = \emptyset$.
\item $\mathcal{G} (\Gamma , X , p, a) = \mathcal{G}(\Gamma , X , p, b)$.
\end{itemize}
\end{Cor}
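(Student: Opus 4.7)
The plan is to leverage three properties of the family $r \mapsto \mathcal{G}(\Gamma, X, p, r)$: it is monotone (non-decreasing with respect to inclusion), it is right-continuous at every $r > 0$ (this is exactly the proposition immediately preceding the corollary), and it is left-continuous at every $r \notin \sigma(\Gamma)$ (this is the definition of the spectrum). Once these three ingredients are in hand, the corollary reduces to a short monotonicity check in one direction and a sup-argument on $[a,b]$ in the other.

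The implication $\mathcal{G}(\Gamma, X, p, a) = \mathcal{G}(\Gamma, X, p, b) \Rightarrow \sigma(\Gamma) \cap (a,b] = \emptyset$ is immediate from monotonicity. Given $r \in (a,b]$, the chain
\[ \mathcal{G}(\Gamma, X, p, a) \subseteq \mathcal{G}(\Gamma, X, p, r) \subseteq \mathcal{G}(\Gamma, X, p, b) = \mathcal{G}(\Gamma, X, p, a) \]
forces equality throughout. Taking $\varepsilon = r - a > 0$ then gives $\mathcal{G}(\Gamma, X, p, r - \varepsilon) = \mathcal{G}(\Gamma, X, p, r)$, so $r \notin \sigma(\Gamma)$.

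For the converse, assume $\sigma(\Gamma) \cap (a,b] = \emptyset$ and set $T := \{ r \in [a,b] : \mathcal{G}(\Gamma, X, p, r) = \mathcal{G}(\Gamma, X, p, a) \}$. The preceding proposition applied at $a$ shows $T \supseteq [a, a + \varepsilon_0]$ for some $\varepsilon_0 > 0$, so $s := \sup T > a$. I would then show both $s \in T$ and $s = b$. For $s \in T$: since $s \in (a, b]$ the hypothesis gives $s \notin \sigma(\Gamma)$, hence some $\varepsilon > 0$ yields $\mathcal{G}(\Gamma, X, p, s - \varepsilon) = \mathcal{G}(\Gamma, X, p, s)$; choosing $r' \in T \cap (s - \varepsilon, s]$ (possible by definition of sup) and using monotonicity produces
\[ \mathcal{G}(\Gamma, X, p, a) \subseteq \mathcal{G}(\Gamma, X, p, s) = \mathcal{G}(\Gamma, X, p, s - \varepsilon) \subseteq \mathcal{G}(\Gamma, X, p, r') = \mathcal{G}(\Gamma, X, p, a), \]
so $s \in T$. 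For $s = b$: if $s < b$, applying the preceding proposition at $s$ produces $\varepsilon' > 0$ with $s + \varepsilon' \leq b$ and $\mathcal{G}(\Gamma, X, p, s + \varepsilon') = \mathcal{G}(\Gamma, X, p, s) = \mathcal{G}(\Gamma, X, p, a)$, contradicting the definition of $s$.

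There is no real obstacle here; the one subtlety worth watching is bookkeeping of which of the two continuity-type inputs supplies which half of the argument. The unconditional right-continuity (previous proposition) is what lets us open $T$ at $a$ and extend it past any interior candidate $s$, while the hypothesis $\sigma(\Gamma) \cap (a,b] = \emptyset$ is precisely left-continuity of $r \mapsto \mathcal{G}(\Gamma, X, p, r)$ on $(a,b]$, which is what makes the supremum $s$ actually attained. Both are needed for the forward direction; monotonicity alone suffices for the reverse.
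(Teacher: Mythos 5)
Your proof is correct and is exactly the argument the paper leaves implicit: the Corollary is stated without proof because it follows directly from the preceding Proposition (right-continuity of $r \mapsto \mathcal{G}(\Gamma,X,p,r)$) together with monotonicity and the definition of $\sigma(\Gamma)$ as the set of left-discontinuities, which is precisely what your supremum argument on $T$ carries out. One small streamlining you could note is that monotonicity forces $T$ to be an interval $[a,s)$ or $[a,s]$, so the two cases ($s\in T$ and $s\notin T$) can be handled a bit more transparently, but this changes nothing essential.
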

It is well known that when a group action is co-compact, the spectrum is bounded \cite[Proposition 5.28]{G81}.

\begin{Lem}\label{lem:co-compact-spectrum}
\rm Let $(X, p)$ be a pointed proper geodesic space and $\Gamma \leq \iso (X)$ a closed group of isometries. Then $ r \leq 2  \cdot$  diam$(X/ \Gamma)  $ for all $r \in \sigma(\Gamma , X , p ).$
\end{Lem}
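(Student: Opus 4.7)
Let $D := \mathrm{diam}(X/\Gamma)$; we may assume $D < \infty$, else the inequality is vacuous. The plan is to prove the stronger statement
\[
\mathcal{G}(\Gamma, X, p, 2D + \epsilon) \;=\; \Gamma \qquad \text{for every } \epsilon > 0 .
\]
Granting this, if $r > 2D$ then picking $\epsilon := (r - 2D)/2 > 0$ gives $r - \epsilon = (r + 2D)/2 > 2D$, so both $\mathcal{G}(\Gamma, X, p, r)$ and $\mathcal{G}(\Gamma, X, p, r - \epsilon)$ are equal to $\Gamma$; from the definition of $\sigma(\Gamma, X, p)$ we then conclude $r \notin \sigma(\Gamma, X, p)$, which is exactly what we want.

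To prove the displayed equality, fix $g \in \Gamma$ and $\epsilon > 0$. I would take a unit-speed geodesic $\gamma : [0, L] \to X$ from $p$ to $gp$, where $L := d(p, gp)$, and subdivide it into $n$ equal segments at points $x_i := \gamma(iL/n)$, with $n$ chosen so large that $L/n < \epsilon/2$. The orbit-space diameter bound supplies, for every interior index $1 \leq i \leq n-1$, an element $g_i \in \Gamma$ with $d(g_i p, x_i) < D + \epsilon/4$. The key bookkeeping is to anchor the endpoints by setting $g_0 := \Id$ and $g_n := g$, which is compatible with the bound since $x_0 = p$ and $x_n = g p$ force those distances to be zero.

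Then $g = g_0^{-1} g_n = \prod_{i=1}^n h_i$ with $h_i := g_{i-1}^{-1} g_i$, and for each $i$ the triangle inequality together with the fact that $\Gamma$ acts by isometries gives
\[
\|h_i\|_p \;=\; d(g_{i-1} p, g_i p) \;\leq\; d(g_{i-1} p, x_{i-1}) + d(x_{i-1}, x_i) + d(x_i, g_i p) \;<\; 2D + \tfrac{\epsilon}{2} + \tfrac{L}{n} \;<\; 2D + \epsilon ,
\]
so $g \in \mathcal{G}(\Gamma, X, p, 2D + \epsilon)$, as required. There is no substantial obstacle here: the only care needed is to pin the chain down at $g_0 = \Id$ and $g_n = g$ so the telescoping product returns $g$ exactly, and to take $n$ large (depending on $g$ and $\epsilon$) so that the subdivision error $L/n$ is absorbed.
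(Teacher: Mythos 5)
Your proof is correct. The paper does not supply its own proof of this lemma; it simply cites Gromov's Proposition 5.28, so there is no in-paper argument to compare against, but the chaining argument you give — subdividing a geodesic from $p$ to $gp$, choosing orbit representatives near each subdivision point, anchoring $g_0 = \Id$ and $g_n = g$, and telescoping — is precisely the standard proof of this fact, and the bookkeeping of the $\epsilon/4$, $\epsilon/2$ allowances and the passage from the generation claim $\mathcal{G}(\Gamma, X, p, 2D+\epsilon) = \Gamma$ to the spectral bound via the definition of $\sigma$ are all handled correctly.
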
 

To prove Theorem \ref{thm:induction-main-theorem}, one needs to control the number of generators of the groups $\Gamma_i$ \cite[Theorem 80]{SZ23}.

\begin{Lem}\label{lem:very-finite-generated}
\rm  Let $(X,d, \mm , p ) $ be a pointed $ \rcd (K,N)$ space, and $\Gamma \leq Iso (X)$ a discrete group of measure preserving isometries with $\Gamma  = \mathcal{G}(\Gamma , X , p, D)$. Then $\Gamma$ can be generated by at most $C(K,N, D)$ elements. 
\end{Lem}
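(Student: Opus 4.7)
The plan is Gromov's classical short-basis argument, adapted to the $\rcd(K,N)$ setting using Bishop--Gromov volume comparison and the Lie group structure of $\iso(X)$. First, I would construct a short basis $\gamma_1, \gamma_2, \ldots$ of $\Gamma$ greedily: let $\gamma_1$ realize $\min\{\|\gamma\|_p : \gamma \in \Gamma \setminus \{e\}\}$, attained by discreteness, and inductively let $\gamma_{i+1}$ realize $\min\{\|\gamma\|_p : \gamma \in \Gamma \setminus \langle \gamma_1, \ldots, \gamma_i\rangle\}$. The hypothesis $\Gamma = \mathcal{G}(\Gamma, X, p, D)$ forces $\|\gamma_i\|_p \leq D$ for all $i$. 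Writing $r_i := \|\gamma_i\|_p$, one has $r_1 \leq r_2 \leq \cdots$, and the process terminates once $\Gamma = \langle \gamma_1, \ldots, \gamma_N\rangle$.

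The key orbit-separation step is as follows: for $j < i$ the element $\gamma_j^{-1}\gamma_i$ cannot lie in $\langle \gamma_1, \ldots, \gamma_{i-1}\rangle$ (otherwise $\gamma_i$ would, contradicting its minimality), so by the choice of $\gamma_i$ we have $\|\gamma_j^{-1}\gamma_i\|_p \geq r_i$. Equivalently $d(\gamma_i p, \gamma_j p) \geq r_{\max(i,j)}$ for all $i \neq j$, and the balls $B_{r_i/2}(\gamma_i p)$ are pairwise disjoint and contained in $B_{3D/2}(p)$. Using that each $\gamma_i$ preserves $\mm$, one has $\mm(B_{r_i/2}(\gamma_i p)) = \mm(B_{r_i/2}(p))$; applying Bishop--Gromov (Theorem~\ref{thm:bg-in}) at each orbit point and summing disjoint volumes against $\mm(B_{3D/2}(p))$ gives a quantitative packing estimate controlling the total volume contribution of the short basis in terms of $K, N, D$.

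The main obstacle is that this packing estimate alone does not bound $N$ when the $r_i$ may be arbitrarily small, since the sum can accommodate many small-scale terms. To close this gap I would invoke the Lie group structure of $\iso(X)$ (Theorem~\ref{thm:gss-lie}): its dimension is controlled by a function of $N$, so the near-identity portion of the short basis, being a discrete subset of a bounded-dimensional Lie group, contributes only $O(1)$ independent generators, while Bishop--Gromov packing at any definite scale $r_0$ bounds the number of short-basis elements with $r_i \geq r_0$. Balancing these two contributions --- or, equivalently, arguing by contradiction and extracting an equivariant Gromov--Hausdorff limit via Theorems~\ref{thm:compactness} and \ref{thm:equivariant-compactness} on a hypothetical sequence of counterexamples in which $N$ diverges, then deriving a contradiction from the Lie structure of the limiting isometry group --- yields the uniform bound $N \leq C(K,N,D)$, as in the argument of \cite[Theorem 80]{SZ23}.
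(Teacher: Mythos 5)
The paper does not prove Lemma~\ref{lem:very-finite-generated}; it imports it directly from \cite[Theorem 80]{SZ23}, so there is no in-paper proof to compare against and your argument must stand on its own. The first half of your proposal is correct and standard: the greedy short basis, the orbit-separation inequality $d(\gamma_i p,\gamma_j p)\geq r_{\max(i,j)}$ (obtained from the minimality of $\gamma_{\max(i,j)}$), the disjointness of the balls $B_{r_i/2}(\gamma_i p)\subset B_{3D/2}(p)$, measure preservation to equalize their volumes, and Bishop--Gromov packing. You are also right that this step alone only bounds the number of short-basis elements with $r_i\geq r_0$ by a constant $C(K,N,D/r_0)$, and that this does not control the total count when the $r_i$ accumulate at $0$.

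The resolution you offer for the small-displacement regime is where the gap lies. The assertion that ``the near-identity portion of the short basis, being a discrete subset of a bounded-dimensional Lie group, contributes only $O(1)$ independent generators'' is a substantive claim that you neither prove nor reduce to a citable fact. Note that a discrete subgroup of a Lie group of dimension $d$ can support arbitrarily long strictly increasing chains of subgroups (for instance $2^k\mathbb{Z}<2^{k-1}\mathbb{Z}<\dots<\mathbb{Z}$ inside $\mathbb{R}$); what must save the day is specifically the greedy construction with \emph{non-decreasing} norms together with a Zassenhaus/Margulis-type statement for $\iso(X)$, and you do not make that connection. Even granting it, the scale $r_0$ at which the Lie-group argument kicks in is the size of a Zassenhaus neighborhood of $\iso(X)$, which depends on the particular space $X$ and not only on $(K,N,D)$, so the ``balancing'' of the two regimes is not uniform and does not on its own produce a constant $C(K,N,D)$.

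Your alternative route --- contradiction plus equivariant Gromov--Hausdorff compactness --- is in fact closer in spirit to what is available here (the natural tool being Theorem~\ref{thm:upsilon}, which produces the subgroups $\Upsilon_i\triangleleft\Gamma_i$ with $\Gamma_i/\Upsilon_i$ a quotient of the finitely generated group $\Gamma/\Gamma_0$, after which one must still bound the generators of $\Upsilon_i$ by a rescaling/descent argument). But as written, your sketch stops precisely at the crucial sentence ``deriving a contradiction from the Lie structure of the limiting isometry group,'' without explaining how the bound on generators of the small subgroups $\Upsilon_i$ is extracted. So, while the ingredients you list are the right ones, the step that actually closes the argument is missing.
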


\subsection{Group Theory}

In this section we cover basic group theory results needed later. Proofs of Propositions \ref{pro:pass-to-subgroup} and \ref{pro:finite-extension-abelian} below can be found in \cite[Section 4]{FY92}.

\begin{Pro}\label{pro:pass-to-subgroup}
\rm Let $G$ be a group and $H \leq G$ a subgroup of index $[G:H] \leq M$.
\begin{enumerate}
\item There is a normal subgroup $H^{\prime} \triangleleft G$ with $H^{\prime} \leq H$ and $[G:H^{\prime}] \leq C(M)$.\label{normal-always}
\item If $G$ is generated by $k$ elements, there is a characteristic subgroup $H^{\prime \prime} \triangleleft G$ with $H^{\prime \prime } \leq H$ and $[G : H^{\prime \prime } ]\leq C(M,k)$.\label{characteristic-always}
\end{enumerate}
\end{Pro}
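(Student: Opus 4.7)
The plan is to use two classical constructions: the normal core for part (1), and the intersection of all finite-index subgroups with a uniformly bounded index for part (2).

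For part \ref{normal-always}, I would set $H' := \bigcap_{g \in G} gHg^{-1}$, the normal core of $H$ in $G$. This is plainly normal in $G$ and contained in $H$. To bound its index, consider the action of $G$ on the set $G/H$ of left cosets by left multiplication. This defines a homomorphism $\Phi : G \to \mathrm{Sym}(G/H)$, and it is a standard observation that $\ker \Phi = H'$. Since $|G/H| \leq M$, we get $[G:H'] = |\mathrm{Im}\,\Phi| \leq |\mathrm{Sym}(G/H)| \leq M!$, so $C(M) := M!$ works.

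For part \ref{characteristic-always}, we want a characteristic subgroup, so we take an intersection that is visibly invariant under $\mathrm{Aut}(G)$. Let $N := M!$ and set
\[
H'' := \bigcap_{\substack{K \leq G \\ [G:K] \leq N}} K .
\]
Any automorphism $\varphi \in \mathrm{Aut}(G)$ permutes the set of subgroups of $G$ of index $\leq N$, hence $\varphi(H'') = H''$, so $H''$ is characteristic. Since $H' \leq H$ is one of the subgroups being intersected, $H'' \leq H' \leq H$. The main point is to bound $[G:H'']$ in terms of $M$ and $k$ only. Here I would invoke the classical finiteness result: in a group generated by $k$ elements, the number of subgroups of index exactly $n$ is at most $n \cdot (n!)^{k}$, since each such subgroup is the stabilizer of a basepoint in a transitive $G$-action on $\{1,\dots,n\}$, and a transitive action is determined by the images of the $k$ generators in $S_n$ (at most $(n!)^k$ choices) together with the basepoint (at most $n$ choices). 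Thus the number of subgroups appearing in the intersection is bounded by
\[
S(M,k) := \sum_{n=1}^{N} n \cdot (n!)^{k}.
\]

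Finally, to bound the index of the intersection I would use the elementary inequality: for any finite collection $K_1,\dots,K_s \leq G$ of finite-index subgroups,
\[
[G : K_1 \cap \dots \cap K_s] \leq \prod_{j=1}^{s} [G:K_j].
\]
Applied with each $[G:K_j] \leq N$ and $s \leq S(M,k)$, this yields $[G:H''] \leq N^{S(M,k)}$, a bound depending only on $M$ and $k$, so we may set $C(M,k) := (M!)^{S(M,k)}$. I do not expect any genuine obstacle here; the only non-triviality is the finiteness count for subgroups of bounded index in a finitely generated group, which is standard.
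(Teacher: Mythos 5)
Your proof is correct and follows the standard argument: normal core via the action on cosets for part (1), and intersection of all subgroups of index at most $M!$ for part (2), using the fact that a $k$-generated group has boundedly many subgroups of each finite index. The paper does not prove this inline but cites \cite[Section 4]{FY92}, whose argument is essentially the one you give.
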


\begin{Rem}\label{rem:main-characteristic}
\rm By Lemma \ref{lem:very-finite-generated} and Proposition \ref{pro:pass-to-subgroup}.\ref{characteristic-always}, whenever Theorem \ref{thm:induction-main-theorem} holds, we may assume the subgroups $G_i  \triangleleft \Gamma _ i $  are characteristic.
\end{Rem}

\begin{Pro}\label{pro:finite-extension-abelian}
\rm Let $A$ be an abelian group generated by $m$ elements, and $\varphi :  G \to A $ a surjective morphism with finite kernel. Then $G$ contains a finite index abelian subgroup generated by $m$ elements. 
\end{Pro}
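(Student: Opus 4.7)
The plan is to lift a generating set of $A$ to $G$ and then modify the lifts by taking appropriate powers so that they pairwise commute while still generating a finite-index subgroup. The main obstacle is to arrange commutativity of the modified lifts, which I would handle by two successive rounds of exponentiation, exploiting the finiteness of $K$.

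First, pick lifts $g_1, \ldots, g_m \in G$ of generators $a_1, \ldots, a_m$ of $A$. Since $K$ is finite, $\aut (K)$ is finite, and the conjugation action $G \to \aut (K)$ has finite image. Setting $N = |\aut (K)|$, each $g_i^N$ centralizes $K$. Because $G/K \cong A$ is abelian we have $[G,G] \subseteq K$, so every commutator $[g_i^N, g_j^N]$ lies in $K \cap C_G(K) = Z(K)$, a finite subgroup that is central in $H := \langle g_1^N, \ldots, g_m^N \rangle$. Hence $H$ is nilpotent of class at most $2$ with finite commutator subgroup.

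Second, I would invoke the identity $[x^a, y^b] = [x, y]^{ab}$, valid in class-$2$ nilpotent groups whenever $[x, y]$ is central. Setting $C = |Z(K)|$ and replacing each $g_i^N$ by $g_i^{NC}$ produces elements that pairwise commute (since each $[g_i^N, g_j^N]$ is central of order dividing $C$), so $H' := \langle g_1^{NC}, \ldots, g_m^{NC} \rangle$ is abelian and generated by $m$ elements. Finally, $\varphi(H')$ contains $NC \cdot a_i$ for each $i$, hence $\varphi(H') \supseteq NC \cdot A$, which has finite index in the finitely generated abelian group $A$; combining this with $|K| < \infty$ via $[G:H'] = [A : \varphi(H')] \cdot [K : K \cap H']$ gives $[G : H'] < \infty$, completing the argument.
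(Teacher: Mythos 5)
Your proof is correct, and each step is justified. Raising lifts to the power $N=\lvert \aut(K)\rvert$ places them in $C_G(K)$; abelianness of $G/K$ puts their pairwise commutators in $K\cap C_G(K)=Z(K)$, which is centralized by these lifts; the class-two identity $[x^a,y^b]=[x,y]^{ab}$ then kills the commutators after a further exponentiation by $C=\lvert Z(K)\rvert$; and the index bound $[G:H']=[A:\varphi(H')]\cdot[K:K\cap H']$ is finite because $A/NC\,A$ is a quotient of $(\Z/NC\Z)^m$ and $K$ is finite. This is essentially the same power-raising argument as in Fukaya--Yamaguchi, to which the paper defers for this proposition.

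One cosmetic remark: you say $Z(K)$ is ``central in $H$'' — strictly it is \emph{centralized by} $H$ (it need not lie inside $H$), but the elements $[g_i^N,g_j^N]$ that you actually use do lie in $H\cap Z(K)\subseteq Z(H)$, so the argument is unaffected. You also don't strictly need the observation that $[H,H]$ is finite; the only facts used are that each $[g_i^N,g_j^N]$ is central in $H$ and has order dividing $C$.
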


\begin{Pro}\label{pro:powers}
\rm Let $G$ be a group,  $H \triangleleft G$ a normal subgroup, $a, b \in G$ such that $[a,b ] \in H$, and $H_0 \triangleleft H$ a characteristic subgroup of $H$ with $[H : H_0] \leq M$. Then for all $C \geq 2M$ one has $[a^{C !}, b ] \in H_0$.
\end{Pro}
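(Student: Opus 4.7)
The first step is to pass to a quotient where $\bar H := H/H_0$ is a finite group that houses everything relevant. Since $H \triangleleft G$, conjugation by any element of $G$ restricts to an automorphism of $H$; since $H_0$ is characteristic in $H$, this automorphism preserves $H_0$. Hence $H_0 \triangleleft G$, and working in $\bar G := G/H_0$ it suffices to prove $[\bar a^{C!}, \bar b] = 1$, given $|\bar H| \leq M$ and $[\bar a, \bar b] \in \bar H$.

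Let $\tau \in \aut(\bar H)$ denote conjugation by $\bar a$ restricted to $\bar H$, and set $h_n := [\bar a^n, \bar b]$. Since $\bar a^n \bar b \bar a^{-n} = h_n \bar b$, the computation
\[ \bar a^{m+n} \bar b \bar a^{-(m+n)} \;=\; \bar a^m (h_n \bar b) \bar a^{-m} \;=\; \tau^m(h_n)\cdot h_m \bar b \]
yields the cocycle-type identity $h_{m+n} = \tau^m(h_n)\, h_m$. In particular, starting from $h_0 = 1$ and $h_1 \in \bar H$, an easy induction shows $h_n \in \bar H$ for all $n \geq 0$.

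The key step is a pigeonhole argument. The $M+1$ elements $h_0, h_1, \ldots, h_M$ all lie in $\bar H$, which has cardinality at most $M$, so $h_i = h_j$ for some $0 \leq i < j \leq M$. The cocycle relation then gives $h_j = \tau^i(h_{j-i})\, h_i$, whence $\tau^i(h_{j-i}) = 1$, and therefore $h_{m_0} = 1$ for $m_0 := j - i \in \{1, \ldots, M\}$. Substituting back, $h_{m+m_0} = \tau^m(h_{m_0})\, h_m = h_m$, so the sequence $(h_n)_{n \geq 0}$ is $m_0$-periodic, and $h_{k m_0} = h_0 = 1$ for every $k \geq 0$.

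Since $1 \leq m_0 \leq M \leq C$ (using only $C \geq M$; the hypothesis $C \geq 2M$ is a convenient overestimate), the factorial $C!$ is divisible by $m_0$, so $h_{C!} = 1$, i.e. $[a^{C!}, b] \in H_0$. I do not anticipate any serious technical obstacle; the only points needing care are the verification of the cocycle identity $h_{m+n} = \tau^m(h_n) h_m$ and the observation that characteristicity of $H_0$ in $H$ combined with normality of $H$ in $G$ upgrades to normality of $H_0$ in $G$, both of which are routine.
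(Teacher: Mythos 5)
Your proof is correct, and it takes a genuinely different (and, I think, cleaner) route than the paper's. Both arguments pass to $\bar G := G/H_0$ and exploit the fact that conjugation by $\bar a$ is an automorphism $\tau$ of the finite group $\bar H = H/H_0$. The paper bounds the \emph{order of $\tau$} by $M!$ (a permutation of at most $M$ objects), expands $\bar a^{M!M}\bar b\bar a^{-M!M} = \bar b\, q^M$ with $q = h_{M!}\in\bar H$, and then argues $q^M = 1$ to deduce $h_{M!M}=1$; the conclusion then requires $M!M\mid C!$, which is why the paper takes $C\ge 2M$. You instead apply the pigeonhole principle directly to the sequence $h_0,\dots,h_M\in\bar H$, extract a genuine period $m_0\in\{1,\dots,M\}$ with $h_{m_0}=1$, and conclude $h_{C!}=1$ as soon as $m_0\mid C!$. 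This buys two things. First, it avoids the step $q^M=1$: the paper justifies it only by $q\in\bar H$ with $|\bar H|\le M$, but an element's order in a group of order at most $M$ need not divide $M$ (it divides $|\bar H|$, hence $M!$, not $M$); your periodicity argument in fact shows $q=h_{M!}=1$ outright, which both repairs and subsumes that step. Second, you observe that $C\ge M$ already suffices, so you prove a slightly sharper statement than the one asserted. The cocycle identity $h_{m+n}=\tau^m(h_n)h_m$, the upgrade from ``$H_0$ characteristic in $H\triangleleft G$'' to ``$H_0\triangleleft G$'', and the closure $h_n\in\bar H$ are all handled correctly.
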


\begin{proof} In the group $G / H_0$, set $\alpha  : = a H_0$ and $\beta : = bH_0$. Then $\alpha \beta \alpha ^{-1} = \beta h$ for some $h \in H/ H_0$. A direct computation shows that $\alpha^k \beta \alpha^{-k} = \beta (h)(\alpha h \alpha ^{-1}) \cdots ( \alpha^{k-1} h \alpha ^{-k+1})$. As $H/H_0$ is normal in $G/H_0$ and $\vert H / H_0 \vert \leq M$, one gets that $\alpha ^{M!} h \alpha^{-M!} = h$, so
\begin{eqnarray*}
\alpha ^{M! M} \beta \alpha^{-M! M} & = & \beta \prod_{j=0}^{M!M-1} (\alpha ^j h \alpha^{-j}) \\
& = & \beta \left( \prod_{j=0}^{M!-1} (\alpha ^j h \alpha ^{-j} )    \right)^{M}\\
& = & \beta.
\end{eqnarray*} 
If $C \geq 2M$, then $C!$ is a multiple of $ M!M $, and 
\begin{eqnarray*}
[\alpha ^{C!}, \beta ] & = &  \alpha^{C!}\beta \alpha ^{-C!}\beta^{-1}\\
& = &  \alpha^{M!M} (  \ldots ( \alpha^{M!M}  \beta    \alpha^{-M!M} )  \ldots  \alpha^{-M!M})  \beta^{-1}\\
& = & \beta \beta^{-1}\\
& = & e_{G/H_0}
\end{eqnarray*} 
This shows that $[a^{C!}, b ] \in H_0$.
\end{proof}

\begin{Pro}\label{pro:replace}
\rm Let $\Gamma$ be a group, $G \triangleleft \Gamma$ a characteristic subgroup admitting a nilpotent basis, $g \in \Gamma$,  $\varphi  \in \aut (\Gamma )$, and $C\in 2 \mathbb{Z} $. If $ [\Gamma : G ] \leq C /2 $, then the nilpotent basis in $G$ is preserved by $\varphi  ^{C!}$ if and only if it is preserved by $(\varphi  \circ g_{\ast} ) ^{C!}$. 
\end{Pro}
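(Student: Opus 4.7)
My plan is to write $(\varphi \circ g_\ast)^{C!}$ as an inner twist of $\varphi^{C!}$, show that the twisting element lies in $G$, and observe that conjugation by elements of $G$ automatically respects $\beta$; the reverse implication then follows by symmetry.

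The first step is algebraic: by induction using the identity $\varphi \circ s_\ast = (\varphi(s))_\ast \circ \varphi$ for $s \in \Gamma$, one obtains
\[
    (\varphi \circ g_\ast)^{k} \;=\; (\ell_k)_\ast \circ \varphi^k, \qquad \ell_k := \varphi(g)\,\varphi^2(g)\cdots\varphi^k(g),
\]
so with $\ell := \ell_{C!} \in \Gamma$ we have $(\varphi \circ g_\ast)^{C!} = \ell_\ast \circ \varphi^{C!}$.

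The second (and main technical) step is to show $\ell \in G$. Since $G$ is characteristic in $\Gamma$, the automorphism $\varphi$ descends to an automorphism $\alpha$ of the finite group $A := \Gamma/G$, and $\ell$ projects to $\prod_{k=1}^{C!} \alpha^k(a)$ with $a := \bar g \in A$. Letting $p$ denote the period of $a$ under $\alpha$, we have $p \mid |\aut(A)| \leq |A|! \leq (C/2)!$, so the product collapses to $b^{C!/p}$ for $b := \alpha(a)\alpha^2(a) \cdots \alpha^p(a) \in A$. Since the order of $b$ divides $|A| \leq C/2$, it suffices to show $|A|\,p \mid C!$. This follows from the factorization $C! = (C/2)! \cdot (C/2+1)(C/2+2)\cdots C$: the second factor is a run of $C/2 \geq |A|$ consecutive integers and hence contains a multiple of $|A|$, while $p \mid (C/2)!$. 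Thus $b^{C!/p} = e$ in $A$, so $\ell \in G$.

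The third step shows that inner conjugation by any $\ell \in G$ already respects $\beta$. Writing $G_m := \langle \gamma_1, \dots, \gamma_m\rangle$, the nilpotent basis axiom gives $[\gamma_i, \gamma_j] \in G_{\min(i,j)-1}$; taking $i = m$ yields $[\gamma_m, \gamma_k] \in G_{m-1}$ for every $k$. Using the commutator identity $[a,bc] = [a,b](b[a,c]b^{-1})$ together with induction on $m$ and the concurrent normality of $G_{m-1}$ in $G$, we obtain $[\gamma_m, G] \subseteq G_{m-1}$. Hence $G_m/G_{m-1}$ (cyclic, generated by $\bar\gamma_m$) lies in the center of $G/G_{m-1}$, so every inner automorphism of $G$ preserves each $G_m$ and acts trivially on each $G_m/G_{m-1}$.

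Finally, since respecting $\beta$ is closed under composition, if $\varphi^{C!}$ respects $\beta$ then $(\varphi \circ g_\ast)^{C!} = \ell_\ast \circ \varphi^{C!}$ respects $\beta$ as well. For the converse I apply the same argument with $(\varphi, g)$ replaced by $(\varphi \circ g_\ast,\, g^{-1})$, noting that $(\varphi \circ g_\ast) \circ (g^{-1})_\ast = \varphi$. The main obstacle is the divisibility claim $|A|\,p \mid C!$ in the second step, which is where the hypotheses $C \in 2\mathbb{Z}$ and $[\Gamma:G] \leq C/2$ are used crucially; the remaining steps are straightforward consequences of the nilpotent basis axioms.
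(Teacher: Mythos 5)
Your proof is correct and follows essentially the same strategy as the paper's: write $(\varphi \circ g_\ast)^{C!} = \ell_\ast \circ \varphi^{C!}$ with $\ell = \varphi(g)\cdots\varphi^{C!}(g)$, prove that $\ell$ lies in $G$ via a divisibility argument in the finite quotient $A=\Gamma/G$, and observe that inner automorphisms by $G$-elements respect $\beta$ (the paper declares this last point ``clear,'' so your third step is a useful elaboration; your bookkeeping in step two, blocking by the period $p$ and using $C! = (C/2)!\cdot(C/2+1)\cdots C$, is a slight variant of the paper's blocking by $(C/2)!$). One minor notational slip: in ``$p \mid |\aut(A)| \leq |A|! \leq (C/2)!$'' the last two relations should be divisibilities, not inequalities — the intended chain $p \mid |\aut(A)| \mid |A|! \mid (C/2)!$ holds because $\aut(A)$ embeds into the symmetric group on $A$ and $|A| \leq C/2$.
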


\begin{proof}
First we observe that for any $k \in \mathbb{N}$ we have
\begin{equation}\label{eq:replacement}
\begin{split}
(\varphi  \circ g_{\ast} ) ^{k} & =  (\varphi  \circ g_{\ast} \circ  \varphi ^{-1})(\varphi ^2 \circ  g_{\ast} \circ  \varphi ^{-2}) \cdots (\varphi  ^k \circ  g_{\ast}  \circ \varphi  ^{-k} ) \varphi ^k \\ 
& =  (\varphi  (g))_{\ast} ( \varphi ^2 (g))_{\ast} \cdots (\varphi  ^k (g))_{\ast} \varphi  ^k \\
& =  (  \varphi  (g) \varphi  ^2 (g) \cdots \varphi  ^k (g)  )_{\ast} \varphi ^k.
\end{split}
\end{equation}
On the other hand, as $G$ is characteristic in $\Gamma$,  the group $G_{\ast} : = \{ x_{\ast} : \Gamma \to \Gamma \vert x \in G \}$ is normal in $\aut(\Gamma )$, so one has $y_{\ast}^C G_{\ast} = G_{\ast}$ for all $y \in \Gamma$. Also, notice that  $\varphi ^{(C/2)!}(g ) G = g G$ in $\Gamma / G$,  so  $(\varphi ^{(C/2)!}(g ))_{\ast} G_{\ast} = g_{\ast} G_{\ast}$ in $\aut (\Gamma )/ G_{\ast}$. Thus if $\ell = (C-1)!/(C/2)!$, using \ref{eq:replacement} we have
\begin{equation*}
\begin{split}
(\varphi  \circ g_{\ast} ) ^{C!} G_{\ast} & =  \left( (  \varphi  (g) \varphi  ^2 (g) \cdots \varphi  ^{(C/2)!} (g)  )_{\ast} \right)^{C \ell } \varphi ^{C!} G_{\ast} \\
& =   \varphi ^{C!} G_{\ast}.     
\end{split}
\end{equation*}
This implies that $(\varphi  \circ g_{\ast} ) ^{C!}$ and $\varphi ^{C!}$ differ only by an element of $G_{\ast}$, which clearly respects the nilpotent basis in $G$. 
\end{proof}

We will also need the following version of Bieberbach Theorem \cite[Section 4]{FY92}.

\begin{Thm}\label{thm:fyb}
\rm (Fukaya--Yamaguchi) Let $G \leq \iso(\mathbb{R}^m)$ be a closed group of isometries and $G _0 \leq G$ its identity connected component. Then $G / G_0$ contains a finite index abelian subgroup generated by at most $m$ elements.
\end{Thm}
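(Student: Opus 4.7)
This is the Fukaya--Yamaguchi generalization of the classical Bieberbach theorem to arbitrary (non-discrete, non-cocompact) closed subgroups of Euclidean isometry groups. My approach would be to reduce it to the classical Bieberbach result for discrete subgroups of $\iso(\mathbb{R}^k)$, $k \leq m$.

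First I would reduce to the case where the linear parts of $G$ lie in a connected compact Lie group. Using the decomposition $\iso(\mathbb{R}^m) = \mathbb{R}^m \rtimes O(m)$ and the linear-part projection $\pi : G \to O(m)$, the closure $\overline{\pi(G)} \leq O(m)$ is a compact Lie group with finite component group. Replace $G$ by the finite-index subgroup $G^{(1)} := \pi^{-1}(\overline{\pi(G)}_0) \cap G$; since $G_0 \subset G^{(1)}$, it suffices to prove the conclusion for $G^{(1)}/G_0$, which is a finite-index subgroup of $G/G_0$.

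Next, I analyze $G_0$ as a connected closed Lie subgroup of $\iso(\mathbb{R}^m)$ via its Lie algebra $\mathfrak{g}_0 \subset \mathbb{R}^m \oplus \mathfrak{o}(m)$: the pure-translation ideal $\mathfrak{t}_0 := \mathfrak{g}_0 \cap \mathbb{R}^m$ corresponds to a $G^{(1)}$-invariant translation subspace $V \subset \mathbb{R}^m$. The metric orbit space $\mathbb{R}^m/G_0$ then splits as $\mathbb{R}^k \times C$ with $k \leq m$ and $C$ a compact (possibly singular) factor arising from the rotational part of $G_0$ acting on a complement of $V$. The $G^{(1)}$-action on $\mathbb{R}^m$ descends to an isometric action of $G^{(1)}/G_0$ on $\mathbb{R}^m/G_0$ which preserves this splitting, yielding a homomorphism $\rho : G^{(1)}/G_0 \to \iso(\mathbb{R}^k)$.

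Using closedness of $G$ in $\iso(\mathbb{R}^m)$ together with compactness of $\iso(C)$, one checks that $\rho$ has discrete image (if $\rho(g_n) \to \Id$, then by compactness of $\iso(C)$ a subsequence of lifts $\tilde g_n \in G^{(1)}$ converges in the closed group $G$ modulo $G_0$, forcing $g_n = e$ eventually) and finite kernel (the kernel embeds into $\iso(C)$, which is compact, so it is a relatively compact subset of the discrete group $G^{(1)}/G_0$, hence finite). Applying classical Bieberbach to the discrete subgroup $\rho(G^{(1)}/G_0) \leq \iso(\mathbb{R}^k)$ gives a finite-index free abelian subgroup of rank $\leq k \leq m$; Proposition \ref{pro:finite-extension-abelian} then lifts this through the finite-kernel quotient $\rho$ to produce the desired finite-index abelian subgroup of $G^{(1)}/G_0$ generated by at most $m$ elements.

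The main obstacle is making the decomposition $\mathbb{R}^m/G_0 \cong \mathbb{R}^k \times C$ rigorous when $G_0$ contains screw-motion components (1-parameter subgroups of the form $(tv, R_{t\theta})$ with $\theta$ irrational), where the rotation and translation parts of $G_0$ are intertwined and a clean semidirect-product description $G_0 = V \rtimes K_0$ is unavailable. This requires careful Lie-algebraic bookkeeping and a center-of-mass or fixed-point argument to extract the Euclidean factor of the appropriate dimension.
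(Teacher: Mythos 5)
The paper itself does not prove Theorem \ref{thm:fyb}; it is quoted from \cite[Section 4]{FY92}, so there is no internal proof to compare against. Your outline contains a genuine gap that you partly flag yourself: the claim that $\mathbb{R}^m/G_0$ splits as $\mathbb{R}^k \times C$ with $C$ \emph{compact} is false. For instance $G_0 = SO(2) \leq \iso(\mathbb{R}^2)$ gives $\mathbb{R}^2/G_0 = [0,\infty)$, which is neither Euclidean nor compact; more generally, whenever $G_0$ has nontrivial rotational part the non-Euclidean factor of the quotient is a non-compact metric cone. What your discreteness and finite-kernel steps actually use is compactness of $\iso(C)$ for the \emph{line-free} factor $C$ in the splitting of the nonnegatively curved Alexandrov space $\mathbb{R}^m/G_0$. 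That is plausible (via a soul-type argument for singular spaces), but it is precisely the nontrivial input you describe as "the main obstacle" and do not supply, and you substitute for it the false claim that $C$ itself is compact.

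There is also an unflagged issue in the discreteness/finite-kernel step. The kernel of $\rho: G^{(1)}/G_0 \to \iso(\mathbb{R}^k)$ does not literally embed into $\iso(C)$: the natural map $G^{(1)}/G_0 \to \iso(\mathbb{R}^m/G_0)$ may itself have a nontrivial kernel, consisting of classes $gG_0$ with $g$ preserving every $G_0$-orbit setwise, and its finiteness must be argued separately (for example by noting that such $g$ lie in $G_0 \cdot \mathrm{Stab}_{G^{(1)}}(0)$, and the stabilizer is a compact subgroup of $O(m)$). Likewise, your argument that $\rho(g_n)\to\Id$ forces $g_n = e$ eventually tacitly assumes that the descent homomorphism $G^{(1)}/G_0 \to \iso(\mathbb{R}^k)\times\iso(C)$ is proper, which needs justification since it is not a priori a closed embedding. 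The overall reduction-to-Bieberbach strategy is a sensible plan, but as written the sketch does not constitute a proof: both the compactness of $\iso(C)$ for the singular line-free factor and the properness of the descent map remain open.
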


\begin{Cor}\label{cor:fybc}
\rm Let $Z$ be a compact metric space, $\Gamma \leq \iso ( \mathbb{R}^m \times Z )$ a closed group of isometries and $\Gamma_0 \leq \Gamma$ its identity connected component. If $\iso (Z)$ is a Lie group, then $\Gamma / \Gamma_0$ contains a finite index abelian subgroup generated by at most $m$ elements.
\end{Cor}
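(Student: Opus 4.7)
The plan is to reduce to the Fukaya--Yamaguchi version of Bieberbach (Theorem \ref{thm:fyb}) via the projection onto the Euclidean factor. First I would observe that since $Z$ is compact, it is bounded and therefore contains no isometric copy of $\mathbb{R}$; a direct analysis of isometric embeddings $\mathbb{R}\to\mathbb{R}^m\times Z$ (projecting to both factors and using the Pythagorean identity from the $L^2$ product) shows every such line must be horizontal. This forces any isometry of $\mathbb{R}^m\times Z$ to preserve the product decomposition, yielding $\iso(\mathbb{R}^m\times Z)=\iso(\mathbb{R}^m)\times\iso(Z)$. Since $\iso(Z)$ is a compact Lie group by hypothesis and $\iso(\mathbb{R}^m)$ is Lie, the ambient group is Lie, hence so is its closed subgroup $\Gamma$, and $\Gamma_0$ is open in $\Gamma$ with $\Gamma/\Gamma_0$ discrete.

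Let $\pi_1$ denote the projection onto $\iso(\mathbb{R}^m)$, and set $G:=\pi_1(\Gamma)$ and $K:=\Gamma\cap(\{\Id\}\times\iso(Z))$, so $\Gamma/K\cong G$. Because $\iso(Z)$ is compact, $K$ is compact and $G$ is closed in $\iso(\mathbb{R}^m)$: any limit point of $\pi_1(g_n)$ lifts to $\Gamma$ after extracting a convergent subsequence $h_n\to h$ in the compact factor. The same compactness argument shows $\pi_1(\Gamma_0)$ is closed and connected in $G$, hence contained in the identity component $G_0$. Consequently the composition $\Gamma\to G\to G/G_0$ descends to a surjection $\Gamma/\Gamma_0\twoheadrightarrow G/G_0$ whose kernel is $H/\Gamma_0$, where $H:=\pi_1^{-1}(G_0)\cap\Gamma$.

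The main step is showing this kernel is finite. I would first note that $\Gamma_0$ is itself the identity component of $H$: $\Gamma_0\subseteq H$ is connected and contains the identity, while any connected set in $H\subseteq\Gamma$ through the identity lies in $\Gamma_0$. To see $H/\Gamma_0=\pi_0(H)$ is finite, consider the quotient $H/(K\cdot\Gamma_0)$. This group is simultaneously a continuous quotient of the connected group $H/K=G_0$ and of the discrete group $H/\Gamma_0$, so it must be trivial; hence $H=K\cdot\Gamma_0$, and therefore $H/\Gamma_0\cong K/(K\cap\Gamma_0)$ is both a continuous quotient of the compact group $K$ and discrete, hence finite.

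To conclude, apply Theorem \ref{thm:fyb} to the closed group $G\leq\iso(\mathbb{R}^m)$ to produce a finite index abelian subgroup $A\leq G/G_0$ generated by at most $m$ elements. Its preimage $A'\leq\Gamma/\Gamma_0$ is of finite index in $\Gamma/\Gamma_0$ and surjects onto $A$ with finite kernel $H/\Gamma_0$, so Proposition \ref{pro:finite-extension-abelian} extracts from $A'$ a finite index abelian subgroup generated by at most $m$ elements, which remains of finite index in $\Gamma/\Gamma_0$. The main conceptual obstacle I foresee is justifying the product decomposition $\iso(\mathbb{R}^m\times Z)=\iso(\mathbb{R}^m)\times\iso(Z)$ from the bare hypothesis that $Z$ is compact and $\iso(Z)$ is Lie, but this reduces to the elementary observation about the absence of lines through the $Z$-factor.
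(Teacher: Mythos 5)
Your proposal is correct and follows the same overall route as the paper: decompose $\iso(\mathbb{R}^m\times Z)$ as a product using the absence of lines through the compact factor, project $\Gamma$ to a closed subgroup $G\leq\iso(\mathbb{R}^m)$, invoke Theorem \ref{thm:fyb} there, and then handle the kernel of $\Gamma/\Gamma_0\twoheadrightarrow G/G_0$ via Proposition \ref{pro:finite-extension-abelian}. Where you diverge is in the justification that the kernel $H/\Gamma_0$ is finite. The paper proves $\pi(\Gamma_0)=G_0$ and $[H:\Gamma_0]<\infty$ by two sequential compactness arguments, extracting convergent subsequences in the compact factor $\Ker(\pi)$ and using openness of $\Gamma_0$ to derive a contradiction. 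You instead observe $H/(K\Gamma_0)$ is simultaneously discrete (quotient of $H/\Gamma_0$) and a continuous image of the connected group $H/K\cong G_0$, hence trivial, so $H/\Gamma_0\cong K/(K\cap\Gamma_0)$ is compact and discrete, hence finite. This is cleaner and avoids taking subsequences, at the price of leaning (implicitly) on the open mapping theorem for locally compact second-countable groups to identify $H/K$ topologically with $G_0$; in the present Lie setting that is unproblematic, and the conclusion is the same. Both arguments also use, implicitly or explicitly, that $\Gamma_0$ is open in $\Gamma$, which you correctly ground in the ambient group being Lie.
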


\begin{proof}

 Notice that for each $(x,z) \in \mathbb{R}^m \times Z$, the $\mathbb{R}^m$-fiber passing through $(x,z)$ can be characterized as the union of the images of all infinite geodesics passing through $(x,z)$. This implies that $\iso(\mathbb{R}^m \times Z)$ respects the splitting $\mathbb{R}^m \times Z$ and decomposes as $\iso(\mathbb{R}^m \times  Z) = \iso(\mathbb{R}^m) \times \iso(Z)$. Let $G \leq \iso (\mathbb{R}^m )$ be the image of $\Gamma $ under the projection  $ \pi : \iso(\mathbb{R}^m \times Z) \to \iso(\mathbb{R}^m)$. As $\Gamma$ is closed and $\iso(Z) = \Ker (\pi) $ is compact,  $G$ is closed in $\iso (\mathbb{R}^m)$.  
 
 We claim that $ \pi ( \Gamma _ 0 ) = G_0$. Assuming the contrary, as $G_0$ is connected, there would be a sequence $x_i \in G_0 \backslash \pi (\Gamma _0)$ with $x_i \to e_{G_0}$. Pick $g_i \in \Gamma$ with $\pi (g_i) = x_i$.  Since $\iso (Z)$ is compact, after passing to a subsequence we can assume $g_i \to g_{\infty}$ for some $g_{\infty} \in \Ker ( \pi )$. Then  $g_{\infty}^{-1} g_i \to e_{\Gamma}$, so for $i$ large enough one has $g_{\infty}^{-1} g_i \in \Gamma_0$. This would mean that $\pi ( g_{\infty}^{-1} g_i ) = \pi (g_i)  = x_i \in \pi (\Gamma_0)$, which is a contradiction. 

Let $H : = \pi^{-1}(G_0) \cap \Gamma $. We claim that $[H : \Gamma_0 ] < \infty $. Otherwise, there would be a sequence $h_i \in H \cap \Ker(\pi)$ with $h_i^{-1}h_j \in H \backslash \Gamma_0$ for all $i \neq j$. As $\Ker(\pi )$ is compact, after taking a subsequence we can assume $h_i \to h_{\infty}$  for some $h_{\infty} \in \Ker(\pi )$. This would mean that for $i,j$ large enough, one has $h_i^{-1}h_j \in \Gamma_0$, which is a contradiction. 
 
 The above implies that $\Gamma / \Gamma_0$ is a finite extension of $ (\Gamma / \Gamma_0) / (H / \Gamma_0 ) = \Gamma / H \cong G / G_0$, so the result follows from  Theorem \ref{thm:fyb} and Proposition \ref{pro:finite-extension-abelian}.
\end{proof}

\section{Groups of connected components}\label{sec:gcc}

The goal of this section is to prove the following result (cf. \cite[Theorem 3.10]{FY92} and \cite[Lemma 58]{SZ23}). The groups $\Upsilon_i$ play the role of ``connected component of the identity'' in the groups $\Gamma_i$.

\begin{Thm}\label{thm:upsilon}
\rm  Let $(X_i, p_i)$ be a sequence of proper geodesic spaces that converges in the Gromov--Hausdorff sense to a space $(X,p)$, and $\Gamma_i \leq \iso (X_i)$ a sequence of closed groups of isometries that converges equivariantly to a closed group $\Gamma \leq \iso (X)$. Assume
\begin{itemize}
\item $\Gamma _ i = \mathcal{G}(\Gamma _i , X_i , p_i, D)$ for some $D > \sqrt{2}$.
\item $\Gamma_0$, the connected component of the identity of $\Gamma$, is open.
\item $\Gamma / \Gamma_0 $ is finitely presented.  
\end{itemize}
Then there are subgroups $\Upsilon_i \triangleleft \Gamma_i$ such that 
\begin{itemize}
\item $\Upsilon_i$ is normal in $\Gamma_i$ for $i$ large enough.
\item For any $R > 0$, $\Upsilon_i  = \langle \psi _i ^{-1}(B_R(\Id_X ) \cap \Gamma _0 )  \rangle $ for $i$ large enough.
\item For $i$ large enough, there are surjective morphisms $\Gamma / \Gamma_0 \to \Gamma_i / \Upsilon _ i$. 
\end{itemize} 
Where $\psi_i : \Gamma_i \to \Gamma \cup \{ \ast \} $ are the Gromov Hausdorff approximations given by Definition \ref{def:equivariant}.
\end{Thm}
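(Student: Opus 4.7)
The plan is to define $\Upsilon_i$ as the subgroup of $\Gamma_i$ generated by those elements whose $\psi_i$-images lie in a fixed small neighborhood of $\Id_X$ inside $\Gamma_0$, and then to verify each of the three properties by a bootstrap that uses only bounded generation of $\Gamma_0$, almost-multiplicativity of $\psi_i$, and the finite presentation of $\Gamma/\Gamma_0$.

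\textbf{Setup and bounded generation of $\Gamma_0$.} Since $\Gamma_0$ is open in $\Gamma$, pick $R_0>0$ with $B_{3R_0}(\Id_X)\cap\Gamma\subset \Gamma_0$. The metric $d_0^p$ from \eqref{d0} makes $\Gamma$ a proper, locally compact topological group, so $\Gamma_0$ is a connected locally compact topological group and therefore $\Gamma_0=\bigcup_n V^n$ for $V:=B_{R_0/2}(\Id_X)\cap\Gamma_0$. Compactness of $\overline{B_R(\Id_X)}\cap\Gamma_0$ yields, for every $R>0$, an integer $N=N(R,R_0)$ such that every $h\in B_R(\Id_X)\cap\Gamma_0$ factors as $h=h_1\cdots h_N$ with $h_j\in V$. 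For each $R>0$ set $\Upsilon_i^R:=\langle\psi_i^{-1}(B_R(\Id_X)\cap\Gamma_0)\rangle$, and define $\Upsilon_i:=\Upsilon_i^{R_0}$.

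\textbf{Scale independence (the key step).} I would show that for every fixed $R>0$ one has $\Upsilon_i^R=\Upsilon_i$ for all large $i$. Given $g\in\psi_i^{-1}(B_R(\Id_X)\cap\Gamma_0)$ with $\psi_i(g)=h$, write $h=h_1\cdots h_N$. Using equivariant convergence, choose $g_j^i\in\Gamma_i$ with $\psi_i(g_j^i)\to h_j$ as $i\to\infty$, so for large $i$ each $\psi_i(g_j^i)\in B_{R_0}(\Id_X)\cap\Gamma_0$, and in particular $g_j^i\in\Upsilon_i$. Since $\psi_i$ is almost multiplicative on bounded subsets (a direct consequence of Definition~\ref{def:equivariant}, extracted by comparing the action on $\varphi_i(B_R(p_i))$), the error $f:=g\cdot(g_1^i\cdots g_N^i)^{-1}$ satisfies $\psi_i(f)\to\Id_X$ in $\Gamma$, so eventually $\psi_i(f)\in B_{R_0}(\Id_X)\cap\Gamma\subset\Gamma_0$, whence $f\in\Upsilon_i$ and $g\in\Upsilon_i$. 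The symmetric argument, factoring each element of $\psi_i^{-1}(B_{R_0}(\Id_X)\cap\Gamma_0)$ into pieces of norm $<R$, gives $\Upsilon_i\subset\Upsilon_i^R$.

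\textbf{Normality and the surjection $\Gamma/\Gamma_0\to\Gamma_i/\Upsilon_i$.} For normality, take $h\in\Gamma_i$ with $\|h\|_{p_i}\leq D$ and a generator $g\in\Upsilon_i$; then $\psi_i(hgh^{-1})$ is close to $\psi_i(h)\psi_i(g)\psi_i(h)^{-1}$, which lies in $\Gamma_0$ (normal in $\Gamma$) and has norm bounded in terms of $D$ and $R_0$, so by scale independence $hgh^{-1}\in\Upsilon_i$. Since $\Gamma_i=\mathcal{G}(\Gamma_i,X_i,p_i,D)$, this propagates to all of $\Gamma_i$ by induction on word length. For the surjection, fix a finite presentation $\Gamma/\Gamma_0=\langle\bar{s}_1,\dots,\bar{s}_k\mid w_1,\dots,w_m\rangle$, lift to $s_j\in\Gamma$, and choose $s_j^i\in\Gamma_i$ with $\psi_i(s_j^i)\to s_j$. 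Each $\psi_i(w_l(s_1^i,\dots,s_k^i))$ is close to $w_l(s_1,\dots,s_k)\in\Gamma_0$ and of uniformly bounded norm, so lies in $\Gamma_0$ and hence in $\Upsilon_i$ by scale independence; therefore $\bar{s}_j\mapsto s_j^i\Upsilon_i$ extends to a morphism $\phi_i\colon\Gamma/\Gamma_0\to\Gamma_i/\Upsilon_i$. Since $B_D(\Id_X)\cap\Gamma$ is compact and the quotient $\Gamma/\Gamma_0$ is discrete, its image there is finite, so every $x\in\Gamma_i$ with $\|x\|_{p_i}\leq D$ satisfies $\psi_i(x)\Gamma_0=w(\bar{s}_j)\Gamma_0$ for one of finitely many words $w$; the same approximation argument shows $x\cdot w(s_j^i)^{-1}\in\Upsilon_i$, giving surjectivity via $\Gamma_i=\mathcal{G}(\Gamma_i,X_i,p_i,D)$.

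\textbf{Main obstacle.} The delicate point is scale independence: it requires combining bounded generation of $\Gamma_0$ (so that arbitrarily large-norm elements decompose into a uniformly bounded number of small-norm pieces) with the almost-multiplicativity of $\psi_i$, with the added subtlety that the residual error $f$ must genuinely land in the open neighborhood $B_{R_0}(\Id_X)\cap\Gamma\subset\Gamma_0$ for $i$ sufficiently large depending on both $R$ and $N(R,R_0)$. Once scale independence is in place, normality and the surjection from $\Gamma/\Gamma_0$ follow almost formally.
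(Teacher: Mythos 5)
Your proof is correct and follows the same overall structure as the paper's: define $\Upsilon_i$ as the subgroup generated by $\psi_i$-preimages of a small neighborhood of the identity in $\Gamma_0$, prove scale independence, deduce normality by conjugating generators and using bounded generation of $\Gamma_i$, and build the surjection from a finite presentation of $\Gamma/\Gamma_0$. The one substantive difference is in the scale-independence step. You invoke bounded generation of $\Gamma_0$ (since $\Gamma_0$ is connected and locally compact, $\Gamma_0 = \bigcup_n V^n$ for the symmetric neighborhood $V = B_{R_0/2}(\Id_X)\cap\Gamma_0$, and properness of $\Gamma$ gives a uniform factorization length $N(R,R_0)$ on $\overline{B_R(\Id_X)}\cap\Gamma_0$), then factor $\psi_i(g)$ as $h_1\cdots h_N$ with $h_j\in V$ and approximate each factor in $\Gamma_i$. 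The paper instead takes a finite $\delta/10$-net $\{y_1,\dots,y_n\}$ of $B_R(\Id_X)\cap\Gamma_0$ and, by connectedness, builds for each $y_j$ a chain $e=z_{j,0},\dots,z_{j,k_j}=y_j$ of small steps, then lifts the chain to $\Gamma_i$ and telescopes. Both versions rest on the same two ingredients — compactness of $\overline{B_R(\Id_X)}\cap\Gamma_0$ and uniform almost-multiplicativity of $\psi_i$ on bounded sets — and yield the same uniform-in-$g$ conclusion; yours is algebraically cleaner, the paper's more directly geometric.

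One presentational point worth tightening: you phrase the choice of approximants as ``choose $g_j^i$ with $\psi_i(g_j^i)\to h_j$ as $i\to\infty$'', but $h=\psi_i(g)$ and hence each $h_j$ is itself a moving target depending on $i$ and $g$. The correct statement is that the GH approximation supplies, for every $h'\in\overline V$ and every $i$, some $g'\in\Gamma_i$ with $d_0^p(\psi_i(g'),h')\le\eta_i$ where $\eta_i\to 0$ uniformly over $\overline V$ by compactness; combined with the uniform almost-multiplicativity estimate over bounded sets and the fact that $N$ is bounded, the residual error $f$ lands in $B_{R_0}(\Id_X)\cap\Gamma\subset\Gamma_0$ for all $i\ge i_0(R,R_0)$ simultaneously for every $g$. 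You flag exactly this subtlety in your ``Main obstacle'' paragraph, so it is a matter of phrasing rather than a genuine gap.
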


\begin{proof}
Let $r > 0 $ be such that $B_{2r}(\Id_X) \subset \Gamma_0$. First we show that for any fixed $R \geq r $ and  $\delta \in (0, r]$, the subgroup of $\Gamma_i$ generated by $\psi_i^{-1}(B_{\delta}(\Id_X  ))$ in $\Gamma_i$ coincides with $   \langle \psi _i ^{-1}(B_R(\Id_X ) \cap \Gamma _0 )  \rangle  $ for large enough $i$. To see this, first take a collection $y_1, \ldots , y_{n} \in B_R(\Id_X ) \cap \Gamma_0$ with 
\[ B_R(\Id_X ) \cap \Gamma_0 \subset \bigcup_{j=1}^n B_{\delta /10}(y_j) .  \]
By connectedness,  for each $j  \in \{1 , \ldots , n \} $ we can construct a sequence $e= z_{j,0}, \ldots , z_{j,k_j}=y_j $ in $\Gamma$ with  $d(z_{j,\ell -1},z_{j , \ell}) \leq \delta /10$ for each $\ell \in \{ 1, \ldots ,k_j \} $. Since all $  z_{j, \ell}  $ are contained in a compact subset of $\Gamma_0$,  if $i$ is large enough, for any element $x \in \psi_i^{-1}(B_R(\Id_X)\cap \Gamma_0)$ we can find $y_j $ with $d(y_j,\psi_i(x)) \leq \delta /10$, and $ e = x_0 , \ldots , x_{k_j} = x $  in $ \Gamma _i $ with $d(z_{j,\ell} , \psi_i(x_{\ell})) \leq \delta / 10$ for each $\ell$. This allows us to write $x = (x_1)(x_1^{-1}x_2 )\cdots (x_{k_j -1}^{-1}x_{k_j})$ as a product of $k_j $ elements in $\psi_i^{-1}(B_{\delta}(\Id_X ))$, proving our claim. Set $\Upsilon_i$ to be the subgroup of $\Gamma_i$ generated by $\psi_i^{-1}(B_r(\Id _X))$.

Choose $\delta > 0 $ small enough so that for all $g \in B_{3D}(\Id_X ) $, $h \in B_{\delta}( \Id_X)$ one has $ghg^{-1} \in B_{r/2}(\Id_X)$.  Then for large enough $i$, the conjugate of an element in  $\psi_i^{-1}(B_{\delta}(\Id_X ))$ by an element in $\psi_i^{-1}(B_{3D}(\Id_X ))$ lies in $\psi_i^{-1}(B_r(\Id_X ))$. By Proposition \ref{pro:bounded-generation},  $\psi_i^{-1}(B_{3D}(\Id_X))$ generates $\Gamma_i$ and $\psi_i^{-1}(B_{\delta}(\Id_X ))$ generates $\Upsilon_i$ for large enough $i$, implying that $\Upsilon_ i$ is normal in $\Gamma_i$.

Let $S_0 = \{ \overline{s}_1, \ldots , \overline{s}_k  \} \subset \Gamma / \Gamma_0$ be a finite symmetric generating set containing all connected components intersecting $B_{4D}(\Id_X ) $, $ S = \{s_1, \ldots , s_k \} \subset \Gamma $ a set of representatives, and for each $j \in \{ 1, \ldots ,  k\} $, pick a sequence $g_i^j \in \Gamma_i$ with $\psi_i (g_i^j) \to s_j$. Then define $h_i^{\prime} : S_0 \to \Gamma_i / \Upsilon_i$ as  $h_i^{\prime} ( \overline{s}_j)  : = g_i^j\Upsilon_i \in \Gamma_i / \Upsilon_i $. It is easy to check that $h_i^{\prime}(s_j)$ does not depend on the choices of the representatives $s_j$ nor the sequences $g_i^j$ for $i$ large enough. 

By hypothesis, $\Gamma / \Gamma_0$ admits a presentation $\langle S_0, W \rangle$ with $W$ a finite set of words. For $\overline{s}_{i_1} \ldots \overline{s}_{i_{\ell}} \in W$, one has $d_0^p(\psi_i(g_i^{i_1})\cdots \psi_i(g_i^{i_{\ell}}), \psi_i( g_i^{i_1} \cdots g_i^{i_{\ell}} ) ) < r $ for $i$ large enough, and hence $  \psi_i( g_i^{i_1} \cdots g_i^{i_{\ell}} )  \in \Gamma_0 $. This means, again for $i$ large enough, that $g_i^{i_1} \cdots g_i^{i_{\ell}} \in \Upsilon_i $, and $h_i^{\prime}(s_{i_1})\cdots h_i^{\prime}(s_{i_{\ell}})  =  g_i^{i_1} \cdots g_i^{i_{\ell}} \Upsilon_i = \Upsilon_i \in \Gamma _i / \Upsilon_i  $.  As there are only finitely many words in $W$, the functions $h_i^{\prime} :S_0 \to \Gamma_i / \Upsilon_i$ extend to group morphisms $h_i : \Gamma / \Gamma_0 \to \Gamma_i / \Upsilon_i$. As $S_0$ intersects each connected component in $B_{4D}(\Id_X)$ and $\Gamma_i$ is generated by $B_{3D}(\Id_{X_i})$, the maps $h_i$ are surjective.
\end{proof}
The following result deals with the base of induction in the proof of Theorem \ref{thm:induction-main-theorem}.
\begin{Lem}\label{lem:no-collapse-small-subgroup}
\rm Let $(X_i, d_i, \mm _i, p_i)$ be a sequence of $\rcd (K,N)$ spaces of rectifiable dimension $n$, and  $\Gamma_i \leq \iso (X_i)$ a sequence of closed groups of isometries. Assume the sequence $(X_i, p_i)$ converges in the Gromov--Hausdorff sense to a pointed metric space $(X, p)$, $X$ admits a measure that makes it an $\rcd (K,N)$ space of rectifiable dimension $n$, there is $D> 0 $ such that $\Gamma_i = \mathcal{G}(\Gamma_i ,X_i , p_i , D)$ for all $i$, and $\Gamma_i$ converges equivariantly to the trivial group. Then the groups $\Gamma_i$ are trivial for $i$ large enough.
\end{Lem}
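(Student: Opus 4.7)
The plan is to argue by contradiction, using the non-collapsing dimension hypothesis to show that any nontrivial element of $\Gamma_i$ would force a nontrivial action on the Euclidean tangent cone at a regular point of $X$, contradicting the equivariant convergence to the trivial group.

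Suppose, after passing to a subsequence, that $\Gamma_i \neq \{e\}$ for all $i$. Since $\Gamma_i = \mathcal{G}(\Gamma_i, X_i, p_i, D)$, I can pick a nontrivial $g_i \in \Gamma_i$ with $\Vert g_i \Vert_{p_i} \leq D$. The equivariant convergence $\Gamma_i \to \{e\}$ forces the bounded sets $\Gamma_i \cap B_R^{d_0^{p_i}}(\Id_{X_i})$ to collapse to a point in pointed Gromov--Hausdorff sense, so $d_0^{p_i}(g_i, \Id_{X_i}) \to 0$; by \eqref{d0}, there exist $R_i \to \infty$ and $\varepsilon_i \to 0$ such that $g_i$ moves every point of $B_{R_i}(p_i)$ by at most $\varepsilon_i$. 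By Theorem \ref{thm:compactness-2}, after normalizing the measures I may assume $(X_i, c_i \mm_i, p_i) \to (X, \mm, p)$ in measured Gromov--Hausdorff sense. Since $X$ has rectifiable dimension $n$, Theorem \ref{thm:bs-dimension} combined with Mondino--Naber yields a full-measure set of $n$-regular points; pick one $y \in B_1(p)$ with tangent cone $\R^n$, choose $y_i \to y$ in $X_i$, and set $r_i := d(g_i y_i, y_i)$. A small perturbation of $y_i$ guarantees $r_i > 0$ (since a nontrivial isometry of an $\rcd$ space fixes only a measure-zero set, as full-measure agreement with the identity would force $g_i = \Id$ by continuity together with full support of $\mm_i$). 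Then $r_i \leq \varepsilon_i \to 0$ once $y_i \in B_{R_i}(p_i)$.

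Rescaling by $r_i^{-1}$, regularity of $y$ together with a diagonal argument gives $(X_i, r_i^{-1} d_i, y_i) \to (\R^n, 0)$ in pointed GH. Applying Theorem \ref{thm:equivariant-compactness} to the $\Gamma_i$-action on the rescaled spaces, after a further subsequence there is a closed subgroup $\tilde{\Gamma} \leq \iso(\R^n)$ with $\Gamma_i \to \tilde{\Gamma}$ equivariantly in the rescaled metric and $(X_i/\Gamma_i, r_i^{-1} d, [y_i]) \to (\R^n/\tilde{\Gamma}, [0])$. On the other hand, Theorem \ref{thm:equivariant-compactness} applied in the original metric to $\Gamma_i \to \{e\}$ gives $(X_i/\Gamma_i, [p_i]) \to (X, p)$ in GH; since $[y_i] \to y$ under this convergence and the tangent cone of $X$ at $y$ is $\R^n$, a diagonal argument yields $(X_i/\Gamma_i, r_i^{-1} d, [y_i]) \to (\R^n, 0)$ as well. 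Comparing the two descriptions of the same GH limit forces $\R^n/\tilde{\Gamma} \cong \R^n$, and since every closed nontrivial subgroup of $\iso(\R^n)$ produces a strictly smaller quotient (of lower dimension, with nontrivial topology, or with boundary), this forces $\tilde{\Gamma} = \{e\}$.

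However, the equivariant limit $H$ of the closed subgroups $\overline{\langle g_i \rangle} \leq \Gamma_i$ in the rescaled metric satisfies $H \leq \tilde{\Gamma}$, and $H$ contains an isometry displacing $0$ by distance exactly $1$ (because $g_i$ moves $y_i$ by $1$ in the rescaled metric by construction), so $H \neq \{e\}$. This contradicts $\tilde{\Gamma} = \{e\}$, completing the proof. The main obstacle is justifying the diagonal convergence $(X_i/\Gamma_i, r_i^{-1} d, [y_i]) \to (\R^n, 0)$: combining the pointed GH convergence $X_i/\Gamma_i \to X$ with the tangent cone structure at $y$ requires selecting $y_i$ from a suitable sequence of approximate regular points so that the rescaled convergence along $r_i$ resolves uniformly to the Euclidean tangent cone, which can be ensured by an appropriate diagonal selection and the density of regular points furnished by Mondino--Naber.
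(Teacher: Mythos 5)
Your argument takes a genuinely different route from the paper. The paper proves this lemma in a few lines: using Theorem \ref{thm:upsilon} it identifies $\Gamma_i$ with $\Upsilon_i = \langle \psi_i^{-1}(\Id_X)\rangle$, observes that $\psi_i^{-1}(\Id_X)$ is a subgroup for large $i$ (the preimage of the trivial open compact subgroup under the equivariant GH approximation), and then cites \cite{SZ23} (Definition 66, Remark 75, Theorem 93), where the real analytic content --- that a ``small subgroup'' of $\iso(X_i)$ must be trivial once the rectifiable dimension does not drop in the limit --- is established. Your proposal tries to rederive that content directly by a blow-up argument at a regular point.

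The gap you flag at the end is genuine, and I do not think it can be dismissed as ``an appropriate diagonal selection.'' The rescaling factor $r_i = d(g_i y_i, y_i)$ is dictated by $g_i$ and $y_i$, not chosen by you. In a diagonal argument one is free to let the rescaling parameter go to infinity \emph{slowly} relative to the Gromov--Hausdorff errors of the sequence; here you have no such freedom, and nothing prevents $r_i$ from decaying much faster than the GH error of $X_i \to X$. In that regime the claims $(X_i, r_i^{-1}d_i, y_i) \to (\R^n,0)$ and $(X_i/\Gamma_i, r_i^{-1}d, [y_i]) \to (\R^n,0)$ both fail: at scale $r_i$ the spaces $X_i$ and $X_i/\Gamma_i$ are not yet resolved by their convergence to $X$, so the tangent-cone structure of $X$ at the regular point $y$ tells you nothing about them. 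Choosing $y_i$ from ``a suitable sequence of approximate regular points'' is circular, since changing $y_i$ changes $r_i$ along with it. Showing that the displacement of a nontrivial isometry cannot be small at all scales simultaneously with non-collapsing is precisely the content of \cite[Theorem 93]{SZ23} that the paper leans on, and your argument would need to supply a genuine substitute for it.

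Two secondary claims are also asserted without proof: that a nontrivial isometry of an $\rcd$ space fixes only a measure-zero set, and that a closed subgroup $\tilde{\Gamma}\le\iso(\R^n)$ with $\R^n/\tilde{\Gamma}$ isometric to $\R^n$ must be trivial. Both are plausible and likely fixable, but the diagonal-convergence issue is the one that requires real new work.
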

\begin{proof}
Clearly, we can assume $D > \sqrt{2}$.  Let $\Upsilon_i \leq \Gamma_i$ be the subgroups given by Theorem \ref{thm:upsilon}. Then  $\Gamma _ i = \Upsilon _i = \langle \psi_i^{-1} ( \Id_X) \rangle  $ for $i$ large enough. From the definition of equivariant convergence, it is easy to see that the $\psi_i$-preimage of an open compact subgroup of $\Gamma$ is a subgroup in $\Gamma_i$ for $i$ large enough, hence $\Gamma_i = \psi_i^{-1} (\Id_X)$. This means that $\Gamma_i$ are small subgroups in the sense of \cite[Definition 66 and Remark 75]{SZ23}, so by  \cite[Theorem 93]{SZ23} the result follows.  
\end{proof}

\section{Proof of main regularity estimates on RLFs}\label{sec:pmre}

\begin{proof}[Proof of Theorem \ref{thm:essential-stability}:]
Let $r_x  \leq \rho / 100 $ be such that for all $r \leq r_x$ one has
\[    \mathfrak{m} (  \{  y \in B_r(x) \vert H(y) \leq  \delta \} ) \geq \frac{1}{2} \mathfrak{m} ( B_r(x)  ).          \]
Fix $r \leq r_x$. We claim that if $\delta$ is small enough,  there is $x_r \in B_{r}(x) \cap \{ H \leq  \delta \} $ and a constant $C_0 (N) > 1 $ which is independent of $r$ such that
\begin{enumerate}[label= $\bm{S _r}$\textbf{.\arabic*}]
    \item   There is $B_{r}'(x_r) \subseteq B_{r}(x_r)$ such that $\mm(B_{r}'(x_r)) \geq (1 - \sqrt{\delta }) \mm(B_r(x_r))$ and 
    \[   \XX_t(B_{r}'(x_r)) \subseteq B_{2r}(\XX_t(x_r)) \text{ for all } t \in [0, T].  \]\label{eq:sr-1}
    \item  For all $t \in [0,T]$, 
    \[ \frac{1}{C_0}\mm(B_{r}(x_r)) \leq \mm(B_r(\XX_t(x_r)) \leq C_0\mm(B_{r}(x_r)).\]\label{eq:sr-2}
\end{enumerate}
The argument is based on an induction on time, following the scheme of \cite[Section 5]{D20}. Let $I_0 = [0,\frac{r}{10L}]$ and fix some $x_{r,0} \in B_{r}(x)$ with $H (x_{r,0}) \leq \delta$. By Remark \ref{rem:sob-to-lip}, for any $y\in B_{r}(x)$ and any $t \in I_0$, we have
\begin{align}\label{distance bound 1}
\begin{split}
    d(\XX_{t}(y), \XX_{t}(x_{r,0})) &\leq d(\XX_{t}(y), y) + d(y, x) + d(x, x_{r,0}) + d(x_{r,0}, \XX_{t}(x_{r,0}))\\
    &\leq 2L\frac{r}{10L}+2r < 3r.
\end{split}
\end{align}
Define $dt_{r} (t)$ as in \ref{eq:ddrt}, and denote $S_1 = B_{r}(x) \cap \{H \leq  \delta \}$, $S_2 = B_{2r}(x) $, and $\Gamma (t)$ as in \ref{eq:gamma(t)}. By Theorem \ref{thm:first-variation}, we obtain
\begin{align}\label{eq:ide-1}
\begin{split}
    & \int_{S_1 \times S_2} dt_{r}\big( \frac{r}{10L} \big) (x,y)\, d(\mm \times \mm)(x,y)\\
    = & \,\int_{I_0} \frac{d}{dt} \int_{S_1 \times S_2} dt_{r}(t)(x,y) \, d(\mm \times \mm)(x,y)\,dt\\
    \leq & \,\int_{I_0} \int_{0}^{1} \int_{\Gamma (t)} d(\XX_{t}(x), \XX_{t}(y))|\nabla b (t) |(\gamma_{\XX_{t}(x), \XX_{t}(y)}(s)) \, d(\mm \times \mm)(x,y) \, ds \, dt.
\end{split}
\end{align}
Using \eqref{eq:volume-distortion} and a change of variables, for any $t \in I_0$ we have
\begin{align*}
    & \int_{0}^{1} \int_{\Gamma (t)} d(\XX_{t}(x), \XX_{t}(y))|\nabla b(t)|(\gamma_{\XX_{t}(x), \XX_{t}(y)}(s)) \, d(\mm \times \mm)(x,y) \, ds \\
    \leq & \, e^{D T} \int_{0}^{1}\int_{\XX_t(\Gamma (t))} d(x, y)|\nabla b (t) |(\gamma_{x,y}(s)) d(\mm \times \mm)(x,y)\, ds.
\end{align*}
Furthermore,
\begin{align*}
    & e^{D T} \int_{0}^{1}\int_{ \XX_t(\Gamma (t))} d(x, y)|\nabla b (t) |(\gamma_{x,y}(s)) d(\mm \times \mm)(x,y)\, ds\\
    \leq & \, e^{D T}\int_{0}^{1} \int_{B_{6r}(\XX_{t}(x_{r,0}))^{ \times 2}} d(x, y)|\nabla b (t) |(\gamma_{x,y}(s)) d(\mm \times \mm)(x,y)\, ds\\
    \leq & \, e^{D T}C(N)r \mm(B_{6r}(\XX_{t}(x_{r,0})))^2 \fint_{B_{12r}(\XX_{t}(x_{r,0}))}|\nabla b (t) |\, d\mm\\
    \leq & \, e^{ D T}C(N)r\mm(B_r(x))^2 \fint_{B_{12r}(\XX_{t}(x_{r,0}))}|\nabla b (t) |\, d\mm, 
\end{align*}
where we used \eqref{distance bound 1} for the second line, Theorem \ref{thm:segment} for the third line, and Theorem \ref{thm:bg-in} for the fourth line. 
Combining the above estimates starting from \eqref{eq:ide-1}, we obtain
\begin{align*}
    & \int_{S_1 \times S_2} dt_{r} \big( \frac{r}{10L}\big) (x,y)\, d(\mm \times \mm)(x,y)\\
    \leq & \, e^{ D T}C(N)r\mm(B_r(x))^2 \int_{I_0} \fint_{B_{12r}(\XX_{t}(x_{r,0}))}|\nabla b (t) |\, d\mm \, dt\\
    \leq & \, e^{ D T}C(N)r\mm(B_r(x))^2 \int_{I_0} \mx_{\rho} (|\nabla b (t)|)(\XX_{t}(x_{r,0})) \, dt\\
    \leq & \, e^{ D T}C(N)r\mm(B_{r}(x))^2 H(x_{r,0}) \\
    \leq & \, e^{ D T}C(N)r\mm(B_{r}(x))^2 \delta .
\end{align*}
By Chebyshev's inequality, there is $x_{r,1} \in B_{r}(x) \cap \{H < K\}$ with 
\begin{equation*}
    \int_{B_{2r}(x)} dt_{r}\big( \frac{r}{10L}\big) (x_{r,1},y) d\mm(y) \leq e^{ D T}C(N)r\mm(B_{r}(x)) \delta .\\
\end{equation*}
Another instance of Chebyshev's inequality, along with 
\begin{equation*}
    \mm(B_{r}(x_{r,1})) \geq \frac{1}{C(N)} \mm(B_{r}(x)) ,
\end{equation*}
 shows that here is $B_{r,1}(x_{r,1}) \subseteq B_{r}(x_{r,1})$ with $ \mm(B_{r,1}(x_{r,1})) \geq (1 - \sqrt{ \delta }) \mm(B_{r}(x_{r,1}))$ and
    \begin{equation*}
        dt_{r}\big( \frac{r}{10L} \big) (x_{r,1}, z) \leq e^{D T}C(N)r \sqrt{\delta} \text{ for all } z \in B_{r,1}(x_{r,1}).
    \end{equation*}
 Hence if  $e^{ D T}C(N)\sqrt{ \delta} < 1$, then by the definition of $dt_{r} (t) $, for all $t \in I_0$ and $z \in B_{r,1}(x_{r,1})$ we have
\begin{align*}
    d(\XX_{t}(x_{r,1}), \XX_{t}(z)) &< d(x,z)+e^{D T}C(N)r\sqrt{ \delta } \\
    &< 2r,
\end{align*}
 so $\XX_{t}( B_{r,1}(x_{r,1}) ) \subset B_{2r}(\XX_{t}(x_{r,1}))$ for all $t \in I_0$. As 
\begin{align}B_{\frac{1}{2}r}(x_{r,1})\subset B_r(\XX_{t}(x_{r,1}))\subset B_{\frac{3}{2}r}(x_{r,1})  \text{ for all }  t \in I_0 , \end{align} 
from Theorem \ref{thm:bg-in} we have for all $t \in I_0$,
\begin{align}
   \frac{1}{C}\mm(B_r(x_{r,1}))\leq \mm(B_r(\XX_{t}(x_{r,1})))\leq C \mm(B_r(x_{r,1})) .
\end{align}
The argument above establishes \ref{eq:sr-1} and \ref{eq:sr-2} up to time $\frac{r}{10L}$. Now we show we can establish the same estimate up to time $T$ provided $\delta$ is small enough. 

 Let $k \in \mathbb{N}$ with $k < \lceil  10T L / r \rceil$,  and assume there is $x_{r,k}\in B_r(x)$ such that 
\begin{enumerate}[label=$\bm{S_{r,k}}$\textbf{.\arabic*}]
    \item There exists $B_{r}'(x_{r,k}) \subseteq B_{r}(x_{r,k})$ with $\mm(B_{r}'(x_{r,k}))) \geq (1 - \sqrt{\delta }) \mm(B_{r}(x_{r,k})))$ and  
    \begin{equation*}
            \XX ( B_{r}^{\prime} (x_{r,k})) \subset B_{2r}(x_{r,k}) \text{ for all } t \in \big[ 0,\frac{kr}{10L} \big].
    \end{equation*} \label{eq:e-s-ind-hyp-1}
    \item  For all $t \in [ 0, \frac{kr}{10L}  ]$, 
    \[     
   \frac{1}{C_0}\mm(B_r(x_{r,k}))\leq \mm(B_r(\XX_{t}(x_{r,k})))\leq C_0 \mm(B_r(x_{r,k}))   .               \]\label{eq:e-s-ind-hyp-2}
\end{enumerate}  
Set $t_k : = \min \big\{ \frac{(k+1)r}{10L}  ,  T \big\}$ and  $I_k : = [ 0, t_k ]$. 
From \ref{eq:e-s-ind-hyp-1} and Remark \ref{rem:sob-to-lip},  for all $t \in I_k$ we have
\begin{equation}\label{eq:sausage-r-induction}
\XX_{t}(B_{r}'(x_{r,k})) \subset B_{2r + \frac{1}{2}r}(\XX_{t}(x_{r,k})).
\end{equation}
Let $S_1 : = B_{r}^{\prime}(x_{r,k})$, $S_2 : = B_{2r}(x) $, and $\Gamma (t)$ be given by \ref{eq:gamma(t)}. By Theorem \ref{thm:first-variation},
\begin{align}\label{eq:ide-2}
\begin{split}
    & \int_{S_1 \times S_2} dt_{r} ( t_k  ) (x,y)\, d(\mm \times \mm)(x,y)\\
    = & \,\int_{I_k} \frac{d}{dt} \int_{S_1 \times S_2} dt_{r}(t)(x,y) \, d(\mm \times \mm)(x,y)  \, dt \\
    \leq & \,\int_{I_k} \int_{0}^{1} \int_{\Gamma (t)} d(\XX_{t}(x), \XX_{t}(y))|\nabla b (t) |(\gamma_{\XX_{t}(x), \XX_{t}(y)}(s)) \, d(\mm \times \mm)(x,y) \, ds \, dt\\ 
    \leq &\,\int_{I_k}  e^{D T} \int_{0}^{1}\int_{ \XX_t(\Gamma (t))} d(x, y)|\nabla b (t) |(\gamma_{x,y}(s)) d(\mm \times \mm)(x,y)\, ds \, dt \\ 
    \leq &\,\int_{I_k}   e^{D T}\int_{0}^{1} \int_{B_{6r}(\XX_{t}(x_{r,k})) ^{\times 2 }} d(x, y)|\nabla b (t) |(\gamma_{x,y}(s)) d(\mm \times \mm)(x,y)\, ds  \, dt \\ 
    \leq &\,\int_{I_k}   e^{D T}C(N)r \mm(B_{12r}(\XX_{t}(x_{r,k})))^2 \fint_{B_{6r}(\XX_{t}(x_{r,k}))}|\nabla b (t) |\, d\mm  \, dt \\ 
    \leq &\,\int_{I_k}   e^{D T} C_0^2C(N)r\mm(B_r(x))^2 \fint_{B_{12r}(\XX_{t}(x_{r,k}))}|\nabla b (t) |\, d\mm  \, dt .
\end{split}
\end{align}
where we used \ref{eq:volume-distortion}, \eqref{eq:sausage-r-induction}, Theorem \ref{thm:segment}, and Theorem \ref{thm:bg-in} with \eqref{eq:sausage-r-induction}.
From the above estimates we obtain
\begin{align*}
    & \int_{S_1 \times S_2} dt_{r} (t_k) (x,y)\, d(\mm \times \mm)(x,y)\\
    \leq & \, e^{D T}C_0^2C(N)r\mm(B_r(x))^2 \int_{I_k} \fint_{B_{12r}(\XX_{t}(x_{r,k}))}|\nabla b (t)|\, d\mm \, dt\\
    \leq & \, e^{D T}C_0^2C(N)r\mm(B_r(x))^2 \int_{I_k} \mx _{\rho}(|\nabla b (t)|)(\XX_{t}(x_{r,k})) \, dt\\
    \leq & \, e^{D T}C_0^2C(N)r\mm(B_{r}(x))^2 H(x_{r,k}) \\
    \leq & \, e^{D T}C_0^2C(N)r\mm(B_{r}(x))^2 \delta .
\end{align*}
By Chebyshev's inequality, there is some $x' \in B_{r}'(x_{r,k})$ with
\begin{equation*}
    \int_{B_{2r}(x)} dt_{r} (t_k) (x',y) d\mm(y) \leq e^{ D T}C_0^2C(N)r\mm(B_{r}(x)) \delta.\\
\end{equation*}
Thus there are $C(D, T, N) > 1 $ and  $B_k\subset B_{2r}(x)$ with 
\begin{align}
dt_{r} (t_k) (x',y) &\leq C r \sqrt{\delta }  \text{ for all }  y\in B_k, \label{eq:b-k-1} \\
\mm(B_k) &\geq (1-\sqrt{\delta }/2)\mm(B_{2r}(x)).\label{eq:b-k-2}
\end{align}
As $x'\in B_r'(x_{r,k})$, from \ref{eq:sausage-r-induction} we have $\XX_{t}(x')\in B_{2r + \frac{1}{2}r}(\XX_{t}(x_{r,k}))$ for all  $t\in I_k$, provided $C \sqrt{\delta } < 1$. Thus for all $t \in I_k$, we also have 
\begin{equation}\label{eq:sausage-r-induction-2}
\XX_{t}(B_k) \subset  B_{7r}(\XX_{t}(x_{r,k})).    
\end{equation}
Define $S_1=B_k$, $S_2=B_{r}(x)\cap\{H<\delta \}$, and $\Gamma (t)$ as in \ref{eq:gamma(t)}. Similar as before, we have
\begin{align*}
    & \int_{S_1 \times S_2} dt_{r} (t_k) (x,y)\, d(\mm \times \mm)(x,y)\\
    \leq & \, e^{D T}C_0^2C(N)r\mm(B_r(x))^2 \int_{I_k} \fint_{B_{20r}(\XX_{t}(x_{r,k}))}|\nabla b (t)|\, d\mm \, dt\\
    \leq & \, e^{D T}C_0^2C(N)r\mm(B_r(x))^2 \int_{I_k} \mx _{\rho}(|\nabla b|)(\XX_{t}(x_{r,k})) \, dt\\
    \leq & e^{D T}C_0^2C(N)r\mm(B_{r}(x))^2 \delta.
\end{align*}
This there is $x_{r,k+1}\in B_{r}(x)\cap\{H<\delta \}$ such that
\begin{align*}
     \int_{B_k} dt_{r} (t_k) (x_{r,k+1},y)\, d\mm(y)  \leq  e^{ D T}C_0^2C(N)r\mm(B_{r}(x))\delta.
\end{align*}
From \ref{eq:b-k-2} and Theorem \ref{thm:bg-in}, there are $C(D,T, N)>1$ and  $B_r'(x_{r,k+1})\subset B_r(x_{r,k+1})$ with 
\begin{align*}
dt_{r}  (t_k) (x_{r,k+1},y) &\leq C\sqrt{\delta}r \text{ for all } y\in B_r'(x_{r,k+1}),\\
\mm(B_r'(x_{r,k+1}))&\geq (1-\sqrt{\delta}) \mm(B_{r}(x_{r,k+1})).
\end{align*}
Thus if $ C \sqrt{\delta } < 1$, for all $t \in I_k$ we have 
\[\XX_t(B_r'(x_{r,k+1})) \subseteq B_{2r}(\XX_t(x_{r,k+1})).\] 
Also, from Theorem \ref{thm:bg-in} and \ref{eq:volume-distortion} we have for some $C(D,T, N) > 1$,
\begin{align}
    \notag\mm(B_r(\XX_t(x_{r,k+1})))&\geq \frac{1}{C}\mm(B_{2r}(\XX_t(x_{r,k+1})))\geq \frac{1}{C}\mm(\XX_t(B_r'(x_{r,k+1})) )\\ &\geq \frac{1}{C}\mm(B_r(x_{r,k+1})).\label{lowerbound}
\end{align}
To obtain the other direction of the volume estimate corresponding to \ref{eq:sr-2}, we consider the reversal of the flow. Fix $t \in I_k$, define $\overline{b} \in L^1 ([0,t]; H^{1,2}_{C,s}(TX))$ as
\[   \overline{b}(s) : = - b(t-s) \text{ for all }s \in [0,t] ,   \]
and let $\overline{\XX} : [0,t] \times X \to X$ be its RLF. Define $ dt_{r}' : [0,t] \times X \times X \to [0,r]$ as 
\begin{equation*}
    dt_{r}'(s)(x,y)=\sup_{0 \leq u \leq s} \dr (\overline{\XX}_u), 
\end{equation*}
 $S_1=\XX_t(B_r'(x_{r,k+1}))$, and $S_2=B_r(\XX_t(x_{r,k+1}))$. Similar as before we have
\begin{align*}
    & \int_{S_1 \times S_2} dt_{r}'(t )(x,y)\, d(\mm \times \mm)(x,y)\\
    \leq & \, e^{ D T}C_0^2C(N)r\mm(B_r(x_{r,k+1}))^2 \int_{0}^{t} \fint_{B_{20r}(\XX_{t-s}( x_{r,k+1}))}|\nabla \overline{b}(s) |\, d\mm \, ds\\
    \leq & \, e^{D T}C_0^2C(N)r\mm(B_r(x))^2 \int_{0}^{t} \mx _{\rho}(|\nabla b (s) |)(\XX_{t-s}(x_{r,k+1})) \, ds\\
    \leq & e^{D T}C_0^2C(N)r\mm(B_{r}(x))^2 \delta.
\end{align*}
Thus we have $x''\in \XX_t(B_r'(x_{r,k+1}))$ with
\begin{align*}
     \int_{B_r(\XX_t(x_{r,k+1}))} dt_{r}'( t )(x'',y)\, d\mm (y)   \leq  e^{D T}C_0^2C(N)r\mm(B_{r}(x))\delta.
\end{align*}
Hence there are $C(D,T,N)>0$ and $A'\subset B_r(\XX_t(x_{r,k+1}))$ such that 
\begin{align*}
dt_{r}'(t)(x'',y) &\leq Cr \sqrt{\delta} \text{ for all } y\in B_r(\XX_t(x_{r,k+1})),\\
\mm(A')&\geq ( 1- \sqrt{\delta})\mm(B_r(\XX_t(x_{r,k+1}))).
\end{align*}
Thus we have for some $C(D,T,N)>1$,
\begin{align}
\notag\mm(B_r(x_{r,k+1}))&\geq \frac{1}{C}\mm(B_{2r}(x_{r,k+1}))\geq \frac{1}{C}\mm(\overline{\XX}_{t}(A')) )\\
&\geq \frac{1}{C}\mm(B_r(\XX_t(x_{r,k+1}))).\label{upperbound}
\end{align}
Combining with \eqref{lowerbound} we have the desired volume bound, concluding the induction step and establishing \ref{eq:sr-1} and \ref{eq:sr-2} for $r \leq r_x$.

Take 
\begin{align*}
    x_r \in B_r(x) \cap \{ H \leq \delta \}, \, & \, B_r^{\prime}(x_r) \subset B_r(x_r),\\
 x_{r/2}\in B_{r/2}(x) \cap \{ H \leq \delta \} , \, & \,  B_{r/2}^{\prime} (x_{r/2}) \subset B_{r/2}(x_{r/2})   
\end{align*}
  given by the claim. That is, they satisfy \ref{eq:sr-1} and \ref{eq:sr-2} with $r$ and $r/2$ respectively. We claim that 
\begin{equation}\label{eq:second-claim}
 d( \XX_t (  x_r), \XX_t ( x_{r/2} ) ) \leq 20r \text{ for all } t\in [0,T].
\end{equation}
Take  $S_1=B_r(x)$ and $S_2=B_{r/2}^{\prime}(x_{r/2})$. Arguing as before, we can find  $x^{\prime}\in B_{r/2}^{\prime}(x_{r/2})$ with
\begin{align*}
     \int_{B_r(x)} dt_{r}(T)(x^{\prime},y)\, d\mm (y) \leq  e^{D T}C(N)r\mm(B_{r}(x))\delta,
\end{align*}
and $B_r^{\prime \prime}(x)\subset B_r(x)$ such that 
\begin{align}
dt_{r}(T)(x^{\prime},y) & < r   \text{ for all } y\in B_r^{\prime \prime }(x), \label{eq:b-prime-prime} \\ 
\mm(B_r^{\prime \prime }(x))&\geq ( 1-\sqrt{\delta}) \mm(B_{r}(x)). \label{eq:b-prime-prime-m}
\end{align}
Hence for all $t \in [0, T] $, $y \in B_r^{\prime }(x)$, using \ref{eq:b-prime-prime} we have
\begin{equation}\label{eq:b-prime-prime-y}
\begin{split}
    d ( \XX _t (x_{r/2}), \XX _ t (y)  )  & \leq d(  \XX _t (x_{r/2}), \XX_t(x^{\prime})  ) + d(  \XX _t (x^{\prime})  , \XX_t (y) ) \\
    & \leq  r + r + d( x^{\prime}, y ) \leq 4r.          
\end{split}
\end{equation}
In a similar fashion, one can find a subset $B^{\prime \prime \prime }_r(x) \subset B_r(x) $ with 
\begin{gather}
\textbf{X}_t(B_r^{\prime \prime \prime}(x)) \subset B_{10r} ( \XX_t(x_r) )  \label{eq:b-prime-prime-prime-d} \\
    \mathfrak{m} ( B_r^{\prime \prime \prime }(x) ) \geq (1 - \sqrt{\delta }) \mm ( B_r(x)).\label{eq:b-prime-prime-prime-m}
\end{gather}                
From \ref{eq:b-prime-prime-m} and \ref{eq:b-prime-prime-prime-m}, one can find $z \in B_r^{\prime \prime } (x) \cap B_r^{\prime \prime \prime } (x) $.  Then from \ref{eq:b-prime-prime-y} and \ref{eq:b-prime-prime-prime-d} applied to $z$, we conclude \ref{eq:second-claim}. 

Notice that by iterated applications of \ref{eq:second-claim},  for $r_1,r_2 \leq r_x$ and $t \in [0,T]$, one gets
\begin{equation}\label{eq:adjusted-well-defined}
    d(  \XX_t(x_{r_1}), \XX_t(x_{r_2})  ) \leq 100 \max \{ r_1, r_2 \}.
\end{equation}
Now let us adjust the Regular Lagrangian Flow. Let $S \subset X$ denote the set of $x$ satisfying \ref{eq:good-point-hypothesis} and for each $x \in S$ define $r_k \leq  \rho / 100 $ such that for all $r \leq  r_x $ one has
\[    \dfrac{\mm ( \{ H > \delta \} \cap B_r(x) )}{\mm (B_r(x))} \leq \frac{1}{2} .                       \]
Using $x_r$ from our first claim, by Kuratowski-Ryll-Nardzewski measurable selection theorem we can take a measurable choice of $x_r$ to define a measurable map $\tilde{x}:\mathbb{R}^+\times X\to X$ as
\[    \tilde{x} (r,x) := \begin{cases}
    x_r  & \text{ if }\,  x \in S, \, r \leq r_x \\
    x & \text{ else. }
\end{cases}          \]
Then let us define an adjusted flow $\tilde{\XX}:  [0,T] \times X \to X$ as\begin{align}
    \tilde{\XX}(x,t)=\lim _{r\to 0}\XX(\tilde{x}(r,x),t).
\end{align}
By \ref{eq:adjusted-well-defined}, the limit exists and satisfies \ref{eq:es-st-1} and \ref{eq:es-st-2} for all $x \in S$ and $r \leq r_x $. Now we need to verify that $\tilde{\XX}$ is also a Regular Lagrangian Flow.

\ref{def:rlf-1}  holds as \ref{eq:sob-to-lip-flow} passes to the limit trajectories.  Given $x\in S$, $r \leq r_x$, choose a set $A_r(x)\subset B_r(x)$ satisfying \ref{eq:es-st-1}, and consider the probability measure 
\begin{align}
    \mu_{r,x}=\frac{\chi_{A_r(x)}}{\mm(A_r(x))}\mm.
\end{align}
From  the definition of $\XX$, for all $f\in \test  (X)$ we have 
\begin{equation}\label{eq:mu-derivative}
    \frac{d}{dt}\int_X f d\mu_{r,x}(t)=\int_X df ( b ( t )) \, d\mu_{r,x}(t).
\end{equation} 
Also notice that by \ref{eq:es-st-1}, for all $t \in [0,T]$ we have 
\begin{equation}\label{eq:delta-convergence}
    \supp ( \mu _{r,x} (t)  )  \subset B_{2r} (\tilde{\XX}_t(x))  ,
\end{equation}   
and by \ref{eq:es-st-2}, the map $\mu _{r,x} (t) \xmapsto{\theta} \tilde{\XX}_t (x) $ makes 
\[  \{ \mu_{r,x}(t) \vert x \in S , r \leq r_x , t \in [0,T] \} \]
a family of bounded eccentricity.  Let $f\in \test (X)$ and $t \in [0,T]$.  From Lemma \ref{lem:leb} we have for $\mm$-a.e. $x\in S$,
\begin{align*}
    df (b(t) ) (\tilde{\XX}_t(x))  = \lim_{r\to 0}\int_X  df (b(t)) d \mu_{r,x}(t).
\end{align*}
Hence, for all $t_0, t_1 \in [0,T]$ and $\mm$-a.e. $x \in S$,
\begin{equation*}
\begin{split}
    \int_{t_0}^{t_1}   df (b(t) ) (\tilde{\XX}_t(x)) dt       &= \lim_{r\to 0}\int_{t_0}^{t_1}\int_X  df (b(t) ) d \mu_{r,x}(t) \\
    & =\lim_{r\to 0}\int_Xf d\mu_{r,x}(t_1)-\int_Xf d \mu_{r,x}(t_0)\\
    &=f(\tilde{\XX}_{t_1}(x))-f(\tilde{\XX}_{t_0}(x)),
\end{split}
\end{equation*}
where we used Dominated Convergence on the first two lines, \ref{eq:mu-derivative} on the second one, and \ref{eq:delta-convergence} on the third one. This implies $\tilde{\XX}$ satisfies \ref{def:rlf-2} for $\mm$-almost all $x \in S$. Since $\tilde{\XX}$ and $\XX$ coincide on $X \backslash S$, and $\XX$ satisfies \ref{def:rlf-2}, we deduce  $\tilde{\XX}$ satisfies \ref{def:rlf-2} as well. 

Let $S_k : = \{ x \in S \vert r_x \geq \frac{1}{k }\}$. We claim there is $\mathfrak{C} > 0 $ such that for all $r \leq \frac{1}{k}$, $y \in X$, we have
\begin{equation}\label{eq:y-bound}
    \int_{S_k} \frac{\chi _{\XX_t(A_r(x))} (y) }{\mm ( A_r (x)  )} \text{d} \mm (x) \leq \mathfrak{C} .
\end{equation}
This follows from the computation
\begin{equation*}
\begin{split}
    \int_{S_k} \frac{\chi _{\XX_t(A_r(x))} (y) }{\mm ( A_r (x)  )} d \mm (x) & \leq  M^2 \int_{S_k} \frac{\chi _{B_{2r}(y)} ( \XX_t (x) )}{ \mm ( B_r(\XX _t (x)) ) } d \mm (x)   \\
    & \leq M^2 \int_{B_{2r}(y)} \frac{1}{\mm (B_r(z) )} d ( (\XX_t)_{\ast} (\mm ) ) (z)\\
    & \leq M^2 C (b) \int_{B_{2r}(y)} \frac{1}{\mm ( B_r(z)  ) } d \mm (z) \\
    & \leq M^2 C (b) C(N), 
\end{split}
\end{equation*}
where the first line follows from \ref{eq:es-st-1} and \ref{eq:es-st-2}, the second from a change of variables, the third from  \ref{def:rlf-3} applied to $\XX$, and the fourth from Theorem \ref{thm:bg-in}. Then, given any $0\leq f\in \test (X)$, we have
\begin{equation*}
\begin{split}
\int_X f d ( (\tilde{ \XX} _t)_{\ast} (\mm \vert _{S_k}) ) & = \int_{S_k} (f \circ \tilde{\XX}_t) d \mm \\
&  \leq \lim _{r \to 0}  \int_{S_k} \int_X f(y) d \mu _{r,x} (t) (y) d \mm (x)          \\
&  = \lim _{r \to 0 } \int_X f(y) \int_{S_k} \frac{ \chi _{X_t(A_r(x))}(y)}{\mm (A_r(x))} d\mm  (x) d ((\XX_t)_{\ast} \mm) (y)\\
& \leq \mathfrak{C} \int_X f (y) d((\XX_t)_{\ast}\mm ) (y) \\
& \leq C \mathfrak{C} \int_X f d \mm ,
\end{split}
\end{equation*}
where we used \ref{eq:delta-convergence} on the second line, Tonelli's Theorem on the third line, \ref{eq:y-bound} on the fourth, and \ref{def:rlf-3} applied to $\XX$ on the fifth. This implies that $(\tilde{\XX})_{\ast}\mm \vert_{S_k} \leq \tilde{ \mathfrak{C}} \mm$ for some $\tilde{\mathfrak{C}}>0$ independent of $k$. Hence
\begin{equation*}
\begin{split}
( \tilde{\XX}_t )_{\ast} \mm & = (\tilde{\XX}_t)_{\ast} (\mm \vert _{X \backslash S}) +  (\tilde{\XX}_t)_{\ast} (\mm \vert _{S}) \\
& = (\XX_t)_{\ast} (\mm \vert _{X \backslash S}) + \lim_{k \to \infty}  (\tilde{\XX}_t)_{\ast} (\mm \vert _{ S_k})\\
& \leq ( C + \mathfrak{C}) \mm ,
\end{split}
\end{equation*}
establishing \ref{def:rlf-3} for $\tilde{\XX}$, so we conclude $\tilde{\XX}$  is an RLF for $b$.
\end{proof}

\begin{proof}[Proof of Corollary \ref{cor:essential-stability}:]
By Proposition \ref{pro:max-properties}(\ref{pro:max-properties-2}), for all $s \in [1, R-1]$, one has 
\[   \fint_{B_{s}(p)} \mx (  \vert  \nabla b  \vert  )^2 d \mm  \leq C(N) \eta .                     \]
 Define $H : X \to \mathbb{R}$ as $H(x) : = \int_0^T \mx  (  \vert \nabla b  \vert  ) ( \XX_t(x) ) dt$. Then by the Cauchy-Schwarz inequality and \ref{eq:volume-distortion} one gets
\begin{equation}\label{eq:h-corollary}
\begin{split}
& \left[ \fint_{B_r(p)}  H(x) d\mathfrak{m}(x) \right] ^2 \\
& \leq \fint_{B_r(p)} \left[  \int_0^T \mx (  \vert  \nabla b  \vert  )  (\XX _t (x)) dt  \right] ^2 d\mathfrak{m}(x) \\
& \leq \, T  \fint_{B_r(p)} \int_0^T \mx (  \vert  \nabla b  \vert  )  ^2 (\XX _t (x)) dt d\mathfrak{m}(x)\\
& \leq \, T  e^{2DT} \int_0^T \fint_{ \XX _t (B_r (x)) }    \mx (  \vert  \nabla b  \vert  )^2 (\XX _t (x))  d\mathfrak{m}(x) dt \\
& \leq \,  C(D,T, N) \int_0^T \fint_{ B_{r + LT } (x) }    \mx ( \vert \nabla b \vert )^2 (\XX _t (x))  d\mathfrak{m}(x) dt \\
& \leq C(D,T,N) \eta. 
\end{split}
\end{equation}
Let $\delta ( D,T, N) > 0 $ be given by Theorem \ref{thm:essential-stability}. From \ref{eq:h-corollary}, there is $C(D,T, N) > 1 $ such that
\begin{equation}\label{eq:h-corollary-2}
    \mm ( \{ H \leq \delta \}  \cap B_r(p)  ) \geq ( 1 - C \sqrt{\eta}  ) \,  \mm ( B_r(p) ) .     
\end{equation}   
By Theorem  \ref{thm:essential-stability}, $G$ contains the density points of $\{ H \leq \delta \}$, so the result follows from \ref{eq:h-corollary-2} provided $\eta \leq \varepsilon ^2 /C^2 $.
\end{proof}

\section{Self-improving stability}\label{sec:sis}

In this section, we use essential stability to obtain geometric control on RLFs from integral control on the covariant derivative of the corresponding vector fields. 
\begin{Lem}\label{lem:boot-strap}
\rm For each $N \geq 1$, $M \geq 1$,  there are  $\lambda (N) \geq 4$, $\varepsilon (N,M) > 0$, such that the following holds. Let $(X, d , \mm ) $ be an $ \rcd (-(N-1),N)$ space, $x \in X$, $V \in L^1 ( [0,T]; H^{1,2}_{C,s}(TX))$ a divergence free vector field, and $\XX: [0,T] \times X \to X$ the RLF of $V$. Assume 
\[     \int_0^T \mx _4 ( \vert \nabla V (t) \vert ) ( \XX_t(x)) dt \leq \varepsilon ,              \]
for some $r \leq 1/\lambda $ one has 
\begin{equation}\label{eq:measure-control-bootstrap-lambda}
    \frac{1}{M} \mm (B_r(x)) \leq \mm  (B_r(\XX_t(x))) \leq  M  \mm (B_r(x)) \text{ for all }t \in [0,T],  
\end{equation}     
and there is $ S_r \subset B_r(x)$ with   
\begin{gather*}
\mm  (S_r) \geq \frac{1}{M} \mm (B_r(x)),  \\
\XX_t(S_r) \subset B_{2r}(\XX_t(x)) \text{ for all }t \in [0,T].        
\end{gather*}
Then 
\begin{equation}\label{eq:boot-strap-measure}
\frac{1}{2} \mm (B_{\lambda r }(x)) \leq \mm  (B_{\lambda r }(\XX_t(x))) \leq  2 \mm (B_{\lambda r}(x)) \text{ for all }t \in [0,T],
\end{equation}
and there is $A_{\lambda r} \subset B_{\lambda r}(x) $ with 
\begin{equation}\label{eq:better-stability-1}
\mathbf{X}_t(A_{\lambda r}) \subset B_{(\lambda + 4 ) r } ( \XX_t(x)) \text{ for all }t \in [0,T],            
\end{equation}     
\begin{equation}\label{eq:better-stability-2}
 \mathfrak {m}  (  \XX_t (A_{\lambda r }) \cap B_{\lambda r} (  \XX_t (x) )  ) \geq \frac{9}{10} \mm  (B_{\lambda r} (\XX_t(x))) \text{ for all } t \in [0,T].      
\end{equation}
\end{Lem}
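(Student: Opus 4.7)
The plan is to adapt the induction-on-time scheme of Theorem~\ref{thm:essential-stability}, using the sausage $S_r$ as a reference trajectory. First fix $\lambda = \lambda(N) \geq 5$ large enough that the Bishop--Gromov ratio
\[
\nu := \sup \bigl\{ \mm(B_{(\lambda+5) r}(p))/\mm(B_{\lambda r}(p)) : p \in X,\ r \leq 1/\lambda \bigr\}
\]
satisfies $\nu \leq 1+\eta_0$ for any desired $\eta_0 > 0$; this is possible on $\rcd(-(N-1),N)$ spaces because for small $\lambda r$ the model-space ratio $((\lambda+5)/\lambda)^N$ tends to $1$ as $\lambda \to \infty$. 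I then apply the first variation formula (Theorem~\ref{thm:first-variation}) with $S_1 = S_r$, $S_2 = B_{\lambda r}(x)$, and distortion scale $r$. The right-hand side is handled by the usual recipe: change variables via $(\XX_s)_\ast \mm = \mm$ (divergence-freeness), restrict the integrand to $B_{(\lambda+4)r}(\XX_s(x))^{\times 2}$, apply the segment inequality (Theorem~\ref{thm:segment}) at scale $(\lambda+4)r$, and bound the resulting spatial $\fint$ by $\mx_4(|\nabla V(s)|)(\XX_s(x))$, valid since $2(\lambda+4)r \leq 4$ for $\lambda \geq 4$ and $r \leq 1/\lambda$. The hypothesis $\mm(B_r(\XX_s(x))) \leq M \mm(B_r(x))$ combined with Bishop--Gromov then yields
\[
\int_{S_r \times B_{\lambda r}(x)} dt_r(T)(y,z)\, d(\mm \times \mm) \leq C(N,M)\, \lambda\, r\, \varepsilon\, \mm(B_{\lambda r}(x))^2.
\]

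Two Chebyshev inequalities, dividing first by $\mm(S_r) \geq c(N,M)\, \mm(B_{\lambda r}(x))$, extract a reference $y_0 \in S_r$ and a set $A_{\lambda r} \subset B_{\lambda r}(x)$ with $\mm(A_{\lambda r}) \geq (1-\delta_\varepsilon)\mm(B_{\lambda r}(x))$ and $dt_r(T)(y_0, z) \leq r/2$ for every $z \in A_{\lambda r}$, where $\delta_\varepsilon := C'(N,M)\,\lambda\varepsilon$. This forces $|d(\XX_t(y_0), \XX_t(z)) - d(y_0, z)| \leq r/2$ for all $t \in [0,T]$, and combined with $\XX_t(y_0) \in B_{2r}(\XX_t(x))$ gives $d(\XX_t(z), \XX_t(x)) \leq (\lambda + 4)r$, which is \ref{eq:better-stability-1}. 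The symmetric argument applied to the reverse RLF on $[0,t]$ started at $\XX_t(x)$, with the enlarged sausage $\tilde S := \XX_t(S_r) \subset B_{2r}(\XX_t(x))$ (whose measure is comparable to $\mm(B_{2r}(\XX_t(x)))$ by the hypothesis, Bishop--Gromov and $(\XX_t)_\ast\mm = \mm$), produces $\tilde A \subset B_{\lambda r}(\XX_t(x))$ with $\mm(\tilde A) \geq (1-\delta_\varepsilon)\mm(B_{\lambda r}(\XX_t(x)))$ whose image under the reverse flow at time $t$ lies in $B_{(\lambda+5)r}(x)$. Crucially, the $\mx_4$ hypothesis transfers for free since the reverse trajectory of $\XX_t(x)$ visits the same points as the forward trajectory of $x$, and the larger sausage radius $2r$ only enlarges the target radius by one unit.

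Using measure preservation $\mm(\XX_t(A)) = \mm(A)$ for Borel $A$ (consequence of $(\XX_t)_\ast \mm = \mm$ and a.e.\ bijectivity via the reverse flow), the forward inclusion $\XX_t(A_{\lambda r}) \subset B_{(\lambda+4)r}(\XX_t(x))$ and the reverse inclusion of $\tilde A$ into $B_{(\lambda+5)r}(x)$ translate into the two-sided comparison
\[
(1-\delta_\varepsilon)\mm(B_{\lambda r}(x)) \leq \nu\mm(B_{\lambda r}(\XX_t(x))), \qquad (1-\delta_\varepsilon)\mm(B_{\lambda r}(\XX_t(x))) \leq \nu\mm(B_{\lambda r}(x)),
\]
which gives \ref{eq:boot-strap-measure} as soon as $\nu/(1-\delta_\varepsilon) \leq 2$. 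For \ref{eq:better-stability-2}, the estimate
\[
\mm(\XX_t(A_{\lambda r}) \cap B_{\lambda r}(\XX_t(x))) \geq \mm(A_{\lambda r}) - (\nu - 1)\mm(B_{\lambda r}(\XX_t(x)))
\]
combined with \ref{eq:boot-strap-measure} forces the ratio to be at least $(1-\delta_\varepsilon)^2/\nu - (\nu-1)$, which exceeds $9/10$ once $\nu$ and $\delta_\varepsilon$ are sufficiently small. The main obstacle is the bookkeeping of constants in the correct order: $\lambda(N)$ is fixed first (depending only on $N$) to drive $\eta_0 = \nu - 1$ as close to zero as needed, which is precisely what upgrades the hypothesis's $M$-dependent volume bound to the absolute factor $2$ and fraction $9/10$ in the conclusion; only afterwards is $\varepsilon(N,M)$ chosen small enough to make $\delta_\varepsilon$ correspondingly tiny.
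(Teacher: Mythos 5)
Your proposal is correct and follows essentially the same approach as the paper's proof: fix $\lambda(N)$ first via Bishop--Gromov so the volume ratio $\mm(B_{(\lambda+5)r})/\mm(B_{\lambda r})$ is as close to $1$ as needed, apply the first-variation formula with $S_1 = S_r$ and $S_2 = B_{\lambda r}(x)$ together with the segment inequality and the $\mx_4$ bound, extract the reference point and the set $A_{\lambda r}$ by two Chebyshev steps, and run the symmetric argument through the reverse flow to bound $\mm(B_{\lambda r}(\XX_t(x)))$ from above. The constants are organized slightly differently (symbolic $\nu$ and $\delta_\varepsilon$ rather than $101/100$, $99/100$, etc.) but the logical structure is identical.
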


\begin{proof} 
Pick $\lambda (N) \geq 5 $ such that 
\begin{equation}\label{eq:lambda-bg}
\mm  (B_{(\lambda + 5) s } (y) ) \leq \frac{101}{100} \mm  (B_{\lambda s} (y) )               
\end{equation}
for all $y \in X$, $s \leq 10 / \lambda $. With this choice of $\lambda$, \ref{eq:boot-strap-measure} will follow from \ref{eq:better-stability-1} and \ref{eq:better-stability-2}.  Let $\dr (t)$ be given by \ref{eq:ddrt} and 
\[\Gamma (t) : = \{ (a,b) \in S_r \times B_{\lambda r}(x) \vert \dr (t) (a,b) < r \}.\]
Notice that for each $t \in [0, T]$, $(a,b)  \in \Gamma (t)$, one has $d(\XX_t(a), \XX_t(x)) \leq 2r$, and 
\begin{equation}\label{eq:sausage}  
\begin{split}
d( \XX_t(b), \XX_t(x) ) & \leq d(\XX_t(b), \XX_t(a)) + d(\XX_t(a), \XX_t(x)) \\
& \leq d(a,b) + r + 2 r\\
& \leq  \lambda r + 4r.
\end{split}
\end{equation}
Then
\begin{equation}\label{eq:as-usual-1}
\begin{split}
  &\fint_{S_r \times B_{ \lambda r}(x)}  \dr(T)  d ( \mm \times \mm )   \\
  & \leq \dfrac{ M }{ \mm (B_{ r}(x))^2 }   \int_0^T  \int_{\Gamma(t)}  d( \XX_t(a),\XX_t(b)) \left[ \int_0^1 \vert \nabla V (t) \vert (\gamma_{\XX_t (a), \XX_t (b)}(s)) ds \right] d (\mm \times \mm )(a,b)dt \\
  & \leq \dfrac{  M   }{ \mm (B_{ r}(x))^2 }   \int_0^T  \int_{ \XX_t (\Gamma(t))}  d( a,b) \left[ \int_0^1 \vert \nabla V (t) \vert (\gamma_{a,b}(s)) ds \right] d (\mm \times \mm )(a,b)dt \\
  & \leq  \dfrac{  M }{ \mm (B_{ r}(x))^2 }   \int_0^T  \int_{ B_{(\lambda + 4 )r} (\XX_t(x))^{\times 2} }  d( a,b) \left[ \int_0^1 \vert \nabla V (t) \vert (\gamma_{a,b}(s)) ds \right] d (\mm \times \mm )(a,b)dt \\
  & \leq C(N) \cdot M^3 \cdot r \cdot \int_0^T  \fint _{B_{(2 \lambda + 8)r  } ( \XX_t(x))} \vert \nabla V (t) \vert (y) d\mm (y) dt \\
  & \leq C(N) \cdot M ^3  \cdot r \cdot  \int_0^T \mx _4 (\vert \nabla V (t) \vert )(\XX_t(x)) dt \leq  C(N) M ^3 \varepsilon r,
 \end{split}
\end{equation}
where the first inequality follows from Theorem \ref{thm:first-variation}, the second one from Tonelli Theorem, the third one from \ref{eq:sausage}, and the fourth one from Theorem \ref{thm:segment} and \ref{eq:measure-control-bootstrap-lambda}. Hence there is $y \in S_r$ with 
\begin{equation}\label{eq:many-dont-escape}
     \fint_{B_{ \lambda r}(x)} \dr (T) (y,z) d\mm (z) \leq C(N) M ^3 \varepsilon r  .
\end{equation}   
We can then define $A_{ \lambda r} : = \{ z \in B_{\lambda r}(x) \vert \dr (T) (y,z)  < r \} $, which by \ref{eq:sausage} satisfies \ref{eq:better-stability-1}, and by \ref{eq:many-dont-escape} satisfies  
\begin{equation} \label{eq:points-dont-escape}
    \mm (A_{\lambda r}) \geq (1- C(N)M^3 \varepsilon ) \mm  (B_{\lambda r }(x))  \geq \frac{99}{100} \mm (B_{\lambda r} (x)), 
\end{equation}  
provided $\varepsilon$ is small enough. To verify \ref{eq:better-stability-2}, fix $t \in [0,T]$, consider the vector field $\overline{V}  \in L^1( [0, t], H^{1,2}_{C,s} ( 
 TX ) )$ given by $\overline{V} ( s ) : = - V (t-s)$, and $\overline{\XX} : [0,t] \times X  \to X$ its RLF. Also set 
 \begin{gather*}
  \overline{\dr}(\cdot) (\cdot, \cdot ) : [0,t] \times X \times X \to \mathbb{R} \\
    \overline{\dr} (s):= \sup_{u \in [0,s]} \dr ( \overline{X}_u ),      
 \end{gather*} 
and define $\overline{\Gamma}  (s) : = \{ (a,b) \in \XX_t(S_r) \times B_{\lambda r}(\XX_t(x)) \vert \overline{\dr} (s) (a,b) < r \}$. Then for all $s \in [0,t]$ and $(a,b) \in \overline{\Gamma}  (s)$, one has $d ( \overline{\XX}_s(a) , \XX_{t-s} (x) ) \leq 2 r $, and  
\begin{equation}\label{eq:sausage-2}
\begin{split}
    d( \overline{\XX}_s(b), \XX_{t-s}(x) ) & \leq   d( \overline{\XX}_s(b), \overline{\XX}_s(a)  )  +  d( \overline{\XX}_s(a), \XX_{t-s}(x) )       \\
    &   \leq   d( a,b) + r + 2r          \\
    &   \leq   \lambda r + 5 r. 
\end{split}
\end{equation}                        
As in \ref{eq:as-usual-1}, using \ref{eq:sausage-2} instead of \ref{eq:sausage} we get
\begin{equation*}
\begin{split}
  &\fint_{\XX_t(S_r) \times B_{\lambda r}(\XX_t(x))}  \overline{\dr} (t)  d(\mm \times  \mm )  \\
  & \leq \dfrac{ 2  M  }{ \mm (B_{r}( x))^2 }   \int_0^t  \int_{\overline{\Gamma}  (s)}  d(\overline{ \XX}_s(a),\overline{\XX}_s(b)) \left[ \int_0^1 \vert \nabla \overline{ V} ( s) \vert (\gamma_{\overline{\XX}_s (a), \overline{\XX}_s (b)}(u)) du \right] d (\mm \times \mm )(a,b)ds \\
  & \leq  \dfrac{ 2  M  }{ \mm (B_{r}(  x))^2 } \int_0^t  \int_{ \overline{\XX}_s (\overline{\Gamma}  (s))}  d( a,b) \left[ \int_0^1 \vert \nabla \overline{V} ( s) \vert (\gamma_{a,b}(u)) du \right] d (\mm \times \mm )(a,b)ds \\
  & \leq  \dfrac{ 2  M  }{ \mm (B_{ r}( x))^2 }   \int_0^t  \int_{ B_{(\lambda + 5 )r} (\XX_{t-s}(x))^{\times 2} }  d( a,b) \left[ \int_0^1 \vert \nabla \overline{V} ( s) \vert (\gamma_{a,b}(u)) du \right] d (\mm \times \mm )(a,b)ds \\
  & \leq C(N)  M^3  r  \int_0^t  \fint _{B_{(2 \lambda + 10)r  } ( \XX_{t-s}(x))} \vert \nabla \overline{V} ( s) \vert (y) d\mm (y) ds \\
  & \leq C(N)  M ^ 3  r  \int_0^t \mx _4 (\vert \nabla V ( t-s)\vert )(\XX_{t-s}(x)) ds \leq  C(N) M^3 \varepsilon r.
  \end{split}
  \end{equation*}
  Pick $y_t \in \XX_t(S_r)$ such that 
  \[   \fint_{B_{\lambda r} (\XX_t(x)) } \overline{\dr} (t) (y_t, z) d\mm (z) \leq C(N) M^3 \varepsilon r.    \]
  Then the set $  A^t_{\lambda r} : =  \{ z \in  B_{\lambda r} (\XX_t(x)) \vert \overline{\dr}(t) (y_t, z) < r   \}  $ satisfies 
  \[\mm  (A^t_{\lambda r}) \geq (1 - C(N) M^3 \varepsilon ) \mm  (B_{\lambda r }(\XX_t(x))).\]
  Since $\overline{X}_t(A^t_{\lambda r}) \subset B_{(\lambda + 5)r} (x) $ and $\overline{X}_t$ is measure preserving, using \ref{eq:lambda-bg} we have 
  \begin{equation}\label{eq:boot-strap-measure-2}
  \mm  (B_{\lambda r}(x) ) \geq \dfrac{98}{100} \mm  (B_{\lambda r} (\XX_t(x))) 
  \end{equation}
  provided $\varepsilon$ is small enough. We conclude 
\begin{equation*}
\begin{split}
&\mm   ( \XX_t(A_{\lambda r}) \cap B_{\lambda r} (\XX_t (x))  )  \\
&\geq \mm  ( \XX_t (A_{\lambda r} )) - \mm  ( B_{(\lambda + 4 ) r}(\XX_t(x)) \backslash B_{\lambda r } (\XX_t(x)) )  \\
&\geq  \mm  (A_{\lambda r} ) - \frac{1}{100} \mm  ( B_{\lambda r} (\XX_t(x)) )\\
&\geq \frac{99}{100} \mm  ( B_{\lambda r}(x) )  - \frac{1}{100} \mm  ( B_{\lambda r} (\XX_t(x)) ) \\
&\geq \frac{9}{10} \mm  ( B_{\lambda r} (\XX_t(x)) ),
\end{split}
\end{equation*}
where we used \ref{eq:better-stability-1} on the first inequality,  \ref{eq:lambda-bg} on the second, \ref{eq:points-dont-escape} on the third, and \ref{eq:boot-strap-measure-2} on the fourth.
\end{proof}

\begin{Cor}\label{cor:e-s-better}
\rm Let $(X, d , \mm )$ be an $ \rcd (-(N-1),N)$ space, $x \in X$, $V \in L^1 ( [0,T] ;  H^{1,2}_{C,s}(TX))$ a divergence free vector field, and $\XX: [0,T] \times X \to X$ the RLF of $V$. Assume $x$ is a point of essential stability of $\XX$ and 
\[   \int_0^T \mx _4 ( \vert \nabla V (t) \vert ) (\XX_t(x)) dt \leq \varepsilon.         \]
If $\varepsilon$ is small enough, depending only on $N$, then for all $r \leq 1$, $t \in [0, T]$, one has 
    \[     \frac{1}{2} \mm (B_r(x)) \leq \mm  (B_r(\XX_t(x))) \leq  2 \mm (B_r(x)).   \]
\end{Cor}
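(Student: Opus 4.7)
The plan is to bootstrap the measure comparability factor from the essential stability constant $M_0 := M(0, T, N)$ (valid only for $r \leq r_x$) up to the uniform factor $2$ valid for all $r \leq 1$, by iterating Lemma \ref{lem:boot-strap}. Fix a target scale $r \leq 1$, choose $k \in \mathbb{N}$ large enough that the base scale $r_0 := r/\lambda^k$ satisfies $r_0 \leq r_x$, and observe that \ref{eq:es-st-1} and \ref{eq:es-st-2} supply exactly the hypotheses of Lemma \ref{lem:boot-strap} at scale $r_0$ with constant $M_0$: factor-$M_0$ measure comparability and a set $S_{r_0} \subset B_{r_0}(x)$ with $\XX_t(S_{r_0}) \subset B_{2r_0}(\XX_t(x))$.

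The key observation is that the output of Lemma \ref{lem:boot-strap} at scale $s \leq 1/\lambda$ can be fed back as its input at scale $\lambda s$ with a new universal constant. Since $V$ is divergence-free, \ref{eq:volume-distortion} says $\XX_t$ preserves $\mathfrak{m}$, so combining \ref{eq:better-stability-2} with \ref{eq:boot-strap-measure} gives
\[
\mathfrak{m}(A_{\lambda s}) \;=\; \mathfrak{m}(\XX_t(A_{\lambda s})) \;\geq\; \tfrac{9}{10}\mathfrak{m}(B_{\lambda s}(\XX_t(x))) \;\geq\; \tfrac{9}{20}\mathfrak{m}(B_{\lambda s}(x)),
\]
while \ref{eq:better-stability-1} together with $\lambda \geq 4$ yields $\XX_t(A_{\lambda s}) \subset B_{(\lambda+4)s}(\XX_t(x)) \subset B_{2\lambda s}(\XX_t(x))$. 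Hence $A_{\lambda s}$ plays the role of $S$ at scale $\lambda s$ with new constant $M' = 3$, and \ref{eq:boot-strap-measure} supplies the factor-$M'$ comparability (in fact factor $2$) required as input.

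I would iterate Lemma \ref{lem:boot-strap} exactly $k$ times: once with constant $M_0$ and $k-1$ times with $M' = 3$, each step multiplying the scale by $\lambda$ and preserving factor-$2$ comparability. After $k$ iterations we arrive at scale $\lambda^k r_0 = r$ with the desired bound. The $\mx_4$ integral hypothesis persists throughout since it concerns only the trajectory of the fixed point $x$; taking $\varepsilon := \min\{\varepsilon(N, M_0), \varepsilon(N, 3)\}$ handles all applications and depends only on $N$ (with $T$ absorbed into $M_0$). The argument presents no serious obstacle: it rests entirely on the observation that the constant $\lambda \geq 4$ in Lemma \ref{lem:boot-strap} is precisely calibrated so that the $(\lambda+4)$-ball containment in its conclusion fits inside the $2$-inflated ball at the next scale, letting the bootstrap close.
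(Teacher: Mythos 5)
Your proposal is correct and takes essentially the same route as the paper: iterating Lemma \ref{lem:boot-strap}, feeding \ref{eq:boot-strap-measure}, \ref{eq:better-stability-1}, and \ref{eq:better-stability-2} at scale $s$ back in as hypotheses at scale $\lambda s$ with a universal constant. You actually make explicit the constant-tracking that the paper's two-line proof glosses over (e.g.\ that $\lambda+4\le 2\lambda$ closes the loop, and that measure preservation of the divergence-free flow gives $\mm(A_{\lambda s})\ge \tfrac{9}{20}\mm(B_{\lambda s}(x))$ — though taking $t=0$ in \ref{eq:better-stability-2} directly yields the cleaner $\tfrac{9}{10}$); one small caveat shared by both your argument and the paper's is that the first iteration uses $\varepsilon(N,M_0)$ where $M_0$ depends on $T$, so the stated ``$\varepsilon$ depending only on $N$'' implicitly presumes $T$ is fixed.
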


\begin{proof}   
By the definition of essential stability, there is $M(N) > 0 $ for which the hypotheses of  Lemma \ref{lem:boot-strap} hold for small enough $r\leq 1$. By Lemma \ref{lem:boot-strap}, if they hold for a certain $r$, then they hold for $\lambda r$ so we can apply Lemma \ref{lem:boot-strap} repeatedly, and \ref{eq:boot-strap-measure} is valid for all $r \leq 1/\lambda$.  
\end{proof}

\begin{Pro}\label{pro:e-s-better-2}
\rm There is $C_0(N) > 0$ such that under the conditions of Corollary \ref{cor:e-s-better}, for all $r \leq 1$ there is $A_r \subset B_r(x)$ such that 
\begin{gather}
\mm (A_r) \geq (1 - C_0 \varepsilon ) \mm (B_r(x))\label{e-s-better-2-1} \\
\XX_t(A_r) \subset B_{2r} ( \XX_t(x)) \text{ for all }t \in [0,T].  \label{e-s-better-2-2}
\end{gather}
\end{Pro}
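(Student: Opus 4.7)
The key observation is that the desired conclusion is essentially built into the output of Lemma~\ref{lem:boot-strap}, once one rescales $s = \lambda r$ and uses that $\lambda \ge 4$, so $(\lambda+4)r = (1+4/\lambda)\lambda r \le 2\lambda r$. By Corollary~\ref{cor:e-s-better}, the volume-comparison hypothesis \ref{eq:measure-control-bootstrap-lambda} holds at every scale $s \le 1$ with $M = 2$. It therefore suffices to secure the other ingredient required by Lemma~\ref{lem:boot-strap}---an essential-stability set $S_s \subset B_s(x)$ with $\mm(S_s) \ge \mm(B_s(x))/M$ and $\XX_t(S_s) \subset B_{2s}(\XX_t(x))$---at every scale $s \le 1/\lambda$, with $M = M(N)$ depending only on $N$.

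At scales $s \le r_x$ the existence of $S_s$ follows from the definition of essential stability. For $s \in (r_x,\, 1/\lambda]$ I argue by iteration: whenever $S_s$ is available at scale $s$, Lemma~\ref{lem:boot-strap} produces a set $A_{\lambda s}$ with $\mm(A_{\lambda s}) \ge (1 - C(N) M^3 \varepsilon)\mm(B_{\lambda s}(x)) \ge \tfrac{99}{100}\mm(B_{\lambda s}(x))$ and $\XX_t(A_{\lambda s}) \subset B_{(\lambda+4)s}(\XX_t(x)) \subset B_{2\lambda s}(\XX_t(x))$, so $A_{\lambda s}$ may be used as $S_{\lambda s}$ with the constant $M$ replaced by $100/99$. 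Iterating along the geometric sequence $\{\lambda^k r_x\}$ and defining $S_s := S_{\lambda^k r_x}$ for $\lambda^k r_x \le s < \lambda^{k+1} r_x$, one checks using Theorem~\ref{thm:bg-in} that the measure bound deteriorates only by a dimensional factor, so we obtain $S_s$ at every scale $s \le 1/\lambda$ with a uniform $M = M(N)$.

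Given any $r \le 1$, I then apply Lemma~\ref{lem:boot-strap} with input scale $s := r/\lambda \le 1/\lambda$ and rename the output $A_r := A_{\lambda s}$. This yields
\[
\mm(A_r) \ge \big(1 - C(N)\, M(N)^3\, \varepsilon\big)\, \mm(B_r(x)),
\]
and, for every $t \in [0,T]$,
\[
\XX_t(A_r) \subset B_{(\lambda + 4)(r/\lambda)}(\XX_t(x)) = B_{(1 + 4/\lambda)r}(\XX_t(x)) \subset B_{2r}(\XX_t(x)),
\]
the last inclusion again using $\lambda \ge 4$. Setting $C_0 := C(N)\, M(N)^3$ gives \ref{e-s-better-2-1} and \ref{e-s-better-2-2}.

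The main (mild) obstacle is bookkeeping in the iterative construction of the $S_s$: one must verify that $M$ does not degrade as one traverses the geometric sequence of scales and fills in intermediate scales. This is ensured precisely because each application of Lemma~\ref{lem:boot-strap} outputs a set of almost-full relative measure (not merely a fixed fraction), so the new $M$ stabilizes near $1$, while Bishop--Gromov contributes only a fixed dimensional constant when interpolating between powers of $\lambda$.
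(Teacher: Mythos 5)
Your overall strategy is sound and runs parallel to the paper's own argument: both proofs set up the conclusion for small $r$ via the definition of essential stability and then iterate up in scale using the first-variation and segment inequalities (packaged in Lemma~\ref{lem:boot-strap}). The rescaling observation $(\lambda+4)/\lambda \le 2$ is a clean way to pass from the lemma's $(\lambda+4)r$-ball to the $2r$-ball required in \ref{e-s-better-2-2}. The interpolation between the geometric scales $\lambda^k r_x$ via Bishop--Gromov is also correct.

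There is, however, one genuine imprecision at the final step. You invoke the bound $\mm(A_{\lambda s}) \ge (1 - C(N)M^3\varepsilon)\,\mm(B_{\lambda s}(x))$ as though it were an output of Lemma~\ref{lem:boot-strap}. That estimate appears \emph{inside} the proof of the lemma (it is equation \ref{eq:points-dont-escape}), but it is not among the lemma's stated conclusions; the stated conclusion \ref{eq:better-stability-2}, specialized to $t=0$, only yields $\mm(A_{\lambda s}) \ge \tfrac{9}{10}\mm(B_{\lambda s}(x))$, which is too weak to give the quantitative $(1-C_0\varepsilon)$ lower bound in \ref{e-s-better-2-1}. (For the intermediate iteration a fixed fraction like $9/10$ would in fact suffice; the sharp bound is only needed at the last application.) To close this, you should either extract \ref{eq:points-dont-escape} explicitly as part of the lemma, or do what the paper does: rerun the segment-inequality/first-variation computation once more at the final scale to obtain the $(1-C\varepsilon)$-bound directly, rather than reading it off as a black-box consequence of the lemma statement.
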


\begin{proof}   
By the definition of essential stability, for $r\leq 1$ sufficiently small, 
there is $S_{r/10} \subset B_{r/10}(x) $ with $\mm (S_{r/10}) \geq \frac{1}{M(N)} \mm  (B_{r/10}(x)) $ and $ \XX_t (S_{r/10}) \subset B_{r/5} (\XX_t (x))$ for all $t \in [0,T]$.  Set 
\begin{gather*}
    \drt (\cdot) (\cdot, \cdot ) : [0,T] \times X \times X \to [0, r/10] \\
      \drt (t) : =  \sup_{s \in [0,t]}   \drt (\XX_s ) ,  
\end{gather*}  
and $\Gamma(t) : = \{ (a,b) \in S_{r/10} \times B_{r}(x) \vert \drt (t) (a,b) < r / 10 \}$. Then for $(y,z) \in \Gamma (t)$, for each $t \in [0,T]$ we have
\begin{equation}\label{eq:medium-scale-control}
\begin{split}
d(   \XX_t (b), \XX_t(x) ) & \leq d ( \XX_t (b), \XX_t(a)) + d (\XX_t (a), \XX_t(x) ) \\
& \leq  d(a,b) + r/10 + r/5 \\
& \leq r/2 + r/10 + r/10 + r/5 < r ,
\end{split}
\end{equation}  
so as in \ref{eq:as-usual-1}, using Corollary \ref{cor:e-s-better} one gets
\begin{equation}\label{eq:as-usual-2}
\begin{split}
  &\fint_{S_{r/10} \times B_{r}(x)}  \drt(T)  d(\mm \times \mm ) \\
  &  \leq C(N)  \cdot r \cdot  \int_0^T \mx _4 (\vert \nabla V (t) \vert )(\XX_t(x)) dt \leq  C(N)  \varepsilon r.
 \end{split}
\end{equation}
 Then there is $y \in S_{r/10}$ with 
\begin{equation}\label{eq:stability-medium-scales}
     \fint_{B_{r}(x)} \drt (T) (y,z) d \mm (z) \leq C(N) \varepsilon r.
\end{equation}   
We can then define $A_{r} : = \{ z \in B_{r}(x) \vert \drt (T) (y,z)  < r/10 \} $, which  by \ref{eq:stability-medium-scales} satisfies \ref{e-s-better-2-1} and by \ref{eq:medium-scale-control} also \ref{e-s-better-2-2}. The above analysis shows that \ref{e-s-better-2-1} and \ref{e-s-better-2-2} hold for all $r$ small enough. An identical argument (using $A_{r/10}$ instead of $S_{r/10}$) shows that if \ref{e-s-better-2-1} and \ref{e-s-better-2-2} hold for some $r /10 \leq 1/10$, then they hold for $r$.  
\end{proof}
\begin{Pro}\label{pro:e-s-better-3}
\rm There is $C_0 (N) > 0 $ such that under the conditions of Corollary \ref{cor:e-s-better},  for all $r \leq 1$, one has 
    \[    \fint_{B_{r}(x)^{\times 2}} \dr  ( \XX_T )  d ( \mm \times \mm  ) \leq  C_0 \varepsilon r.          \]
\end{Pro}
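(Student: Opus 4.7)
\textbf{Proof plan for Proposition \ref{pro:e-s-better-3}.} The plan is to apply Theorem \ref{thm:first-variation} with $S_1 = S_2 = A_r$, where $A_r \subset B_r(x)$ is the set furnished by Proposition \ref{pro:e-s-better-2}, and then absorb the small error coming from $B_r(x)^{\times 2} \setminus A_r \times A_r$ using the trivial bound $\dr \leq r$.

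Fix $r \leq 1$ and let $A_r \subset B_r(x)$ be as in Proposition \ref{pro:e-s-better-2}, so $\mm(A_r) \geq (1 - C_0 \varepsilon)\mm(B_r(x))$ and $\XX_t(A_r) \subset B_{2r}(\XX_t(x))$ for every $t \in [0,T]$. Since $V$ is divergence free, \ref{eq:volume-distortion} gives $(\XX_t)_{\ast}\mm = \mm$. Applying Theorem \ref{thm:first-variation} to $S_1 = S_2 = A_r$ with $\Gamma(t) = \{(a,b) \in A_r \times A_r : \dr(t)(a,b) < r\}$, and performing the change of variables $a \mapsto \XX_t(a)$, $b \mapsto \XX_t(b)$, which preserves $\mm \times \mm$, I would obtain
\begin{equation*}
\int_{A_r \times A_r} \dr(T)\, d(\mm \times \mm) \leq \int_0^T \int_0^1 \int_{\XX_t(\Gamma(t))} d(a',b')\,|\nabla V(t)|(\gamma_{a',b'}(s))\, d(\mm \times \mm)(a',b')\, ds\, dt.
\end{equation*}
Since $a,b \in A_r$ forces $\XX_t(a), \XX_t(b) \in B_{2r}(\XX_t(x))$, the inner integration domain is contained in $B_{2r}(\XX_t(x))^{\times 2}$. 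The segment inequality (Theorem \ref{thm:segment}), applied at the point $\XX_t(x)$ on radius $2r$, followed by Corollary \ref{cor:e-s-better} (to replace $\mm(B_{2r}(\XX_t(x)))^2$ by a constant multiple of $\mm(B_r(x))^2$) yields, since $4r \leq 4$,
\begin{equation*}
\int_{A_r \times A_r} \dr(T)\, d(\mm \times \mm) \leq C(N)\, r\, \mm(B_r(x))^2 \int_0^T \mx_4(|\nabla V(t)|)(\XX_t(x))\, dt \leq C(N)\,\varepsilon\, r\, \mm(B_r(x))^2,
\end{equation*}
by the hypothesis of Corollary \ref{cor:e-s-better}.

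To pass from $A_r \times A_r$ to $B_r(x)^{\times 2}$, I use $\dr \leq r$ pointwise, so
\begin{equation*}
\int_{B_r(x)^{\times 2} \setminus A_r \times A_r} \dr(\XX_T)\, d(\mm \times \mm) \leq r \cdot 2\,\mm(B_r(x))(\mm(B_r(x)) - \mm(A_r)) \leq 2 C_0\,\varepsilon\, r\, \mm(B_r(x))^2.
\end{equation*}
Adding the two contributions and dividing by $\mm(B_r(x))^2$ gives the claimed bound. There is no serious obstacle here; the only point that requires a little care is keeping the auxiliary radius $\leq 4$ so that the maximal function $\mx_4$ may actually be invoked, which is why integrating on $A_r$ (whose image lies in $B_{2r}(\XX_t(x))$) rather than directly on $B_r$ (whose image could a priori escape) is important. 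Once one commits to using $A_r$, the argument is a direct adaptation of the computations already carried out in \ref{eq:as-usual-1} and \ref{eq:as-usual-2}.
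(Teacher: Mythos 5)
Your proposal is correct and matches the paper's approach. The paper's proof is terse (it simply says "a computation analogous to \ref{eq:as-usual-1}" on $A_r^{\times 2}$, then combines with \ref{e-s-better-2-1}), and your write-up fills in exactly those steps: apply Theorem \ref{thm:first-variation} with $S_1 = S_2 = A_r$, use that $\XX_t(A_r) \subset B_{2r}(\XX_t(x))$ so the segment inequality plus Corollary \ref{cor:e-s-better} control the integral by $C(N)\varepsilon r\,\mm(B_r(x))^2$, and absorb the complement of $A_r^{\times 2}$ via the trivial bound $\dr \leq r$ together with $\mm(B_r(x)\setminus A_r) \leq C_0\varepsilon\,\mm(B_r(x))$.
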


\begin{proof} 
For $A_r \subset B_r(x)$ given by Proposition \ref{pro:e-s-better-2}, a computation analogous to  \ref{eq:as-usual-1} yields
 \[ \fint_{A_{r}^{\times 2}}  \dr( \XX_{T} )    d ( \mm \times \mm  )  \leq   C(N)  \varepsilon r.\] 
Combining this with \ref{e-s-better-2-1}, we get the result.
\end{proof}

\begin{Def}\label{def:weak-e-s}
\rm Let $(X,d,\mm )$ be an $\rcd (-(N-1), N)$ space, $V_1, \ldots , V_k \in L^1 ( [0, 1] ; H^{1,2}_{C,s}(TX))$ divergence free vector fields,  $\XX^j : [ 0, 1 ] \times X \to X $ their RLFs, and $ x_1 , \ldots , x_k   \in X$ such that
\begin{itemize}
    \item $x_j$ is a point of essential stability of $\XX^j$ for each $j \in \{ 1, \ldots , k \}$.
    \item $X_1^j (x_j) = x_{j+1}$ for each $j \in \{ 1, \ldots , k-1 \} $.
\end{itemize}
If $V \in L^1 ( [0,1] ; H^{1,2}_{C,s}(TX))$ is given by
\[
    V(t)  : = k \cdot  V_j ( kt - j + 1 ) \text{ for }t \in \left[ \frac{j-1}{k}, \frac{j}{k}  \right] ,
\]   
and $\XX : [0, 1] \times X \to X$ is its RLF, then we say $x_1$ is a point of \textit{weak essential stability} of $\XX$.
\end{Def}

\begin{Pro}\label{pro:connect}
\rm  Under the conditions of Definition \ref{def:weak-e-s}, there is $\eta (N) > 0$ such that if 
\[   \int_0^1  \mx ( \vert \nabla V (t) \vert ) ( \XX _t( x_1) ) dt = \sum_{j=1}^k \int_{0}^1 \mx ( \vert \nabla V_j (t) \vert ) (\XX^j_t(x_j) ) dt \leq \eta ,  \]
then  $x_1$ is a point of essential stability of $\XX$. 
\end{Pro}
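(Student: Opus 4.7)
The plan is to verify the density hypothesis \ref{eq:good-point-hypothesis} of Theorem \ref{thm:essential-stability} at $x_1$ for the concatenated vector field $V$, and then invoke the moreover clause. Writing $z_j(y) := \XX^{j-1}_1 \circ \cdots \circ \XX^1_1(y)$ (so $z_j(x_1) = x_j$) and substituting $s = kt - j + 1$ on each interval $[(j-1)/k, j/k]$, the factor $k$ in $|\nabla V(t)|$ cancels the $1/k$ from $dt = ds/k$, yielding
\[
H(y) \,:=\, \int_0^1 \mx_\rho(|\nabla V(t)|)(\XX_t(y))\,dt \,=\, \sum_{j=1}^k \int_0^1 \mx_\rho(|\nabla V_j(s)|)(\XX^j_s(z_j(y)))\,ds,
\]
and in particular $H(x_1) = \sum_j \eta_j \leq \eta$, where $\eta_j := \int_0^1 \mx_\rho(|\nabla V_j(s)|)(\XX^j_s(x_j))\,ds$.

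Provided $\eta = \eta(N)$ is chosen small enough that each $\eta_j \leq \varepsilon(N)$, Corollary \ref{cor:e-s-better} and Propositions \ref{pro:e-s-better-2}, \ref{pro:e-s-better-3} apply to every $\XX^j$ at $x_j$, yielding volume bounds $\tfrac{1}{2}\mm(B_r(x_j)) \leq \mm(B_r(\XX^j_s(x_j))) \leq 2\mm(B_r(x_j))$ for $r \leq 1$, individual sausage sets $A_r^j \subset B_r(x_j)$ of density at least $1 - C_0 \eta_j$, and the average distortion estimate $\fint_{B_r(x_j)^{\times 2}} dt_r(\XX^j_1)\,d(\mm \times \mm) \leq C_0 \eta_j r$.

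The key step is to propagate these individual estimates to $\XX$. Combining the triangle-type bound
\[
|d(\XX_1 y_1, \XX_1 y_2) - d(y_1, y_2)| \leq \sum_{j=1}^k \bigl|d(z_{j+1}(y_1), z_{j+1}(y_2)) - d(z_j(y_1), z_j(y_2))\bigr|
\]
with the measure-preserving property of each divergence-free $\XX^j$, the individual average distortions add rather than compound, giving $\fint_{B_r(x_1)^{\times 2}} dt_r(\XX_t)\,d(\mm \times \mm) \leq C \sum_j \eta_j r \leq C \eta r$ uniformly in $t \in [0, 1]$. By tracking the refined form of Lemma \ref{lem:boot-strap}, the volume ratios satisfy $\mm(B_r(\XX^j_s(x_j)))/\mm(B_r(x_j)) \in [1 - C\eta_j, 1 + C\eta_j]$, so iterating gives $\mm(B_r(\XX_t(x_1)))/\mm(B_r(x_1)) \in [e^{-C\eta}, e^{C\eta}] \subset [1/2, 2]$ bounded uniformly in $k$. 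A Chebyshev argument then produces a set $A_r \subset B_r(x_1)$ of density at least $1 - C\eta$ with $\XX_t(A_r) \subset B_{2r}(\XX_t(x_1))$ for all $t \in [0, 1]$, and changing variables through the measure-preserving $\XX_t$ combined with local maximal-function bounds on $B_{2r}(\XX_t(x_1))$ yields $\int_{B_r(x_1)} H\,d\mm \leq C\eta\,\mm(B_r(x_1))$. Chebyshev then verifies \ref{eq:good-point-hypothesis} with $\delta = C'\eta$, so the moreover clause of Theorem \ref{thm:essential-stability} concludes that $x_1$ is a point of essential stability of $\XX$.

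The main obstacle is to handle the concatenation uniformly in $k$: the fixed factor-$2$ sausage expansion of Proposition \ref{pro:e-s-better-2} compounds multiplicatively, producing $2^k$ if iterated naively. The remedy is the additive structure of Proposition \ref{pro:e-s-better-3}, where small individual distortions $\eta_j r$ add to $\sum_j \eta_j r \leq \eta r$ rather than multiplying, preventing any exponential blow-up in $k$ and allowing the universal constant $M(N)$ from Theorem \ref{thm:essential-stability} to appear in the conclusion.
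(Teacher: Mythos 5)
Your overall strategy — propagate the individual estimates from Corollary \ref{cor:e-s-better} and Propositions \ref{pro:e-s-better-2}, \ref{pro:e-s-better-3} directly through all $k$ steps — has a genuine gap at the point where you assert volume ratios $\mm(B_r(\XX^j_s(x_j)))/\mm(B_r(x_j)) \in [1-C\eta_j, 1+C\eta_j]$. No such $\eta_j$-sharp volume estimate is established anywhere in the paper, and it is not the "refined form" of Lemma \ref{lem:boot-strap}. What the proof of that lemma actually yields (via \ref{eq:points-dont-escape} and \ref{eq:boot-strap-measure-2}) is a bound of the form $\mm(B_{\lambda r}(\XX_t(x)))/\mm(B_{\lambda r}(x)) \in [99/101, 101/99]$, where the $101/100$ comes from the Bishop--Gromov factor for the ball expansion $B_{(\lambda+5)r}$ versus $B_{\lambda r}$ in \ref{eq:lambda-bg}. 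That expansion factor is a fixed geometric constant — it does not improve as $\eta_j \to 0$, because the sausage escape radius in Proposition \ref{pro:e-s-better-2} is $2r$ (an $O(r)$ quantity), not $O(\eta_j r)$. So iterating over $k$ steps gives a compounded factor on the order of $(101/99)^k$, which is unbounded in $k$; the same compounding infects your average-distortion estimate, since transferring Proposition \ref{pro:e-s-better-3}'s bound on $\fint_{B_r(x_j)^{\times 2}} \dr(\XX^j_1)$ back to $B_r(x_1)^{\times 2}$ involves the ratio $\mm(B_r(x_j))/\mm(B_r(x_1))$. Trying to sharpen the sausage radius to $O(\eta_j r)$ by Chebyshev from Proposition \ref{pro:e-s-better-3} only yields pointwise control $O(\sqrt{\eta_j}\,r)$ on a $(1-C\sqrt{\eta_j})$-density set, and $\sum_j \sqrt{\eta_j}$ is not controlled by $\eta$.

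The paper's proof sidesteps the compounding entirely by inducting to $k=2$: given that $x_2$ is a point of essential stability of the concatenation $\XX^{2,\ldots,k}$ (from the inductive hypothesis), Corollary \ref{cor:e-s-better} immediately gives the factor-$2$ volume bound for that entire tail as a single flow, and likewise for $\XX^1$, so the composed flow has volume distortion at most $4$ uniformly in $k$. The required set $A_r$ is then built by intersecting, \emph{around the intermediate point $x_2$}, the $9/10$-density sets from Lemma \ref{lem:boot-strap} for the reverse flow $\overline{\XX}^1$ and the forward flow $\XX^2$, and pulling back by $\overline{\XX}^1_1$. This "reset" of constants at each inductive step is the essential idea you are missing. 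A secondary concern: your routing through \ref{eq:good-point-hypothesis} and the moreover clause of Theorem \ref{thm:essential-stability} only gives essential stability of the \emph{adjusted} representative $\tilde{\XX}$, not of the particular concatenated $\XX$ whose pointwise trajectory $\XX_t(x_1)$ you are interested in; the paper instead verifies \ref{eq:es-st-1} and \ref{eq:es-st-2} for $\XX$ directly.
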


\begin{proof}
By induction we can assume $k = 2$. By Corollary \ref{cor:e-s-better}, and Proposition \ref{pro:e-s-better-2}, we can apply  Lemma \ref{lem:boot-strap} to both $X^1$ and $X^2$, provided $\eta$ is small enough. By \ref{eq:boot-strap-measure}, for each $r \leq 1$, $t \in [0,1]$, we get
\[   \frac{1}{4} \mm  (B_r(x_1)) \leq  \mm  (B_r( \XX_t(x_1)))   \leq 4  \mm  (B_r(x_1))  .           \]
Let $\overline{V}_1 \in L^1 ( [0,1] ; H^{1,2}_{C,s}(TX) )$ be given by $\overline{V} _1(t) : = - V_1(1-t) $ and let $\overline{\XX}^1 : [0,1] \times X \to X$ be its RLF. Again by Lemma \ref{lem:boot-strap}, for $r$ small enough there are sets $A^1_r , A_r^2 \subset B_r(x_2) $ such that 
\begin{gather*}
\overline{\XX}_t^1 (A_r^1) \subset B_{2r} (\overline{\XX}_t^1 (x_2)) , \text{ }\XX_t^2 (A_r^2) \subset B_{2r} ( \XX_t^2 (x_2)) \\
 \mm  ( \overline{\XX}_t^1 (  A^1_r ) \cap B_r ( \overline{\XX}_t^1 (x_2) ) )   \geq \frac{9}{10} \mm  ( B_r ( \overline{\XX}_t^1 (x_2) ) )   \\
 \mm  ( \XX_t^2 (  A^2_r ) \cap B_r ( \XX^2_t (x_2) ) )   \geq \frac{9}{10} \mm  ( B_r ( \XX^2_t (x_2) ) )
\end{gather*}
 for all $t \in [0,1] $. Then $A_r : =  B_r(x_1)  \cap \overline{\XX}_1^1 ( A_r ^1 \cap A_r^2 )$ satisfies 
 \begin{equation*} 
      \XX_t(A_r) \subset B_{2r} ( \XX_t(x) ) \text{ for all }t \in [0,1], 
 \end{equation*}  
 and using \ref{eq:boot-strap-measure} we conclude 
\begin{equation*}
\begin{split}
     \mathfrak {m} (A_r ) & \geq   \mm  ( B_r(x_1) \cap \overline{\XX}_1^1 ( A_r ^1 )  - \mm ( A_r^1 \backslash A_r^2  )   \\
     & \geq \frac{9}{10} \mm  ( B_r(x_1) ) - \frac{1}{10} \mm  (B_r(x_2))\\
     & \geq \left[  \frac{9}{10} - \frac{1}{5}   \right] \mm  ( B_r(x_1) ) \\
     & \geq \frac{1}{2} \mm  (  B_r(x_1)).
\end{split}
\end{equation*}
\end{proof}

\section{Properties of GS maps}\label{sec:pgas}

In this section we prove the main properties of GS maps; they converge weakly to an isometry (Lemma  \ref{gas:limit}), have the zoom-in property (Proposition \ref{gas:zoom}), and can be concatenated (Proposition \ref{gas:composition}).

\begin{Rem}\label{rem:good-good-sets}
\rm From the absolute continuity condition of Definition \ref{def:gas}, we can assume that for all $x \in U_i^1$, $y \in U_i^2$, we have $(f_i^{-1})^{-1}(x) = \{ f_i(x) \}$ and $(f_i)^{-1}(y) =  \{ f_i^{-1}(y)\}$.
\end{Rem}

\begin{Def}
\rm  For $j \in \{ 1, 2 \} $, let $(X_i^j, d_i^j , \mm _i^j,p_i^j) $,  be a sequence of pointed  $\rcd (K,N)$ spaces for which $(X_i^j,p_i^j)$  converges to $(X^j,p^j)$  in the Gromov--Hausdorff sense. We say a sequence of maps $f_i : X_i^1 \to X_i^2$ converges weakly to $f_{\infty} : X^1 \to X^2$ if there is a sequence of subsets $U_i \subset X_i^1$ with asymptotically full measure such that  
\begin{equation}\label{eq:weak-limit}
 \lim_{i \to \infty } \sup_{x \in U_i} d(  \varphi _i ^2 f_i (x) , f _{\infty}\varphi _i ^1 (x)  ) = 0 ,             
\end{equation}
where $\varphi _i^j : X_i^j \to X^j \cup \{ \ast \} $ are Gromov--Hausdorff approximations for $j \in \{ 1, 2 \}$. 
\end{Def}

\begin{Lem}\label{gas:limit}
\rm For $j \in \{ 1, 2 \} $, let $(X_i^j, d_i^j , \mm _i^j,p_i^j) $,  be a sequence of pointed  $\rcd (K,N)$ spaces for which $(X_i^j,p_i^j)$  converges to $(X^j,p^j)$  in the Gromov--Hausdorff sense.  If $f_i : [X_i^1, p_i^1] \to [X_i^2, p_i^2] $ is a sequence of GS maps, then, after taking a subsequence, $f_i$ converges weakly to an isometry $f_{\infty}: X^1 \to X^2$.
\end{Lem}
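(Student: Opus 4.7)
The plan is to construct $f_\infty$ pointwise on a countable dense subset of $X^1$, extend by density, and then verify that it is an isometry by the symmetric treatment of $f_i^{-1}$. The key tool is the scale-$1$ distortion bound \ref{def:gas-3d}, which must be propagated to larger scales through a chaining argument.

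First I would establish a uniform boundedness estimate: for each $R>0$ there exists $R'(R)$ such that asymptotically $f_i$ maps a subset of asymptotically full measure of $B_R(p_i^1)$ into $B_{R'}(p_i^2)$. Starting from a point $y_i \in U_i^1 \cap B_{\rho_i}(p_i^1)$ for suitable $\rho_i \to 0$ (guaranteed by the asymptotic full measure of $U_i^1$ via Corollary~\ref{cor:a-full-measure}), Chebyshev's inequality applied to \ref{def:gas-3d} at $y_i$ with $r=1$ identifies a set $A_i \subset B_1(y_i)$ of measure $\geq (1-\sqrt{\varepsilon_i})\mm_i^1(B_1(y_i))$ on which the local average distortion of $f_i$ is at most $\sqrt{\varepsilon_i}$. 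Combined with \ref{def:gas-2} and the Bishop--Gromov comparison (Theorem~\ref{thm:bg-in}), the intersection $A_i \cap S_i^1 \cap U_i^1$ is nonempty; picking $a_i$ in it, $f_i(a_i) \in B_{R_0}(p_i^2)$ and $d(f_i(b), p_i^2) \leq R_0 + 2 + o(1)$ for most $b \in B_1(y_i)$. Chaining the argument through $O(R)$ overlapping unit balls centered at consecutive points of $U_i^1$ produces the desired global bound.

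Pick next a countable dense $\{x_n\}\subset X^1$ and, for each $n$, choose $x_{n,i}\in U_i^1$ with $\varphi_i^1(x_{n,i})\to x_n$ (again Corollary~\ref{cor:a-full-measure}). By the uniform boundedness, $\{\varphi_i^2 f_i(x_{n,i})\}_i$ is precompact in the proper space $X^2$, so a diagonal extraction yields $\varphi_i^2 f_i(x_{n,i}) \to y_n$ for every $n$; set $f_\infty(x_n) := y_n$. To show $d(y_n, y_m) \leq d(x_n, x_m)$, I sample $O(d(x_n,x_m))$ points along a geodesic from $x_{n,i}$ to $x_{m,i}$ in $X_i^1$, perturb them into the local good-distortion subsets produced by \ref{def:gas-3d}, and apply the triangle inequality; the upper bound follows. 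Thus $f_\infty$ is $1$-Lipschitz on $\{x_n\}$ and extends uniquely to a $1$-Lipschitz map $X^1 \to X^2$.

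Applying the symmetric procedure to $f_i^{-1}$ (after a further subsequence) produces a $1$-Lipschitz $g_\infty: X^2 \to X^1$. The almost-inverse property $f_i^{-1}\circ f_i = \mathrm{Id}_{X_i^1}$ $\mm_i^1$-a.e.\ together with Remark~\ref{rem:good-good-sets} (which makes $f_i$ and $f_i^{-1}$ genuine mutual inverses on $U_i^1, U_i^2$) forces $g_\infty \circ f_\infty = \mathrm{Id}_{X^1}$ on the dense set $\{x_n\}$ and hence everywhere; symmetrically $f_\infty \circ g_\infty = \mathrm{Id}_{X^2}$. Since $f_\infty$ is a bijective $1$-Lipschitz map with $1$-Lipschitz inverse, it is an isometry. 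For weak convergence, given $x \in U_i^1$ with $\varphi_i^1(x)$ close to some $x_n$, the distortion bound at scale $\approx d(x, x_{n,i})$ and Chebyshev yield a subset $U_i$ of asymptotically full measure on which $d(\varphi_i^2 f_i(x), f_\infty \varphi_i^1(x)) \to 0$ uniformly. The main technical obstacle is the chaining in the second paragraph: scale-$1$ estimates must be patched across overlapping balls whose centers need to simultaneously lie in $U_i^1$ (for \ref{def:gas-3d} to apply) and escape the corresponding Chebyshev exceptional sets, which relies crucially on the combination of asymptotic full measure of $U_i^1$ and Bishop--Gromov volume comparison to keep the combinatorial loss uniformly controlled.
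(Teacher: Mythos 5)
Your proposal follows the paper's proof closely in outline: a chaining argument from the scale-one distortion bound gives Lipschitz-type control over bounded scales, the sets $S_i^j$ control the range of $f_i$, a diagonal extraction builds $f_\infty$ on a dense set, and the reverse maps $f_i^{-1}$ are brought in to upgrade the $1$-Lipschitz bound to an isometry. The repackaging of the final step as ``bijective with $1$-Lipschitz inverse'' instead of the paper's direct two-sided distance estimate (Step~6) is cosmetic; the underlying mechanism is the same.

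There is, however, a genuine gap in the crucial step $g_\infty\circ f_\infty=\mathrm{Id}_{X^1}$. You assert this follows from the $\mm_i^1$-a.e. identity $f_i^{-1}\circ f_i=\mathrm{Id}$ together with Remark~\ref{rem:good-good-sets}, but that remark only guarantees $f_i^{-1}(f_i(x))=x$ for $x\in U_i^1$; it says nothing about whether $f_i(x)$ lies in $U_i^2$. To apply your Lipschitz estimate for $f_i^{-1}$ at the point $f_i(x_{n,i})$ (so as to relate $f_i^{-1}(f_i(x_{n,i}))=x_{n,i}$ to the nearby anchor points $z_{m,i}\in U_i^2$ defining $g_\infty$), you need to choose $x_{n,i}\in U_i^1\cap f_i^{-1}(U_i^2)$, and the non-triviality of that set is not automatic: a priori $f_i$ could send all of $U_i^1$ into the small exceptional set $X_i^2\setminus U_i^2$. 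The paper handles this in Step~5 by combining the measure-preserving property \ref{def:gas-3b} of $f_i$ on $U_i^1$, the asymptotic full measure of $U_i^2$, Bishop--Gromov, and the sets $S_i^j$; the result is that $U_i^1\cap f_i^{-1}(U_i^2)$ accumulates at every point of $U_i^1$. Without this argument your diagonal constructions of $f_\infty$ and $g_\infty$ are not linked, and $f_\infty$ might fail to be surjective (a $1$-Lipschitz map into $X^2$ need not be onto). Once Step~5 is supplied, the rest of your argument closes.
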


\begin{proof}
Let $R_0 > 0 $, $\varepsilon_i \to 0$, and $S_i^j, U_i ^j \subset X_i^j$ be given by Definition \ref{def:gas} (see Remark \ref{rem:good-good-sets}). \\

\noindent \textbf{Step 1:} For $i$ large enough, $x \in U_i^1$, $r \leq 1$, there is $A \subset B_r(x )$ such that 
\[ f_i(A) \subset B_{2r}(f_i (x)) ,\text{ and } \mm _i^1 (A) \geq \frac{1}{2}\mm _i^1 (B_r( x )) .  \]
From the definition of essential continuity, the statement holds for $r$ small enough (depending on $x$). We now see that if $i$ is large enough, and there is $A_0 \subset B_{r/10}(x )$ such that $f_i(A_0) \subset B_{r/5}(f_i (x))$, and $ \mm _i^1 (A_0) \geq \frac{1}{2}\mm _i^1 (B_{r/5}( x )) $, then there is $A \subset B_r(x )$ such that $f_i(A) \subset B_{2r}(f_i (x))$, and $ \mm _i^1 (A) \geq \frac{1}{2}\mm _i^1 (B_r( x)).$ Since there is $C (K,N) > 0$ such that $   \mm _i^1 ( B_r(x) ) \leq C \mm _i^1 (A_0), $ we have
\[      \fint _{B_r(x_i^1)\times A_0 } \dr (f_i)  d(\mm_i^1 \times \mm_i^1 ) \leq  C r \varepsilon_i .  \]
Hence if $A : = \{ y \in B_r(x) \vert d(fy,fx ) < 2r \} $, one gets 
\[  r \cdot \frac{  \mm _i^1( B_r(x) \backslash  A) }{ \mm _i^1( B_r(x)) }   =   \frac{\int_{ (B_r(x) \backslash A) \times A_0 } \dr (f_i) d(\mm_i^1 \times \mm_i^1 ) }{ \mm _i^1( B_r(x)) \cdot \mm _i^1 ( A_0) }   \leq         Cr \varepsilon_i, \]
implying that $\mm _i^1 (A ) \geq \frac{1}{2} \mm _i^1( B_r(x) )$ provided $\varepsilon_i \leq 1/2C$.\\

\noindent \textbf{Step 2:} For all distinct $x_i, y_i \in U_i^1$ with $d(x_i , y_i) \leq 1/2$, one has  
\[ \limsup_{i \to \infty } \dfrac{d(f_i x_i , f_iy_i)}{d(x_i, y_i)} \leq 1.    \]
Set $r_i : = d(x_i , y_i)$ and assume, after taking a subsequence, that $d( f_ix_i , f_iy_i ) \geq (1 + \delta ) r_i$ for some $\delta > 0 $ and all $i$. By Step 1, there are subsets $A_i \subset B_{\delta r_i /10}(x_i )$, $ B_i \subset B_{\delta r_i /10}(y_i )$ with $f_i(A_i) \subset B_{\delta r_i /5}(f_ix_i )$, $f_i(B_i) \subset B_{\delta r_i /5}(f_iy_i )$, $\mm _i^1 (A_i) \geq \frac{1}{2} \mm _i^1 ( B_{\delta r_i /10}(x_i) )$, and $\mm _i^1 (B_i) \geq \frac{1}{2} \mm _i^1 ( B_{\delta r_i /10}(y_i ) )$. Since there is $C (K, N) > 0 $ such that 
\[   \mm _i^1 ( B_{2r_i}(x_i) ) \leq C \cdot \min \{ \mm _i^1 (A_i), \mm _i^1 (B_i)   \},                \]
one has
\[  \frac{\delta r_i }{ 10 \cdot C^2}  \leq  \dfrac{\int _{A_i \times B_i } \dr (f_i) d(\mm_i^1 \times \mm_i^1 ) }{ \mm _i^1 (B_{2r_i}(x_i ))^2 }  \leq  \fint _{B_{2r_i}(x_i )^{\times 2}} \dr  (f) d(\mm_i^1 \times \mm_i^1 ) \leq  2 r_i \varepsilon_i , \]
which is impossible as $\varepsilon _i \to 0$.\\

\noindent \textbf{Step 3:} For $R>0$ and distinct $x_i, y_i \in U_i^1$ with $d(x_i , p_i^1), d(y_i , p_i^1) \leq R$, one has  
\[ \limsup_{i \to \infty } \dfrac{d(f_i x_i , f_iy_i)}{d(x_i, y_i)} \leq 1.    \]
By Step 2, we can assume $d(x_i, y_i) \geq \frac{1}{2}$ for all $i$.  For each $i$, choose a sequence $x_i = z_i^0 , \ldots , z_i^k  = y_i \in X_i^1$ with $d(z_i^{j-1}, z_i^j) \leq \frac{1}{3}$ for each $j \in \{ 1, \ldots , k \}$, $d(x_i , y_i) = \sum_{j=1}^k d(z_i^{j-1}, z_i^j)$, and $k  = \lfloor 10 R \rfloor$. For each $i \in \mathbb{N}$ and $j \in \{ 1, \ldots , k\}$, let $w_i ^j \in U_i^1$ be such that 
\[d( w_i^j , z_i^j  )\leq 2 \cdot \inf \{ d(w, z_i^j  ) \vert w \in U_i^1 \}.\]
As the sets $U_i^1$ have asymptotically full measure, $\sup_j d(w_i^j , z_i^j) \to 0$ as $i \to \infty$, and the claim follows from Step 2 applied to pairs $(w_i^{j-1}, w_i^j)$.\\

\noindent \textbf{Step 4:} For $R > 0 $ and $x_i \in U_i^1$ with $d(x_i , p_i^1) \leq R$, one has  
\[ \limsup_{i \to \infty } d(f_i x_i , p_i^2 ) \leq R_0 + R + 1.    \]
As the sets $U_i^1$ have asymptotically full measure, for $i$ large enough one can pick $y_ i \in U_i^1 \cap  S_i^1 $. Then the result follows from Step 3 and the fact that $d(x_i , y_i ) \leq R+1$ for all $i$.\\

\noindent \textbf{Step 5:} For $R >0$, $x_i \in U_i^1$ with $d(x_i , p_i^1) \leq R$, and $\delta > 0 $, for large enough $i$ there is 
\[y _i \in B_{\delta}(x_i ) \cap U_i^1 \cap f_i^{-1}(U_i^2). \]
Without loss of generality assume $\delta < 1/2$. As the sets $U_i^1$ have asymptotically full measure, the sets $A_i : =  B_{\delta}(x_i ) \cap U_i^1 $ satisfy $\mm _i^1 (A_i) \geq \frac{1}{2} \mm _i^1 (B_{\delta}(x_i ))  $ for $i$ large enough. Assuming the claim fails, one has from Step 2, after taking a subsequence, that $f _ i (A_i ) \subset B_{2 \delta}( f_ix_i  ) \backslash U_i^2$ for all $i$. As $f_i$ restricted to $A_i$ is measure preserving, and the sets $U_i^2$ have asymptotically full measure, this means that 
\begin{equation}\label{eq:step-5}
  \frac{\mm _i^1 (B_{\delta}(x_i )) )   }{ \mm _i^2(B_{2 \delta}( f_ix_i  ))  }  \to 0 \text{ as }i \to \infty .    
\end{equation}  
From Step 4 we know that $B_{2 \delta}(f_ix_i) \subset B_{R_0 + R + 2} (p_i)$ for large $i$, and from the Bishop--Gromov inequality, there is $C ( K, N , R_0 , R , \delta ) > 0 $ such that 
\begin{itemize}
\item $  \mm _i^2 ( B_{R_0 + R + 2}(p_i^2 ) ) \leq C \cdot \mm _i^2 (S_i^2 \cap U_i^2) $ for large enough $i$.
\item $ \mm _i^1( B_{R_0}( p_i^1  ) )\leq C \cdot  \mm _i^1 (  B_{\delta}(  x_i ) ) $.
\end{itemize}
 Combining this with the fact that $f_i^{-1} ( S_i^2 \cap U_i^2) \subset B_{R_0}(p_i^1 )$, we get that 
\[    \frac{ \mm _i^2(B_{2 \delta}( f_ix_i  ))  }{\mm _i^1 (B_{\delta}(x_i )) )   }    \leq  C^2         \]
for $i$ large enough, contradicting \ref{eq:step-5}.\\

\noindent \textbf{Step 6:} For $R > 0 $ and distinct $x_i, y_i \in U_i^1$ with $d(x_i , p_i^1), d(y_i , p_i^1) \leq R$, one has  
\[ \lim_{i \to \infty } \vert d( f_ix_i , f_iy_i  ) - d(x_i, y_i ) \vert  =0   .    \]
From Step 3, one gets 
\[     \limsup_{i \to \infty } ( d( f_ix_i , f_iy_i  ) - d(x_i, y_i ) ) \leq 0.         \]
By Step 5, there are sequences $w_i , z_i \in U_i^1 \cap h_i (U_i^2)$ with $d(w_i , x_i)$, $d(z_i , y_i) \to 0$. By Step 2, we have $d(f_iw_i, f_ix_i), $ $d(f_iz_i , f_iy_i) \to 0$, and by Step 3 applied to $h_i$, one gets 
\[     \limsup_{i \to \infty } ( d( w_i , z_i  ) - d(f_i w_i, f_iz_i ) ) \leq 0.         \]
Hence
\[     \limsup_{i \to \infty } ( d( x_i , y_i  ) - d(f_i x_i, f_iy_i ) ) \leq 0.         \]

\noindent \textbf{Step 7: }Lemma \ref{gas:limit} holds.\\

\noindent Let $\varphi_i^j : X_i^j \to X^j \cup \{ \ast \} $ be Gromov--Hausdorff approximations and fix $\mathcal{D} \subset X^1$ a countable dense set. For $x \in \mathcal{D}$, choose $x_i \in U_i^1$ converging to $x$.  By Step 4 we can define (after taking a subsequence)  $f_{\infty }^{\prime} (x) \in X^2$ as 
\[ f _{\infty }^{\prime}(x) : =  \lim_{i \to \infty}  \varphi_i^2 f_i ( x_i ).\]
By a diagonal argument, this can be done simultaneously for all $x \in \mathcal{D}$. It is easy to see from Step 6 that $f_{\infty}^{\prime}: \mathcal{D} \to X^2$ extends to an isometry $f_{\infty} : X^1 \to X^2$ and satisfies \ref{eq:weak-limit}.
\end{proof}

\begin{Pro}\label{pro:large-intersection}
\rm  Under the conditions of Lemma \ref{gas:limit}, sequences of sets $ V_i^j \subset X_i^j$ have asymptotically full measure, then $f_i(V_i^1) \subset X_i^2$, $f_i^{-1}(V_i^2) \subset X_i^1$ have asymptotically full measure as well.
\end{Pro}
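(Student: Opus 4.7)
The plan is to prove the statement for $f_i(V_i^1)$ only; by the symmetry of Definition \ref{def:gas}, the companion statement for $f_i^{-1}(V_i^2)$ will follow by the same argument with the roles of the two sides swapped. Fix $R > 0$. I will show $\mm _i^2(B_R(p_i^2) \setminus f_i(V_i^1)) = o(\mm _i^2(B_R(p_i^2)))$, which by Corollary \ref{cor:a-full-measure} implies asymptotic full measure of $f_i(V_i^1)$.

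The first step will be to establish the containment (up to a $\mm _i^2$-null set)
\begin{align*}
B_R(p_i^2) \setminus f_i(V_i^1) \;\subset\; \bigl(B_R(p_i^2) \setminus U_i^2\bigr) \;\cup\; \bigl\{ y \in U_i^2 \cap B_R(p_i^2) : f_i^{-1}(y) \notin V_i^1 \bigr\},
\end{align*}
which follows from the identity $f_i \circ f_i^{-1} = \Id$ holding $\mm _i^2$-a.e.\ together with Remark \ref{rem:good-good-sets}. Next, applying Step 4 in the proof of Lemma \ref{gas:limit} to $f_i^{-1}$ (legitimate because the GS property is symmetric in the two sides), there will be $R' := R_0 + R + 2$ such that $f_i^{-1}(U_i^2 \cap B_R(p_i^2)) \subset B_{R'}(p_i^1)$ for large $i$. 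The measure-preservation clause of Definition \ref{def:gas}(3b) will then give
\begin{align*}
\mm _i^2\bigl\{ y \in U_i^2 \cap B_R(p_i^2) : f_i^{-1}(y) \notin V_i^1 \bigr\} \;\leq\; \mm _i^1\bigl(B_{R'}(p_i^1) \setminus V_i^1\bigr),
\end{align*}
and asymptotic full measure of $U_i^2$ and $V_i^1$ will control both pieces: $\mm _i^2(B_R(p_i^2) \setminus U_i^2) = o(\mm _i^2(B_R(p_i^2)))$ and $\mm _i^1(B_{R'}(p_i^1) \setminus V_i^1) = o(\mm _i^1(B_{R'}(p_i^1)))$.

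The main technical obstacle is to convert the latter bound, expressed as a fraction of $\mm _i^1(B_{R'}(p_i^1))$, into a bound in terms of $\mm _i^2(B_R(p_i^2))$, since the two measures live on different spaces. The plan is to extract a comparison from the GS property itself: by Definition \ref{def:gas}(2), $f_i(S_i^1) \subset B_{R_0}(p_i^2)$ with $\mm _i^1(S_i^1) \geq \tfrac{1}{2} \mm _i^1(B_1(p_i^1))$. Combining injectivity of $f_i$ on $U_i^1$ (Remark \ref{rem:good-good-sets}), measure-preservation of $f_i|_{U_i^1}$, and asymptotic full measure of $U_i^1$ will give $\mm _i^2(B_{R_0}(p_i^2)) \geq \tfrac{1}{3}\mm _i^1(B_1(p_i^1))$ for large $i$, after which two applications of Theorem \ref{thm:bg-in} will yield $\mm _i^1(B_{R'}(p_i^1)) \leq C(K,N,R,R_0)\, \mm _i^2(B_R(p_i^2))$, closing the argument.
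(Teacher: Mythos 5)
Your argument is correct and takes a genuinely different route from the paper's. You bound the global quantity $\mm _i^2(B_R(p_i^2) \setminus f_i(V_i^1))$ directly, splitting it into the part outside $U_i^2$ (small by Definition~\ref{def:gas}(\ref{def:gas-3c})) and the part where $f_i^{-1}$ lands outside $V_i^1$ (controlled via measure preservation together with the uniform containment $f_i^{-1}(U_i^2 \cap B_R(p_i^2)) \subset B_{R'}(p_i^1)$ extracted from Step~4 of the proof of Lemma~\ref{gas:limit} applied to $f_i^{-1}$). This forces you to supply a cross-space comparison $\mm _i^1(B_{R'}(p_i^1)) \leq C\, \mm _i^2(B_R(p_i^2))$, which you derive correctly from Definition~\ref{def:gas}(\ref{def:gas-2}), measure preservation on $S_i^1 \cap U_i^1$, and Theorem~\ref{thm:bg-in}. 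The paper instead uses the second, local characterization in Corollary~\ref{cor:a-full-measure}: it first reduces to $U_i^j = V_i^j$, fixes $x_i \in B_R(p_i^1)$ and $\delta > 0$, uses Step~5 to move to a nearby $y_i^1 \in U_i^1 \cap f_i^{-1}(U_i^2)$, and shows the density of $f_i^{-1}(A_i^2)$ in $B_\delta(x_i)$ tends to $1$ via Step~6 and measure preservation. That local framing only compares measures of balls that nearly coincide after transport by $f_i$ or $f_i^{-1}$, so the fixed-scale cross-space comparison never has to appear. Both proofs are sound; yours is structurally more direct but pays for it with the extra comparison lemma, which incidentally is essentially the inequality already established in the proof of Step~5 near \ref{eq:step-5}.
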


\begin{proof}
By replacing $U_i^j$ and $V_i^j$ by $U_i^j \cap V_i^j$, we can assume $U_i^j = V_i^j $ for all $j \in \{ 1, 2\}$, $i \in \mathbb{N}$. Fix $R > \delta > 0$, and consider a sequence $x_i \in B_R(p_i^1)$.  As the sets $U_i^1$ have asymptotically full measure, by Step 5 above, there is a sequence $y_ i^1 \in U_i^1 \cap f_i^{-1}(V_i^2)$ with $d(x_i,y_i^1) \to 0$. Define $y_i^2 : = f_i y_i^1$,  $A_i^j : = U_i^j \cap B_{\delta} (y_i^j)$ for $j \in \{ 1, 2 \}$. By Step 6 above, there is a sequence $\varepsilon _i \to 0$ such that
\[        f_i (A_i^1) \subset B_{\delta + \varepsilon_i} (y_i^2), f_i^{-1}(A_i^2) \subset B_{\delta + \varepsilon _i }(y_i^1) .                  \]
Then
\begin{equation*}
\begin{split}
 \lim\limits_{i \to \infty } \dfrac{\mm _i^1 (f_i^{-1}(A_i^2) \cap B_{\delta } (x_i^1))  }{\mm_i^1 ( B_{\delta } (x_i^1) ) } 
&  \geq \lim_{i \to \infty } \dfrac{\mm _i^1 (f_i^{-1}(A_i^2)  )}{\mm_i^ 1( A_i^1)  } \\
& \geq \lim\limits_{i \to \infty } \dfrac{\mm _i^2 ( A_i^2 )   }{\mm_i^2 ( f_i(A_i^1) ) } \\
& \geq \lim_{i \to \infty } \dfrac{\mm _i^2 (  B_{\delta } (y_i^2))  }{\mm _i^2 (  B_{\delta } (y_i^2)) }\\
& \geq 1.
\end{split}
\end{equation*}
This shows that $U_i^1$ has asymptotically full measure. The result for $U_i^2$ is analogous.
\end{proof}

\begin{Pro}\label{gas:zoom}
\rm Let $(X_i^j,d_i^j, \mm _i^j , p_i^j) $, $j \in \{1, 2 \}$ be a pair of sequences of pointed $\rcd ( K , N)$ spaces and $f_i : [X_i^1, p_i^1] \to [X_i^2, p_i^2] $ is a sequence of GS maps. Then there is a sequence of subsets $W_i^1 \subset X_i^1$ of asymptotically full measure with the property that for all $w_i \in W_i^1$ and $\lambda_i \to \infty$, the sequence $f_i : [ \lambda_i X_i^1 , w_i  ] \to  [\lambda_i X_i^2 , f_i(w_i)] $ is GS.
\end{Pro}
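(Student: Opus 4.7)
The strategy is to produce $W_i^1$ as a subset of $U_i^1 \cap f_i^{-1}(U_i^2)$ on which the density of the complement of the good set is controlled at every scale $\leq 1$ by a single sequence $\delta_i \to 0$. Once this uniform density control holds, rescaling by $\lambda_i \to \infty$ simply restricts attention to scales $\leq 1/\lambda_i$, so the original GS data $(U_i^j, \varepsilon_i)$ descend to the rescaled sequences with essentially the same constants; the only new ingredient is the ``sausage'' sets $\tilde S_i^j$, which are supplied by essential continuity applied at scale $1/\lambda_i$.

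Concretely, I would set $V_i^1 := U_i^1 \cap f_i^{-1}(U_i^2)$ and $V_i^2 := f_i(V_i^1)$; both have asymptotic full measure by Proposition \ref{pro:large-intersection}, and $f_i : V_i^1 \to V_i^2$ is a measure-preserving bijection modulo null sets. Applying Proposition \ref{pro:max-properties}(\ref{pro:max-properties-1}) to $h_i^j := \chi_{X_i^j \setminus V_i^j}$, the maximal function $\phi_i^j := \mx (h_i^j)$ satisfies
\[
\mm_i^j(\{\phi_i^j > \delta\} \cap B_R(p_i^j)) \leq C(K,N)\, \delta^{-1}\, \mm_i^j\big((X_i^j \setminus V_i^j) \cap B_{R+1}(p_i^j)\big).
\]
Since the right-hand side divided by $\mm_i^j(B_R(p_i^j))$ tends to $0$ as $i \to \infty$ for each fixed $R$, a diagonal argument (replacing the complement density by its monotone envelope in $R$ and choosing a growing threshold $n_i \to \infty$ slowly) yields a single $\delta_i \to 0$ for which $Z_i^j := \{\phi_i^j \leq \delta_i\}$ has asymptotic full measure. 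Setting $W_i^1 := V_i^1 \cap Z_i^1 \cap f_i^{-1}(Z_i^2)$ and invoking Proposition \ref{pro:large-intersection} once more yields the required asymptotic full measure.

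Given $w_i \in W_i^1$ and $\lambda_i \to \infty$, I propose the rescaled GS data $\tilde U_i^j := V_i^j$, $\tilde \varepsilon_i := \varepsilon_i$, $\tilde R_0 := 2$, with $\tilde S_i^1 \subset B_{1/\lambda_i}(w_i)$ and $\tilde S_i^2 \subset B_{1/\lambda_i}(f_i(w_i))$ provided by the essential continuity of $f_i$ at $w_i$ and of $f_i^{-1}$ at $f_i(w_i)$, respectively, applied at scale $1/\lambda_i$ (valid for $i$ large enough that $1/\lambda_i$ falls below the pointwise essential-continuity threshold). Items (1), (2), (3a), and (3b) of Definition \ref{def:gas} concern the unchanged maps or are purely local, hence scale-invariant. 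Item (3c) follows because, for each fixed $R$, once $R/\lambda_i \leq 1$ the estimate $\mm_i^1((X_i^1 \setminus V_i^1) \cap B_{R/\lambda_i}(w_i))/\mm_i^1(B_{R/\lambda_i}(w_i)) \leq \phi_i^1(w_i) \leq \delta_i$ delivers the desired decay to zero, and symmetrically on side $2$ using $f_i(w_i) \in Z_i^2$. Item (3d) reduces, via the identities $dt_r^{\lambda_i d}(f_i) = \lambda_i \cdot dt_{r/\lambda_i}^d(f_i)$ and $B_r^{\lambda_i d}(x) = B_{r/\lambda_i}^d(x)$, to the original integral inequality at scale $s := r/\lambda_i \leq 1$, which holds with constant $\varepsilon_i$.

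The main technical obstacle is the diagonal selection of $\delta_i$: since the sequence $\lambda_i$ is quantified only after $W_i^1$ is fixed, the complement densities must be simultaneously controlled at all rescaled small scales $R/\lambda_i$, forcing $\delta_i$ to be chosen in advance using information across all $R$. This is achieved by replacing each pointwise-in-$R$ decay rate by its monotone envelope in $R$ before diagonalizing, which delivers a single $\delta_i \to 0$ satisfying $\mm_i^j((X_i^j \setminus V_i^j) \cap B_{R+1}(p_i^j))/(\delta_i \mm_i^j(B_R(p_i^j))) \to 0$ for every fixed $R$ and every $j$. All remaining verifications are routine consequences of essential continuity, measure preservation, and the scale-invariance of the distortion estimate.
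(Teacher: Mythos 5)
Your overall strategy matches the paper's: restrict to the set where the maximal function of the complement indicator of the good set is $\leq \delta_i$, diagonalize to a single $\delta_i \to 0$, observe that rescaling restricts the integral estimate \ref{def:gas-3d} and the density condition \ref{def:gas-3c} to sub-unit scales where the maximal-function control applies, and note that \ref{def:gas-1}, \ref{def:gas-3a}, \ref{def:gas-3b} are scale-invariant. That part is correct.

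The gap is in your treatment of condition \ref{def:gas-2}. You produce $\tilde S_i^1 \subset B_{1/\lambda_i}(w_i)$ from ``the essential continuity of $f_i$ at $w_i$ \ldots\ applied at scale $1/\lambda_i$ (valid for $i$ large enough that $1/\lambda_i$ falls below the pointwise essential-continuity threshold).'' But the pointwise threshold $r_0(w_i)$ is only known to be positive for each fixed $w_i$; along an arbitrary sequence $w_i \in W_i^1$ it may decay arbitrarily fast, and $\lambda_i$ is an \emph{arbitrary} sequence tending to infinity, so there is no guarantee that $1/\lambda_i < r_0(w_i)$ for large $i$ (take, say, $r_0(w_i) = 2^{-i^2}$ and $\lambda_i = i$). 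Nothing in your construction of $W_i^1$ (density control of $X_i^j \setminus V_i^j$ via $Z_i^j$) imposes a uniform lower bound on the essential-continuity threshold, so this step fails as written. What is actually needed, and what the paper invokes, is Step 1 in the proof of Lemma \ref{gas:limit}: a bootstrap using the averaged distortion estimate \ref{def:gas-3d} shows that for $i$ large (uniformly, i.e.\ once $\varepsilon_i$ is smaller than a constant depending only on $K,N$), \emph{every} $x \in U_i^1$ admits, for \emph{every} $r \leq 1$, a set $A \subset B_r(x)$ with $\mm(A) \geq \tfrac12 \mm(B_r(x))$ and $f_i(A) \subset B_{2r}(f_i(x))$. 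This uniform version of essential continuity is what legitimizes applying the property at scale $1/\lambda_i$ for an arbitrary $\lambda_i$, and it should be cited explicitly rather than appealing to the pointwise threshold.
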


\begin{proof}
Let $ U_i ^j \subset X_i^j$ be given by Definition \ref{def:gas} and consider a sequence $\delta _i \to 0$. Set 
\[\chi_i^j :  = 1 - \chi _{U_i^j} : X_i^j \to \mathbb{R} ,\]
and $V_i ^j : = \{ x \in  U_i^j \vert \mx ( \chi_i^j)(x) \leq \delta _i  \} $. Then by Proposition \ref{pro:max-properties}(\ref{pro:max-properties-1}) and Proposition \ref{pro:large-intersection}, if $\delta_i \to 0$ slowly enough, the sets
\[W_i^1 : =  V_i ^1 \cap f_i^{-1} ( V_i^2 ) , \text{ }  W_i^2 : =  V_i^2 \cap f_i ( V_i^1 ) . \]
have asymptotically full measure. Moreover, by construction, for any sequences $\lambda_i \to \infty$ and $w_i\in W_i^1$, the sets $W_i^1$ and $W_i^2$ also have asymptotically full measure when regarded as subsets of the spaces $( X_ i^1 , \lambda_i  d_i^1, \mm _i^1 , w_i )$ and $(X_i^2, \lambda _i d_i ^2, \mm _i^2 , f_i w_i )$, respectively.

Using the sets $W_i^j$ as a replacement for $U_i^j$, all the properties of Definition \ref{def:gas} for $f_i : [\lambda _i X_i^1 ,w_i] \to [\lambda _i X_i^2, f_i (w_i)]$ follow from the ones of the original sequence, except for condition \ref{def:gas-2}, which follows from Step 1 in the proof of Lemma \ref{gas:limit}.
\end{proof}

\begin{Pro}\label{gas:composition}
\rm Let $(X_i^1j, d_i^j, \mm _i^j,p_i^j)$, $j \in \{1, 2, 3 \}$ be sequences of pointed $\rcd ( K , N)$ spaces. and $f_i : [X_i^1, p_i^1] \to [X_i^2,p_i^2] $ , $h_i : [X_i^2,p_i^2] \to [X_i^3,p_i^3]$ be sequences of GS maps. Then $h_i \circ f_i : [X_i^1 , p_i^1] \to [X_i^3 , p_i^3]$ is GS. Moreover, if $f_i$ converges weakly to $f$ and $h_i$ converges weakly to $h$, then $h_i f_i$ converges weakly to $hf$.
\end{Pro}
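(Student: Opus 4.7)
The plan is to establish each clause of Definition \ref{def:gas} for the composition $g_i := h_i \circ f_i$ with candidate inverse $g_i^{-1} := f_i^{-1} \circ h_i^{-1}$. Absolute continuity \ref{def:gas-1} follows by chaining the two pushforward inequalities. For the bounded-image condition \ref{def:gas-2} I take $S_i^{1,g} := B_1(p_i^1) \cap U_i^1 \cap f_i^{-1}(U_i^2)$ (and symmetrically $S_i^{3,g}$), which by Proposition \ref{pro:large-intersection} has measure at least $\tfrac{1}{2}\mathfrak{m}_i^1(B_1(p_i^1))$ for $i$ large; Step 4 in the proof of Lemma \ref{gas:limit} applied consecutively to $f_i$ and $h_i$ then yields a radius $R_0' = R_0'(R_0)$ with $g_i(S_i^{1,g}) \subset B_{R_0'}(p_i^3)$.

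For the good sets in \ref{def:gas-3}, following the strategy of Proposition \ref{gas:zoom} I set
\[
U_i^{1,g} := \{ x \in U_i^1 \cap f_i^{-1}(U_i^2) : \mx(1 - \chi_{U_i^1})(x) \leq \delta_i,\ \mx((1-\chi_{U_i^2}) \circ f_i)(x) \leq \delta_i \},
\]
with $\delta_i \to 0$ chosen to decay sufficiently slowly, and analogously $U_i^{3,g}$. Proposition \ref{pro:max-properties}(\ref{pro:max-properties-1}) together with Proposition \ref{pro:large-intersection} show these sets have asymptotically full measure, and conditions \ref{def:gas-3a}, \ref{def:gas-3b} transfer immediately from the analogous properties of $f_i$ and $h_i$. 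The heart of the matter is \ref{def:gas-3d}: the triangle inequality yields the pointwise bound
\[
\dr(g_i)(a,b) \leq \dr(f_i)(a,b) + \dr(h_i)(f_i(a), f_i(b)),
\]
and integrating over $B_r(x)^{\times 2}$ for $x \in U_i^{1,g}$ and $r \leq 1$, the first summand averages to at most $\varepsilon_i r$ by hypothesis. For the second summand I pick, via Chebyshev applied to the averaged $f_i$-distortion bound, an auxiliary $a_0 \in U_i^1 \cap B_r(x)$ such that $f_i(b) \in B_{3r}(f_i(a_0))$ holds for all $b$ outside an exceptional set of measure $O(\varepsilon_i \mathfrak{m}_i^1(B_r(x)))$. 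Since $f_i(a_0) \in U_i^2$, the measure-preservation of $f_i$ on $U_i^1$ allows a change of variables, and applying the $h_i$-hypothesis on $B_{3r}(f_i(a_0))^{\times 2}$, combined with doubling to compare $\mathfrak{m}_i^2(B_{3r}(f_i(a_0)))$ to $\mathfrak{m}_i^1(B_r(x))$, bounds the main contribution by $O(\varepsilon_i r)$; the complement contributions are controlled by the built-in maximal-function bounds in $U_i^{1,g}$ together with the universal cap $\dr \leq r$.

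For the weak-convergence claim, let $W_i \subset X_i^1$ and $W_i' \subset X_i^2$ be the good sets witnessing $f_i \to f$ and $h_i \to h$, and set $V_i := U_i^{1,g} \cap W_i \cap f_i^{-1}(W_i')$, which has asymptotically full measure by Proposition \ref{pro:large-intersection}. For $x \in V_i$, the triangle inequality
\[
d\bigl( \varphi_i^3 h_i f_i(x),\ hf(\varphi_i^1 x) \bigr) \leq d\bigl(\varphi_i^3 h_i f_i(x),\ h(\varphi_i^2 f_i(x))\bigr) + d\bigl(h(\varphi_i^2 f_i(x)),\ hf(\varphi_i^1 x)\bigr)
\]
together with continuity of the isometry $h$ forces both summands to tend to zero uniformly on $V_i$. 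The main obstacle is the distortion estimate in the previous paragraph: the auxiliary point $a_0$ is precisely where the GS hypotheses on $f_i$ and $h_i$ are interlocked, since without it the triangle-inequality bound fails to localize the integral of $\dr(h_i)$ to a ball on which the $h_i$-hypothesis is directly applicable.
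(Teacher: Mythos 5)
Your overall framework---verifying each clause of Definition \ref{def:gas} for $g_i := h_i \circ f_i$ with inverse $f_i^{-1}\circ h_i^{-1}$, and the pointwise bound $\dr(g_i)(a,b) \leq \dr(f_i)(a,b) + \dr(h_i)(f_ia, f_ib)$---is the right starting point, and your weak-convergence argument matches the paper's and is correct. However, there are two genuine gaps in the rest, and both are symptoms of missing the key ingredient the paper uses: Step~2 of Lemma \ref{gas:limit} upgrades the averaged distortion hypothesis to a \emph{pointwise asymptotically-Lipschitz} bound on the good set, namely $\dr(f_i)(a,b),\ \dr(h_i)(f_ia,f_ib) \leq \eta_i\,d(a,b)$ for $a,b\in W_i'$ with $\eta_i\to 0$.

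\textbf{Essential continuity (condition \ref{def:gas-3a}) does not ``transfer immediately.''} Chaining essential continuity naively gives, for $A\subset B_r(a)$ with $f_i(A)\subset B_{2r}(f_ia)$, only $h_if_i(A)\subset B_{4r}(h_if_ia)$, whereas the definition requires $B_{2r}$. Running the scales down (e.g.\ using $r/2$) does not repair this because the measure requirement $\mm(A)\geq \frac12\mm(B_r(a))$ is then lost up to a doubling constant. In addition, even at fixed radius, $\mm(A_r^{(1)}\cap f_i^{-1}(A_{2r}^{(2)}))$ need not be at least $\frac12\mm(B_r(a))$: the two sets of measure $\geq\frac12$ live in balls of \emph{a priori} incomparable volume in $X_i^1$ and $X_i^2$, so a volume-comparison across $f_i$ must be established first. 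The paper avoids all of this: once the pointwise bound holds, any $y\in B_r(a)\cap W_i'$ satisfies $d(h_if_ia, h_if_iy)\leq d(a,y)+2\eta_i r < 2r$, so the set $A = B_r(a)\cap W_i'$ itself works.

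\textbf{The Chebyshev/auxiliary-point route to \ref{def:gas-3d} is incomplete as sketched.} After changing variables to $B_{3r}(f_ia_0)^{\times 2}$, the resulting estimate is in terms of $\mm_i^2(B_{3r}(f_ia_0))^2$, and you still must divide by $\mm_i^1(B_r(x))^2$. Comparing these two quantities is not ``doubling''---doubling compares balls in the same space. The needed estimate $\mm_i^2(B_{3r}(f_ia_0)) \leq C\,\mm_i^1(B_r(x))$ requires invoking essential continuity of $f_i^{-1}$ at $f_ia_0$ together with measure preservation (to transport a large subset of $B_{3r}(f_ia_0)$ back to $B_{Cr}(a_0)$) and then doubling in $X_i^1$. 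You also silently need $a_0 \in f_i^{-1}(V_i^2)$ (so that the $h_i$-hypothesis applies at $f_ia_0$), and $3r\leq 1$, so the range $r\in(1/3,1]$ is not covered. The paper's pointwise bound sidesteps the measure comparison entirely: on $Z=B_r(x)\cap W_i'$ one has the scalar bound $\dr(h_i)(f_i\cdot, f_i\cdot) + \dr(f_i)(\cdot,\cdot) \leq 4\eta_i r$ pointwise, and the remaining range $r\in[1/10,1]$ is handled via Step~6 of Lemma \ref{gas:limit} after shrinking the good set to $W_i\cap B_{R_i}(p_i^1)$. I would recommend replacing the Chebyshev step with an appeal to Step~2 of Lemma \ref{gas:limit}; this simultaneously closes both gaps and shortens the argument.
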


\begin{proof}
Let $ U_i ^1 \subset X_i^1$, $ U_i^2 \subset X_i^2, V_i^2 \subset X_i^2$, $V_i^3 \subset X_i^3$ be given by Definition \ref{def:gas} applied to $f_i $ and $h_i$ respectively. Set $W_i^{\prime} : = U_i^1 \cap f_i^{-1}(U_i^2 \cap V_i^2 \cap h_i^{-1}(V_i^3))$,  $\chi _ i : = 1 - \chi _{W_i^{\prime}}$, and for a sequence $\delta_i \to 0$, define 
\begin{equation*}
    W_i : = \{  x \in U_i^1 \vert \mx \left(\chi _ i \right) (x) \leq \delta_i \}.
\end{equation*}
By Proposition \ref{pro:max-properties}(\ref{pro:max-properties-1}), if $\delta _ i \to 0$ slowly enough, the sets $W_i \subset X_i^1$ have asymptotically full measure, so from Lemma \ref{gas:limit}, $S_i^1 : = W_i \cap B_1(p_i^1)$ and $h_i  f_i$ satisfy condition \ref{def:gas-2} from Definition \ref{def:gas}. By Step 2 in the proof of Lemma \ref{gas:limit} (applied to both $f_i$ and $f_i ^{-1}$), there is $\eta_i \to 0$ such that for all $r < 10$, $a,b \in W_i^{\prime }$ with $d(a,b) \leq 2r$, one has
\begin{equation}\label{eq:gas-composition-dist}
    \dr (f_i) (a,b) ,  \dr (h_i) (f_ia , f_i b)  \leq \eta _i \cdot d( a , b ).
\end{equation}   
This implies that $W_i$ consists of essential continuity points of $h_i f_i$ provided $\delta_i , \eta_i \leq 1/2$. Also, for $x \in W_i $, $ r\leq 1/10 $, set $Z = B_r(x) \cap W_i^{\prime}$. Then 
\begin{equation*}
\begin{split}
& \frac{1}{r} \fint _{ B_r(x)^{\times 2} } \dr (h_i  f_i) d( \mm _i^1 \times \mm_i^1 ) \\
& \leq  2 \frac{\mm _i^1 (B_r (x) \backslash Z   ) }{ \mm _i^1 (B_r( x )) }   + \frac{1}{\mm _i^1(B_r(x))^2}  \int_{Z^{\times 2}} \frac{\dr ( 
h_i  f_i )}{r} d( \mm _i^1 \times \mm_i^1 ) \\
& \leq  2 \delta_i +  \fint_{Z^{\times 2}}  \frac{ \dr (h_i)(  
f_i \cdot , f_i \cdot ) + \dr (f_i )(\cdot , \cdot ) }{r} d( \mm _i^1 \times \mm_i^1 )  \\
& \leq   2\delta_i + 4 \eta_i
\end{split}
\end{equation*} 
This shows that $h_i  f_i$ satisfies condition \ref{def:gas-3d} from Definition \ref{def:gas}, with $r \leq 1/10$ instead of $r \leq 1$. Identical arguments show that $f_i^{-1}  h_i^{-1}$ also satisfy the corresponding properties in Definition \ref{def:gas}. Conditions \ref{def:gas-1} and \ref{def:gas-3b} for $h_i  f_i$ follow from the corresponding conditions for $h_i $ and $f_i$.

Notice that the proof of Lemma \ref{gas:limit} still goes through if we replace $1$ by $1/10$ in \ref{def:gas-3d}. In particular, by Step 6, if we replace $W_i$ by $W_i \cap B_{R_i }(p_i^1)$ for some sequence $R_i \to \infty$ diverging slowly enough, then \ref{def:gas-3d} holds for $r \in [1/10, 1]$ as well, and the sequence $h_i f_i$ is GS. 

To verify the last claim, let $(X^j, d_j, \mm ^j, p^j)$  be pointed  $\rcd (K,N)$ spaces and $ \varphi _ i ^j : X_i^j \to X^j \cup \{ \ast \} $ Gromov--Hausdorff approximations for $j \in \{ 1, 2, 3 \} $. Then by hypothesis, there are $\varepsilon_i \to 0$ and sets $A_i^j \subset X_i^j $ with $j \in \{ 1, 2 \} $ having asymptotically full measure such that for $x \in A_i^1$, $y \in A_i^2$ one has
\begin{gather*}
 d(  \varphi _i ^2 f_i (x) , f _{\infty}\varphi _i ^1 (x)  ) ,  d(  \varphi _i ^3 h_i (y) , h _{\infty}\varphi _i ^2 (y)  ) \leq  \varepsilon_i .
\end{gather*}
Then by Proposition \ref{pro:large-intersection}, the sets $A_i : = A_i ^1 \cap f_i^{-1} ( A_i^2)$ have asymptotically full measure, and for all $x \in A_i$,  one has 
\begin{equation*}
\begin{split}
&d( \varphi_i^3 h_i f_i (x) ,  h_{\infty} f_{\infty}  \varphi_i^1 (x) )  \\
& \leq   d( \varphi_i^3 h_i f_i (x)  , h_{\infty} \varphi_i^2 f_i (x) ) +  d( h_{\infty} \varphi_i^2 f_i (x) , h_{\infty} f_{\infty} \varphi_i^1 (x) )  \\
& \leq 2 \varepsilon_i ,
\end{split}
\end{equation*}      
so $ h_i  f_i$ converges weakly to $h_{\infty} f_{\infty}$.
\end{proof}

\section{Construction of GS maps}\label{sec:cgas}

\begin{Lem}\label{lem:euclidean:flow}
\rm Let $(X_i, d_i, \mm _i, p_i)$ be a sequence of $\rcd \left( - \frac{1}{i} ,N \right)$ spaces,   
\[\rho_ i : (\tilde{X}_i, \tilde{d}_i, \tilde{\mm }_i, \tilde{p}_i) \to (X_i, d_i, \mm _i, p_i)\]
their universal covers,   $(Y, y)$, $( \tilde{Y}, \tilde{y} )$ a pair of pointed metric spaces, and a closed group $\Gamma \leq \iso (\mathbb{R}^k \times  \tilde{Y})$ that acts trivially on the $\mathbb{R}^k$ factor with $\tilde{Y} / \Gamma = Y$. Assume the sequences $(X_i, p_i)$ and $(\tilde{X}_i, \tilde{p}_i)$ converge in the Gromov--Hausdorff sense to $(\mathbb{R}^k \times Y , (0, y) )$ and $(\mathbb{R}^k \times \tilde{Y} , (0, \tilde{y}))$, respectively, and the sequence of groups $\pi_1(X_i)$ converges to $\Gamma$. Let $\tilde{\varphi } _i : \tilde{X}_i \to \mathbb{R}^k \times  \tilde{Y} \cup \{ \ast \} $, $\varphi_i : X_i \to \mathbb{R}^k \times  Y \cup \{ \ast \} $ be the Gromov--Hausdorff approximations given by Theorem \ref{thm:equivariant-compactness}. Then for all $s \in \mathbb{R}^k $, there is a sequence $f_i : [\tilde{X}_i , \tilde{p}_i] \to [ \tilde{X}_i , \tilde{p}_i ]$ of deck type GS maps with $(f_i)_{\ast} = \Id _{\pi_1(X_i)}$, and such that $f_i $ converges weakly to the map $\overline{s} : \mathbb{R}^{k} \times \tilde{Y} \to \mathbb{R}^k \times \tilde{Y}$, where $ \overline{s}(x,y ) : = (x+ s , y)$.
\end{Lem}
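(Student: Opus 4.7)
The plan is to realize $f_i$ as the time-1 map of a $\pi_1(X_i)$-equivariant Regular Lagrangian flow of a lifted $\delta$-splitting map in the direction $s$. First I would apply Lemma \ref{lem:convergence-to-delta} to the convergence $(X_i, p_i) \to (\mathbb{R}^k \times Y, (0, y))$ to obtain $L(N)$-Lipschitz $\delta_i$-splitting maps $h^i : X_i \to \mathbb{R}^k$ with $\nabla h^i$ divergence-free on $B_{R_i}(p_i)$, $R_i \to \infty$. Setting $\tilde{h}^i := h^i \circ \rho_i$, I obtain a $\pi_1(X_i)$-invariant $L$-Lipschitz function on $\tilde{X}_i$; by Proposition \ref{pro:mx-lift}, the $\delta$-splitting integral estimates lift to balls around $\tilde{p}_i$ in $\tilde{X}_i$. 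Define $V_i := \sum_{j=1}^k s_j \nabla \tilde{h}^i_j$, a $\pi_1(X_i)$-invariant vector field with $|V_i| \leq |s|L$ that is divergence-free on the preimage of $B_{R_i}(p_i)$, and let $\XX_i : [0,1] \times \tilde{X}_i \to \tilde{X}_i$ be its Regular Lagrangian flow from Theorem \ref{thm:RLF-existence}.

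For equivariance and deck type: since $V_i$ is $\pi_1(X_i)$-invariant, for each $g \in \pi_1(X_i)$ the map $(t,x) \mapsto g^{-1} \XX_i(t, gx)$ is also an RLF of $V_i$ and agrees with $\XX_i$ $\tilde{\mm}_i$-a.e.\ by uniqueness. Using a Borel fundamental domain, choosing the essential-stability representative from Theorem \ref{thm:essential-stability} on it, and extending by the $\pi_1(X_i)$-action, yields a representative pointwise satisfying $\XX_i(t, gx) = g \XX_i(t, x)$. Setting $f_i := \XX_i(1, \cdot)$ and defining $f_i^{-1}$ as the analogous time-1 map for $-V_i$, we get $f_i \circ g = g \circ f_i$ for all $g \in \pi_1(X_i)$, so $f_i$ is of deck type with $(f_i)_\ast = \Id_{\pi_1(X_i)}$. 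For the GS conditions of Definition \ref{def:gas}: (1) follows from \eqref{eq:volume-distortion-00} and the div-free property; (2) holds with $R_0 = 1 + |s|L$ by the speed bound from Remark \ref{rem:sob-to-lip}. Taking $U_i^1, U_i^2$ to be the sets of essential stability of $\XX_i$ and its reverse, the lifted splitting bound $\fint_{B_r(\tilde{p}_i)} |\nabla V_i|^2 \to 0$ together with Corollary \ref{cor:essential-stability} gives (3c); Proposition \ref{pro:e-s-better-2} gives (3a); div-free combined with the volume control of Theorem \ref{thm:essential-stability} gives (3b); and Proposition \ref{pro:e-s-better-3} yields (3d).

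Once the GS property is established, Lemma \ref{gas:limit} produces a weak limit isometry $f_\infty : \mathbb{R}^k \times \tilde{Y} \to \mathbb{R}^k \times \tilde{Y}$, and it remains to identify $f_\infty$ with $\bar{s}$. For $\tilde{x}_i \in U_i^1$ with $\tilde{\varphi}_i(\tilde{x}_i) \to (a, b)$, property \ref{def:rlf-2} gives $\tilde{h}^i(f_i \tilde{x}_i) - \tilde{h}^i(\tilde{x}_i) = \int_0^1 \langle \nabla \tilde{h}^i, V_i\rangle(\XX_i(t, \tilde{x}_i))\, dt$; essential stability confines the trajectory near $\tilde{x}_i$, so the splitting estimate $\langle \nabla \tilde{h}^i_{j_1}, \nabla \tilde{h}^i_{j_2}\rangle \approx \delta_{j_1 j_2}$ combined with Theorem \ref{thm:segment} yields $s + o(1)$ on asymptotically full measure. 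Applying \eqref{eq:d-split-1} together with the compatibility $\pi \circ \varphi_i \circ \rho_i \approx \pi_{\mathbb{R}^k} \circ \tilde{\varphi}_i$ from Theorem \ref{thm:equivariant-compactness}, the $\mathbb{R}^k$-component of $f_\infty(a,b)$ is forced to be $a + s$. For the $\tilde{Y}$-component, $|V_i|^2 = \sum s_{j_1} s_{j_2} \langle \nabla \tilde{h}^i_{j_1}, \nabla \tilde{h}^i_{j_2}\rangle \to |s|^2$ in $L^1$ on bounded balls, so via segment-type integration along trajectories and Cauchy--Schwarz, $d(f_i \tilde{x}_i, \tilde{x}_i) \to |s|$ on asymptotically full measure; the Pythagorean identity in the $L^2$-product limit metric then forces the $\tilde{Y}$-displacement to vanish, giving $f_\infty = \bar{s}$. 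The main obstacle is precisely this last step: sharpening the worst-case Lipschitz bound $|V_i| \leq |s|L$ to the near-identity $|V_i| \approx |s|$ along flow lines for $\tilde{x}_i$ in a set of asymptotically full measure, which requires a careful combination of the averaged $L^1$ estimate on $|V_i|^2$ with the essential-stability controlled integration along trajectories.
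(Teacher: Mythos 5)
Your proposal follows essentially the same route as the paper's proof. Two remarks close the gaps you flagged. For the deck-type property you need not glue over a fundamental domain: $s\nabla\tilde{h}^i$ is by construction the lift of $s\nabla h^i$, and Proposition \ref{pro:lift-of-flow} shows its RLF is the lift of the RLF of $s\nabla h^i$ on $X_i$; a lift of a flow is automatically of deck type with $(f_i)_\ast = \Id_{\pi_1(X_i)}$, as noted right after the definition of lift. For identifying $f_\infty$, the tool for your ``segment-type integration along trajectories'' is not Theorem \ref{thm:segment} (that controls integrals along geodesics, not flow lines) but Fubini together with the pushforward bound \eqref{eq:volume-distortion}: swapping the time and space integrals, using the speed bound to get $\XX^i_t(B_\eta(q_i)) \subset B_{\eta + R + |s|L}(\tilde{p}_i)$, and applying Cauchy--Schwarz to the lifted splitting estimate on $\big|\,|\nabla\tilde{h}^i|^2 - 1\big|$, one bounds $\fint_{B_\eta(q_i)}\big|\int_0^1 |V_i|(\XX^i_t(x))\,dt - |s|\big|\,d\tilde{\mm}_i \le C\delta_i$. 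Translating this to $d(f_i x, x)$ uses the arclength formula of \cite[Proposition 3.6]{D20}, and Step~2 of Lemma \ref{gas:limit} controls the oscillation of $x \mapsto d(f_i x, x)$ over $B_\eta(q_i)\cap U_i$, so letting $\eta\to 0$ yields $d(f_\infty q, q)\le |s|$; this is exactly the estimate that discharges your flagged obstacle. Since you already derive $\tilde{\pi}(f_\infty q) - \tilde{\pi}(q) = s$ (by the same Fubini argument applied to $|\nabla\tilde{h}^i|^2 - 1$), and $\tilde{\pi}$ is $1$-Lipschitz, one gets $d(f_\infty q, q) = |s|$ and the $\tilde{Y}$-displacement vanishes, as you anticipated.
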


\begin{proof}
Notice that by replacing $Y$ by $s^{\perp} \times Y$, where $s^{\perp} \leq \mathbb{R} ^k$ denotes the orthogonal complement of $s$, we can assume $k =1$ and $s > 0 $.   By Lemma \ref{lem:convergence-to-delta}, there are $\delta_i \to 0$, $R_i \to \infty$, and a sequence of $L (N)$-Lipschitz functions $h^i \in H^{1,2}(X_i)$ such that
\begin{itemize}
    \item $\nabla h^i $ is divergence free in $B_{R_i}(p_i)$.
    \item For all $r \in [1, R_i]$, one has
    \[   \fint _{B_r(p_i)} \left[  \vert  \vert  \nabla h^i \vert ^2 - 1 \vert +  \vert \nabla \nabla h^i \vert^2   \right] d \mm _i \leq \delta_i^2.          \]
    \item  For all $x \in B_{R_i}(p_i)$,  one has
    \[ d(h^i (x) , \pi ( \varphi  _i (x) ) ) \leq \delta_i,\]
    where $\pi : \mathbb{R} \times Y \to \mathbb{R}$ is the projection.
\end{itemize}
Set $\tilde {h}^i : \tilde{X}_i \to \mathbb{R} $ as $\tilde{h}^i  : = h^i \circ \rho _ i $, and $\tilde{\pi}  : \mathbb{R} \times \tilde{Y} \to \mathbb{R}$ as $ \tilde{\pi} : = \pi \circ \rho$. By \ref{eq:fy} one gets for $x \in B_{R_i }(\tilde{p}_i)$, after possibly updating $\delta_i $ and $R_i$, that 
\begin{equation*} 
    \begin{split}
    & d(\tilde{h}^i (x) , \tilde{\pi} ( \tilde{ \varphi } _i (x )) )   \\
    & \leq d( h^i  ( \rho_i ( x) ), \pi ( \varphi  _i(\rho_i  (x) )) ) + d( \pi ( \varphi  _i(\rho_i  (x) )), \pi ( \rho ( \tilde{ \varphi } _i (x) )) ) \\
    & \leq  \delta_i +  \delta _ i .
    \end{split}
\end{equation*}
Then by Lemma \ref{pro:mx-lift} one gets,  after possibly updating $\delta_i $ and $R_i$, that
\begin{itemize}
    \item $\nabla \tilde{h}^i $ is divergence free in $B_{R_i}(\tilde{p}_i)$.
    \item For all $r \in [1, R_i]$, one has
    \[   \fint _{B_r(\tilde{p}_i)} \left[  \vert  \vert  \nabla \tilde{h}^i  \vert ^2 - 1 \vert +  \vert \nabla \nabla \tilde{h}^i  \vert^2   \right] d \tilde{\mm }_i \leq \delta_i^2.          \]
    \item For all $x \in B_{R_i}(\tilde{p}_i)$, one has 
    \begin{equation}\label{gh-compatible}
        d(\tilde{h}^i  (x) , \tilde{\pi} ( \tilde{ \varphi } _i (x )) ) \leq \delta _ i .
    \end{equation}          
\end{itemize}
Set $V_i : = s \nabla  \tilde{h}^i   $, $\XX^i : [0,1] \times \tilde{X}_i \to \tilde{X}_i$ the corresponding RLF, and $f_i : = \XX_1^i$. For $r \geq 1$, and $i$ large enough, using the Cauchy-Schwarz inequality and Proposition \ref{pro:max-properties}(\ref{pro:max-properties-2}), we have 
\begin{equation}\label{eq:translate-estimate}
\begin{split}
    \fint_{B_{r} (\tilde{p}_i)} & \left[ \int_0^1 \mx _4 (  \vert \nabla V_i  \vert ) (\XX^i_t(x))dt \right] d \tilde{\mm}_i (x)\\
    &  =  \int_0^1 \left[ \fint_{\XX^i_t(B_{r}(\tilde{p}_i))} \mx  _4 (\vert \nabla V_i \vert ) d \tilde{\mm}_i \right] dt\\
    &  \leq  C( N , s,  r  ) \fint_{B_{r + s L  }(\tilde{p}_i)} \mx _4 ( \vert \nabla V_i \vert  ) d \tilde{\mm}_i \\
    & \leq C(N, s, r ) \sqrt{  \fint _{B_{ r + sL}(\tilde{p}_i)} \mx _4  (\vert \nabla V_i \vert  )^2 d \tilde{\mm}_i }\\
    & \leq C(N, s, r ) \delta_i .
\end{split}
\end{equation}
Set 
\[ U_i^{\prime} : = \left\{ x \in \tilde{X}_i \bigg\vert \int_0^1 \mx _4 (\vert \nabla V_i \vert ) ( \XX_t^i(x)) dt \leq \sqrt{\delta_i} \right\} ,\]
and let $U_i \subset U_i^{\prime}$ be the density points of $U_i^{\prime}$. From \ref{eq:translate-estimate}, the sets $U_i$ have asymptotically full measure. By Theorem \ref{thm:essential-stability}, for $i$ large enough, $U_i$ consists of points of essential stability of $\XX^i$, and hence of essential continuity of $f_i$. To verify Part \ref{def:gas-2} of Definition \ref{def:gas}, we notice that  for all $i$ we have 
\[f_i (B_1(\tilde{p}_i)) \subset B_{1+sL} (\tilde{p}_i) . \]
 Applying Proposition \ref{pro:e-s-better-3} to the points in $U_i$, we see that Part \ref{def:gas-3d} of Definition \ref{def:gas} holds. The corresponding properties for $f_i^{-1}$ follow by identical arguments applied to the reverse flow, so we get that the maps $f_i$ are good at all scales and converge, by Lemma \ref{gas:limit}, to a measure preserving isometry $f_{\infty} : \mathbb{R} \times \tilde{Y} \to \mathbb{R} \times \tilde{Y} $.  

For $q \in \mathbb{R} \times \tilde{Y}$ with $d(q, (0,y)) < R $, choose $q_i \in U_i \cap f_i^{-1}(U_i)  $ converging to $q$, and  $\eta <  1/4$. Then, for $i$ large enough we have 
\begin{equation*}
\begin{split}
      \fint_{B_{\eta } (q_i)} & \left\vert \int_0^1   \vert  V_i  \vert  (\XX^i_t(x))dt - s \right\vert  d \tilde{\mm}_i (x)  \\
      &  \leq  s \fint_{B_{\eta} (q_i)} \left[  \int_0^1  \vert \vert  \nabla h_i  \vert  - 1 \vert  (\XX^i_t(x))dt \right]  d \tilde{\mm}_i (x)  \\
    &  \leq  s  \int_0^1 \left[ \fint_{\XX^i_t(B_{\eta} (q_i))}  \vert \vert \nabla h_i \vert - 1 \vert  d \tilde{\mm}_i  \right] dt\\
    &  \leq  C( N , R, s,  \eta )  \fint_{B_{\eta + R +  s L   }(p_i)} \vert \vert \nabla h_i \vert -1 \vert  d \tilde{\mm}_i  \\
    & \leq C( N , R, s,  \eta  )    \sqrt{\fint_{B_{\eta + R + s L  }(p_i)} \vert \vert \nabla h_i \vert ^2 -1 \vert d \tilde{\mm}_i  }\\
    & \leq C(N, R, s,  \eta)  \delta _ i .
\end{split}
\end{equation*}
Hence, from the derivative formula  \cite[Proposition 3.6]{D20},  and using the fact that $U_i$ have asymptotically full measure, we have 
\begin{equation}\label{eq:eta-doing-something}
  \fint _{B_{\eta}(q_i) \cap U_i } \max \left\{   d( f_ i (x), x) - s , 0 \right\} d \tilde{\mm}_i (x)  \leq    C(N, R, s, \eta  ) \delta _ i .  
\end{equation}
 From Step 2 of Lemma \ref{gas:limit}, we know that for $i$ large enough, $d( f_i(x), x)$ varies by at most $5 \eta $ for $x \in B_{\eta}(q_i) \cap U_i$. Since $\eta $ was arbitrary, \ref{eq:eta-doing-something} implies that 
\begin{equation}\label{eq:overline-s-1}
    d( f_{\infty}(q),q)  =   \lim_{i \to \infty} d( f_i (q_i), q_i ) \leq s .
\end{equation}  
Similarly, by the definition of RLF, if $\eta < 1/4$,
\begin{equation*}
\begin{split}
     & \fint_{B_{\eta } (q_i)}  \Big\vert   \left( \tilde{h}_i ( f_i (x) ) - \tilde{h}_i (x) \right)  -s  \Big\vert  d \tilde{\mm}_i (x)     \\
     & \leq  s \fint_{B_{\eta }  (q_i)} \left[ \int_0^1  \Big\vert  \vert \nabla \tilde{h}_i \vert ^2  - 1 \Big\vert  (\XX_t^i(x))  dt \right] d \tilde{\mm}_i (x)  \\
     & \leq C(N, s, R, \eta ) \cdot \delta _ i^2 .
\end{split}
\end{equation*}
Then, as $\eta $ was arbitrary, from \ref{gh-compatible} we get
\begin{equation}\label{eq:overline-s-2}
   \tilde{ \pi} ( f_{\infty } q  ) - \tilde{\pi} ( q ) = \lim_{i \to \infty} \left[ \tilde{h}_i ( f_i (q_i) ) - \tilde{h}_i (q_i) \right]   =   s   . 
\end{equation} 
Since $\overline{s}$ is the only map $\mathbb{R} \times \tilde{Y} \to \mathbb{R} \times \tilde{Y} $ satisfying \ref{eq:overline-s-1} and \ref{eq:overline-s-2}, we get $f_{\infty} = \overline{s}$.
\end{proof}

\begin{Lem}\label{lem:power-flow}
\rm Let $(X_i, d_i, \mm _i)$ be a sequence of $\rcd ( -(N-1)  ,N)$ spaces,  $V_i \in L^1 ( [0,1] ;  H^{1,2}_{C,s}(TX_i))$ a sequence of piece-wise constant on time, divergence-free vector fields, $\XX^i : [0, 1] \times X_i \to X _ i $ their RLFs, and $x_i \in X_i$ a sequence such that $x_i$ is a point of essential stability of $\XX^i$, and
\[   \int_0^1 \mx ( \vert \nabla V_i (t) \vert ^{3/2} )^{2/3} ( \XX^i_t(x_i) ) dt = \varepsilon_i .   \]
If $\varepsilon _ i \to 0$, then for all $\lambda_i \to \infty$, the sequence of maps 
\[ \XX ^i_1 :  [ \lambda _ i X_i , x_i  ] \to [\lambda _ i X_i , 
\XX^i_1 (x_i)]  \]
has the GS property. 
\end{Lem}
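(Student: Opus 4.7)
The plan is to verify Definition \ref{def:gas} for the map $\XX^i_1 : [\lambda_i X_i, x_i] \to [\lambda_i X_i, \XX^i_1(x_i)]$ by working in the rescaled metric $\lambda_i d_i$ and applying the regularity results of Section \ref{sec:sis} in that metric; I write $\mx_s^{\lambda_i d}$, $B_r^{\lambda_i d}$, $dt_r^{\lambda_i d}$ for the corresponding maximal function, ball, and distortion in the rescaled metric. Since each $V_i$ is divergence-free, $(\XX^i_t)_\ast \mm_i = \mm_i$ and the reverse RLF exists and is also measure-preserving, yielding condition \ref{def:gas-1} and the measure-preservation part of \ref{def:gas-3b}. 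The rescaling identities $\mx_s^{\lambda_i d}(f) = \mx_{s/\lambda_i}^{d}(f)$, $B_r^{\lambda_i d}(\cdot) = B_{r/\lambda_i}^{d}(\cdot)$, and $dt_r^{\lambda_i d}(\XX^i_1) = \lambda_i\, dt_{r/\lambda_i}^{d}(\XX^i_1)$, combined with Jensen's inequality, reduce the hypothesis at $x_i$ to $\int_0^1 \mx_4^{\lambda_i d}(|\nabla V_i|)(\XX^i_t x_i)\, dt \leq \varepsilon_i$. Coupling this with essential stability of $x_i$ and Proposition \ref{pro:e-s-better-2} (applied in the rescaled metric, iterated to any fixed scale $r \leq R$ with constants depending on $R$) produces, for each $R > 0$, a set $A_R \subset B_R^{\lambda_i d}(x_i)$ with $\mm(A_R) \geq (1 - C(N,R)\varepsilon_i)\mm(B_R^{\lambda_i d}(x_i))$ and $\XX^i_t(A_R) \subset B_{2R}^{\lambda_i d}(\XX^i_t x_i)$ for all $t \in [0,1]$.

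The key step is to bound $\tilde{H}_i(y) := \int_0^1 \mx_4^{\lambda_i d}(|\nabla V_i(t)|)(\XX^i_t y)\, dt$ in average over $A_R$ by $C(N,R)\varepsilon_i$. For this I apply H\"older's inequality with exponent $3/2$ pointwise in $t$, change variables via $(\XX^i_t)_\ast \mm = \mm$, and use the containment $\XX^i_t(A_R) \subset B_{2R}^{\lambda_i d}(\XX^i_t x_i)$ to localize:
\[
\fint_{A_R} \mx_4^{\lambda_i d}(|\nabla V_i(t)|)^{3/2}(\XX^i_t y)\, d\mm(y) \leq C(N,R)\, \mx_{2R}^{\lambda_i d}\!\left(\mx_4^{\lambda_i d}(|\nabla V_i(t)|)^{3/2}\right)(\XX^i_t x_i),
\]
the constant absorbing the measure-comparison factor from Corollary \ref{cor:e-s-better} at $x_i$ together with doubling (Theorem \ref{thm:bg-in}). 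The iterated maximal function bound of Proposition \ref{pro:max-properties}, part \ref{pro:max-properties-3}, with $\alpha = 3/2$ collapses the right-hand side to $C(N,R)\mx^d(|\nabla V_i(t)|^{3/2})(\XX^i_t x_i)$ as soon as $\lambda_i$ is large enough that the rescaled ambient radius is $\leq 1$ in the original metric. Taking the $2/3$-power and integrating in $t$ gives $\fint_{A_R} \tilde{H}_i\, d\mm \leq C(N,R)\varepsilon_i$, and Chebyshev's inequality then produces a set $U_i^1$ of density points of $\{\tilde{H}_i \leq \sqrt{\varepsilon_i}\}$ of asymptotically full measure in every rescaled ball centred at $x_i$.

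For $y \in U_i^1$, Theorem \ref{thm:essential-stability} applied in the rescaled metric with $\rho = 4$ provides a representative of the RLF for which $y$ is a point of essential stability, and Propositions \ref{pro:e-s-better-2} and \ref{pro:e-s-better-3} at this $y$ (with input parameter $\sqrt{\varepsilon_i} \to 0$) directly yield the essential continuity condition \ref{def:gas-3a} and the distortion estimate \ref{def:gas-3d} at all rescaled scales $r \leq 1$; taking $S_i^1 := A_1$ and $R_0 := 2$ verifies \ref{def:gas-2} once $\varepsilon_i$ is small. The target-side sets $U_i^2, S_i^2$ and the corresponding properties of $f_i^{-1}$ follow from the identical argument applied to the reverse RLF $\overline{\XX}^i$ at $\XX^i_1(x_i)$: this base point inherits the same hypothesis because the substitution $\overline{V}_i(s) = -V_i(1-s)$ and $\overline{\XX}^i_s \circ \XX^i_1 = \XX^i_{1-s}$ leaves $\int_0^1 \mx^d(|\nabla V_i|^{3/2})^{2/3}(\XX^i_t x_i)\, dt = \varepsilon_i$ unchanged. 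The principal technical difficulty is the transfer of the pointwise $\mx(|\nabla V|^{3/2})^{2/3}$ bound from the single point $x_i$ to small-scale $\mx$-type bounds at asymptotically all nearby $y$; the exponent $3/2$ is calibrated precisely so that H\"older with $p=3/2$ and the iterated maximal function estimate of Proposition \ref{pro:max-properties}, part \ref{pro:max-properties-3}, with $\alpha = 3/2$ close the loop.
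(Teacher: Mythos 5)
Your proof is correct and follows essentially the same strategy as the paper: you transfer the single-point hypothesis at $x_i$ to an averaged maximal-function estimate over nearby points using the exponent $3/2$ together with Proposition \ref{pro:max-properties}(\ref{pro:max-properties-3}), extract an asymptotically full-measure set of essential stability points via Chebyshev and Theorem \ref{thm:essential-stability}, and invoke Propositions \ref{pro:e-s-better-2} and \ref{pro:e-s-better-3} plus the reverse flow to verify the GS conditions, exactly as in the paper's argument. The only organizational difference is that you run the whole scheme in the rescaled metric $\lambda_i d_i$ (so that $\mx_4^{\lambda_i d} = \mx_{4/\lambda_i}^d$), whereas the paper works at fixed original scales $r \leq 1/4$ with $\mx_{1/2}$ and passes to the rescaled spaces only at the end by choosing $r_i \to 0$ with $\lambda_i r_i \to \infty$ slowly; the two formulations are equivalent and yours is, if anything, a bit more streamlined in how it matches the radius $4$ required by Corollary \ref{cor:e-s-better}.
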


\begin{proof}
For $r \leq 1/4$, let 
\[ A_r^i : = \{   y \in B_r(x_i)  \vert \XX^i_t( y) \in B_{2r}(\XX^i_t( x_i )) \text{ for all }t \in [0,1 ] \} \} .\]
By Corollary \ref{cor:e-s-better} and Proposition \ref{pro:e-s-better-2}, we have for $i$ large enough
\begin{gather}
\mm _i (B_r(\XX_t(x_i))) \leq 2 \mm _i (B_r(x_i)) . \label{eq:power-flow-1} \\
\mm _i (A_r^i)  \geq (1 - C (N) \varepsilon_i)  \mm _i (B_r(x_i)) ,    \label{eq:power-flow-1-1}
\end{gather}
Also, using \ref{eq:power-flow-1} and Proposition \ref{pro:max-properties}(\ref{pro:max-properties-3}),
\begin{equation}\label{eq:power-flow-2}
\begin{split}
\fint _{A_r^i} &\int_0^1 \mx _{1/2} (\vert \nabla V_i (t) \vert) (\XX_t^i (y)) dt d \mm _i (y) \\
& = \int_0^1 \fint_{\XX^i_t( A_r^i)}   \mx_{1/2} (\vert \nabla  V_i (t) \vert) ( y) d \mm _ i  (y) dt \\
& \leq  C(N)  \int_0^1 \fint_{B_{2r}(\XX^i_t( x_i ))}   \mx _{1/2} (\vert  \nabla V_i (t) \vert) ( y) d \mm _i (y) dt\\
& \leq C(N) \int_0^1 \mx_{1/2} (  \mx_{1/2} ( \vert \nabla  V_i (t) \vert )) (\XX_t^i (x_i)) dt\\
& \leq C(N) \int_0^1 \mx (   \vert \nabla  V_i (t) \vert ^{3/2} )^{2/3} (\XX_t^i (x_i)) dt \\
& \leq  C (N) \varepsilon _i  .
\end{split}
\end{equation}
Let $U_i(r)$ be the density points of the set 
\[\left\{  y \in A_r^i \bigg\vert \int_0^1 \mx _{1/2} ( \vert  \nabla V_i (t) \vert ) (\XX_t^i(y)) dt \leq \sqrt{ \varepsilon_i }  \right\} . \]
By Theorem \ref{thm:essential-stability}, the set $U_i(r)$ consists of points of essential stability of $\XX^i$ for $i$ large enough. From  \ref{eq:power-flow-1-1} and \ref{eq:power-flow-2}, we have 
\begin{equation} \label{eq:uir-full}
\mm _i(U_i(r)) \geq (1 - C (N) \sqrt{ \varepsilon _ i}) \mm _i (B_r(x_i)) .                
\end{equation}
Given $\lambda_i \to \infty$ and $r_ i \to 0$,  by \ref{eq:uir-full} and Theorem \ref{thm:bg-in},  if $\lambda_i r_i \to \infty $ slowly enough, the sets $U_i(r_i)$ have asymptotically full measure in the spaces $( X_i , \lambda _i d_i , \mm _i, x_i )$. By Proposition  \ref{pro:e-s-better-3}, for  $y \in U_i(r_i)$, $r < 1/\lambda _ i $, we get
\[           \fint _{B_r(y) ^{\times 2} } \dr (\XX_1^i) d(\mm_ i \times \mm _i )   \leq C(N) \sqrt{ \varepsilon_i }   r    ,               \]
verifying part \ref{def:gas-3d} of Definition \ref{def:gas}. The analogue properties for $\XX_{-1}^i : [ \lambda _i X_i, \XX_1^i(x_i)] \to [\lambda _i X_i , x_i]$ follow from an identical argument. Property \ref{def:gas-2} of Definition \ref{def:gas} follows from the definition of essential stability, concluding the proof.
\end{proof}

\begin{Def}
\rm Let $X$ be a geodesic space, $ \rho : Y \to X$ a covering map, and $\varphi : [0,T] \times X \to X$ be a function such that for each $x \in X$, the map $t \mapsto \varphi (t,x)$ is continuous, and $\varphi (0,x) = x$. The \textit{lift} of $\varphi$ is defined to be the unique map $\psi : [0,T] \times Y \to Y$ such that for each $y \in Y$, the map $t \mapsto \psi (t,y)$ is continuous,  $\psi (0,y) = y$, and $\rho ( \psi (t,y) ) = \varphi (t,y)$ for all $t \in [0,T]$.
\end{Def}

Notice that if $Y$ is the universal cover of $X$, then $\psi$ is a deck type map with $\psi_{\ast} = \Id _{\pi_1(X)}$.

\begin{Pro}\label{pro:lift-of-flow}
\rm Let $(X,d,\mm , p)$ be a pointed $\rcd (-(N-1),N)$ space, $(\tilde {X} , \tilde{d}, \tilde{\mm} , \tilde{p})$ its universal cover, $V \in L^1 ( [0,T] ; L^2(TX))$ a vector field satisfying the conditions of Theorem \ref{thm:RLF-existence}, $\XX:[0,T ] \times X \to X$ its RLF, and $\tilde{V} : [0,T] \to  L^{2}_{ \loc}(T \tilde{X}) $ its lift. Then $\tilde{\XX} : [0,T] \times \tilde{X} \to \tilde{X}$, the lift of $\XX$, is the RLF of $\tilde{V}$.  Moreover, if $p$ is a point of essential stability of $\XX$, then $\tilde{p}$ is a point of essential stability of $\tilde{\XX}$.
\end{Pro}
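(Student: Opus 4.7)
The plan is to verify the three conditions \ref{def:rlf-1}--\ref{def:rlf-3} of Definition \ref{def:rlf} for $\tilde{\XX}$ with vector field $\tilde{V}$, and then deduce essential stability of $\tilde{p}$. Throughout I would exploit two facts: $\rho : \tilde{X} \to X$ is a local isomorphism of metric measure spaces, and $\tilde{\XX}$ is $\pi_1(X)$-equivariant, $\tilde{\XX}_t(g\tilde{x}) = g\tilde{\XX}_t(\tilde{x})$, which follows from uniqueness of continuous lifts with prescribed initial point together with the $\pi_1(X)$-invariance of $\tilde{V}$.

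Condition \ref{def:rlf-1} is immediate from the definition of the lift. For \ref{def:rlf-2}, I would fix $\tilde{f} \in \test(\tilde{X})$ and argue for $\tilde{\mm}$-a.e.\ $\tilde{x}$ by covering the compact image $\{\tilde{\XX}_t(\tilde{x}) : t \in [0,T]\}$ by finitely many evenly covered open sets $\tilde{W}_k$ and partitioning $[0,T]$ into intervals along which the trajectory stays in a single sheet $\tilde{W}_{i_k}$. On each such interval I would multiply $\tilde{f}$ by a good cut-off supported in $\tilde{W}_{i_k}$ (existing by \cite{MN19}) to obtain a function which descends through $\rho|_{\tilde{W}_{i_k}}$ to a test function on $X$; property \ref{def:rlf-2} for $\XX$ applied to this descended function yields the ODE on the interval, and the local isomorphism translates it back to $\tilde{X}$. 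A bookkeeping over a countable dense family of test functions and an enumeration of evenly covered atlases gives \ref{def:rlf-2} on $[0,T]$. For \ref{def:rlf-3}, I would decompose a Borel $\tilde{E} \subset \tilde{X}$ as $\bigsqcup_j \tilde{E}_j$ with each $\tilde{E}_j$ contained in a single sheet of some evenly covered open set. The equivariance argument shows that $\rho$ is injective on $\tilde{\XX}_t^{-1}(\tilde{E}_j)$: two preimages sharing a projection would differ by some $g \in \pi_1(X)$, whose action would carry their images out of the single sheet unless $g = e$. Hence
\[
\tilde{\mm}(\tilde{\XX}_t^{-1}(\tilde{E}_j)) = \mm(\rho(\tilde{\XX}_t^{-1}(\tilde{E}_j))) \leq \mm(\XX_t^{-1}(\rho(\tilde{E}_j))) \leq C \mm(\rho(\tilde{E}_j)) = C \tilde{\mm}(\tilde{E}_j),
\]
with $C = \exp(T \|\text{div}(V)^-\|_\infty)$ coming from \ref{eq:rlf-measure-0} applied to $\XX$; summing over $j$ gives \ref{def:rlf-3}.

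For the essential stability assertion, the compactness of $\{\XX_t(p) : t \in [0,T]\}$ in $X$ allows me to choose a uniform $\delta > 0$ so that $B_{3\delta}(\XX_t(p))$ is evenly covered for every $t$. Setting $r_{\tilde{p}} := \min\{r_p, \delta\}$ and fixing $r \leq r_{\tilde{p}}$, the sheet of $\rho^{-1}(B_{3r}(\XX_t(p)))$ containing $\sigma(t) := \tilde{\XX}_t(\tilde{p})$ is an isomorphic (hence measure preserving) copy of $B_{3r}(\XX_t(p))$ and coincides with $B_{3r}(\sigma(t))$. Lifting $A_r \subset B_r(p)$ from \ref{eq:es-st-1} to $\tilde{A}_r \subset B_r(\tilde{p})$ through the $t=0$ identification, the key claim $\tilde{\XX}_t(\tilde{A}_r) \subset B_{2r}(\sigma(t))$ follows from a continuity-connectedness argument: for $\tilde{x} \in \tilde{A}_r$, both $t \mapsto \tilde{\XX}_t(\tilde{x})$ and $\sigma$ are continuous lifts of trajectories remaining $2r < 3\delta$-close in $X$, they share a sheet at $t=0$, and by continuity together with even covering they cannot jump sheets, so $\tilde{d}(\tilde{\XX}_t(\tilde{x}), \sigma(t)) = d(\XX_t(\rho(\tilde{x})), \XX_t(p)) < 2r$. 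Condition \ref{eq:es-st-2} for $\tilde{p}$ then reduces to \ref{eq:es-st-2} for $p$ via the measure preserving sheet identifications at both $t = 0$ and at each later time.

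The most delicate step I anticipate is \ref{def:rlf-2}: localizing test functions across the finite atlas while tracking measure-zero exceptional sets for the flow equation requires care, since different atlas pieces produce different descended test functions on $X$ and the a.e.\ sets in \ref{def:rlf-2} depend on the test function. The argument for \ref{def:rlf-3} via sheet-injectivity and equivariance, and the essential stability argument via uniform even covering, should by contrast proceed straightforwardly.
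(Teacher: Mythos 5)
Your proposal is correct and follows essentially the same route as the paper: verify \ref{def:rlf-1}--\ref{def:rlf-3} for the lift by decomposing into evenly covered sheets on which $\rho$ is a measure isomorphism, then lift essential stability by exploiting the compactness of the base trajectory to obtain a uniform even-covering radius and transporting $A_r$ to $B_r(\tilde p)$. The only cosmetic differences are that the paper handles \ref{def:rlf-2} more tersely (localizing test functions to a single evenly covered ball and invoking locality of \ref{eq:rlf-def}) and states the \ref{def:rlf-3} estimate as a bound on $\tilde{\mm}(\tilde{\XX}_t(A))$ rather than on $\tilde{\mm}(\tilde{\XX}_t^{-1}(\tilde E))$, while you make the equivariance/sheet-injectivity step explicit; the underlying bookkeeping is the same.
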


\begin{proof}
Let $\rho : \tilde{X} \to X$ be the projection. \ref{def:rlf-1}  holds by construction. To verify \ref{def:rlf-2}, notice that by linearity, it is enough to check it for $\tilde{f} \in \test (\tilde{X}) $ supported in a ball $ \tilde{B} \subset \tilde{X}$ sent isomorphically as a metric measure space to a ball $B = \rho (\tilde { B } ) $. For such $\tilde{f}$, it induces a function $f \in \test (X)$ supported in $B$ with $\tilde{f} \vert _{\tilde{B}} =  f  \circ \rho \vert _{\tilde{B}}$. Then \ref{def:rlf-2} holds for $\tilde{\XX}$ and $\tilde{f}$ since it holds for $\XX$ and $f$ by locality of \ref{eq:rlf-def}.

To verify \ref{def:rlf-3}, consider a Borel partition $\{ E_ k \} _{k \in \mathbb{N}}$  of $\tilde{X}$ consisting of subsets sent isomorphically 
 by $\rho$ as metric measure spaces to subsets of $X$. For a Borel set $A \subset \tilde{X}$, and $t \in [0,T]$,  setting
\[A_{k,\ell } : = A\cap E_k \cap \tilde{\XX}_t^{-1}(E_{\ell}), \]
and using that $\XX$ satisfies \ref{def:rlf-3}, we get
\begin{equation*}
\begin{split}
 & \tilde{\mm} ( \tilde{\XX} _t (A) )   =  \sum_{k,\ell \in \mathbb{N}} \tilde{\mm} ( \tilde{\XX} _t (A_{k,\ell} ) ) = \sum_{k, \ell \in \mathbb{N}} \mm (\XX_t( \rho (A_{k,\ell})))   \\
 & \leq  \sum_{k,\ell \in \mathbb{N}}  C \mm (\rho ( A_{k,\ell }))    = \sum_{k,\ell} C \tilde{\mm} (  A_{k,\ell} ) = C \tilde{\mm} (A),
\end{split}
\end{equation*}
hence $\tilde{X}$ is the RLF of $\tilde{V}$. 

Now assume $p$ is a point of essential stability. Let $R \geq 1 $ be such that $\XX([0,T] \times \{ p \} ) \subset B_R(p)$. By semi-local-simple-connectedness, there is $r_0 \leq 1/10 $ such that any two curves $\alpha , \beta : [a, b ] \to B_{2R}(p)$ sharing endpoints and at uniform distance $\leq 10 r_0$, are homotopic relative to their endpoints. Then for each $t \in [0,T]$, the ball $B_{2r_0}(\tilde{\XX}_t(\tilde{p}) )$ is isomorphic as a metric measure space to $B_{2r_0}(\XX_t(p))$, so for $r \leq r_0$ small enough one has
\[  \frac{1}{M} \tilde{\mm} (  B_{r}(\tilde{p} )) \leq \tilde{\mm} (B_{r}(\tilde{\XX}_t(\tilde{p}))) \leq  M  \tilde{\mm} ( B_{r}(\tilde{p} ))  \text{ for all }t \in [0,T].          \]
 By hypothesis,  for $r \leq r_0$ small enough, there is $A_r \subset B_r(p)$ with 
\[ \mm (A_r) \geq \frac{1}{2} \mm (B_r(p)), \text{ and }
\XX_t(A_r) \subset B_{2r}(\XX_t(p)) \text{ for all }t \in [0,T]. \]
Then if $\tilde{A}_r \subset \tilde{X} $ denotes the intersection of the preimage of $A_r$ with $B_r(\tilde{p})$, one has 
\[        \tilde{ \mm} (\tilde{A}_r) \geq \frac{1}{2} \tilde{\mm} (B_r(\tilde{p})), \text{ and }
\tilde{\XX}_t(\tilde{A}_r) \subset B_{2r}(\tilde{\XX}_t(\tilde{p})) \text{ for all }t \in [0,T]    . \]
\end{proof}

\section{Re-scaling Theorem}\label{sec:res}

In this section we prove the following result, following the lines of \cite[Section 5]{KW11}.

\begin{Thm}\label{thm:re-scaling}
\rm For each $i$, let $(X_i, d_i , \mm _i, p_i) $ be an $ \rcd \left( -\frac{1}{i}, N \right) $ space of rectifiable dimension $n$, $( \tilde{X}_i, \tilde{d}_i, \tilde{\mm}_i , \tilde{p}_i  )$ be its universal cover, and define $\Gamma _ i : = \mathcal{G}(\pi_1(X_i), \tilde{X}_i , \tilde{p}_i , 1 ) $. If $(X_i, p_i)$ converges in the Gromov--Hausdorff sense to  $(\mathbb{R}^k, 0)$ with $k < n$, then, after taking a subsequence, there are sets $ \Theta _i \subset B_{1/2}(p_i) $ s.t.  $\mm _i ( \Theta _i) / \mm _i (B_{1/2}(p_i) ) \to 1$ as $i \to \infty$, a sequence $\lambda _i \to \infty$, and a compact   space $(Y, y)$ such that $Y \neq \{ \ast \}$, diam$(Y) \leq 1/10$, and 
\begin{enumerate}
\item  For all $x_i \in  \Theta _i$, after taking a subsequence, $(\lambda_i X_i , x_i) $ converges to $ (\mathbb{R}^k \times Y , (0,y_1))$ in the Gromov--Hausdorff sense ($y_1$ may depend on the $x_i$, but $Y$ doesn't), and for any lift $\tilde{x}_i \in B_{1/2}(\tilde{p}_i)$,
\[     \Gamma _ i = \mathcal{ G } ( \pi_1(X_i) , \lambda _ i X_i , x_i , 1  ) .                           \]\label{eq:re-scaling-1}
\item  For all $a_i ,b_i \in  \Theta _i$ and lifts $\tilde{a}_i, \tilde{b}_i \in B_{1/2}( \tilde{p}_i ) $, there are sequences 
\begin{gather*}
h_i : [\lambda _i X_i, a_i] \to [\lambda _i X_i , b_i]\\
f_i : [ \lambda_i \tilde{X}_i , \tilde{a}_i] \to [ \lambda_i \tilde{X}_i , \tilde{b}_i]      
\end{gather*}
of maps with the GS property such that the $f_i$ are deck maps with $(f_i)_{\ast} \in ( \Gamma_i )_{\ast}$ for all $i$, where $(\Gamma_i )_{\ast} : = \{ g_{\ast} : \pi_1(X_i) \to \pi_1(X_i) \vert g \in \Gamma_i \}$. \label{eq:adjustment}
\end{enumerate}
\end{Thm}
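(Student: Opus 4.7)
The plan is to adapt Kapovitch--Wilking's rescaling scheme to the $\rcd$ setting, with Regular Lagrangian Flows replacing gradient flows. I first choose the scale. Since $X_i$ has rectifiable dimension $n>k$, Theorem~\ref{thm:bs-dimension} gives that $\mm_i$-a.e.\ point of $X_i$ has tangent cone $\mathbb{R}^n$. Scale invariance of $\mathbb{R}^k$ combined with the convergence $(X_i,p_i)\to(\mathbb{R}^k,0)$ yields that $(\lambda X_i, x)$ is Gromov--Hausdorff close to $(\mathbb{R}^k, 0)$ for fixed $\lambda$, most $x \in B_{1/2}(p_i)$, and $i$ large, whereas the tangent cone is Gromov--Hausdorff far from $\mathbb{R}^k$ since $n > k$. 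For each generic $x$ the critical scale
\[
  \lambda_i(x) \;:=\; \sup\{\lambda \geq 1 : d_{GH}((\lambda X_i, x), (\mathbb{R}^k, 0)) < 1/200\}
\]
is therefore finite and tends to $\infty$ in measure. A pigeonhole on logarithmic scales followed by a diagonal extraction produces a subsequence, a diverging $\lambda_i \to \infty$, and sets $\Theta_i \subseteq B_{1/2}(p_i)$ of asymptotically full measure with $\lambda_i(x_i)$ comparable to $\lambda_i$ for every $x_i \in \Theta_i$. By Theorem~\ref{thm:compactness-2} one extracts subsequential limits of $(\lambda_i X_i, x_i)$; the limit contains arbitrarily large $\mathbb{R}^k$-balls (because at scales $\ll \lambda_i$ the space is still $\mathbb{R}^k$-like), hence splits via Theorem~\ref{thm:gigli} as $\mathbb{R}^k \times Y$. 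The choice of threshold places the limit's distance to $\mathbb{R}^k$ in $[1/400, 1/100]$, so Remark~\ref{rem:100k} yields $\diam(Y) \leq 1/10$ and $Y \neq \{*\}$; a further compactness extraction ensures $Y$ is independent of $x_i \in \Theta_i$.

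For part~(2), given $a_i, b_i \in \Theta_i$ with lifts $\tilde a_i, \tilde b_i \in B_{1/2}(\tilde p_i)$, I would extract equivariant limits $(\tilde X_i, \tilde p_i) \to (\mathbb{R}^k \times \widetilde{W}, (0, \tilde w))$ at the original scale (Theorem~\ref{thm:equivariant-compactness}, Proposition~\ref{pro:gigli-corollary}), and let $v \in \mathbb{R}^k$ be the limiting $\mathbb{R}^k$-displacement from $\tilde a_i$ to $\tilde b_i$. Lemma~\ref{lem:euclidean:flow} applied with $s = v$ produces deck-type GS maps $f_i^0 : [\tilde X_i, \tilde p_i] \to [\tilde X_i, \tilde p_i]$ with $(f_i^0)_* = \Id$, converging weakly to translation by $v$. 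Then $f_i^0(\tilde a_i)$ and $\tilde b_i$ have the same $\mathbb{R}^k$-limit in $\mathbb{R}^k \times \widetilde{W}$, so differ by a deck transformation $g_i$ whose limit lies in the identity component of the limit group of $\pi_1(X_i)$; by equivariant convergence $g_i$ can be taken in $\Gamma_i$ for $i$ large. Setting $f_i := g_i \circ f_i^0$, Proposition~\ref{gas:composition} gives a deck-type GS map with $(f_i)_* = (g_i)_* \in (\Gamma_i)_*$ and $f_i(\tilde a_i) = \tilde b_i$ up to vanishing error; the downstairs $h_i$ is its projection. Proposition~\ref{gas:zoom} then transports the GS property to the rescaled pairs $[\lambda_i \tilde X_i, \tilde a_i] \to [\lambda_i \tilde X_i, \tilde b_i]$ after a further refinement of $\Theta_i$ to exclude lifts failing the zoom-in good-point condition.

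The main obstacle is condition~(1)'s identity $\Gamma_i = \mathcal{G}(\pi_1(X_i), \lambda_i X_i, \tilde x_i, 1)$: it requires that every element of $\Gamma_i$ be a product of elements whose original norm at $\tilde x_i$ is at most $1/\lambda_i$, a no-gap statement on the norm spectrum of $\Gamma_i$ at the rescaled scale. I would approach it via the equivariant limit of $\Gamma_i$ acting on $(\lambda_i \tilde X_i, \tilde x_i)$ (Theorem~\ref{thm:equivariant-compactness}); by Proposition~\ref{pro:gigli-corollary} the limit is a closed subgroup of $\iso(\mathbb{R}^k \times \tilde Y)$ that acts trivially on the $\mathbb{R}^k$ factor and cocompactly on $\tilde Y$, and Corollary~\ref{cor:split-cover} further splits $\tilde Y \cong \mathbb{R}^m \times Z$ with $Z$ compact. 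Combining this with the identity-component analysis from Theorem~\ref{thm:upsilon} produces a connected limit identity component whose orbits fill $\tilde Y$, precluding spectral gaps above a fixed scale; Proposition~\ref{pro:spec-cont}, together with the specific structure of $\Gamma_i$ at finite $i$, then yields condition~(1).
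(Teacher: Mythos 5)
Your plan for part~(1) is sound in outline: equivariant compactness, Proposition~\ref{pro:gigli-corollary}, and Proposition~\ref{pro:spec-cont} are exactly the tools the paper uses. However, the phrase ``precluding spectral gaps above a fixed scale'' inverts the logic --- you need to \emph{establish} a gap between scale $1/\lambda_i$ and a diverging scale $\mu_i$, and the actual mechanism is Lemma~\ref{lem:co-compact-spectrum}: the limit group acts cocompactly with quotient $Y$ of diameter $\leq 1/10$, so $\sigma(\Gamma)\subset[0,1/5]$, and if some $r_i\in\sigma(\pi_1(X_i),\tilde X_i,\tilde x_i)\cap[1/\lambda_i,M]$ survived, then $1\in\sigma(\Gamma_i,r_i^{-1}\tilde X_i,\tilde x_i)$ would pass to the limit by Proposition~\ref{pro:spec-cont}, a contradiction. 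This can be filled in, so I would not call it a fatal gap.

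Part~(2) has a genuine gap. You build the GS map $f_i$ at the \emph{original} scale: a translation flow $f_i^0$ from Lemma~\ref{lem:euclidean:flow} composed with a deck transformation $g_i$, achieving $d(f_i(\tilde a_i),\tilde b_i)=o(1)$. You then invoke Proposition~\ref{gas:zoom} to transfer the GS property to the rescaled pair. But Proposition~\ref{gas:zoom} produces a GS sequence $[\lambda_i\tilde X_i,\tilde a_i]\to[\lambda_i\tilde X_i,f_i(\tilde a_i)]$, with target base point $f_i(\tilde a_i)$, not $\tilde b_i$. For this to give the required map to $[\lambda_i\tilde X_i,\tilde b_i]$ you need $\lambda_i\,d(f_i(\tilde a_i),\tilde b_i)=O(1)$, i.e., the endpoint error must be $O(1/\lambda_i)$ in the original metric. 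Your construction only controls this error to $o(1)$, with no quantitative rate tied to $\lambda_i$; after rescaling by $\lambda_i\to\infty$ the base points can drift arbitrarily far apart, so the GS property at the new base points is lost. This is precisely why the paper does not build the flow at the original scale and then zoom: Lemmas~\ref{lem:flow-in}, \ref{lem:half-induction-blown}, and~\ref{lem:iterate-flows} construct an iterated, multiscale cascade of flows that pushes a point from $B_1(p_i)$ all the way down into $B_{1/\lambda_i}(p_i)$, with the covariant-derivative budget summing over scales. Concatenating the cascade for $a_i$ with the reversed cascade for $b_i$ yields a flow whose endpoint lies in $B_{1/\lambda_i}(b_i)$ \emph{by construction}, which is exactly the $O(1/\lambda_i)$ accuracy the rescaled GS property requires. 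A single translation flow at the coarse scale cannot supply this.

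A secondary issue: your $\Theta_i$, defined by pigeonholing on the critical rescaling threshold, omits the maximal-function condition $\mx_{R_i}(u_i)\leq\delta_i^2$ on the $\delta$-splitting defect that the paper builds into $\Theta_i$. This condition is what lets Lemma~\ref{lem:half-induction-blown} and Lemma~\ref{lem:iterate-flows} control $\int_0^1\mx(|\nabla V|^{3/2})^{2/3}(\XX_t(y))\,dt$ along the trajectory, which in turn feeds Theorem~\ref{thm:essential-stability}, Proposition~\ref{pro:connect}, and Lemma~\ref{lem:power-flow}. Without it, there is no mechanism to verify essential stability of the trajectory at the points of $\Theta_i$, and hence no way to obtain the GS property at the rescaled level.
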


\begin{Lem}\label{lem:flow-in}
\rm For each $N \geq 1, L \geq 1 $, there are $R > 1$, $\delta \leq   1/100  $, such that the following holds. If $r \leq 1$,  $(X, d , \mm , p)$ is a pointed $\rcd (- \delta ,N)$ space with $d_{GH}((r^{-1}X, p), (\mathbb{R}^k, 0))\leq \delta$, and $f \in H^{1,2}( X ; \mathbb{R}^k)$ is an $L$-Lipschitz function with $f(p) = 0$ such that  $\nabla f_j$ is divergence free in $B_R(p)$ for each $j \in \{ 1, \ldots , k \} $, and for all $s \in [r, R]$, one has 
    \[   \fint_{B_s(p)} \left[ \text{ } \sum_{j_1,j_2 =1} ^k \vert \langle \nabla  f_{j_1} , \nabla f_{j_2} \rangle - \delta _{j_1, j_2 } \vert + \sum_{j=1}^k \vert \nabla \nabla (f_j) \vert ^2 \right] d \mm  \leq \delta.          \]
Then:
\begin{enumerate}
    \item For all $x_1,x_2 \in B_{10^kr}(p)$, and $r_1 , r_2 \in \left[ \frac{r}{4}, (10^k + 1 )r \right] $, one has
    \[  \mm (  B_{r_1}(x_1))  \leq 2 \cdot \dfrac{r_1^k}{r_2^k} \cdot \mm ( B_{r_2}(x_2) ).     \] \label{eq:mm-convergence}
    \item For all $x \in B_{10^kr }(p)$, if $\XX:[0,1]\times X \to X$ denotes the RLF of the vector field
    \begin{equation}\label{eq:v_x}
    V_x : = - \sum_{j=1}^k  f_j(x)  \nabla f_j  ,    
    \end{equation}
    then there is a set $A \subset B_{r/10}(x)$ of points of essential stability of $\mathbf{X}$ with 
\begin{equation}\label{eq:a-properties}
    \mm  (A) \geq \frac{1}{2} \mm (B_{r/10}(x)) \text{ and }     \mathbf{X}_1(A) \subset B_{r/5}(p) .    
\end{equation}    \label{eq:flow-right}
\end{enumerate}
\end{Lem}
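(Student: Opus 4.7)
The plan is to prove both parts together via a compactness--contradiction argument, reducing to the Euclidean limit via rescaling. Suppose the lemma fails for some fixed $N$ and $L$: then there exist sequences $R_i \to \infty$, $\delta_i \to 0$, scales $r_i \in (0,1]$, pointed $\rcd(-\delta_i, N)$ spaces $(X_i, d_i, \mm_i, p_i)$, and $L$-Lipschitz functions $f^i$ satisfying the hypotheses with parameters $(R_i, \delta_i)$, yet for which either \ref{eq:mm-convergence} or \ref{eq:flow-right} fails. Rescale by $r_i^{-1}$: set $\tilde d_i := r_i^{-1} d_i$, $\tilde f^i := r_i^{-1} f^i$, and normalize the measures to $\tilde \mm_i$. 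The hypothesis $d_{GH}((\tilde X_i, p_i), (\mathbb{R}^k, 0)) \leq \delta_i$ together with Theorem \ref{thm:compactness-2} and Corollary \ref{thm:gigli-corollary-2} yields, after a subsequence, measured Gromov--Hausdorff convergence of $(\tilde X_i, \tilde d_i, \tilde \mm_i, p_i)$ to $(\mathbb{R}^k, d^{\mathbb{R}^k}, \mathcal{H}^k, 0)$. The $\tilde f^i$ remain $L$-Lipschitz with $\tilde f^i(p_i) = 0$ and satisfy the $\delta_i$-splitting condition on scales $[1, R_i/r_i]$, so Lemma \ref{lem:delta-to-convergence} furnishes Gromov--Hausdorff approximations $\varphi_i$ with $\sup_{B_R(p_i)} |\tilde f^i - \varphi_i| \to 0$ for every fixed $R$.

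Part \ref{eq:mm-convergence} then follows immediately: the ratio $\mm_i(B_{r_i^1}(x_i^1))/\mm_i(B_{r_i^2}(x_i^2))$ is unchanged by the normalization constant and, by measured Gromov--Hausdorff convergence, converges to $\mathcal{H}^k(B_{r_\infty^1}(x_\infty^1))/\mathcal{H}^k(B_{r_\infty^2}(x_\infty^2)) = \lim_i (r_i^1/r_i^2)^k$, contradicting its supposed value of at least $2(r_i^1/r_i^2)^k$.

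For Part \ref{eq:flow-right}, fix $x_i \in B_{10^k r_i}(p_i)$; in the rescaled frame $x_i \in B_{10^k}(p_i)$ and, after subsequence, $x_i \to x_\infty \in \overline{B_{10^k}(0)}$. Because $f^i_j(x_i) = r_i \tilde f^i_j(x_i)$ while $\nabla f^i_j$ and $\nabla_{\tilde X_i} \tilde f^i_j$ differ by a factor $r_i^{-1}$ as tangent vectors, the field $V_{x_i}$ of \ref{eq:v_x} admits the same expression $-\sum_j \tilde f^i_j(x_i) \nabla_{\tilde X_i} \tilde f^i_j$ in the rescaled frame; hence it has uniform $L^\infty$-bound $\leq k L^2 \cdot 10^k$, vanishing divergence, and $|\nabla V_{x_i}|^2$ whose $L^1$-average on $B_s(p_i)$, $s \in [1, R_i/r_i]$, is bounded by $C(k, L)\delta_i$. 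Corollary \ref{cor:essential-stability} then yields, for large $i$, a set $A_i \subset B_{1/10}(x_i)$ of points of essential stability of $\XX^i$ with $\tilde \mm_i(A_i) \geq \tfrac{1}{2} \tilde \mm_i(B_{1/10}(x_i))$, which takes care of the first half of \ref{eq:a-properties}.

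To conclude that $\XX^i_1(A_i) \subset B_{1/5}(p_i)$ in the rescaled metric, apply the RLF equation \ref{eq:rlf-def} to each $\tilde f^i_j$ to get
\[
\tilde f^i_j(\XX^i_1(y)) - \tilde f^i_j(y) = -\sum_m \tilde f^i_m(x_i) \int_0^1 \langle \nabla \tilde f^i_j, \nabla \tilde f^i_m\rangle(\XX^i_t(y))\,dt,
\]
and combine the $\delta_i$-splitting estimate $\fint_{B_s(p_i)} |\langle \nabla \tilde f^i_j, \nabla \tilde f^i_m\rangle - \delta_{jm}| \leq \delta_i$ with the maximal-function and volume-distortion bounds used in the proof of Theorem \ref{thm:essential-stability} to show, for a subset of $A_i$ of relative measure $1-o(1)$, that $\tilde f^i(\XX^i_1(y)) = \tilde f^i(y) - \tilde f^i(x_i) + o(1)$. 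Since $|\tilde f^i - \varphi_i| \to 0$ and $\varphi_i$ is a Gromov--Hausdorff approximation onto $(\mathbb{R}^k, 0)$, the quantities $d(z, p_i)$ and $d(y_1, y_2)$ are asymptotically given by $|\tilde f^i(z)|$ and $|\tilde f^i(y_1) - \tilde f^i(y_2)|$ respectively; hence $d(\XX^i_1(y), p_i) = |\tilde f^i(y) - \tilde f^i(x_i)| + o(1) = d(y, x_i) + o(1) \leq \tfrac{1}{10} + o(1) < \tfrac{1}{5}$, contradicting the failure of \ref{eq:flow-right}. The hard part will be converting the integrated RLF identity above into this pointwise approximation at most points, for which the maximal-function/flow-stability machinery developed in the proof of Theorem \ref{thm:essential-stability} is essential.
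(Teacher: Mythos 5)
Your proposal is correct and follows essentially the same route as the paper's proof: a compactness--contradiction argument, rescaling by $r^{-1}$, observing that Part (1) follows from the measured Gromov--Hausdorff convergence to $(\mathbb{R}^k,\mathcal{H}^k,0)$ via Corollary \ref{thm:gigli-corollary-2}, and for Part (2) combining the RLF differentiation formula for $f^i_j$ with the $\delta_i$-splitting estimate and Corollary \ref{cor:essential-stability}. The only cosmetic difference is the order of the two half-steps in Part (2): the paper first runs the $L^1$ estimate on $\fint_{B_{1/10}(x_i)}\big|f^i(\XX^i_1(y))-f^i(y)+f^i(x_i)\big|\,d\mm_i$ to show the flow lands near $p_i$ for a set of asymptotically full measure, then intersects with the essential-stability set from Corollary \ref{cor:essential-stability}, whereas you invoke essential stability first and then carry out the flow estimate; since both intersections discard sets of vanishing relative measure, the two orders yield the same $A_i$.
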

\begin{proof}
  Notice that by replacing $X$ by $r^{-1}X$ and $f$ by $r^{-1}f$, we can assume $r=1$, and without loss of generality we can also assume $(X, d, \mm , p)$ is normalized. Arguing by contradiction, we get sequences $R_i \to \infty$, $\delta_i \to 0$, a sequence $(X_i, d_i, \mm _i, p_i )$  of normalized $\rcd (-\delta_i , N)$ spaces for which $(X_i,p_i)$  converges to $(\mathbb{R}^k, 0)$ in the Gromov--Hausdorff sense and $L$-Lipschitz functions $f ^i \in H^{1,2}(X_i; \mathbb{R}^k)$ with $f^i(p_i) = 0$ such that:
    \begin{itemize}
        \item $\nabla f^i_i$ is divergence free in $B_{R_i}(p_i)$ for each $j \in \{1, \ldots, k \}$, $i \in \mathbb{N}$.
        \item  For all $s \in [1,R_i]$, one has 
        \[       \fint_{B_s(p_i)} \left[ \text{ } \sum_{j_1,j_2 =1} ^k \vert \langle \nabla  f_{j_1}^i , \nabla f_{j_2}^i \rangle - \delta _{j_1, j_2 } \vert + \sum_{j=1}^k \vert \nabla \nabla (f_j^i) \vert ^2 \right] d \mm _i \leq \delta_i.                      \]
    \end{itemize}
And for each $i$, at least one of the conditions \ref{eq:mm-convergence} or \ref{eq:flow-right} fails. Notice however, that \ref{eq:mm-convergence} holds as by Corollary \ref{thm:gigli-corollary-2},  $(X_i, d_i, \mm _i, p_i)$ converges to $(\mathbb{R}^k, d^{ \mathbb{R}^k}, \mathcal{H}^k,0)$ in the measured Gromov--Hausdorff sense. For a sequence $x_i \in B_{10^k}(p_i)$, let 
\[     V ^i :  = -  \sum_{j=1}^k  f_j^i(x_i)  \nabla f_j^i    ,      \]
and $\XX^i : [0,1]\times X_i \to X_i$ its RLF. Then for $s = (k+1) \cdot 10^kL^2$ and $i$ large enough,
\begin{equation*}
\begin{split}
   & \fint_{B_{1/10}(x_i)} \int_0^1 \sum_{j_1,j_2=1}^k  \vert \langle \nabla f_{j_1}^i , \nabla f_{j_2}^i \rangle  - \delta_{j_1, j_2}   \vert ( \XX_t^i(y) )  dt d \mm_i (y)       \\
    & =  \int_0^1 \fint_{\XX_t^i(B_{1/10}(x_i))} \sum_{j_1,j_2=1}^k   \vert \langle \nabla f_{j_1}^i , \nabla f_{j_2}^i \rangle  - \delta_{j_1, j_2}   \vert ( y )  d \mm_i (y) dt \\
     & \leq  \dfrac{ \mm _i(B_{ s}(p_i)) }{\mathfrak{m_i}(B_{1/10}(x_i))}  \fint_{B_{ s}(p_i)} \sum_{j_1,j_2=1}^k   \vert \langle \nabla f_{j_1}^i , \nabla f_{j_2}^i \rangle  - \delta_{j_1, j_2}   \vert ( y )  d \mm_i (y)  \\
     &  \leq C(N,L)   \delta_i.
\end{split}
\end{equation*}
Then, from the definition of RLF, 
\begin{equation*}
\begin{split}
&\fint_{B_{1/10}(x_i)}  \vert \left( f^i( \XX_1^i (y)) - f^i(y) \right) +  f^i (x_i)   \vert    d \mm _i(y)    \\
& \leq  \sum_{j=1}^k \fint _{B_{1/10}(x_i)}  \vert \left( f^i_j( \XX_1^i (y)) - f^i_j(y) \right) +  f^i _j(x_i)   \vert    d \mm_i (y)    \\
& \leq  \sum_{j=1}^k  \fint _{B_{1/10}(x_i)}    \vert  f^i_j(x_i) \vert      \int_0^1    \vert  \vert \nabla f^i_j  \vert  ^2  - 1       \vert ( \XX^i_t(y) ) dt d \mm_i (y)                \\
& +   \sum_{\substack{ j_1, j_2=1\\ j_1 \neq j_2 }}^k  \fint _{B_{1/10}(x_i)}       \vert  f^i_{j_1}(x_i) \vert  \int_0^1  \langle \nabla f^i_{j_1}, \nabla f^i _{j_2} \rangle    ( \XX^i_t(y) ) dt d \mm_i (y)     \\
& \leq C(N,L) \delta _ i .
\end{split}
\end{equation*}
From the last assertion of Lemma \ref{lem:delta-to-convergence},  $  \vert f^i(y) - f^i (x_i) \vert \leq 3/20 $ for all $y\in B_{1/10}(x_i)$ if $i$ is large enough, so the set 
\[ A_i^{\prime  } : = \left\{ y \in B_{1/10}(x_i) \bigg\vert   \vert \XX^i_1(y) \vert  < \frac{1}{5} \right\}  \] 
satisfies $ \mm _i(A_i^{\prime}) /\mm _i(B_{1/10}(x_i)) \to 1 $. Then by Corollary \ref{cor:essential-stability}, if we define $A_i$ to be the points of $A_i^{\prime}$ that are of essential stability of $\XX^i$, we get that $\mm _i (A_i) \geq \frac{1}{2} \mm _i(B_{1/10}(x_i))$ for $i$ large enough, implying condition \ref{eq:flow-right}; a contradiction. 
\end{proof}

\begin{Lem}\label{lem:half-induction-blown}
\rm For $N \geq 1, L \geq  1$, let $\delta > 0 $ be given by Lemma \ref{lem:flow-in}. Then there are $R>1$, $C_0 > 1$, $\varepsilon_0 > 0 $ such that the  following holds. Let $r \leq 1$,  $(X, d , \mm , p)$ is a pointed $\rcd (- \delta ,N)$ space with $d_{GH}((r^{-1}X,p), (\mathbb{R}^k ,0)) \leq \delta$, and  $f \in H^{1,2}( X ; \mathbb{R}^k)$ an $L$-Lipschitz function with $f(p) = 0$ such that $\nabla f_j$ is divergence free in $B_{R}(p)$ for each $j \in \{1, \ldots, k \}$, and 
    \[   \mx _R \left(   \sum_{j_1,j_2 =1} ^k \vert \langle \nabla  f_{j_1} , \nabla f_{j_2} \rangle - \delta _{j_1, j_2 } \vert   +  \sum_{j=1}^k \vert \nabla \nabla (f_j) \vert ^2  \right) ( p )  \leq \varepsilon ^2  \leq \varepsilon_0^2.          \]  
For $x \in B_{10^kr}(p)$, let $V_x$ be given by \ref{eq:v_x}, and let $\XX:[0,1]\times X \to X$ be the RLF of $V_x$. Then there is a subset $B_{r/2}'(x) \subset B_{r/2}(x)$ of points of essential stability of $\XX$ satisfying  
\begin{gather}
\XX_1(B_{r/2}'(x) ) \subset B_r(p), \label{eq:flow-in-part-1} \\
      \mm (B_{r/2}'(x) ) \geq (1 - C _0 \varepsilon r ) \mm (B_{r/2}(x)) ,       \label{eq:flow-in-part-2}   
\end{gather}
and
\begin{equation}\label{eq:flow-in-part-3}
    \dfrac{1}{\mm (B_{r/2}(x))}    \int_{B_{r/2}'(x) } \int_0^1 \mx (  \vert \nabla V_x \vert ^{3/2} )^{2/3} (\XX_t(y)) dt d \mm (y) \leq C _0 \varepsilon r . 
\end{equation}              
\end{Lem}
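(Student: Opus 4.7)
The plan is to reduce to Lemma \ref{lem:flow-in} at the outset and then propagate its conclusion from $B_{r/10}(x)$ to $B_{r/2}(x)$ via a first-variation argument at scale $r/10$. Choose $\varepsilon_0 \leq \sqrt{\delta}$ so that the hypothesis $\mx_R(\cdots)(p) \leq \varepsilon^2 \leq \varepsilon_0^2$ controls every ball average at scales $s \in [r, R]$, meeting the hypothesis of Lemma \ref{lem:flow-in}. This produces (i) volume comparison for balls of comparable size in $B_{10^k r}(p)$ and (ii) a set $A \subset B_{r/10}(x)$ of essential-stability points of $\XX$ with $\mm(A) \geq \tfrac{1}{2}\mm(B_{r/10}(x))$ and $\XX_1(A) \subset B_{r/5}(p)$. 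Two useful auxiliary facts: $V_x$ is divergence-free so by \ref{eq:volume-distortion} the flow is measure-preserving; and $|f(x)| \leq C_{k,L} r$ gives, via Remark \ref{rem:sob-to-lip}, that every trajectory in $B_{10^k r}(p)$ stays inside $B_{C r}(p)$ during $[0,1]$ together with the pointwise bound $|\nabla V_x| \leq C r |\nabla \nabla f|$.

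The core step is the first-variation estimate. Apply Theorem \ref{thm:first-variation} with $S_1 = A$, $S_2 = B_{r/2}(x)$, and distortion scale $r/10$; change variables via $(\XX_t)_\ast \mm = \mm$; then combine the segment inequality (Theorem \ref{thm:segment}), the pointwise bound on $|\nabla V_x|$, Cauchy--Schwarz, and the maximal-function hypothesis at $p$ to obtain
\[
\fint_{A \times B_{r/2}(x)} dt_{r/10}(1)\, d(\mm \times \mm) \leq C r^2 \varepsilon.
\]
Two rounds of Chebyshev then produce an $a^\ast \in A$ for which $B_{r/2}''(x) := \{ b \in B_{r/2}(x) : dt_{r/10}(1)(a^\ast, b) < r/10 \}$ has measure at least $(1 - C_0 \varepsilon r)\mm(B_{r/2}(x))$. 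For $b \in B_{r/2}''(x)$, the triangle inequality together with $\XX_1(a^\ast) \in B_{r/5}(p)$ gives $d(\XX_1(b), p) \leq d(a^\ast, b) + r/10 + r/5 \leq 9r/10 < r$, establishing \ref{eq:flow-in-part-1}. To secure essential stability as required, intersect $B_{r/2}''(x)$ with the Lebesgue density points of $\{ y : \int_0^1 \mx(|\nabla V_x|)(\XX_t y) \, dt \leq \delta^\prime \}$, where $\delta^\prime$ is the threshold from Theorem \ref{thm:essential-stability}. A structurally identical averaging argument bounds the measure loss by $C r \varepsilon \mm(B_{r/2}(x))$, so the resulting set $B_{r/2}'(x)$ satisfies \ref{eq:flow-in-part-2} and consists of essential-stability points of $\XX$ by Theorem \ref{thm:essential-stability}.

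For the $L^{3/2}$-maximal estimate \ref{eq:flow-in-part-3}, push the integration forward under the measure-preserving flow to obtain
\[
\int_{B_{r/2}'(x)} \int_0^1 \mx(|\nabla V_x|^{3/2})^{2/3}(\XX_t y)\, dt\, d\mm(y) \leq \int_0^1 \int_{B_{Cr}(p)} \mx(|\nabla V_x|^{3/2})^{2/3}\, d\mm\, dt,
\]
factor out $r$ using $|\nabla V_x|^{3/2} \leq (Cr)^{3/2} |\nabla\nabla f|^{3/2}$, and control the remaining spatial integral via Jensen's inequality at exponent $2$ (to pass from $L^{2/3}$ to $L^{4/3}$) followed by Proposition \ref{pro:max-properties}(\ref{pro:max-properties-3}) with $\alpha = 4/3$, which via $(|\nabla\nabla f|^{3/2})^{4/3} = |\nabla\nabla f|^2$ controls the local $L^{4/3}$-mass of $\mx(|\nabla\nabla f|^{3/2})$ by $\mx_R(|\nabla\nabla f|^2)(p) \leq \varepsilon^2$. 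Dividing by $\mm(B_{r/2}(x))$ and applying volume comparison once more yields $C_0 \varepsilon r$. I expect the main obstacle to be this exponent juggling: the expression $\mx(\cdot^{3/2})^{2/3}$ is not directly the maximal function of any quantity one can estimate by a standard $L^p$-boundedness statement, and securing a linear-in-$\varepsilon$ bound requires threading through the iterated-maximal inequality at exponent $4/3$ to bridge the $3/2$-power inside the maximal function with the original $L^2$-hypothesis.
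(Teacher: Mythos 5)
Your proposal is correct and follows essentially the same strategy as the paper's proof: Lemma \ref{lem:flow-in} to get the anchor set $A\subset B_{r/10}(x)$, a first-variation/segment-inequality computation pushed forward by the measure-preserving flow, Chebyshev to pick a good center and a large subset, the triangle inequality for \ref{eq:flow-in-part-1}, Theorem \ref{thm:essential-stability} via density points of a sublevel set of $H$ to secure essential stability, and Jensen plus Proposition \ref{pro:max-properties}(\ref{pro:max-properties-3}) at exponent $4/3$ for the $L^{3/2}$-maximal estimate \ref{eq:flow-in-part-3}. The one genuine organizational difference: the paper front-loads a single master estimate (their \ref{eq:hardest-induction}, i.e.\ $\fint_{B_{r/2}(x)}\int_0^1\mx_4(|\nabla V_x|^{3/2})^{2/3}(\XX_t(y))\,dt\,d\mm(y)\leq C(N,L)\varepsilon r$) and reuses it three times — it simultaneously produces the essential-stability set $A_0\subset B_{r/2}(x)$ with large measure, the good center $q\in A$ via Chebyshev, and \ref{eq:flow-in-part-3} — then obtains the distortion bound by invoking the already-packaged Proposition \ref{pro:e-s-better-3} around the essential-stability point $q$ rather than rerunning the first-variation argument with $S_1=A$, $S_2=B_{r/2}(x)$. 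Your inlined first-variation step gives the same $C r^2\varepsilon$ bound and the rest of the assembly (Chebyshev twice, then intersecting with density points of $\{H\leq\delta'\}$) produces exactly the same set up to reordering the intersections; neither route changes which hypotheses of Theorem \ref{thm:first-variation}, Theorem \ref{thm:segment}, or the maximal-function propositions are being used. The ``exponent juggling'' you flag at the end is precisely Proposition \ref{pro:max-properties}(\ref{pro:max-properties-3}) with $\alpha=4/3$ combined with the Jensen step $\mx_s(\mx(h)^{2/3})\leq\mx_s(\mx(h)^{4/3})^{1/2}$, which is what the paper also does inside \ref{eq:hardest-induction}, so this is not an additional obstacle beyond what the argument already resolves.
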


\begin{proof}
Set $s = (k+1) \cdot 10^kL^2 r $ and compute, provided $R \geq 2s + 8$,
\begin{equation}\label{eq:hardest-induction}
\begin{split}
     &\fint_{B_{r/2}(x)} \int_0^1 \mx _4 (  \vert \nabla V_x \vert ^{3/2} )^{2/3} (\XX_t(y)) dt d \mm (y)       \\
    & =  \int_0^1 \fint_{\XX_t(B_{r/2}(x))} \mx _4 (  \vert \nabla V_x \vert ^{3/2} )^{2/3} d \mm  dt \\
    & \leq \dfrac{1}{\mm (B_{r/2}(x))}  \int_{B_{s}(p)}   \mx _4 (  \vert \nabla V_x \vert ^{3/2} )^{2/3}   d \mm    \\
    & = \dfrac{ \mm (B_{ s}(p)) }{\mm (B_{r/2}(x))}  \fint_{B_{s}(p)} \mx _4 (  \vert \nabla V_x \vert ^{3/2} )^{2/3}  d \mm   \\
    &  \leq C(N,L) \cdot \mx _s \left( \mx _4 ( \vert \nabla V_x \vert ^{3/2})^{2/3}  \right) (p) \\
    &  \leq C(N,L) \cdot \mx _{R} \left(  \vert \nabla V_x \vert ^{2}  \right)^{1/2} (p) \\
     &  \leq C(N,L)  \sum _ {j=1}^k \vert  f_j (x) \vert ^2 \mx _{R} \left(    \vert \nabla  \nabla f_j \vert ^{2}  \right)^{1/2} (p) \\
    &  \leq C(N,L)  \cdot  \varepsilon \cdot r  .
\end{split}
\end{equation}
Combining this with Theorem \ref{thm:essential-stability}, if 
\[A_0 : = \{ y \in B_{r/2}(x) \vert y\text{ is of essential stability of }\XX \},\]
then 
\begin{equation}\label{eq:ess-st-half}
    \mm (A_0) \geq (1 - C (N,L)  \varepsilon r) \mm (B_{r/2}(x)).
\end{equation}
From Lemma \ref{lem:flow-in}, if $\varepsilon$ is small enough, there is a set $A \subset B_{r/10}(x)  \cap A_0 $ satisfying \ref{eq:a-properties}. By \ref{eq:hardest-induction}, there is $q \in A $ with $\int_0^1 \mx ( \vert \nabla V_x \vert ) ( \XX_t (q) )dt \leq C (N,L) \varepsilon r $, and by Proposition \ref{pro:e-s-better-3}, this implies 
\begin{equation*}
 \fint_{B_{r}(q) ^{\times 2}}  \text{dist}_{r}(1) d ( \mm \times \mm )  \leq C (N,L) \varepsilon r^2 ,   
 \end{equation*}  
so 
\[ \fint_{A \times A_0}  \text{dist}_{r}(1) d ( \mm \times \mm )   \leq C (N,L) \varepsilon r^2 .      \]
Hence there is $y \in A$ such that 
\begin{equation}\label{eq:flow-in-intermediate-2}
    \fint_{ A_0}  \dr (1) (y,z)  d \mm (z)  \leq C (N,L) \varepsilon r^2 ,                     
\end{equation}  
so we define  
\[B_{r/2}'(x) : = \{ z \in  A_0  \vert \dr (1) (y,z) < r/10 \} .\]
Then for all $z \in B_{r/2}'(x) $ we have 
\begin{equation*}
\begin{split}
d(   \XX_1 (z), p ) & \leq d ( \XX_1(z), \XX_1 (y) ) + d(\XX_1(y), p ) \\
& \leq  d(z,y) + r/10 + r/5 \\
& \leq r/2 + r/10 + r/10 + r/5 < r ,
\end{split}
\end{equation*}  
so \ref{eq:flow-in-part-1} holds.  \ref{eq:ess-st-half} and \ref{eq:flow-in-intermediate-2} imply \ref{eq:flow-in-part-2}, and  \ref{eq:hardest-induction} implies \ref{eq:flow-in-part-3}.
\end{proof}

\begin{Lem}\label{lem:iterate-flows}
\rm For $N \geq 1, L \geq  1$, let $\delta > 0 $ be given by Lemma \ref{lem:flow-in}. Then there are $R \geq 1$, $C_0  \geq 1$, $\varepsilon_0  > 0$ such that the following holds. Assume $(X,d, \mm , p)$ is a pointed $\rcd (- \delta , N)$ space with $d_{GH}((X,p), (\mathbb{R}^k, 0))\leq  \delta $, and $f \in H^{1,2}( X ; \mathbb{R}^k) $ is an $L$-Lipschitz function with $f(p) = 0$ such that $ \nabla f_j$ is divergence free in $B_R(p)$ for each $j \in \{ 1, \ldots , k \}$,  and 
    \[   \mx _R \left(   \sum_{j_1,j_2 =1} ^k \vert \langle \nabla  f_{j_1} , \nabla f_{j_2} \rangle - \delta _{j_1, j_2 } \vert   +  \sum_{j=1}^k \vert \nabla \nabla (f_j) \vert ^2  \right) ( p )   \leq  \varepsilon^2 \leq \varepsilon _ 0 ^2 .         \]
Assume $p$ is an $n$-regular point with $n > k$ and set
\[ \rho  \geq \{ r \in ( 0, 1 ] \vert d_{GH} ( ( r^{-1} X, p  ) , (\mathbb{R}^k , 0) ) = \delta \} .\]
Then there is a set $G \subset B_1(p)$ with 
\[\mm (G) \geq (1 - C_0 \varepsilon ) \mm (B_1(p)),\]
a finite number of divergence-free on $B_{100 C_0 }(p)$ vector fields 
\[V_1, \ldots , V_m \in L^1([0,1] ;  H^{1,2}_{C,s}(TX))\]
with $\Vert V_j (t) \Vert_{\infty} \leq C_0 $ for all $t \in [0,1]$, $j \in \{ 1 , \ldots , m \}$, with RLFs 
\[\XX^1 , \ldots , \XX^m : [0,1] \times X \to X ,\]
 and a measurable map $\theta : G \to \{ 1, \ldots , m \} $ such that for all $y \in G$, $ y$ is a point of weak essential stability of $\XX^{\theta (y)},  \XX_1^{\theta (y)}(y) \in B_{\rho} (p), $ and   
\[    \fint_G \int_0^1 \mx \left( \vert \nabla V_{\theta (y)} (t) \vert ^{3/2}  \right) ^{2/3} ( \XX^{\theta (y)}_t(y)) dtd \mm  (y)  \leq C_0 \varepsilon  .                    \]
\end{Lem}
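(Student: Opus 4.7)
The plan is to iterate Lemma \ref{lem:half-induction-blown} at a geometric sequence of scales. Set $\lambda := 10^k$ and $r_i := \lambda^{-i}$ for $i = 0, 1, \ldots, M$, where $M$ is the smallest integer with $r_M \leq \rho$. At the $i$-th step, Lemma \ref{lem:half-induction-blown} applied at scale $r_i$ says that for each $x \in B_{r_{i-1}}(p) = B_{\lambda r_i}(p)$, a large subset $B_{r_i/2}'(x)$ of $B_{r_i/2}(x)$ is flowed into $B_{r_i}(p)$ in unit time by the vector field $V_x = -\sum_j f_j(x)\nabla f_j$. Iterating $M$ times should descend from $B_1(p)$ into $B_\rho(p)$.

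\medskip

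First I would pick, for each $i$, a finite cover $\{B_{r_i/2}(x^i_\alpha)\}_{\alpha \in A_i}$ of $B_{r_{i-1}}(p)$ with centers $x^i_\alpha \in B_{r_{i-1}}(p)$ and $|A_i| \leq C(N)$ uniformly in $i$ (guaranteed by Bishop--Gromov together with the fact that $(r_{i-1}^{-1} X, p)$ is $\delta$-close to $(\mathbb{R}^k, 0)$ for all $i \leq M$). For each tuple $\vec\alpha \in A_1 \times \cdots \times A_M$ I would build a time-dependent vector field $V_{\vec\alpha}$ on $[0,1]$ by concatenation, using the rescaled field $V_{x^i_{\alpha_i}}/a_i$ on a time slot $[T_{i-1}, T_i]$ of length $a_i$. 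The crucial point is to choose $a_i \propto r_{i-1}$ (normalized so $\sum_i a_i = 1$): since $|V_{x^i_{\alpha_i}}| \leq kL^2 r_{i-1}$, the rescaled field has magnitude bounded by $kL^2 / \sum_j r_{j-1}$, giving a uniform bound $\|V_{\vec\alpha}(t)\|_\infty \leq C_0(N,L)$ independent of $M$ and hence of $\rho$. Each $V_{\vec\alpha}$ is divergence-free on $B_R(p)$ (inherited from $\nabla f_j$), so taking $R \geq 100 C_0$ suffices.

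\medskip

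Next, I would define $G \subset B_1(p)$ and $\theta: G \to \{1, \ldots, m\}$ (with $m = \prod_i |A_i|$) by choosing $\theta(y) = \vec\alpha$ measurably so that, setting $y_i := \XX^{\vec\alpha}_{T_i}(y)$, we have $y_{i-1} \in B_{r_i/2}'(x^i_{\alpha_i})$ for every $i$; $G$ is the set where such a choice exists. Because the flows are divergence-free (hence measure-preserving), the bound $\mm(B_{r_i/2}(x^i_\alpha) \setminus B_{r_i/2}'(x^i_\alpha)) \leq C_0 \varepsilon r_i \mm(B_{r_i/2}(x^i_\alpha))$ from \eqref{eq:flow-in-part-2} gives, after summing over the cover and using Bishop--Gromov, a bad-set bound at step $i$ of $C'(N) \varepsilon r_i \mm(B_1(p))$. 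Summing the convergent geometric series $\sum_i r_i$ yields $\mm(G) \geq (1 - C_0 \varepsilon) \mm(B_1(p))$.

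\medskip

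For weak essential stability, each $y_{i-1}$ lies in $B_{r_i/2}'(x^i_{\alpha_i})$, which by Lemma \ref{lem:half-induction-blown} consists of points of essential stability of the flow of $V_{x^i_{\alpha_i}}$. A time reparametrization identifies our concatenated flow with one of the form specified by Definition \ref{def:weak-e-s}, so Proposition \ref{pro:connect} applies once we verify smallness of $\sum_i \int_0^1 \mx(|\nabla V_{x^i_{\alpha_i}}|)(\XX^{i,\alpha_i}_t(y_{i-1}))\,dt$. This smallness, together with the $\mx$-integral bound in the statement, follows from \eqref{eq:flow-in-part-3}: the rescaling by $a_i$ exactly cancels in the $\mx(\cdot^{3/2})^{2/3}$ expression (the factor $1/a_i$ from rescaling $V$ is absorbed by the change of variables $dt = a_i\,ds$), so the integral over $[T_{i-1}, T_i]$ equals the unit-time integral in \eqref{eq:flow-in-part-3}, bounded on average by $C_0 \varepsilon r_i$. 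Summing gives the desired $C_0 \varepsilon$ after dividing by $\mm(G)$. The main obstacle throughout is bookkeeping: ensuring all accumulated constants come from the convergent series $\sum_i r_i$ rather than growing with $M$, so the $L^\infty$-bound $C_0$ remains independent of $\rho$; the non-uniform time allocation $a_i \propto r_{i-1}$ is the key device making this possible.
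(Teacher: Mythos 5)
Your proposal follows essentially the same strategy as the paper: iterate Lemma \ref{lem:half-induction-blown} across a geometric ladder of scales, and concatenate the single-step flows with time allocations proportional to the scale so that the $L^\infty$ norm of the combined vector field stays uniformly bounded. The paper organizes this as a reverse induction on $r$ --- proving the assertion with constant $C_0 \varepsilon r$ at all scales $r$ and stepping from $r$ to $10^k r$ --- and concatenates dyadically, putting the new scale-$r$ piece on $[0,1/2]$ and the recursively given tail on $[1/2,1]$; the $2^{-i}$ time slots against the $10^{-ki}$ scales give the same uniform bound you get from $a_i \propto r_{i-1}$. Your version lists the scales $r_i$ up front and builds one long explicit concatenation for each tuple $\vec\alpha$, which is a legitimate reorganization, though the paper's recursion makes the bookkeeping of the measure bound and the $\mx$-integral a bit cleaner (the induction hypothesis carries a constant $C_0 \varepsilon r$, and both quantities double and halve in a controlled way at each step).

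One point where your write-up is looser than the paper, and which would need to be tightened: you invoke ``a time reparametrization identifies our concatenated flow with one of the form specified by Definition \ref{def:weak-e-s}.'' As stated, Definition \ref{def:weak-e-s} builds the combined flow with uniform time slots $1/k$; your flows use nonuniform slots $a_i$. Reparametrization does not literally turn one into the other as an element of $L^1([0,1]; H^{1,2}_{C,s}(TX))$ with the same $L^\infty$ bound, so you cannot simply reparametrize and then cite the definition. What saves you (and what the paper leans on implicitly by always concatenating exactly two pieces at a time) is that the conditions of essential stability in Theorem \ref{thm:essential-stability}, and the $\mx$-integrals appearing in Proposition \ref{pro:connect}, depend only on the trajectory and not on its parametrization; the $1/a_i$ from rescaling the field cancels the $a_i$ from rescaling time in both the first-variation estimates and in $\int \mx(|\nabla V|)\,dt$, exactly as you observe for the $\mx(\cdot^{3/2})^{2/3}$ quantity. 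If you spell this invariance out, your argument closes. Likewise, the derivation of $\mm(G)\geq(1-C_0\varepsilon)\mm(B_1(p))$ by ``pulling back'' the step-$i$ bad set through the measure-preserving flow should be made explicit: the flow is only divergence-free on a fixed large ball $B_R(p)$, so one should record (as the paper does by choosing $R\geq 100C_0$) that all relevant trajectories stay inside that ball.
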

\begin{proof}
We will show that for each $r \leq 1$, there is $G_r \subset B_r(p)$ with 
\[  \mm (G_r) \geq (1 - C_0 \varepsilon r) \mm  (B_r(p)),         \]
a finite number of divergence-free  on $B_{ 100 C_0  }(p)$ vector fields 
\[ W_1, \ldots , W_m  \in L^1([0,1] ;  H^{1,2}_{C,s}(TX))  \]
with $\Vert W_j (t) \Vert_{\infty} \leq  C_0 r $  for all $t \in [0,1]$, $j \in \{ 1 , \ldots , m \}$, with RLFs 
\[\Phi ^1, \ldots , \Phi ^m : [0,1] \times X \to X, \]
and a measurable map $\theta _r : G_r \to \{ 1, \ldots , m \}$ satisfying that for all $y \in G_r$, $y$ is a point of weak essential stability of $\Phi  ^{\theta_r(y)}$, $\Phi  _1 ^{\theta_r(y)}(y) \in B_{\rho }(p)$, and 
\[   \fint_{G_r} \int_0^1 \left(  \mx ( \vert \nabla W_{\theta_r(y)} (t) \vert  )^{3/2}  \right)^{2/3} ( \Phi  _t ^{\theta_r (y)})(y) ) dtd \mm  (y) \leq C_0 \varepsilon r .                \]
Clearly, the claim holds for $r \leq \rho$ with $G_r = B_r (p)$ and the zero vector field. Now we check that if the claim holds for some $r \leq  10^{-k}$, then it also holds for $10^k r$. 

Choose $\{  q_1, \ldots , q_{\ell} \}$, a maximal $r/2$-separated set in $B_{10^kr}(p)$. By Lemma \ref{lem:flow-in}, one has
\[  \sum_{j=1}^{\ell} \mm (B_{r/2}(q_j)) \leq 2^{k+1} \sum_{j=1}^{\ell} \mm (B_{r/4}(q_j)) \leq 2^{k+3} \mm  ( B_{10^kr}(p) ).               \]
By Lemma \ref{lem:half-induction-blown}, if $\varepsilon $ is small enough and $R$ is large enough, for each $j \in \{ 1, \ldots , \ell \} $ there is a divergence free vector field $\overline{W}_j \in H^{1,2}_{C,s}(TX)$ such that  $\big\Vert \overline{W}_j \big \Vert_{\infty} \leq C(N,L) r$,  with RLF
\[\overline{\Phi }^j : [0,1] \times X \to X, \]
and a set $B_{r/2}'(q_j)$ of points of essential stability of $\overline{\Phi }^j$ such that 
$\overline{\Phi }^j_1(B_{r/2}'(q_j)) \subset B_r(p)$, and
\begin{gather*}
\mm (B_{r/2}'(q_j)) \geq (1 - C (N,L) \varepsilon r) \mm (B_{r/2}(q_j)) , \\
\frac{1}{\mm (B_{r/2}(q_j))} \int_{B_{r/2}'(q_j)}\int_0^1  \mx ( \vert \nabla \overline{W}_j \vert ^{3/2} )^{2/3} ( \overline{\Phi  }^j_t (y) ) dt d \mm  (y) \leq C  (N,L)\varepsilon r .
\end{gather*} 
Set 
\[  G_{10^kr} : = B_{10^kr}(p) \cap \bigcup_{j=1}^{\ell} \left(   B_{r/2}'(q_j) \cap \left( ( \overline{\Phi }_1^j )^{-1} (G_r)    \right)    \right) .      \]
Then
\begin{equation*}
    \begin{split}
        \mm (G_{10^kr}) &\geq     \mm  (B_{10^kr}(p)) - \sum_{j=1}^{\ell} \left(  \mm (B_{r/2}(q_j) ) - \mm (B_{r/2}'(q_j)) + \mm (B_r(p)) - \mm  ( G_r)      \right)\\
        & \geq  \mm  (B_{10^kr}(p)) - \sum_{j=1}^{\ell} \left(  C (N,L) \varepsilon r \mm (B_{r/2}(q_j)) + 2^{k+3} C_0 \varepsilon r    \mm (B_{r/2}(q_j))  \right)\\
        & \geq \left(  1 - 2^{k+2} ( C (N,L) \varepsilon r + 2^{k+3} C_0 \varepsilon r  )   \right) \mm (B_{10^kr}(p))\\
        & \geq (1 - C_0 \varepsilon 10^k r ) \mm (B_{10^kr}(p)),
        \end{split}
\end{equation*}
provided $C_0$ was chosen large enough, depending on $N$ and $L$. For each $y \in G_{10^kr}$, set $V_y$ as follows: let $j \in \{ 1, \ldots , \ell \} $ be the smallest index for which $y \in B_{r/2}'(q_j) \cap \left(  ( \overline{\Phi }_1 ^j )^{-1} (G_r)  \right)$. Then define 
\[    V_y (t) := \begin{cases}
    2 \overline{W}_j & \text{ if }t \in [0, \frac{1}{2} ) \\
    2 W_{\theta_r( \overline{\Phi }_1^j (y)) }(2t-1) & \text{ if }t \in [\frac{1}{2}, 1].
\end{cases}          \]
Notice that $\Vert V_y ( t) \Vert_{\infty} \leq  \max \{  C(N,L) r , 2 C_0 r    \} \leq C_0 10^kr$ for all $t \in [0,1]$, provided $C_0 $ was chosen large enough. Set $\Psi ^y$ be the RLF of $V_y$, and set 
\[B_{r/2}^{\prime \prime }(q_j) : = G_{10^k r} \cap B_{r/2}'(q_j) \backslash \cup _{\alpha = 1}^{j-1} B_{r/2}'(q_{\alpha}).\]
Then 
\begin{equation*}
    \begin{split}
        \fint_{G_{10^kr}} &\int_0^1 \mx \left( \vert \nabla V_y  \vert ^{3/2} \right) ^{2/3} (\Psi ^y_t (y) ) dtd \mm (y) \\
        &   \leq \dfrac{1}{\mm (B_{10^kr}(p))} \sum_{j=1}^{\ell} \int_{B_{r/2}^{\prime \prime }(q_j)} \int_0^1 \mx \left( \vert \nabla V_y \vert ^{3/2}   \right)^{2/3} ( \Psi^y_t(t) ) dt d \mm  (y)  \\
        & \leq \dfrac{1}{\mm (B_{10^kr}(p))} \sum_{j=1}^{\ell} \left(  C (N,L) \varepsilon r \mm (B_{r/2}(q_j))  + 2^{k+2} C_0 \varepsilon r \mm (B_{r/2}(q_j))  \right) \\
        & \leq C_0 \varepsilon 10^k r ,
    \end{split}
\end{equation*}
again provided $C_0$ was chosen large enough, depending on $N$ and $L$, concluding the proof of the lemma. 
\end{proof}
\begin{proof}[Proof of Theorem \ref{thm:re-scaling}:]
  By Lemma \ref{lem:convergence-to-delta}, there are sequences $\delta_i \to 0$, $R_i \to \infty$, and a sequence of $L(N)$-Lipschitz maps $h^i \in H^{1,2}(X_i; \mathbb{R}^k)$ with $h^i (p_i)=0$ for all $i$, $\nabla h^i_j $ divergence free in $B_{R_i}(p_i)$ for each $j \in \{ 1, \ldots , k \}$, and such that if 
  \[u_i : = \sum_{j_1, j_2 = 1}^k \vert \langle \nabla h^i_{j_1}, \nabla h^i_{j_2} \rangle -\delta _{j_1, j_2} \vert  + \sum_{j=1}^k \vert \nabla \nabla h_j^i \vert ^2, 
  \]
then for all $r \in [1, R_i ]$, one has 
\[ \fint_{B_r(p_i)} u_i d \mm _i \leq \delta_i^3.\] 
Set $ \Theta _i : = \{  x\in B_{1/2}(p_i) \vert x \text{ is }n\text{-regular, } \mx (u_i)\leq \delta_i^2 \} $. By Proposition \ref{pro:max-properties}(\ref{pro:max-properties-1}), we have $\mm _i( \Theta _i) / \mm _i(B_{1/2}(p_i)) \to 1$. Notice that, possibly after modifying $\delta_i$ and $R_i$, we may assume 
\[   \mx _{R_i} (u_i) (x)  \leq \delta_i^2 \text{ for all }x \in  \Theta _i  .  \]
For $\delta \leq 1/100 $ given by Lemma \ref{lem:flow-in}, set 
\[\lambda_i : =  \inf_{x \in  \Theta _i} \inf \{ \lambda \geq 1 \vert d_{GH}((\lambda X_i , x), (\mathbb{R}^k, 0)) \geq \delta \} .       \]
As $ \Theta _i$ consists of regular points, $\lambda_i < \infty$ for all $i$, and as $(X_i, p_i) $ converges to $(\mathbb{R}^k, 0)$ in the Gromov--Hausdorff sense, we also have $\lambda_i \to \infty$.

We claim there is a sequence $\mu _ i \to \infty$ with the property that for all $x _i \in  \Theta _i$, and any lift $\tilde{x}_i \in B_{1/2}(\tilde{p}_i)$, we have 
\begin{equation}\label{eq:huge-spectral-gap}
    \Gamma _ i  = \mathcal{G} ( \pi_1(X_i), \tilde{X}_i , \tilde{x}_i , 1/\lambda_i )  =  \mathcal {G} (\pi_1(X_i) ,\tilde{X}_i , \tilde{x}_i , \mu _i ) . 
\end{equation}
Otherwise,  by Corollary \ref{cor:empty-spec-1}, after taking a subsequence, there would be a sequence $r_i \in \sigma (  \pi_1(X_i) , \tilde{X}_i , \tilde{x}_i  ) \cap [ 1/\lambda _ i , M ] $ for some $M > 0$. After again taking a subsequence, by Lemma \ref{lem:delta-to-convergence} and Proposition \ref{pro:gigli-corollary}, we can assume $(r_i^{-1}X_i, x_i)$, $(r_i^{-1}\tilde{X}_i, \tilde{x}_i)$ converge to $(\mathbb{R}^k \times Z , (0,z) ) $, $(\mathbb{R}^k \times \tilde{Z}, (0, \tilde{Z}))$, respectively, for some spaces $Z, \tilde{Z}$ with diam$(Z) \leq 1/10$ (see Remark \ref{rem:100k}) in such a way that the sequence $\Gamma_i$ converges equivariantly to some group $\Gamma \leq \iso (\mathbb{R}^k\times \tilde{Z})$ that acts trivially on the first factor and such that $\tilde{Z} / \Gamma = Z$. By Lemma \ref{lem:co-compact-spectrum}, $r \leq 1/2 $ for all $r \in \sigma (\Gamma)$, but by construction $1 \in \sigma (\Gamma_i , r_i^{-1} \tilde{X}_i, \tilde{x}_i ) $ for all $i$, contradicting Proposition \ref{pro:spec-cont} and proving \ref{eq:huge-spectral-gap}.

Let $x_i \in  \Theta _i$ be such that 
\[\inf \{ \lambda \geq 1 \vert d_{GH}((\lambda X_i , x_i), (\mathbb{R}^k, 0)) \geq \delta \} \leq \lambda_i +1.\]
After passing to a subsequence, by Lemma \ref{lem:delta-to-convergence}, we can assume $(\lambda_i  X_i, x_i)$ converges to $(\mathbb{R}^k \times Y , (0,y))$ for some compact space $(Y,y)$ with diam$(Y) \in ( 0, 1/10 ] $.  

For $a_i, b_i \in  \Theta _i$, let $G(a_i) \subset B_1(a_i)$, $G(b_i)\subset B_1(b_i) $ be the sets given by Lemma \ref{lem:iterate-flows}, and let $U_i : = G(a_i) \cap G(b_i)$. Notice that for $i$ large enough we have 
\[ \max \{ \mm _i(B_1(a_i)), \mm _i(B_1(b_i)) \} \leq  C \cdot \mm _i (U_i ) \text{ for some }C(N). \]
For each $y \in U_i$, let $V^{a_i}_{y}$, $V^{b_i}_y \in L^1([0,1]; H^{1,2}_{C,s}(TX_i))$ denote the vector fields given by Lemma \ref{lem:iterate-flows}, define $V_y \in L^1([0,1];H^{1,2}(TX_i)) $ as 
\[    V_y (t) := \begin{cases}
    - 2 V_y^{a_i} (1-2t) & \text{ if }t \in [0, \frac{1}{2} ] \\
    \hspace{0.35cm} 2 V_y ^{b_i} (2t -1 ) & \text{ if }t \in [\frac{1}{2}, 1], 
\end{cases}          \]
and let $\XX^y : [0,1] \times X_i \to X_i$ be its RLF. Then there are measurable maps
\[ V : U_i \to L^{\infty} ( [0,1]; H^{1,2}_{C,s}(TX_i))\]
with finite image such that for all $y \in U_i$, $V_y$ is a divergence-free   on $B_{ 100 C (N) }(p_i)$  vector field with $\Vert V_y (t) \Vert_{\infty}\leq C(N)$, there is a point $y^{\prime} \in B_{1/\lambda_i}(a_i)$ of weak essential stability of $\XX^y$ with $\XX^{y}_1(y^{\prime}) \in B_{1/\lambda_i}(b_i)$, and 
\[       \fint_{U_i} \int_0^1 \mx ( \vert \nabla V_y (t) \vert ^{3/2} )^{2/3} ( \XX_t ^y (y^{\prime }) ) dtd \mm _i  (y)  \leq C(N) \delta _i  .    \]
This implies there is a sequence $y_i \in B_{1/\lambda_i}(a_i)$ and vector fields $W_i \in L^1([0,1]; H^{1,2}_{C,s}(TX_i))$ with RLFs $\XX^i : [0,1]\times X_i \to X_i$ such that $y_i$ is a point of weak essential stability of $\XX^i$, $\XX_1^i (y_i) \in B_{1/\lambda_i }(b_i)$, and 
\[    \int_0^1 \mx ( \vert \nabla W_i (t) \vert ^{3/2} )^{2/3} ( \XX_t ^i (y_i) ) dt  \leq C(N) \delta _i  .       \]
By Lemma \ref{pro:connect}, $y_i$ is a point of essential stability of $\XX^i$ for $i$ large enough, so by Lemma \ref{lem:power-flow}, we get a sequence of  GS maps $ h_i : [\lambda_i X_i, a_i] \to [\lambda _i X_i , b_i ] $. This implies, by Lemma \ref{gas:limit}, that the measured Gromov--Hausdorff limit of the sequence $(\lambda _i X_i, a_i)$ does not depend on the choice of $a_i \in  \Theta _i$.

By Propositions \ref{pro:mx-lift}, \ref{pro:lift-of-flow}, and Lemma \ref{lem:power-flow}, for lifts $\tilde{a}_i, \tilde{b}_i \in B_{1/2} (\tilde{p}_i)$, we also get a sequence of deck type GS maps $f_i^{\prime} : [ \lambda _i  \tilde {X} _i, \tilde{a}_i ] \to [\lambda _i \tilde{X}_i , b_i^{\prime}] $ with $\left( f_i^{\prime } \right)_{\ast} = \Id _{\pi_1(X_i)}$ for some  $b_i^{\prime}$ in the preimage of $b_i$ with 
\begin{equation}\label{eq:no-escape-lift}
    \tilde{d} (\tilde{a}_i, b_i^{\prime}) \leq C (N)
\end{equation}
and such that $(f_i^{\prime})_{\ast} = \Id _{\pi_1(X_i)} $.  From \ref{eq:huge-spectral-gap} and \ref{eq:no-escape-lift}, there are $g_i \in \Gamma_i $ with $g_i (b_i^{\prime}) = \tilde{b}_i$. Composing $f_i^{\prime }$ with  $g_i $,  we get a sequence of deck type GS maps $f_i : [ \lambda_i \tilde{X}_i , \tilde{a}_i  ] \to [\lambda _i \tilde{X}_i , \tilde{b}_i] $ with $(f_i)_{\ast} = (g_i )_{\ast} \in (\Gamma_i)_{\ast}$.
\end{proof}

\section{Proof of Main Theorems}\label{sec:pmt}

We now prove Theorem \ref{thm:induction-main-theorem} by reverse induction on $k$. This is done by contradiction; after passing to a subsequence, we assume the following.

\begin{Asu}\label{asu:contradiction}
\rm There is a sequence of integers $\xi _ i \to \infty$ such that no subgroup of $ \Gamma_i$ of index $\leq \xi _i$ admits a nilpotent basis of length $\leq n-k$ respected by $(f_{j,i})_{\ast}^{\xi _i !}$ for each $j \in \{ 1, \ldots , \ell \}$.
\end{Asu}

The base of induction consists of Proposition \ref{pro:base-main}. The induction step is first proved assuming that the sequences $f_{j,i}$ converge to the identity and $Y \neq \{ \ast \}$ (Proposition \ref{pro:step-easy}). Then we drop the assumption that the maps $f_{j,i}$ converge to the identity (Proposition \ref{pro:step-medium}), and the last step consists on dropping the assumption $Y \neq \{ \ast \}$ (Proposition \ref{pro:step-hard}).

 After taking a subsequence we may assume $( \tilde{X}_i , \tilde{p}_i ) $ converges to a space $ ( \mathbb{R} ^k \times \tilde{Y} , (0,\tilde{y}) ) $ and   $\Gamma_i  $ converges equivariantly to some closed group $\Gamma \leq \iso ( \mathbb{R}^k \times \tilde{Y} ) $, which by Proposition \ref{pro:gigli-corollary}, acts trivially on the $\mathbb{R}^k$ factor. By Corollary \ref{cor:split-cover}, $\tilde{Y}$ splits as a product $\mathbb{R}^m \times Z$ for some compact space $Z$, and by Corollary \ref{cor:fybc}, $\Gamma / \Gamma_0$ has an abelian subgroup of finite index generated by at most $m$ elements. After passing to a subsequence, by Lemma \ref{gas:limit} we can also assume that for each $j \in \{ 1, \ldots , \ell \}$,  $f_{j,i}$ converges weakly  to an isometry $f_{j, \infty} : \mathbb{R}^k \times \tilde{Y} \to \mathbb{R}^k \times \tilde{Y} $.

\begin{Pro}\label{pro:base-main}
\rm Theorem \ref{thm:induction-main-theorem} holds if $k = n$.
\end{Pro}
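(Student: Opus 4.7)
The plan is to show that when $k=n$, the group $\Gamma_i$ is in fact trivial for all sufficiently large $i$; the conclusion of Theorem \ref{thm:induction-main-theorem} is then immediate by taking $G_i = \{e_{\Gamma_i}\}$, the empty nilpotent basis $\beta_i = \emptyset$ (of length $0 \leq n-k$), and $C = 1$, since every index condition and every constraint on the $(f_{j,i})_{\ast}$ becomes vacuous.

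I would first pass to a subsequence and apply Theorem \ref{thm:equivariant-compactness} to the universal covers $(\tilde{X}_i, \tilde{p}_i)$ equipped with the full deck action by $\pi_1(X_i)$. This yields a pointed limit $(\tilde{X}, \tilde{p})$ together with an equivariantly convergent subsequence $\pi_1(X_i) \to \Gamma_\infty \leq \iso(\tilde{X})$; a further subsequence makes $\Gamma_i$ converge equivariantly to some closed subgroup $\Gamma \leq \Gamma_\infty$. Since $\tilde{X}_i/\pi_1(X_i) = X_i$ converges in the Gromov--Hausdorff sense to $\mathbb{R}^n \times Y$, the same theorem identifies $\tilde{X}/\Gamma_\infty$ with $\mathbb{R}^n \times Y$. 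Because each $\tilde{X}_i$ inherits the rectifiable dimension $n$ of $X_i$ via the local-isomorphism property of covers, and since $k=n$, Proposition \ref{pro:gigli-corollary} then forces $\tilde{X} \cong \mathbb{R}^n \times \{\ast\} = \mathbb{R}^n$ with $\Gamma_\infty$ acting trivially on the $\mathbb{R}^n$ factor. As $\tilde{X}$ equals its $\mathbb{R}^n$ factor, the action of $\Gamma_\infty$ on $\tilde{X}$ is trivial, and by faithfulness $\Gamma_\infty = \{e\}$; hence $\Gamma = \{e\}$ as well.

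To transfer triviality from the limit back to the sequence, I would apply Lemma \ref{lem:no-collapse-small-subgroup} to the discrete groups of measure-preserving isometries $\Gamma_i \leq \iso(\tilde{X}_i)$ (deck transformations act freely and properly discontinuously, hence form discrete subgroups in the compact-open topology). The boundedness hypothesis $\Gamma_i = \mathcal{G}(\Gamma_i, \tilde{X}_i, \tilde{p}_i, 2D+1)$ is built into the very definition of $\Gamma_i$ in the statement of Theorem \ref{thm:induction-main-theorem}, and the limit $\tilde{X} = \mathbb{R}^n$ admits an $\rcd(0,N)$ structure of rectifiable dimension $n$, matching that of the $\tilde{X}_i$. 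The lemma then gives that $\Gamma_i$ is trivial for all large $i$, completing the proof. There is no real obstacle: the argument is essentially a verification exercise, with the only substantive ingredient being the dimensional rigidity supplied by Proposition \ref{pro:gigli-corollary}, which in the top-rank regime $k=n$ collapses the cover limit all the way to $\mathbb{R}^n$.
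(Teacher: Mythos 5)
Your argument is correct and is essentially the paper's own proof, only spelled out in more detail: the paper likewise invokes Proposition \ref{pro:gigli-corollary} (whose ``in particular'' clause gives $\tilde{Y} = \{\ast\}$ when $k=n$, hence the equivariant limit $\Gamma$ is trivial) and then concludes with Lemma \ref{lem:no-collapse-small-subgroup}. The extra care you take in identifying $\tilde{X}\cong\mathbb{R}^n$, checking that the rectifiable dimension is preserved under covers, and verifying the bounded-generation hypothesis $\Gamma_i = \mathcal{G}(\Gamma_i,\tilde{X}_i,\tilde{p}_i,2D+1)$ is exactly what the paper leaves implicit, so there is no substantive divergence.
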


\begin{proof} By dimensionality, $\tilde{Y}$ is trivial and so is $\Gamma$. By Lemma \ref{lem:no-collapse-small-subgroup}, the sequence $\Gamma_i$ is trivial as well.
\end{proof}

\begin{Pro}\label{pro:step-easy}
\rm In the induction step, \ref{asu:contradiction} leads to a contradiction if  $Y \neq \{ \ast \}$ and $f_{j, \infty} = \Id_{\mathbb{R}^k \times \tilde{Y}}$ for all $j$.
\end{Pro}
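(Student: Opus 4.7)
The plan is to contradict Assumption \ref{asu:contradiction} by constructing, for $i$ large, a finite-index subgroup $G_i \leq \Gamma_i$ with a nilpotent basis $\beta_i$ of length $\leq n-k$ respected by $(f_{j,i})_{\ast}^{C!}$. The group $G_i$ will be assembled from two layers: an ``identity-component'' layer $G_i^0 \leq \Upsilon_i$ obtained from the induction hypothesis, and a ``component-group'' layer of at most $m$ lifts $\gamma_{1,i},\ldots,\gamma_{s,i}\in\Gamma_i$ of generators of an abelian finite-index subgroup of $\Gamma_i/\Upsilon_i$. The extraction of these two layers relies on Theorem \ref{thm:upsilon} and Corollary \ref{cor:fybc}: Theorem \ref{thm:upsilon} applies because $\Gamma$ is Lie (Theorem \ref{thm:gss-lie}), so $\Gamma_0$ is open, and $\Gamma/\Gamma_0$ is finitely presented as it contains a finite-index abelian subgroup generated by at most $m$ elements by Corollary \ref{cor:fybc} applied with the splitting $\tilde{Y}\cong\mathbb{R}^m\times Z$ from Corollary \ref{cor:split-cover}. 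This produces $\Upsilon_i\triangleleft\Gamma_i$ with a surjection $\Gamma/\Gamma_0\twoheadrightarrow\Gamma_i/\Upsilon_i$; lifting generators of the corresponding abelian subgroup of $\Gamma_i/\Upsilon_i$ yields the $\gamma_{j,i}$.

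The central step is applying the induction hypothesis to $\Upsilon_i$. Via a diagonal argument, I would choose $\lambda_i\to\infty$ and basepoints $\tilde{q}_i\in\tilde{X}_i$ near $\tilde{p}_i$ such that $\tilde{q}_i$ is a GS-zoom-in point for every $f_{j,i}$ (possible via Proposition \ref{gas:zoom}), the rescaled limit $(\tilde{X}_i,\lambda_i\tilde{d}_i,\tilde{q}_i)\to(\mathbb{R}^{k'}\times Y',(0,y'))$ has $k'\geq k+m>k$ and $Y'$ compact of diameter $D'$ (using the $\mathbb{R}^m$-factor of $\tilde{Y}$, or when $m=0$ by blowing up at a regular point of $Z$ from Theorem \ref{thm:bs-dimension}), and
\[
\Upsilon_i = \mathcal{G}\!\left(\pi_1(X_i), \lambda_i\tilde{X}_i, \tilde{q}_i, 2D'+1\right)
\]
for $i$ large. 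The latter balances two constraints: Theorem \ref{thm:upsilon} allows choosing generators of $\Upsilon_i$ with arbitrarily small original-scale displacement (so they lie in the group generated at scale $\leq 2D'+1$ after rescaling, provided $\lambda_i$ is sufficiently large), while elements of $\Gamma_i\setminus\Upsilon_i$ project to non-identity components of $\Gamma$ and so have original-scale displacement bounded below, exceeding $2D'+1$ at the new scale. By Proposition \ref{gas:zoom} the rescaled $f_{j,i}$ remain GS, and by Lemma \ref{gas:limit} their weak limits remain the identity since $f_{j,\infty}=\Id$.

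Applying the induction hypothesis at scale $\lambda_i$ with $k'>k$, and using Remark \ref{rem:main-characteristic} to arrange $G_i^0$ characteristic in $\Upsilon_i$ (and hence normal in $\Gamma_i$), one obtains a nilpotent basis $\beta_i^0$ of $G_i^0$ of length $\leq n-k-m$ respected by $(f_{j,i})_{\ast}^{C_0!}$. Setting $\beta_i:=(\beta_i^0,\gamma_{1,i}^{C!},\ldots,\gamma_{s,i}^{C!})$ and $G_i:=\langle\beta_i\rangle$ for $C$ a sufficiently large multiple of $C_0$ and of $|\aut(\Upsilon_i/G_i^0)|\cdot|\Upsilon_i/G_i^0|$, the length is at most $(n-k-m)+s\leq n-k$. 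The nilpotent-basis commutator conditions follow from Proposition \ref{pro:powers} applied with $H=\Upsilon_i$, $H_0=G_i^0$ (using abelianity of the $\gamma_{j,i}$'s modulo $\Upsilon_i$), which pushes all commutators $[\gamma_{j,i}^{C!},\cdot]$ into $G_i^0=\langle\beta_i^0\rangle$. The respect by $(f_{j,i})_{\ast}^{C!}$ follows from: the induction's respect of $\beta_i^0$; the triviality of the induced action on $\Gamma_i/\Upsilon_i$ (from $f_{j,\infty}=\Id$); and the triviality of $(f_{j,i})_{\ast}^{C!}$ on $\Gamma_i/G_i^0$ from a short-exact-sequence argument forcing the cocycle to vanish for $C$ large enough, with Proposition \ref{pro:replace} helping reduce via inner-automorphism adjustments. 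This contradicts Assumption \ref{asu:contradiction}.

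The principal obstacle will be the calibration in the second paragraph: making $\Upsilon_i$ coincide with $\mathcal{G}(\pi_1(X_i),\lambda_i\tilde{X}_i,\tilde{q}_i,2D'+1)$ while simultaneously arranging the product form and strictly larger Euclidean factor in the rescaled limit. The edge case $m=0$ is particularly delicate: the extra Euclidean directions must then come entirely from the tangent cone of $Z$ at a regular point, and one must verify that $\Upsilon_i$'s action remains well-controlled under the stronger rescaling needed.
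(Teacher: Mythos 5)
Your high-level strategy matches the paper's: produce $\Upsilon_i \triangleleft \Gamma_i$ via Theorem \ref{thm:upsilon}, apply the inductive hypothesis to an intermediate cover to extract a nilpotent chunk, and then append lifted abelian generators. But there are two genuine gaps, and they are exactly where the paper's two-case split earns its keep.

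The first gap is in the verification that $\beta_i = (\beta_i^0, \gamma_{1,i}^{C!}, \ldots, \gamma_{s,i}^{C!})$ is a nilpotent basis. The nilpotent basis condition at a position $\alpha$ inside $\beta_i^0 = (b_1, \ldots, b_t)$ requires $[b_\alpha, \gamma_{j,i}^{C!}] \in \langle b_1, \ldots, b_{\alpha-1}\rangle$, not merely $[b_\alpha, \gamma_{j,i}^{C!}] \in G_i^0 = \langle b_1, \ldots, b_t \rangle$. Proposition \ref{pro:powers}, which is what you invoke, only delivers the latter. The stronger containment is exactly the statement that $(\gamma_{j,i})_{\ast}^{C!}$ \emph{respects} $\beta_i^0$, and this you can only get from the induction hypothesis by feeding the $\gamma_{j,i}$'s into the inductive package as additional GS (deck) maps. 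The paper does precisely this in its second case: it sets $f_{\ell+1,i} := w_{j_0}^i$ and applies the induction with this extra map, so the conclusion hands back respect of $\beta_i'$ by $(f_{\ell+1,i})_{\ast}^{C!}$, which is then what powers Proposition \ref{pro:powers} and the appended generator $(w_{j_0}^i)^{(2C)!}$. Your proposal nowhere promotes the $\gamma_{j,i}$ to GS maps, so the respect you would need is simply not produced.

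The second gap is the calibration itself, as you anticipate. The paper distinguishes whether $[\Gamma_i:\Upsilon_i]$ stays bounded. In the bounded regime, $\tilde{X}_i/H_i$ is a \emph{bounded-degree} cover of $X_i$, and it is this bounded degree that keeps the non-Euclidean factor of the rescaled limit compact with controlled diameter; the rescaled limit of $\lambda_i X_i$ itself is purely Euclidean, so $Y'$ is in fact a point at that scale. In the unbounded regime, $\tilde{X}_i/\Upsilon_i$ is an unbounded-degree cover, that control evaporates, and the paper switches to the original scale, where it is the non-compactness of $\Gamma'/\Upsilon'$ (hence a splitting of $Y'$ via Corollary \ref{cor:split-cover}) that supplies the extra Euclidean direction. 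Your one-shot version also quietly assumes $\Gamma_0$ does not consume the $\mathbb{R}^m$-factor of $\tilde{Y}$, so that the Euclidean dimension of $\tilde{Y}/\Gamma_0$ is still at least $m$; Proposition \ref{pro:gigli-corollary} only gives triviality of the $\Gamma$-action on the outer $\mathbb{R}^k$, not on the Euclidean factor internal to $\tilde{Y}$. Without that, your count $(n-k-m) + s \leq n-k$ can fail: the induction would give length $\leq n - k - m_0$ where $m_0$ is the Euclidean dimension of $\tilde{Y}/\Gamma_0$, and you would need $s \leq m_0$, not just $s \leq m$. The paper sidesteps this entirely by appending generators one at a time with a fresh induction at each step.
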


\begin{proof}
Let $v_1, \ldots , v_{m} \in \Gamma$ be such that $\{  v_1 \Gamma_0 , \ldots , v_{m}\Gamma_0  \}$ generates a finite index abelian subgroup of $\Gamma / \Gamma_0$.  For each $j \in \{ 1, \ldots , m \}$ pick $w_j^i \in \Gamma _i$ with $w_j^i \to v_j$, and define $\Upsilon_i \triangleleft \Gamma_i$  to be the subgroups given by Theorem \ref{thm:upsilon}. Then from the proof of Theorem \ref{thm:upsilon} one has for $i$ large enough:
\begin{itemize}
\item  $\langle  \Upsilon_i , w_1^i , \ldots , w_{m}^i \rangle$ is a finite index subgroup of $\Gamma_i$.
\item $ [ w_{j_1}^i , w_{j_2}^i  ] \in \Upsilon_i $ for $j_1, j_2 \in \{  1, \ldots , m \}$.
\end{itemize}
Furthermore, as $f_{j,i} \to \Id_{\mathbb{R}^k \times \tilde{Y}}$ for each $j \in \{ 1, \ldots , \ell \}$, we also have
\begin{itemize}
    \item $[ f_{j,i} , w_{j_1}^i  ] \in \Upsilon_i $ for all $ j_1 \in \{ 1, \ldots , m \}$ and large enough $i$.
\end{itemize}
\begin{center}
\textbf{Case 1:} The sequence $ [ \Gamma_i : \Upsilon _ i ]$ is bounded. 
\end{center}
By \ref{lem:very-finite-generated} and \ref{pro:pass-to-subgroup}, there are characteristic subgroups $H_i \triangleleft  \Gamma _i $ contained in $\Upsilon_i$  such that the sequence $[\Upsilon _i : H_i ]$ is bounded. After slightly shifting the basepoints $p_i$, we may assume $y$ is an $\alpha$-regular point of $Y$.  Let $\lambda_i \to \infty$ so slowly that:
\begin{itemize}
\item $(\lambda_i X_i , p_i ) \to (\mathbb{R}^{k + \alpha}, 0)$ in the Gromov--Hausdorff sense.
\item $f_{j,i} : [ \lambda_i \tilde{X}_i  , \tilde{p}_i ] \to [\lambda _i \tilde{X}_i , \tilde{p}_i]$ still is GS and converges to $\Id_{\mathbb{R}^{m + \alpha }}$.
\item $ \lim _{i \to \infty } \sup \sigma ( H_i , \lambda_i \tilde{X}_i , \tilde{p}_i ) < \infty $. 
\end{itemize}
By Proposition \ref{pro:gigli-corollary}, any Gromov--Hausdorff limit of $(\lambda_i \tilde{X}_i / H_i , [\tilde{p}_i])$ splits off $\mathbb{R}^{k + \alpha }$, and as $H_ i \triangleleft \Gamma_i$ is characteristic, it is preserved by  $(f_{j,i})_{\ast}$ for each $j$, so the induction hypothesis applies to the spaces $ ( \tilde{X}_i / H_i, \lambda _ i d_i , \mm _i, [\tilde{p}_i] )$, contradicting \ref{asu:contradiction}.
\begin{center}
\textbf{Case 2:} After passing to a subsequence,  $ [ \Gamma_i  : \Upsilon _ i ] \to \infty$. 
\end{center}
For each $j \in \{ 1 , \ldots , m  \}$, set $\Gamma_{i,j} : = \langle \Upsilon_i , w_{j}^i , \ldots , w_m^i  \rangle $ and $\Gamma_{i , m +1 } : = \Upsilon_i$. Let $j_0 \in \{ 1 , \ldots , m \} $ be the smallest number such that, after passing to a subsequence, we get $  [ \Gamma_{i,j_0} : \Gamma_{i, j_0+1} ] \to \infty $, and let $\Gamma_i^{\prime} : = \Gamma_{i, j_0}$, $\Upsilon_i^{\prime} : = \Gamma_{i, j_0 + 1 }$. Notice that by our choice of $w_j^i$'s, $\Upsilon_i^{\prime}$ is normal in $\Gamma_{i}^{\prime}$. 

Let $X_i^{\prime} : = \tilde{X}_i / \Upsilon_i^{\prime} $, $p_i^{\prime} \in X_i^{\prime }$ the image of $\tilde{p}_i$, and $  f_{\ell + 1 , i}  : = w_{j_0}^i \in \iso ( \tilde{X}_i )$. After taking a subsequence, we can assume $(X_i^{\prime} , p_i ^{\prime}) $ converges to a space $( \mathbb{R}^k \times Y^{\prime} , (0, y^{\prime}) )$, $\Upsilon_i^{\prime}$ converges to a closed group $\Upsilon^{\prime } \leq \Gamma$, and $\Gamma_{i}^{\prime}$ converges to a closed group $ \Gamma^{\prime} \leq \Gamma$ with $[\Gamma : \Gamma^{\prime} ] < \infty$. By Theorem \ref{thm:upsilon}, $[\Gamma^{\prime} : \Upsilon ^{\prime} ] = \infty$, so the group $ \Gamma^{\prime} /  \Upsilon ^{\prime } $ is non-compact.

Since $\Gamma^{\prime } / \Upsilon^{\prime}$ acts on $Y^{\prime}$ with compact quotient $\tilde{Y} / \Gamma^{\prime} $, Corollary \ref{cor:split-cover} applies, and since $\Gamma^{\prime}/ \Upsilon ^{\prime}$ is non-compact, $Y^{\prime}$ contains a non-trivial Euclidean factor. Therefore the induction hypothesis applies to the sequence of spaces  $ ( X_i^{\prime}, p_i^{\prime} ) $,  the groups $\Upsilon _{i }^{\prime }$, and the maps $f_{j,i}$ for $j \in \{ 1, \ldots , \ell + 1 \}$ (as $f_{j,i} \to \Id_{\mathbb{R}^k \times \tilde{Y}}$, $(f_{j,i})_{\ast}$ preserves $\Upsilon _i ^{\prime} $). This means there is $C>0$ and subgroups $G_i^{\prime} \leq \Upsilon_i^{\prime} $ such that for $i$ large enough:
\begin{itemize}
\item  $[ \Upsilon_i ^{\prime} : G_i^{\prime} ] \leq C$.
\item  $G_i^{\prime} $ admits a nilpotent basis $\beta^{\prime}_i$ of length $\leq n-k-1$.
\item  $(f_{j,i})^{C!}_{\ast}$ respects $\beta ^{\prime }_i$ for $j \in \{ 1, \ldots , \ell + 1 \} $.
\end{itemize}
By \ref{lem:very-finite-generated} and \ref{pro:pass-to-subgroup}, we can assume $G_i^{\prime}$ is characteristic in $\Upsilon_i^{\prime}$. Then we define $G_i : = \langle G_i^{\prime} , f_{\ell+1, i }^{(2C)!} \rangle $. Notice that $G_i$ admits the nilpotent chain $\beta_i$ obtained by appending $f_{\ell+1, i }^{(2C)!}$ to $\beta_i^{\prime}$. From the fact that $[f_{j,i}, f_{\ell + 1, i}] \in \Upsilon _i^{\prime} $ for $j \in \{ 1, \ldots , \ell \}$ and   Proposition \ref{pro:powers}, we have that $( f_{j,i} )^{C!}_{\ast}$ respects $\beta $ for $j \in \{ 1 , \ldots , \ell \}$. Finally, the sequence  
\[ [\Gamma _ i :  G_i ] = [ \Gamma_i : \Gamma_i^{\prime} ] [ \Gamma _i^{\prime} : G_i ] \leq  [ \Gamma_i : \Gamma_i^{\prime} ] (2C)!C \]
is bounded, contradicting \ref{asu:contradiction}.
\end{proof}

\begin{Pro}\label{pro:step-medium}
\rm In the induction step, \ref{asu:contradiction} leads to a contradiction if  $Y \neq \{ \ast \}$. 
\end{Pro}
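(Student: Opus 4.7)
My plan is to reduce Proposition \ref{pro:step-medium} to Proposition \ref{pro:step-easy} by modifying each $f_{j,i}$ into a new deck type GS map $\tilde{f}_{j,i}$ whose weak limit $\tilde{f}_{j,\infty}$ restricts to the identity on the $\tilde{Y}$-factor. First I would observe that the hypothesis $f_{j,\infty} = \Id_{\mathbb{R}^k \times \tilde{Y}}$ in Proposition \ref{pro:step-easy} is used only to derive two consequences: $[f_{j,\infty}, v_{j_1}] \in \Gamma_0$ (which yields $[f_{j,i}, w_{j_1}^i] \in \Upsilon_i$ in Case 1), and the preservation by $(f_{j,i})_*$ of the subgroup $\Upsilon_i'$ in Case 2. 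Since elements of $\Gamma$ act trivially on the $\mathbb{R}^k$-factor, both consequences depend only on the restriction of $f_{j,\infty}$ to $\tilde{Y}$. It therefore suffices to produce $\tilde{f}_{j,i}$ with $\tilde{f}_{j,\infty}\vert_{\tilde{Y}} = \Id_{\tilde{Y}}$, tolerating an arbitrary orthogonal part on the $\mathbb{R}^k$-factor.

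The modifications will use three tools: (a) passing to a uniform power $f_{j,i}^N$, which is admissible since a nilpotent basis respected by $(f_{j,i}^N)_*^{C!}$ is automatically respected by $(f_{j,i})_*^{(NC)!}$, still contradicting Assumption \ref{asu:contradiction}; (b) composition with elements $g_{j,i} \in \Gamma_i$, which by Proposition \ref{pro:replace} combined with Remark \ref{rem:main-characteristic} (taking $G_i$ characteristic in $\Gamma_i$) leaves the basis-respecting property invariant in the $C!$-limit; and (c) composition with GS maps from Lemma \ref{lem:euclidean:flow}, which are of deck type with trivial induced automorphism and converge weakly to any prescribed $\mathbb{R}^k$-translation. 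By Corollary \ref{cor:split-cover} I decompose $\tilde{Y} = \mathbb{R}^m \times Z$ with $Z$ compact, so $\mathbb{R}^k \times \tilde{Y} \cong \mathbb{R}^{k+m} \times Z$ and every isometry has the form $(v,z) \mapsto (Av + s, \psi(z))$. Writing $f_{j,\infty}(v,z) = (A_j v + s_j, \psi_j(z))$, the normalization of $\Gamma$ by $f_{j,\infty}$ together with the cocompactness of $\Gamma$ on $\mathbb{R}^m \times Z$ forces $A_j$ to preserve the $\mathbb{R}^m$-factor with $A_j\vert_{\mathbb{R}^m} \in O(m) \cap GL_m(\mathbb{Z})$, a finite group. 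Using Theorem \ref{thm:gss-lie} to ensure $\iso(Z)/\iso(Z)_0$ is discrete, the conjugation action of $\psi_j$ on the image of $\Gamma$ in this component group likewise has finite order, since that image is finitely generated (by Corollary \ref{cor:fybc}) and its automorphism group, restricted to the relevant finite-index abelian subgroup generated by $v_1, \ldots, v_m$, factors through a finite quotient.

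Consequently I can choose $N$ uniformly in $j$ and $i$ such that conjugation by $f_{j,\infty}^N$ fixes each $v_{j_1}$ modulo $\Gamma_0$. After replacing $f_{j,i}$ by $f_{j,i}^N$, the remaining obstruction for $f_{j,\infty}^N\vert_{\tilde{Y}}$ to be the identity is a composition of an element of $\Gamma$ with a pure $\mathbb{R}^k$-translation; I absorb the first by composing with a suitable $g_{j,i} \in \Gamma_i$ via tool (b), and the second by composing with a Lemma \ref{lem:euclidean:flow}-GS map via tool (c). The resulting $\tilde{f}_{j,i}$ are GS by Proposition \ref{gas:composition}, are of deck type, and satisfy $\tilde{f}_{j,\infty}\vert_{\tilde{Y}} = \Id_{\tilde{Y}}$. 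The argument of Proposition \ref{pro:step-easy} (which I invoke in the slightly weaker form justified in the first paragraph) then yields a characteristic subgroup of bounded index in $\Gamma_i$ with a nilpotent basis of length $\leq n-k$ respected by $(\tilde{f}_{j,i})_*^{C!}$; Proposition \ref{pro:replace} transfers this basis-respecting property back to $(f_{j,i})_*^{(NC)!}$, contradicting Assumption \ref{asu:contradiction}. I expect the main obstacle to be the finite-order claim for the $\psi_j$-action on $\iso(Z)/\iso(Z)_0$; if that claim is delicate in edge cases (for instance, when the image of $\Gamma$ in $\iso(Z)/\iso(Z)_0$ is not abelian), I would first pass to a characteristic subgroup of $\Gamma_i$ invariant under every $(f_{j,i})_*$ using Proposition \ref{pro:pass-to-subgroup}, reducing to the tractable abelian situation before applying the modification scheme above.
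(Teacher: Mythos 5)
The core step of your plan is to pass to a fixed power $N$ so that $\tilde{f}_{j,\infty}^{N}\vert_{\tilde{Y}}$ is realized (up to an $\mathbb{R}^k$-translation) by an element of $\Gamma$, and this step does not work. Your justification runs through two claims: (i) $A_j\vert_{\mathbb{R}^m}$ lies in $O(m)\cap GL_m(\mathbb{Z})$, hence is finite order, and (ii) once the conjugation action of $f_{j,\infty}^N$ on the $v_{j_1}$ modulo $\Gamma_0$ is trivial, the restriction $f_{j,\infty}^N\vert_{\tilde{Y}}$ differs from the identity by an element of $\Gamma$. Claim (i) is already not right as stated: there is no preferred integral lattice here, $\Gamma$ need not be discrete, and even in the discrete case the relevant finite group is the point group of the translation lattice of $\Gamma$ in whatever coordinates it sits, not $O(m)\cap GL_m(\mathbb{Z})$. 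More seriously, claim (ii) is simply false. Fixing the cosets $v_{j_1}\Gamma_0$ under conjugation says that $f_{j,\infty}^N$ \emph{normalizes} $\Gamma$ in a mild way; it does not say $f_{j,\infty}^N\vert_{\tilde{Y}}$ belongs to $\Gamma\vert_{\tilde{Y}}$. The normalizer of a cocompact isometry group is in general strictly larger than the group, and passing to powers does not rectify this.

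A concrete obstruction: take $\tilde{Y}=\mathbb{R}$, $Z=\{\ast\}$, $\Gamma=\mathbb{Z}$ acting by integer translations, and $f_{j,\infty}\vert_{\tilde{Y}}$ equal to translation by an irrational $\theta$. Then $f_{j,\infty}$ centralizes $\Gamma$, so conjugation fixes every $v_{j_1}$ exactly, yet $f_{j,\infty}^N\vert_{\tilde{Y}}$ is translation by $N\theta$, which is never in $\Gamma$ for any $N\geq 1$. There is no finite power that lands in (or near) $\Gamma$. This is precisely why the paper does \emph{not} look for a finite power. Instead, it uses the cocompactness of $\Gamma$ on $\tilde{Y}$ together with the properness of $\iso(\mathbb{R}^k\times\tilde{Y})$ to extract a sequence $\nu_\alpha\to\infty$ along which $f_{j,\infty}^{\nu_\alpha}\gamma_{\nu_\alpha}$ is Cauchy, so that the consecutive ratios $f_{j,\infty}^{\mu_\alpha}g_\alpha$, with $\mu_\alpha=\nu_{\alpha+1}-\nu_\alpha$ and $g_\alpha\in\Gamma$, converge to the identity (not equal it). The replacement $f_{j,i}\mapsto f_{j,i}^{\mu_\alpha}g_{\alpha,i}$ is then carried out for a slowly diverging $\alpha(i)$ and is compatible with Assumption~\ref{asu:contradiction} via Proposition~\ref{pro:replace}. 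Your observation that Proposition~\ref{pro:step-easy} only needs $f_{j,\infty}\vert_{\tilde{Y}}=\Id_{\tilde{Y}}$ rather than $f_{j,\infty}=\Id$ is correct and is in effect used by the paper (the $\mathbb{R}^k$-translation part is the first thing it kills via Lemma~\ref{lem:euclidean:flow}), but the mechanism you propose for achieving $f_{j,\infty}\vert_{\tilde{Y}}=\Id_{\tilde{Y}}$ cannot work.
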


\begin{proof}
Fix $j \in \{ 1, \ldots , \ell \}$. Then $f_{j, \infty} (0, \tilde{y}) = (s , y^{\prime})$ for some $s \in \mathbb{R}^k$, $y^{\prime} \in \tilde{Y}$. By Lemma \ref{gas:composition}, after composing $f_{j,i}$ with maps given by Theorem \ref{lem:euclidean:flow}, we can assume $f_{j, \infty} (0,\tilde{y}) = ( 0 , y^{\prime})   $. Since $\Gamma $ acts co-compactly on $\tilde{Y}$, there is a sequence $\gamma_{\nu} \in \Gamma$ such that the sequence $f_{j, \infty}^{\nu} ( \gamma_{\nu} (0,\tilde{y} ))$ is bounded. As $\iso ( \mathbb{R}^k \times \tilde{Y} )$ is proper, there is a sequence $\nu _{\alpha} \to \infty $ such that $f_{j, \infty}^{\nu_{\alpha}} ( \gamma _{\nu _{\alpha}} ) $ is a Cauchy sequence in $\iso (\mathbb{R}^k \times \tilde{Y})$. This implies that the sequence 
\[( f_{j, \infty} ^{\nu_{\alpha }} \gamma_{\nu_{\alpha }} ) ^{-1} (f_{j, \infty}^{\nu_{\alpha + 1}} \gamma_{\nu_{\alpha +1}} ) = (f_{j, \infty}^{\nu_{\alpha + 1} - \nu_{\alpha}}) \left[ ( f_{j, \infty}^{  \nu_{\alpha + 1} - \nu_{ \alpha   } }  ) _{\ast}^{-1} (\gamma_{\nu_{\alpha} }^{-1} ) \right] (\gamma_{\nu_{\alpha + 1 }} )\]
converges to $\Id_{\mathbb{R}^k\times \tilde{Y}}$.

Set $\mu_{\alpha} : = \nu_{\alpha +1  } - \nu_{\alpha}$, $g_{\alpha} : = \left[ ( f_{j, \infty}^{ \mu_{ \alpha   } 
 }  ) _{\ast}^{-1} (\gamma_{\nu_{\alpha}} ^{-1} ) \right] (\gamma_{\nu_{\alpha + 1 }} )$, and choose $g_{\alpha , i} \in \Gamma_i$ such that $g_{\alpha , i}$ converges to $g_{\alpha}$ as $i \to \infty$. By Proposition \ref{gas:composition} and a diagonal argument, if a function $i \mapsto \alpha (i) $ diverges to infinity slowly enough, the  maps $ f_{j,i}^{\mu_{\alpha}} g_{\alpha , i} : [\tilde{X}_i , \tilde{p}_i] \to [\tilde{X}_i , \tilde{p}_i]  $ are GS and converge to $\Id_{\mathbb{R}^k \times \tilde{Y}}$.  By Proposition \ref{pro:replace}, if a function $i \mapsto \alpha (i)$ diverges slowly enough, we can replace $f_{j,i}$ by $f_{j,i}^{\mu_{\alpha}} g_{\alpha , i}$ and still have \ref{asu:contradiction}.  By doing this independently for each $j$, we can assume $f_{j, \infty} = \Id _{\mathbb{R}^k \times \tilde{Y}}$ for all $j \in \{ 1, \ldots , \ell \}$ and  Proposition \ref{pro:step-easy} applies.
\end{proof}

\begin{Pro}\label{pro:step-hard}
\rm In the induction step, \ref{asu:contradiction} leads to a contradiction. 
\end{Pro}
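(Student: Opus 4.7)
If $Y \neq \{\ast\}$ the conclusion is already given by Proposition \ref{pro:step-medium}. So assume $Y = \{\ast\}$; since the induction base $k = n$ is handled by Proposition \ref{pro:base-main}, we have $k < n$ and may invoke the Re-scaling Theorem \ref{thm:re-scaling}. It provides $\lambda_i \to \infty$, asymptotically full measure sets $\Theta_i \subset B_{1/2}(p_i)$, and a compact pointed space $(Y', y_1)$ with $Y' \neq \{\ast\}$ and $\diam(Y') \leq 1/10$, such that for every $x_i \in \Theta_i$ with any lift $\tilde x_i \in B_{1/2}(\tilde p_i)$ the rescaled sequence $(\lambda_i X_i, x_i)$ converges in the Gromov--Hausdorff sense to $(\R^k \times Y', (0, y_1))$ while $\Gamma_i = \mathcal G(\pi_1(X_i), \lambda_i \tilde X_i, \tilde x_i, 1)$ remains valid. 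Thus the structural hypotheses of Theorem \ref{thm:induction-main-theorem} continue to hold for the rescaled sequence, but now with nontrivial $Y'$.

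The body of the argument is to bring the deck type GS maps $f_{j,i}$ along to the rescaled picture. Apply Proposition \ref{gas:zoom} to each $j$ to obtain asymptotically full measure sets $W_{j,i} \subset \tilde X_i$ on which the zoomed-in sequences $f_{j,i} \colon [\lambda_i \tilde X_i, \tilde x_i] \to [\lambda_i \tilde X_i, f_{j,i}(\tilde x_i)]$ are GS. Each $f_{j,i}$ descends to a well-defined measure-preserving (on a large set) map $\bar f_{j,i} \colon X_i \to X_i$ by $\bar f_{j,i}(\pi(\tilde x)) := \pi(f_{j,i}(\tilde x))$, since the deck-type equivariance makes this independent of the chosen lift. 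After slightly enlarging $\Theta_i$ to a corresponding nearly full measure subset of $B_R(p_i)$ for some fixed $R$ large enough (the proof of Theorem \ref{thm:re-scaling} goes through with $B_{1/2}(p_i)$ replaced by $B_R(p_i)$), one may pick $x_i \in \Theta_i$ whose lift $\tilde x_i \in \bigcap_j W_{j,i}$ also satisfies $\bar f_{j,i}(x_i) \in \Theta_i$ for every $j$. For each $j$, let $\tilde z_{j,i} \in B_R(\tilde p_i)$ be a lift of $\bar f_{j,i}(x_i)$ and write $\tilde z_{j,i} = g_{j,i} f_{j,i}(\tilde x_i)$ for the unique $g_{j,i} \in \pi_1(X_i)$. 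The uniform bound on $d(\tilde p_i, f_{j,i}(\tilde x_i))$ coming from the GS property (Definition \ref{def:gas}) produces a uniform bound on the norm of $g_{j,i}$, so the huge spectral gap of equation \ref{eq:huge-spectral-gap} in the proof of Theorem \ref{thm:re-scaling} forces $g_{j,i} \in \Gamma_i$ for $i$ large. Theorem \ref{thm:re-scaling}(\ref{eq:adjustment}) applied with $\tilde a_i = \tilde z_{j,i}$ and $\tilde b_i = \tilde x_i$ then yields deck type GS maps $\psi_{j,i} \colon [\lambda_i \tilde X_i, \tilde z_{j,i}] \to [\lambda_i \tilde X_i, \tilde x_i]$ with $(\psi_{j,i})_\ast \in (\Gamma_i)_\ast$. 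Setting $\tilde f_{j,i} := \psi_{j,i} \circ g_{j,i} \circ f_{j,i}$, Proposition \ref{gas:composition} guarantees that each $\tilde f_{j,i}$ is a deck type GS map $[\lambda_i \tilde X_i, \tilde x_i] \to [\lambda_i \tilde X_i, \tilde x_i]$ with induced automorphism $(\tilde f_{j,i})_\ast = \tau_{j,i,\ast} \circ (f_{j,i})_\ast$ for some $\tau_{j,i} \in \Gamma_i$.

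The rescaled data $(X_i, \lambda_i d_i, \mm_i, x_i)$ together with the groups $\Gamma_i$ and the deck type GS maps $\tilde f_{j,i}$ now verify all the hypotheses of Theorem \ref{thm:induction-main-theorem} at the same $k$ but with $Y'\neq\{\ast\}$, so Proposition \ref{pro:step-medium} applies and refutes Assumption \ref{asu:contradiction} as phrased for the $\tilde f_{j,i}$. Rewriting $\tau_{j,i,\ast} \circ (f_{j,i})_\ast = (f_{j,i})_\ast \circ ((f_{j,i})_\ast^{-1}(\tau_{j,i}))_\ast$ and applying Proposition \ref{pro:replace}, together with Remark \ref{rem:main-characteristic} to pass to a characteristic finite-index refinement of $G_i$, converts this into a contradiction to Assumption \ref{asu:contradiction} for the original $f_{j,i}$, completing the proof. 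The main obstacle is the second paragraph: one must simultaneously enforce the zoom-in GS property from Proposition \ref{gas:zoom}, the measure-preserving descent $\bar f_{j,i}$, the spectral gap ensuring $g_{j,i} \in \Gamma_i$, and the applicability of Theorem \ref{thm:re-scaling}(\ref{eq:adjustment}) at a potentially enlarged radius, while arranging that the composite $\tilde f_{j,i}$ is deck type GS and induces $(f_{j,i})_\ast$ up to an inner automorphism by an element of $\Gamma_i$ so that Proposition \ref{pro:replace} can transfer the contradiction back.
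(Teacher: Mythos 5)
Your overall strategy — invoke the Re-scaling Theorem to replace $Y=\{\ast\}$ by a nontrivial $Y'$, zoom in with Proposition \ref{gas:zoom}, adjust the maps $f_{j,i}$ by a GS correction so they become based self-maps, and then transfer the contradiction back through Remark \ref{rem:main-characteristic} and Proposition \ref{pro:replace} to Proposition \ref{pro:step-medium} — is exactly the paper's. However, you miss a key preliminary step that the paper uses and your substitute for it has a gap.

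The paper begins by re-scaling each $X_i$ \emph{down} by a fixed factor so that $f_{j,\infty}$ displaces $(0,\tilde y)$ by at most $1/10$. This is not cosmetic: it is what guarantees that one can pick $a_i$ near $\tilde p_i$ in $W_i = \tilde\Theta_i\cap\bigcap_j W^1_{j,i}$ with $f_{j,i}(a_i)$ \emph{also} in $W_i\subset B_{1/2}(\tilde p_i)$, so that Theorem \ref{thm:re-scaling}(\ref{eq:adjustment}) applies directly with $\tilde a_i = f_{j,i}(a_i)$ and $\tilde b_i = a_i$, both in $B_{1/2}(\tilde p_i)$ as the theorem requires. Your version omits this step; as a result $f_{j,i}(\tilde x_i)$ may land well outside $B_{1/2}(\tilde p_i)$, and you have to introduce a correcting deck transformation $g_{j,i}$ and enlarge $\Theta_i$ to a full-measure subset of $B_R(p_i)$. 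To justify the latter you assert, without proof, that ``the proof of Theorem \ref{thm:re-scaling} goes through with $B_{1/2}(p_i)$ replaced by $B_R(p_i)$.'' That claim is plausible but not established anywhere in the paper, and Theorem \ref{thm:re-scaling}(\ref{eq:adjustment}) as stated only supplies the GS map $f_i$ for lifts in $B_{1/2}(\tilde p_i)$ — your $\tilde z_{j,i}\in B_R(\tilde p_i)$ falls outside its scope. The rest of your construction ($g_{j,i}\in\Gamma_i$ via the spectral gap \ref{eq:huge-spectral-gap}, the composition $\tilde f_{j,i}=\psi_{j,i}\circ g_{j,i}\circ f_{j,i}$ being deck type GS with $(\tilde f_{j,i})_\ast\in(\Gamma_i)_\ast\circ(f_{j,i})_\ast$, and the Proposition \ref{pro:replace} transfer) is sound, but it all hangs on the unproved radius extension. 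The fix is the one the paper uses: perform the fixed-factor rescaling up front so that everything stays inside the ball $B_{1/2}(\tilde p_i)$ where Theorem \ref{thm:re-scaling} was proved; then the $g_{j,i}$ correction and the $B_R$ enlargement become unnecessary, and the composite map is just $\varphi_{j,i}f_{j,i}$ with $\varphi_{j,i}$ coming straight from Theorem \ref{thm:re-scaling}(\ref{eq:adjustment}).
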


\begin{proof}
After re-scaling down each $X_i$ by a fixed factor, we can assume $f_{j,\infty }$ displaces $(0, \tilde{y})$ at most $1/10$. If $Y = \{ \ast \}$, let $\lambda_i $ and  $ \Theta _i \subset B_{1/2}(p_i)$ be given by Theorem \ref{thm:re-scaling}, and $\tilde{ \Theta }_i \subset B_{1/2}(\tilde{p}_i)$ their lifts. For each $j \in \{ 1, \ldots , \ell \}$, let $W_{j,i}^1 \subset \tilde{X}_i$ be the sets obtained by applying Proposition \ref{gas:zoom} to each $f_{j,i}$, and set 
\[W_i :  =  \tilde{ \Theta }_i \cap \bigcap_j W_{j,i}^1.\]
Then for large enough $i$, we can take $a_i \in W_i $ such that $f_{j,i} (a_i) \in W_i $ for each $j$. Let $\varphi_{j,i} : [\lambda_i \tilde{X}_i , f_{j,i}(a_i) ] \to [ \lambda _i  \tilde{X}_i , a_i]$ be the maps given by Part \ref{eq:adjustment} of Theorem \ref{thm:re-scaling}.  By Remark \ref{rem:main-characteristic} and Proposition \ref{pro:replace}, if we replace $f_{j,i}$ by $\varphi_{j,i} f_{j,i}$, we still have \ref{asu:contradiction}. Then by Part \ref{eq:re-scaling-1} of Theorem \ref{thm:re-scaling}, Proposition \ref{pro:step-medium} applies to the spaces $( \lambda_i X_i, a_i )$ and the GS maps $\varphi_{j,i} f_{j,i} : [\lambda \tilde{X}_i, a_i ] \to [\lambda_i \tilde{X}_i, a_i]$.
\end{proof}

\begin{proof}[Proof of \ref{thm:margulis}:]
Assuming the result fails, there is a sequence $(X_i, d_i, \mathfrak{m}_i, p_i)$ of pointed $\rcd (K,N )$ spaces, $\varepsilon _ i \to 0$, and integers $\xi_i \to \infty$, such that if $H_i \leq \pi_1(X_i, p_i)$ denotes the image of the map $\pi_1(B_{\varepsilon_i}(p_i), p_i) \to \pi_1(X_i, p_i)$ induced by the inclusion, then no subgroup of $H_i$ of index $\leq \xi_i$ admits a nilpotent basis of length $\leq n$. 

Taking the pointed universal covers $(\tilde{X}_i, \tilde{d}_i, \tilde{\mm}_i, \tilde{p}_i)$, for each $i$ one can identify $H_i$ with a subgroup of $\mathcal{G}(\pi_1(X_i), \tilde{X}_i, \tilde{p}_i, 2 \varepsilon_i)$. After taking a subsequence, we can assume $(X_i, p_i)$
and $(\tilde{X}_i, \tilde{p}_i)$ converge in the Gromov--Hausdorff sense to spaces $(X,p)$ and $(\tilde{X}, \tilde{p})$, respectively, and the sequence $\pi_1(X_i)$ converges equivariantly to a closed group of isometries $G \leq Iso (\tilde{X})$. 

Let $K \leq  G$ be the stabilizer of $\tilde{p}$, and let $m$ be the number of connected components of $G$ it intersects. Fix $\varepsilon > 0 $ such that the set $\{ g \in G \vert d(d \tilde{p}, \tilde{p}) \leq 2 \varepsilon \}$ intersects the same $m$ connected components of $G$ as $K$, and define 
\[H_i ^{\prime } : = \langle \{ g \in \pi_1(X_i) \vert d(g \tilde{p}_i, \tilde{p}_i) \leq \varepsilon \} \rangle.\]
After taking a subsequence, we can assume $H_i^{\prime}$ converges equivariantly to a closed group $H^{\prime} \leq G$, and let $\Upsilon_i \triangleleft H_i^{\prime}  $ be given by Theorem \ref{thm:upsilon}. Then for $i$ large enough, $H_i \leq H_i^{\prime}$ and by Theorem \ref{thm:upsilon},  $[H_i ^{\prime} : \Upsilon_i ] \leq m $. Hence no subgroup of $\Upsilon_i$ of index $\leq \xi_i / m$ admits a nilpotent basis of length $\leq n$. 

Pick $q \in B_1(p)$ a $k$-regular point, $\tilde{q} \in B_1(\tilde{p})$ a lift, and $\tilde{q}_i \in  B_1 (\tilde{p}_i)$ converging to $\tilde{q}$. If we equip $\pi_1(X_i)$ with the metric $d_0^{\tilde{p}_i}$ from \ref{d0},  then for any $ g \in B_{\delta} (\Id _{\tilde{X}_i}) $ with $\delta < 1$ one has $d(g \tilde{q}_i, \tilde{q}_i) < \delta$. Hence for all $\delta < 1 $ we have
\begin{equation}\label{eq:margulis-delta}
    B_{\delta } (\Id_{\tilde{X}_i})  \subset \{ g \in \pi_1(X_i) \vert d(g\tilde{q}_i, \tilde{q}_i ) < \delta \}  .
\end{equation}      
For a sequence $\delta_i \to 0$, define $\Gamma_ i : = \mathcal{G} ( \pi_1 (X_i), \tilde{X}_i, \tilde{q}_i , \delta_i  )$. By \ref{eq:margulis-delta} and Theorem \ref{thm:upsilon}, if $\delta_i \to 0 $ slowly enough, for all $i$ large we have $ \Upsilon_i \leq \Gamma_i $. 
Finally, consider a sequence $\lambda_i \to \infty$  diverging so slowly that $\lambda_i \delta_i \to 0$, and such that $(\lambda_i X_i, q_i)$ converges to $(\mathbb{R}^k,0)$ in the Gromov--Hausdorff sense. Then $\Gamma_i = \mathcal{G}( \Gamma_i , \lambda_i \tilde{X}_i, \tilde{q}_i, 1 )$, contradicting  Theorem \ref{thm:induction-main-theorem} with $\ell = 0$.
\end{proof}

\end{document}